\numberwithin{equation}{section}
\def\specialsection{\@startsection{section}{1}%
  \z@{\linespacing\@plus\linespacing}{.5\linespacing}%
%  {\normalfont\centering}}% DELETED
  {\Large\bfseries}}% NEW
\def\section{\@startsection{section}{1}%
  \z@{.7\linespacing\@plus\linespacing}{.5\linespacing}%
%  {\normalfont\scshape\centering}}% DELETED
  {\Large\scshape\bfseries}}% NEW
\newcommand{\R}{\mathbb{R}}
\newtheorem{theorem}{Theorem}[section]
\newtheorem{lemma}[theorem]{Lemma}
\newtheorem{proposition}[theorem]{Proposition}
\def\v{\varepsilon}
\def\t{\theta}
\def\k{\kappa}
\def\m{\mu}
\def\a{\alpha}
\def\b{\beta}
\def\g{\gamma}
\def\d{\delta}
\def\l{\lambda}
\def\r{\rho}
\def\s{\sigma}
\def\f{\frac}
\newcommand{\dd}{{\rm d}}
\renewcommand{\S}{\mathbb{S}}
\newcommand{\Fi}{\mathbf{1}}
\newcommand{\CA}{\mathcal{A}}
\newcommand{\CE}{\mathcal{E}}
\newcommand{\CJ}{\mathcal{J}}
\newcommand{\na}{\nabla}
\newcommand{\al}{\alpha}
\newcommand{\be}{\beta}
\newcommand{\ga}{\gamma}
\newcommand{\om}{\omega}
\newcommand{\la}{\lambda}
\newcommand{\de}{\delta}
\newcommand{\si}{\sigma}
\newcommand{\pa}{\partial}
\newcommand{\ka}{\kappa}
\newcommand{\eps}{\epsilon}
\newcommand{\Ga}{\Gamma}
\begin{document}

\title[Boltzmann equation with large-amplitude data]{The Boltzmann equation with large-amplitude initial data in bounded domains}

\author[R.-J. Duan]{Renjun Duan}
\address[R.-J. Duan]{Department of Mathematics, The Chinese University of Hong Kong, Hong Kong}
\email{rjduan@math.cuhk.edu.hk}

\author[Y. Wang]{Yong Wang}
\address[Y. Wang]{Institute of Applied Mathematics, Academy of Mathematics and Systems Science, Chinese Academy of Sciences, Beijing 100190, China, and University of Chinese Academy of Sciences}
\email{yongwang@amss.ac.cn}

\begin{abstract} 
The paper is devoted to constructing the global solutions around global Maxwellians to the initial-boundary value problem on the Boltzmann equation in general bounded domains with isothermal diffuse reflection boundaries. We allow a class of non-negative initial data which have arbitrary large amplitude and even contain vacuum. The result shows that the oscillation of solutions away from global Maxwellians becomes small after some positive time provided that they are initially close to each other in $L^2$.   This yields the disappearance of any initial vacuum  and the exponential convergence of large-amplitude solutions to equilibrium  in large time. The isothermal diffuse reflection boundary condition plays a vital role in the analysis.  The most key ingredients in our strategy of the proof include: (i) $L^2_{x,v}$--$L^\infty_xL^1_v$--$L^\infty_{x,v}$ estimates along a bootstrap argument; (ii) Pointwise estimates on the upper bound of the gain term by the product of $L^\infty$ norm and $L^2$ norm; (iii) An iterative procedure on the nonlinear term. 
\end{abstract}

\subjclass[2000]{Primary 35Q20, 76P05; Secondary 35A01, 35B45}

%35Q20  	Boltzmann equations
%76P05  	Rarefied gas flows, Boltzmann equation
%35A01  	Existence problems: global existence, local existence, non-existence
%35B45  	A priori estimates

\keywords{Boltzmann equation; global well-posedness; bounded domain; large-amplitude initial data; diffuse reflection boundary; {\it a priori} estimate}
\date{\today}
%\thanks{}
\maketitle

\setcounter{tocdepth}{1}
\tableofcontents

\thispagestyle{empty}

%%%%%%%%%%%%%%%%%%%%%%%%%%%%%%%%%%%%%%%%%%%%%%%%%%%%%%%%%%%%%%%%%%

\section{Introduction}
%\lipsum[2]

The paper is concerned with the initial-boundary value problem on the Boltzmann equation in bounded domains. The hard potentials with angular cutoff are considered. For the diffuse reflection boundary condition, we shall  establish the global-in-time existence and exponential time-decay of solutions in the velocity-weighted $L^\infty$ space for initial data whose amplitude around global Maxwellians have arbitrary large oscillations in the same weighted $L^\infty$ space but are small in $L^2$ space. Particularly, initial data are allowed to have large amplitude and contain vacuum. The weight function can be either purely polynomial or the product of polynomial and exponential functions. 

\subsection{Boltzmann equation}

We consider  a rarefied gas in a vessel $\Omega$ whose boundaries are kept at constant temperature. The domain $\Omega$ is assumed to be  a bounded open set of $\R^3$ with a  smooth boundary $\pa \Omega$. The time evolution of the gas is governed by the initial-boundary value problem on the Boltzmann equation
\begin{equation}\label{1.1}
\partial_tF+v\cdot\nabla_x F=Q(F,F),
\end{equation}
supplemented with initial data
\begin{equation}\label{1.5-1}
F(0,x,v)=F_0(x,v),%\geq0.
\end{equation}
and with the diffuse reflection boundary condition which will be clarified later on. Here, the unknown $F=F(t,x,v)\geq 0$ stands for the density distribution function of gas particles with position $x\in\Omega$ and  velocity $v\in\mathbb{R}^3$  at  time $t>0$. 
%We impose the Boltzmann equation \eqref{1.1} with initial condition
%\begin{equation}\label{1.5-1}
%F(t,x,v)|_{t=0}=F_0(x,v)\geq0.
%\end{equation}
The collision term $Q(F,F)$ is an integral with respect to velocity variable only, and it takes the nonsymmetric bilinear form 
\begin{align}%\label{1.2}
Q(F_1,F_2)&=\int_{\mathbb{R}^3}\int_{\mathbb{S}^2} B(v-u,\om)\left[F_1(u')F_2(v')
%dud\omega-\int_{\mathbb{R}^3}\int_{\mathbb{S}^2} B(v-u,\omega)
-F_1(u)F_2(v)\right]\,\dd\omega\dd u.\nonumber
%\\
%&\triangleq Q_+(F_1,F_2)-Q_-(F_1,F_2),\nonumber
\end{align}
Here the post-collison velocity pair $(v',u')$ and the pre-collision velocity pair $(v,u)$  satisfy the relation
\begin{equation}\label{def.popr}
v'=v-[(v-u)\cdot\omega]\,\omega,\quad u'=u+[(v-u)\cdot\omega]\,\omega,\nonumber
\end{equation}
with $\om\in \S^2$,  according to conservations of momentum and energy of two particles before and after the collision
\begin{equation}\label{1.3-1}
v+u=v'+u',\quad %~~\mbox{and}~~
|v|^2+|u|^2=|v'|^2+|u'|^2.
\end{equation}
The collision kernel $B(v-u,\om)$ depends only on the relative velocity $|v-u|$ and $\cos\theta:=(v-u)\cdot \om/|v-u|$, and it is assumed throughout the paper to take the form
%we only consider the hard sphere and hard potentials with angular cut-off, i.e.,
\begin{equation}\label{1.4}
B(v-u,\om)=|v-u|^{\ka}b(\cos\theta),\quad %~~\mbox{with}~~
0\leq\ka\leq1,\quad 0\leq b(\t)\leq C|\cos\t |,
\end{equation}
corresponding to the case of hard potentials with angular cutoff. Under the cutoff assumption, it is also convenient to write
\begin{equation}\nonumber
%\label{ }
Q(F_1,F_2)=Q_+(F_1,F_2)-Q_-(F_1,F_2),
\end{equation}
with $Q_+(F_1,F_2)$, $Q_-(F_1,F_2)$ meaning the gain term and the loss term, respectively. 
 
%and $b(\t)$ satisfies the angular cutoff assumption
%\begin{equation}%\label{1.4-1}
%0\leq b(\t)\leq C|\cos\t |.\nonumber
%\end{equation}

\subsection{Boundary condition}
Throughout the paper, we assume  that $\Omega:=\{x\in \R^3:\xi (x)<0\}$ for a smooth function $\xi(x)$ is connected and bounded. %with $\xi (x)$ being a smooth function. 
%We assume
Suppose that  
$\nabla \xi
(x)\neq 0$ at the boundary $\pa\Omega=\{x\in \R^3: \xi (x)=0\}$, and the outward normal vector at $
\partial \Omega $, given by 
%\begin{equation}\nonumber
$n(x)=\nabla \xi (x)/|\nabla \xi (x)|$, %\frac{\nabla \xi (x)}{|\nabla \xi (x)|},  \label{outwardnormal}
%\end{equation}
%and it 
can be extended smoothly near $\partial \Omega$. %=\{x:\xi (x)=0\}.$ 
%We say $\Omega $ is real analytic if $\xi $ is real analytic in $x.$ 
Moreover,
%We define 
$\Omega $ is said to be strictly convex %in the sense that 
if 
there exists $c_{\xi }>0$ such that 
\begin{equation} 
%\partial _{ij}
\sum_{ij=1}^3\frac{\pa^2 \xi}{\pa x_i \pa x_j}(x)\zeta_{i}\zeta_{j}\geq c_{\xi }|\zeta |^{2},\label{convexity}
\end{equation}
for all $x\in \overline{\Omega}$ %such that $\xi (x)\leq 0$ 
and for all $\zeta \in \mathbb{R}^{3}$.
We denote the phase boundary of the phase space $\Omega \times \mathbb{R}^{3}_v$ by $
\gamma =
\partial \Omega \times \mathbb{R}^{3}_v,$ and split it into the outgoing set
%boundary 
$\gamma _{+},$ the incoming set %boundary 
$\gamma _{-},$ and the
singular set %boundary 
$\gamma _{0}$ for grazing velocities: 
\begin{align}
\gamma _{+} &=\{(x,v)\in \partial \Omega \times \mathbb{R}^{3}_v:~n(x)\cdot v>0\}, \nonumber\\
\gamma _{-} &=\{(x,v)\in \partial \Omega \times \mathbb{R}^{3}_v:~n(x)\cdot v<0\}, \nonumber\\
\gamma _{0} &=\{(x,v)\in \partial \Omega \times \mathbb{R}^{3}_v:~n(x)\cdot v=0\}.\nonumber
\end{align}
In this paper, we only consider the diffuse reflection boundary condition, namely, for $(x,v)\in \ga_-$,
\begin{equation}\label{diffuse1}
F(t,x,v)|_{\gamma _{-}}=c_{\mu }\mu (v)\int_{v^{\prime }\cdot
	n(x)>0}F(t,x,v^{\prime })\{v'\cdot n(x)%\cdot v^{\prime}
	\}\,\dd v^{\prime },
\end{equation}
where 
%\begin{equation}\label{GM}
$\mu(v)=(2\pi)^{-3/2}\exp(-|v|^2/2)$ %\f{1}{(2\pi)^{\f32}}\exp\left(-\f{|v|^2}{2}\right)$
%\end{equation}
is the normalized global Maxwellian with zero bulk velocity, and the constant $c_{\mu}=\sqrt{2\pi}>0$ is chosen such that 
%Assume the natural normalization
\begin{equation}\label{diffusenormal}
c_{\mu }\int_{v'\cdot n(x)>0}\mu (v)\{v'\cdot n(x)\}\,\dd v=1.
\end{equation}
%then   we formulate the diffuse boundary conditions as 
Note that $\mu(v)$ satisfies \eqref{1.1} and \eqref{diffuse1}, and under the above diffuse reflection boundary condition, it holds that  
%For the diffuse boundary condition \eqref{diffuse}, the mass conservation law is always assumed to hold for $t\geq 0,$ 
\begin{equation*}%\label{mass} 
\int_{\Omega \times \mathbb{R}^{3}}F(t,x,v)\,\dd x\dd v=\int_{\Omega \times \mathbb{R}^{3}}F_0(x,v)\,\dd x\dd v,\quad t>0,
\end{equation*}
for any solution to the initial-boundary value problem \eqref{1.1}, \eqref{1.5-1}, \eqref{diffuse1}.

\subsection{$L^\infty$ solution of small amplitude}

%Define a normalized global Maxwellian
%\begin{equation}\label{GM}
%\mu(v)=\f{1}{(2\pi)^{\f32}}\exp\left(-\f{|v|^2}{2}\right).
%\end{equation}
%and a weight function 
%\begin{equation}\label{WF}
%w(v)\triangleq (1+\r^2|v|^2)^{\beta},\quad\mbox{for}\quad  \r\geq1,
%\end{equation}
%Guo \cite{Guo2} proved the following global stability for the Boltzmann equation with small data in $L^\infty$:

It was proved in \cite{Guo2} that

\begin{proposition}\label{prop1.1}
Let $w(v)=(1+\r^2|v|^2)^{\beta}e^{\varpi|v|^2}$ with $\r>1$ fixed and suitably large, $\beta>3/2$, and  $0\leq \varpi< 1/4$.  %and  $\be>\f32$. % such that
%Let $0\leq \k\leq 1$. Fix $\r$ to be a suitably large positive constant. Assume 
%$w^{-2}(1+|v|)^3\in L^1(\R^3_v)$. 
%Let 
%\begin{equation}\label{WF}
%w=w(v):=(1+\r^2|v|^2)^{\beta},%\quad\mbox{for}\quad  \r\geq1,
%\end{equation}
%for a constant $\r>1$ fixed and suitably large and for $\be>0$ such that
%Let $0\leq \k\leq 1$. Fix $\r$ to be a suitably large positive constant. Assume 
%$w^{-2}(1+|v|)^3\in L^1(\R^3_v)$. 
Assume that  $F_0(x,v)=\mu+\sqrt{\mu}f_0(x,v)\geq0$, and the mass conservation holds:
\begin{equation}
\label{mc.i}
\int_{\Omega\times \R^3} \sqrt{\mu}f_0(x,v) \,\dd x \dd v=0.
\end{equation}
% for weight function $w$ in \eqref{WF}. Assume the mass conservation \eqref{mass} is valid for $f_0$ and $F_0(x,v)=\mu+\sqrt{\mu}f_0(x,v)\geq0$. 
Then there are constants $\d>0$  and $C_0\geq1$ 
% sufficiently small constant $\d>0$ and  positive constant $C_0(\geq1)$ 
such that if 
\begin{align}\label{def.delta}
\|wf_0\|_{L^\infty}\leq \d, %\nonumber
\end{align}
then the initial-boundary value problem \eqref{1.1}, \eqref{1.5-1}, \eqref{diffuse1} on the Boltzmann equation  %there exists 
admits a unique solution $F(t,x,v)=\mu+\sqrt{\mu}f(t,x,v)\geq0$ 
%to the diffuse boundary condition \eqref{diffuse1} for the Boltzmann equation \eqref{1.1} and satisfies
satisfying
\begin{align}\label{g.rate}
\|wf(t)\|_{L^\infty}\leq C_0 \|wf_0\|_{L^\infty} e^{-\vartheta t},
\end{align}
for all $t\geq 0$, where $\vartheta>0$ is a constant. Moreover, if $\Omega$ is strictly convex, and % if 
$F_0(x,v)$ is continuous except on $\gamma_0$ with 
\begin{align}\label{bc.i}
F_0(x,v)|_{\gamma_-}=c_{\mu }\mu (v)\int_{v^{\prime }\cdot
	n(x)>0}F_0(x,v^{\prime })\{v'\cdot n(x)\}\,\dd v^{\prime },
\end{align}
then $F(t,x,v)$ is continous in $[0,\infty)\times\{\overline{\Omega}\times\mathbb{R}^3_v\backslash \gamma_0\}$.
\end{proposition}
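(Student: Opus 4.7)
The plan is to follow the $L^2$–$L^\infty$ bootstrap scheme. Writing $F = \mu + \sqrt\mu f$, one gets
\begin{equation*}
\pa_t f + v\cdot\na_x f + Lf = \Gamma(f,f),
\end{equation*}
where $Lf = \nu(v)f - Kf$ with collision frequency $\nu(v)\sim \langle v\rangle^\ka$ and a compact integral operator $K$, and $\Gamma(f,f) = \mu^{-1/2}Q(\sqrt\mu f,\sqrt\mu f)$. The boundary condition \eqref{diffuse1} becomes the linear relation
\begin{equation*}
f(t,x,v)|_{\ga_-} = c_\mu \sqrt{\mu(v)}\int_{v'\cdot n(x)>0} f(t,x,v')\sqrt{\mu(v')}\{v'\cdot n(x)\}\,\dd v'.
\end{equation*}
The target is the \emph{a priori} inequality
\begin{equation*}
\|wf(t)\|_{L^\infty} \leq Ce^{-\vth t}\|wf_0\|_{L^\infty} + C\int_0^t e^{-\vth(t-s)}\|wf(s)\|_{L^\infty}^2\,\dd s,
\end{equation*}
from which a continuity/bootstrap argument under the smallness \eqref{def.delta} closes global existence, uniqueness, and the decay \eqref{g.rate}.

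First I would prove exponential $L^2$ decay of the linearized semigroup. The ingredients are the coercivity $\langle Lf,f\rangle \gtrsim \|(\FI-\FP)f\|_\nu^2$ of $L$ on the orthogonal complement of its five-dimensional hydrodynamic kernel, the macroscopic estimate reconstructing $\|\FP f\|$ from the local conservation laws, and the dissipation generated by diffuse reflection at $\pa\Omega$. The mass conservation \eqref{mc.i} together with the diffuse boundary annihilates the only surviving zero mode in a bounded thermalized cavity, so a Desvillettes–Villani/Guo-type contradiction argument yields $\|f(t)\|_{L^2} \lesssim e^{-\la_0 t}\|f_0\|_{L^2}$.

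The main step is the $L^\infty$ upgrade. Set $h = wf$. Writing $h$ along backward characteristics folded into the stochastic cycles $(t_k,x_k,v_k)$ that encode successive diffuse reflections, one obtains a representation of $h(t,x,v)$ as the weighted initial datum along the trajectory plus a Duhamel integral involving $K_w h$ (with $K_w h(v) = w(v)K(h/w)(v)$) and $w\Gamma$. Iterating Duhamel once more on the $K_w h$ term and exchanging the order of integration, one splits the resulting velocity integrals into (i) a region of small Lebesgue measure — grazing velocities, thin time layer, and histories with too many reflections — absorbed by a prefactor $\eps\|h\|_{L^\infty}$, and (ii) its complement, on which the Jacobian of the change of variables $v'\mapsto X(s';s,X(s;t,x,v),v')$ is uniformly nondegenerate, so the $L^\infty$ control of the inner integrand can be traded for $\|f(s')\|_{L^2}$ and fed into the decay from Step~1. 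This $\eps\|h\|_{L^\infty}+C_\eps\|f\|_{L^2}$ interplay is the technical heart of the argument.

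Nonlinear closure uses the pointwise bound $|w\Gamma(f,f)(v)|\leq C\nu(v)\|wf\|_{L^\infty}^2$, which plugged into the above iteration produces the target inequality; global existence, uniqueness, and \eqref{g.rate} then follow by the standard continuity method. Non-negativity is preserved by approximating $F$ through a positivity-preserving iteration of the form $\pa_t F^{n+1} + v\cdot\na_x F^{n+1} + \nu(F^n)F^{n+1} = Q_+(F^n,F^n)$ (with diffuse boundary for $F^{n+1}$), whose Duhamel representation manifestly maintains $F^{n+1}\geq 0$; convergence is verified in the same weighted $L^\infty$ topology. The continuity assertion under strict convexity \eqref{convexity} is obtained by noting that backward trajectories avoid $\ga_0$ and undergo only finitely many reflections on bounded time intervals, so the cycle representation propagates continuity from $F_0$ (compatible via \eqref{bc.i}) to all of $[0,\infty)\times\{\overline\Omega\times\R^3\setminus\ga_0\}$. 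The chief obstacle is Step 2: controlling the geometry and Jacobian of multi-reflection backward trajectories precisely enough to execute the $L^2$–$L^\infty$ exchange, and bounding the measure of the "bad" reflection histories uniformly in time.
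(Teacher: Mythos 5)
The paper does not reprove Proposition~\ref{prop1.1}; it cites it from Guo \cite{Guo2}, noting only that the extension from the purely polynomial weight ($\varpi=0$ there) to $0<\varpi<1/4$ is straightforward, as also observed in \cite{GKTT-IM}. Your sketch faithfully reproduces the $L^2$--$L^\infty$ bootstrap architecture of \cite{Guo2} --- linearized $L^2$ decay, the double-Duhamel representation along diffusive back-time cycles with a velocity change of variables splitting into a small-measure bad set and a good set with nondegenerate Jacobian, the $\eps\|wf\|_{L^\infty}+C_\eps\|f\|_{L^2}$ exchange, the pointwise quadratic bound on $w\Gamma$, a positivity-preserving iteration with the gain term on the right, and convexity-based continuity propagation --- so it is essentially the same approach as the cited source.
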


Note that the original result in \cite{Guo2} considers only the purely polynomial velocity weight corresponding to $\varpi=0$ in the above Proposition \ref{prop1.1}. It is straightforward to extend it to include the exponential weight in the case $0<\varpi<1/4$, as also pointed out and used in \cite[see the paragraph below Theorem 1]{GKTT-IM}. Indeed, the exponential velocity weight function $\exp(\varpi|v|^2)$ with $\varpi>0$ was needed in \cite{GKTT-IM} mainly for the investigation of regularity of solutions to  the  Boltzmann equation in convex domains. For the same purpose, later on we shall also make some comments on regularity of weighted $L^\infty$ solutions obtained in the setting of the current work. Moreover, for the boundary condition, we focus on only the diffuse reflection boundary in this work. In fact, regarding the initial data with small oscillations around global Maxwellians in $L^\infty$, the global existence of solutions under other types of boundary conditions, such as in-flow, reverse reflection and specular reflection, was also established in \cite{Guo2}. Thus, for convenience of readers we will make some discussions on the prospective extensions of the current work to other types of boundary conditions in Section \ref{sec.pro} later on.

%And it is for the same reason that we need the weight $e^{\varpi|v|^2}$($\varpi>0$) when we consider the regulairty of solutions obtained in the below Theorem.

\subsection{Main results}

The goal of the paper is to remove the smallness of $\|wf_0\|_{L^\infty}$ in the above proposition with the price that $\|f_0\|_{L^2}$ is suitably small. Thus, initial data $F_0(x,v)$ are allowed to have arbitrary large oscillations near global Maxwellians, and hence $F_0(x,v)$ can have large amplitude  and even contain the {\it vacuum}.  

\begin{theorem}\label{thm1.2}
%Let $0\leq\k\leq 1$ and  $\b\geq \f52$. Fix $\r$ to be a suitably large positive constant.  
Let $w(v)=(1+\r^2|v|^2)^{\beta}e^{\varpi|v|^2}$ with $\r>1$ fixed and suitably large, $\beta\geq 5/2$, and $0\leq \varpi\leq 1/64$. 
%\f1{64}$ and $\b\geq 5/2$. 
Assume that  $F_0(x,v)=\mu+\sqrt{\mu}f_0(x,v)\geq0$ satisfying the mass conservation \eqref{mc.i}.
For any $M_0\geq 1$, there exists a  constant $\epsilon_0>0$, depending only on %$\r$, $\be$, 
$\d$ and $M_0$, such that if 
%For any given $M_0\geq 1$,  assume the initial data $F_0$ satisfies $F_0(x,v)=\mu(v)+\sqrt{\mu(v)}f_0(x,v)\geq0$ and $\|wf_0\|_{L^\infty}\leq M_0$. Then there exists a small positive constant $\epsilon_0>0$ depending only on $\b,\r, \d, M_0$ such that if 
\begin{equation*}
\|wf_0\|_{L^\infty}\leq M_0,\quad \|f_0\|_{L^2}\leq \epsilon_0,
\end{equation*}
%then the Boltzmann equation \eqref{1.1}, \eqref{1.4}, \eqref{1.5-1} with diffuse reflection boundary \eqref{diffuse1} has a global unique solution $F(t,x,v)=\mu(v)+\sqrt{\m(v)}f(t,x,v)\geq0$   satisfying 
then the initial-boundary value problem \eqref{1.1}, \eqref{1.5-1}, \eqref{diffuse1} on the Boltzmann equation  %there exists 
admits a unique solution $F(t,x,v)=\mu+\sqrt{\mu}f(t,x,v)\geq0$ 
%to the diffuse boundary condition \eqref{diffuse1} for the Boltzmann equation \eqref{1.1} and satisfies
satisfying
\begin{align}\label{thm.ltb}
\|wf(t)\|_{L^\infty}\leq \widetilde{C}_0M_0^5\exp\left\{ \f2{\nu_0}\widetilde{C}_0M_0^5\right\} e^{-\vartheta_1 t},
\end{align}
for all $t\geq 0$, 
where $\widetilde{C}_0\geq1$ is a generic constant, 
%depending only on $\r$ and $\beta$, 
$\vartheta_1:=\min\left\{\vartheta, \f{\nu_0}{16}\right\}>0$ where $\vartheta>0$ is the same constant as in \eqref{g.rate}, and $\nu_0:=\inf_{v\in \R^3} \int_{\R^3}\int_{\S^2}B(v-u,\omega) \mu(u)\, \dd \omega \dd u>0$. Moreover, if $\Omega$ is strictly convex, and % if 
$F_0(x,v)$ is continuous except on $\gamma_0$ %with
and satisfies \eqref{bc.i}, 
%\begin{align}
%F_0(x,v)|_{\gamma_-}=c_{\mu }\mu (v)\int_{v^{\prime }\cdot
%	n(x)>0}F_0(x,v^{\prime })\{n(x)\cdot v^{\prime}\}dv^{\prime },
%\end{align}
then $F(t,x,v)$ is continous in $[0,\infty)\times\{\overline{\Omega}\times\mathbb{R}^3\backslash \gamma_0\}$.
\end{theorem}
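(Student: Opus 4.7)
My plan is to decompose $F=\mu+\sqrt{\mu}f$ and cast \eqref{1.1}--\eqref{diffuse1} as the perturbation problem
\begin{equation*}
\pa_t f+v\cdot\na_x f+Lf=\Gamma(f,f),\quad f(t,x,v)|_{\gamma_-}=P_\gamma f,
\end{equation*}
where $L$ is the linearization of $Q$ about $\mu$ and $P_\gamma$ is the diffuse-reflection projection on $\pa\Omega$ acting on $f$. The strategy is to combine a conditional $L^2_{x,v}$ energy estimate with an $L^\infty_{x,v}$ mild-formulation estimate along the diffuse-reflecting characteristics, bridged by the intermediate norm $L^\infty_xL^1_v$, and to close the loop by a bootstrap argument in which the smallness of $\|f_0\|_{L^2}$ compensates for the largeness of $\|wf_0\|_{L^\infty}$. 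Once $\|wf(t)\|_{L^\infty}$ has been driven below the small-data threshold $\d/C_0$ of Proposition~\ref{prop1.1} at some finite time $T^{\ast}=T^{\ast}(M_0)$, the long-time exponential decay in \eqref{thm.ltb} is inherited from that proposition; a standard truncation/iteration scheme yields local existence with $\|wf\|_{L^\infty}\le 2M_0$ on $[0,T_{\rm loc}]$, so the theorem reduces to a priori estimates on such a solution.

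The first building block is a conditional $L^2$ estimate. Assuming the bootstrap hypothesis $\sup_{s\in[0,t]}\|wf(s)\|_{L^\infty}\le A$, I pair the equation with $f$ and use the coercivity of $L$ on $(I-P)f$, the boundary dissipation of $(1-P_\gamma)f$ generated by the diffuse reflection, the mass conservation \eqref{mc.i} to remove the zero-mean hydrodynamic mode, and the trilinear bound $|\langle\Gamma(f,f),f\rangle|\le C\|wf\|_{L^\infty}\|f\|_{L^2}^2$, to obtain
\begin{equation*}
\|f(t)\|_{L^2}^2\le C\|f_0\|_{L^2}^2\,\exp(CAt).
\end{equation*}
With $\e_0$ chosen small depending on $M_0$, this keeps $\|f(t)\|_{L^2}$ of order $\e_0\,e^{CM_0 t}$ on any interval where the bootstrap bound $A\sim M_0$ persists.

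The technical core is the $L^\infty$ estimate, built on the pointwise gain-term bound
\begin{equation*}
w(v)\,|\Gamma_+(f,f)(t,x,v)|\le C\,\nu(v)\,\|wf(t)\|_{L^\infty}\cdot\|f(t,x,\cdot)\|_{L^1_v},
\end{equation*}
which follows from a Carleman-type change of variables that absorbs one factor of $f$ into the sup norm while keeping the other in an integrable velocity norm. Substituted into the mild representation of $f$ along the diffuse-reflecting characteristics (iterating the stochastic cycles of \cite{Guo2} so that, after finitely many bounces, the contribution from near-grazing velocities becomes negligible), this produces, schematically,
\begin{equation*}
\|wf(t)\|_{L^\infty}\le Ce^{-\nu_0 t/2}\|wf_0\|_{L^\infty}+C\,\sup_{s\in[0,t]}\|wf(s)\|_{L^\infty}\cdot\sup_{s\in[0,t]}\|f(s)\|_{L^\infty_xL^1_v}.
\end{equation*}
A further application of the Duhamel formula, combined with a change of variables along straight-line segments of the characteristics, controls $\sup_{[0,t]}\|f(s)\|_{L^\infty_xL^1_v}$ by $\|f\|_{L^2_{x,v}}$ together with a lower-order contribution of $\|wf\|_{L^\infty}$. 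Feeding in the $L^2$ bound of the previous paragraph, the nonlinear term on the right is absorbed into the left whenever $C\e_0\cdot\text{poly}(M_0)<1/2$, which fixes the choice $\e_0=\e_0(M_0)$ and produces the explicit prefactor $\widetilde{C}_0M_0^5\exp\{2\widetilde{C}_0M_0^5/\nu_0\}$ in \eqref{thm.ltb}; the decay rate $\vth_1=\min\{\vth,\nu_0/16\}$ reflects the competition between the rate from Proposition~\ref{prop1.1} (applied for $t\ge T^{\ast}$) and the damping factor $e^{-\nu_0 t/2}$ of the Duhamel integral on $[0,T^{\ast}]$.

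The main obstacle is ingredient (iii) of the abstract, the iterative procedure on $\Gamma(f,f)$. Because each Duhamel iteration picks up a factor of $\|wf\|_{L^\infty}\sim M_0$, the iteration must be carried out exactly enough times for the resulting multilinear integral over the characteristic cycles to be convertible---via Cauchy--Schwarz and a velocity change of variables---into $\|f\|_{L^2_{x,v}}$ without the prefactor blowing up. The diffuse reflection compounds this difficulty because each boundary bounce introduces a separate integration over the tangent cone at the impact point together with an auxiliary control on the measure of near-grazing reflection trajectories. Executing this iteration uniformly for large $M_0$, and tracking the constants so that the polynomial $M_0^5$ in \eqref{thm.ltb} balances exactly against $\e_0$, is the heart of the proof.
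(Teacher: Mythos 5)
Your high-level roadmap (local existence, $L^2$-growth estimate, $L^\infty_xL^1_v$ bridge, $L^\infty$ estimate via mild form along diffusive cycles, then hand off to Proposition~\ref{prop1.1}) matches the paper's, but three of your core steps are either wrong or shortcut in a way that would make the argument fail.

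\textbf{The gain-term pointwise bound is stated incorrectly.} You claim
$w(v)|\Gamma_+(f,f)(v)|\le C\nu(v)\|wf\|_{L^\infty}\|f\|_{L^1_v}$.
The Carleman change of variables that you invoke produces
$w(v)|\Gamma_+(f,f)(v)|\le C\|wf\|_{L^\infty_v}\int_{\R^3}\frac{|e^{\varpi|\eta|^2}f(\eta)|}{|\eta-v|}\,\dd\eta$,
and the kernel $|\eta-v|^{-1}$ is not bounded, so this cannot be dominated by any (weighted) $L^1_v$ norm of $f$ uniformly in $v$; a Cauchy--Schwarz step is unavoidable and yields a weighted $L^2_v$ norm, as in Lemma~\ref{lem2.2}: $w(v)|\Gamma_+(f,f)(v)| \leq \frac{C_\b\|wf\|_{L^\infty_v}}{1+|v|}\bigl(\int (1+|\eta|)^4|e^{\varpi|\eta|^2}f(\eta)|^2 \dd\eta\bigr)^{1/2}$. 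This is not a cosmetic difference: the $L^2_v$ structure is exactly what lets the change of variables $v\mapsto y=x-v(t-s)$ along the backward characteristics convert the pointwise estimate into the global $\|f\|_{L^2_{x,v}}$ norm, which is the only place smallness of $\e_0$ enters.

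\textbf{Your scheme has no way to handle the initial layer.} You keep the loss term $\Gamma_-$ in the Duhamel forcing and expect to close by requiring $\sup_s\|f(s)\|_{L^\infty_xL^1_v}$ to be small. But this quantity is small only after a waiting time $\tilde t\sim\f{2}{\nu_0}\ln M_0$ (this is precisely Lemma~\ref{lem5.3}); for $s\le\tilde t$ it can be of order $M_0$ and the absorption fails. The paper resolves this by a structural reformulation: $\Gamma_-$ is moved into the variable collision frequency $R(f)=\CA F\ge 0$ (since $F\ge0$), and the semigroup $G^f$ with this frequency is used. On $[0,\tilde t]$ one has $I^f(t,s)\le1$ automatically, so no decay is needed there; on $[\tilde t,T_0]$ the $L^\infty_xL^1_v$ estimate gives $R(f)\ge\nu/2$ and hence the decay of $G^f$ (Lemma~\ref{lem5.4}). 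The waiting time $\tilde t\sim\ln M_0$ is also what produces the $M_0^5$ factor ($e^{2\nu_0\tilde t}\sim M_0^4$ against $\|h_0\|\le M_0$). Your sketch, taken at face value, would produce a final constant $\sim M_0$ rather than $M_0^5\exp\{cM_0^5\}$, which is a sign the hard part has been skipped.

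\textbf{The mechanism behind the double exponential is a Gronwall argument, not iteration counting.} After substituting the mild representation \emph{once} back into the gain-term estimate, the paper arrives at the scalar inequality $H'(t)\le C M_0^5\, e^{-\nu_0 t/8}H(t)+D$ with $H(t)=1+\int_0^t\|h(s)\|_{L^\infty}\dd s$ (see \eqref{5.87}--\eqref{5.88}, \eqref{5.91}--\eqref{5.96}); integrating the \emph{exponentially decaying} coefficient gives $H(t)\le(1+Dt)\exp\{\f{8}{\nu_0}CM_0^5\}$, which is where $\exp\{cM_0^5\}$ originates. Your description --- ``the iteration must be carried out exactly enough times'' and ``$C\e_0\cdot\mathrm{poly}(M_0)<1/2$ ... produces the explicit prefactor'' --- misattributes the source of the big constant: the smallness of $\e_0$ only kills the remainder $D$, and the iteration is performed once; the constant is a Gronwall output. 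Without identifying the decaying-coefficient Gronwall structure you have no proof that the a priori $L^\infty$ bound stays finite on $[0,T_0]$.

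In short: replacing your $L^1_v$ gain bound by the $L^2_v$ bound of Lemma~\ref{lem2.2}, reformulating via $R(f)$ and $G^f$ to decouple the initial layer $[0,\tilde t]$ from the decay regime $[\tilde t,T_0]$, and recognizing the decaying-coefficient Gronwall closure are the three non-negotiable ingredients missing from the proposal.
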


The above theorem provides the first result on both the global-in-time existence and the  large-time behavior for solutions $F(t,x,v)$ to the Boltzmann equation \eqref{1.1} in bounded domains with initial data $F_0(x,v)$ which can be large in $L^\infty$ and even contains {\it vacuum}. Recently, important progress has been made in \cite{DV} to establish  the $t^{-\infty}$ rate of convergence to equilibrium for large-amplitude solutions to the  Boltzmann equation for general collision kernels and general boundary conditions, under some {\it a priori} high-order Sobolev bounds on solutions. As pointed out in \cite[Introduction, page 714]{Guo2}, even though those bounds can be verified in perturbation framework for spatially periodic domains (cf.~\cite{Guo-03}, for instance), their validity is unclear for the Boltzmann solutions in general bounded domains. Indeed, for a general non-convex domain, high-order Sobolev bounds may not be expected, and discontinuity of solutions near global Maxwellians can form and propagate along characteristics \cite{Kim-1}.   More surprisingly, even in a strictly convex domain with certain typical boundary conditions,  it has been shown in \cite[Appendix]{GKTT-IM} that some second order spatial derivative of close-to-Maxwellians solutions does not exist up to the boundary in general. Therefore, it is a challenging topic on studying the global existence and large-time behavior of solutions for large-amplitude initial data even with certain extra smallness in integrability.

Another important issue we concern in Theorem \ref{thm1.2} is that vacuum can be allowed for initial data, and even there could be no particles over a suitably small subdomain at initial time. This is different from \cite{DV} and \cite{GMM}, where either an explicit exponential lower bound on solutions for all time is needed, or the macroscopic density is  initially away from vacuum.  
On the other hand, in fact \cite{Br1} proved the immediate appearance of an exponential lower bound, uniform in time and space, for continuous mild solutions to the full Boltzmann equation in a $C^2$ convex bounded domain with the physical  diffusion boundary conditions, under the sole assumption of regularity of the solution; see also \cite{Br2} for specular boundary conditions. However, it is still a problem to justify the instantaneous disappearance of initial vacuum for general non-convex domains, and it is also unclear how to apply such result to study the global existence of large-amplitude Boltzmann solutions. 
%Recall that in the well-known work by Desvillettes and Villani \cite{DV} on the $t^{-\infty}$ rate of convergence to equilibrium for solutions to the  Boltzmann equation, the following {\it a priori} condition
%\begin{equation*}
%%\label{ }
%\forall\,t\geq 0,x\in \Omega, v\in \R^3,\quad F(t,x,v)\geq K_0e^{-A_0|v|^{q_0}}\quad (A_0,K_0>0; q_0\geq 2),
%\end{equation*}
%is postulated, and thus vacuum is not allowed for all time $t\geq 0$, particularly at initial time $t=0$. Similarly, in a recent progress by Gualdani, Mischler and Mouhot \cite{GMM} for obtaining the sharp exponential time-decay under additional {\it a priori} bounds on solutions, the condition 
%\begin{equation*}
%%\label{ }
%\inf_{x\in \T^3} \int_{\R^3} F_0(x,v)\,\dd v >0
%\end{equation*}
%is required, implying that the macroscopic density is also initially away from vacuum. 
We would point out that in the appearance of vacuum for initial data, it is hard to justify the long-term dynamics of non-negative solutions to the nonlinear Botlzmann equation. Our idea as explained in detail later on is to make use of smallness of perturbative solutions in $L^2$ space to bound solutions in weighted $L^\infty_xL^1_v$ space over a finite time interval after some positive time depending only on weighted $L^\infty$ norm of initial data, so that initial vacuum can be shown to disappear after a finite time.

In what follows we would further give a few remarks on the main result Theorem \ref{thm1.2}. First of all, we remark that the same strategy of the proof of Theorem \ref{thm1.2} together with \cite{Guo2}  should be able to be carried out to treat other types of boundary conditions, namely, in-flow, reverse reflection, and specular reflection. 
Second, it seems impossible for us to use the method developed in \cite{DHWY} to consider the global large-amplitude solutions in  the case of the initial-boundary value problem due to the complexity of characteristic lines. Here we shall develop a new strategy different from the previous work \cite{DHWY} to treat the problem in general bounded domains. Particularly, an {\it iteration} procedure on the {\it nonlinear} term will be developed to control solutions in $L^\infty$ norm. 

%
%The reason for choosing the weight $w(v)$ depending on $\r$ is that we want to keep the original form of existence result in \cite{Guo2}. And it is trivial to rescale to the case $\r=1$ with   constants $C_0, \tilde{C}_0$ changing slightly in above results. 

The last remark is concerned with regularity of the obtained $L^\infty$ solutions as extensively studied in \cite{GKTT-IM}. 
%\begin{corollary}\label{cor1.3}
Let $\Omega$ be strictly convex. According to \cite{GKTT-IM}, under the assumptions of Theorem \ref{thm1.2} corresponding to the mixed polynomial and exponential weight $w(v)=(1+\r^2|v|^2)^{\beta}\exp(\varpi|v|^2)$ with $0<\varpi\leq 1/64$, then the solutions obtained in Theorem \ref{thm1.2} above must belong to the space $W^{1,p}(\Omega\times\mathbb{R}^3_v)$ $(1<p<2)$ and the weighted space $W^{1,p}(\Omega\times\mathbb{R}^3_v)$ $(2\leq p\leq\infty)$ for any time $t\in[0,\infty)$, provided that the initial data are in the same regular spaces, respectively. Therefore, for the diffuse reflection boundary condition, we have constructed a class of global-in-time $W^{1,p}$ solutions to the Boltzmann equation with initial data of large amplitude in  $L^\infty$ space in strictly convex  domains.
%\end{corollary}
Since our main concern in this work is the global existence and conituity of solutions in $L^\infty$ space, we do not present the explicit statements of regularity results. We refer interested readers to \cite{GKTT-IM} for more details.

For the proof of Theorem \ref{thm1.2}, we first construct the local-in-time non-negative solutions of large amplitude with the lifespan explicitly determined by the only weighted $L^\infty$ norm of initial data, then develop new {\it a priori} estimates to treat the large-amplitude solutions in spaces $L^2_{x,v}$, $L^\infty_xL^1_v$ and $L^\infty_{x,v}$ with weights by a bootstrap argument along a new strategy, and hence the global existence follows from the extension of local-in-time solutions under additional smallness of $L^2$ norm of initial perturbation. One key point of our new strategy for the proof is that given any constants $M_0$ and $\delta$ with $0<\delta<M_0$, one can figure out a positive time $T_0$, depending only on $M_0$ and $\delta$, such that  the amplitude of solutions in weighted $L^\infty$ norm, if they exist, can reduce from $M_0$ at initial time to $\delta$ at time $T_0$, provided that $L^2$ norm of initial perturbation is  suitably small, see Figure 1 in Section \ref{sec1.6},  and then solutions can be extended to all time on $[T_0,\infty)$ and thus on all non-negative time $[0,\infty)$.  As a byproduct, even though initial data can have large amplitude and contain vacuum, the exponential time-decay of solutions is immediately obtained as a consequence in case of small-amplitude global solutions over $[T_0,\infty)$.  More details of the proof of Theorem \ref{thm1.2} will be given in Section \ref{sec1.6} later on.

%\newpage
\subsection{Known results on global well-posedness}

The Boltzmann equation is a fundamental model in the collisional kinetic theory. There has been an enormous literature on the study of well-posedness, particularly the global existence theory, for the initial and/or boundary value problem on the Boltzmann equation; see \cite{CIP} and \cite{Vi} and the references therein. In what follows we only mention some known results in the spatially inhomogeneous framework which are related to our work. 

First of all, the global existence of renormalized solutions for general initial data in $L^1_{x,v}$ with finite energy and entropy was proved by DiPerna and Lions \cite{D-Lion} for the case of the whole space; uniqueness still remains open. Such global existence result for weak solutions was later extended by Hamdache \cite{Ha} and also Arkeryd and Maslova \cite{AM} to the case of bounded domains for different types of boundary conditions. Mischler \cite{Mi-VPB} introduced some trace theorem to refine  those results on the sequence stability and global existence for the Boltzmann equation, and also generalized them to the initial-boundary value problem for the Vlasov-Poisson-Boltzmann system; some new results concerning weak-weak convergence were further obtained by Mischler \cite{Mi}. For the large time behavior of solutions with general initial data, the weak convergence of solutions in $L^1$ as $t\to \infty$ for the Boltzmann equation on bounded domains was considered by Desvillettes \cite{De}, and the strong convergence in $L^1$ to the stationary solution was studied by Arkeryd and Nouri \cite{AN} for the diffuse reflection boundary condition with constant wall temperature. A breakthrough was achieved by Desvillettes and Villani \cite{DV}  to study 
the convergence of a class of  large amplitude solutions  toward the global Maxwellian with an explicit almost exponential rate in large time, conditionally under  some assumptions on smoothness and polynomial moment bounds of the solutions, as well as a positive lower bound, i.e. away from vacuum. The result has been recently improved by Gualdani, Mischler and Mouhot \cite{GMM} to give the first constructive proof of exponential decay with sharp rate towards the global Maxwellian for the hard-sphere Boltzmann equation still under some a priori bounds on solutions, as well as for initial data whose macroscopic density is away from vacuum. 

The global existence and large time behavior of solutions are well established in perturbation framework. For the Cauchy problem on the Boltzmann equation near vacuum, the global existence of mild solutions was first obtained by Illner and Shinbrot \cite{IS}   in terms of a monotonicity argument introduced by Kaniel and Shinbrot \cite{KS}. Later on, the result of \cite{IS} has been extended in many aspects; see the references of \cite[Chapter 2]{Glassey}. An interesting extension in this direction was recently made by Bardos, Gamba, Golse and Levermore \cite{BGGL} for the study of  stability of a class of traveling Maxwellians connected to vacuum at infinity.    

On the other hand, in the perturbation framework near global Maxwellians, due to the fundamental study of the linearized operator, for instance Grad \cite{Gra}, the global well-posedness theory of the Boltzmann equation is indeed better understood in different kinds of settings. Under the angular cutoff assumption, the pioneering work on global existence and time-decay of mild solutions was done by Ukai \cite{Uk} for the hard potentials in terms of the spectral analysis and the bootstrap argument; see also \cite{NI,Sh,UY} and references therein. The result in  \cite{Uk} was also extended by Ukai and Asano \cite{UA} to the soft case $-1<\kappa<0$. For the full range of soft potentials $-3<\kappa<0$, Guo \cite{Guo-03} first obtained the global existence of classical solutions in the periodic box by using the energy method, and then Strain and Guo \cite{SG} established the time-decay of solutions. Since then, there have appeared many extensive applications of the $L^2$ energy method to the mathematical study of the kinetic equations with Boltzmann collisions in plasma physics, cf.~\cite{Guo-IM}, for instance. In the mean time, Liu-Yang-Yu \cite{LYY} also developed their energy method based on the macro-micro decomposition around local Maxwellians which admit the same fluid quantities as the solution itself,  and the approach has extensive applications to the study of stability of nontrivial wave patterns in the context of hyperbolic conservation laws, cf.~\cite{Yu2}, for instance. 

Moreover, for the global well-posedness theory around global Maxwellians, we would mention \cite{AMUXY-s} and \cite{GS} for the angular noncutoff Boltzmann equation, and \cite{DLX,MSa,SS} for existence and large-time behavior of solutions in Besov spaces. We also point out that for the hard-sphere Boltzmann equation in the torus, a non-symmetric energy method was developed in \cite{GMM} to obtain the large-time asymptotic stability of solutions to global Maxwellians with a sharp exponential  rate for initial data $F_0(x,v)$ such that $F_0-\mu$ is small enough in $L^1_vL^\infty_x((1+|v|)^k)$ with  a constant $k>2$.

Those works for perturbation theory near global Maxwellians we mentioned above are concerned with the Boltzmann equation either in the whole space or in the periodic box. As pointed out in Grad \cite{Gra-Pr}, it is one of the basic problems to study the boundary effect on the global-in-time dynamics in the Boltzmann theory. Indeed, Guo \cite{Guo2} developed a new mathematical theory to study the time decay and continuity of Boltzmann solutions for angular cutoff hard potentials with four basic types of boundary conditions. His approach inspired by Vidav \cite{Vidav} is based on the $L^2$ decay theory and its interplay with delicate $L^\infty$ analysis for the linearized Boltzmann equation in the presence of repeated interactions with the boundary. Such $L^2\cap L^\infty$ approach has proven very useful to study the global existence and regularity for the time-dependent or stationary Boltzmann equation on bounded domains, cf.~\cite{BG,EGKM,EGKM-hy,GKTT-BV,GKTT-IM,Kim-1}. Particularly, a great progress has been recently made by Guo, Kim, Tonon, and Trescases \cite{GKTT-IM} to construct weighted $C^1$ solutions away from the grazing set for all boundary conditions by introducing a distance function toward the grazing set. In the mean time, for the diffuse reflection boundary condition, they constructed $W^{1,p}$ solutions for $1<p<2$ and weighted $W^{1,p}$ solutions for $2\leq p\leq \infty$ as well. %\Blue{counterexample to add: $W^{k,p}$ impossible $k\geq 2$,....} %\Blue{An extension of those results to the setting in the current work would be an interesting problem to consider.}
It was also proved in \cite{GKTT-IM} that the local-in-time $W^{1,p}$ solution exists for the case of  large initial data, and some second order spatial derivative does not exist up to the boundary in general. 

For the mathematical study of the Boltzmann equation with boundaries, we would further mention many other contributions in different aspects: \cite{AC, AEMN-1,AEMN-2,Cer, CCLS,Gui, KLT, LYa, LYu, MS,Yu1}; See also the books \cite{Cer-B,CIP,Ma,Yo} and references therein.  Here the book \cite{Yo} by Sone gives the systematic investigations of the initial and/or boundary value problem on the Boltzmann equation from the numerical point of view. 
 
% 
% For instance, the energy method in smooth Sobolev spaces was developed in Guo \cite{Guo-03} and Liu, Yang and Yu \cite{Liu-Yang-Yu}. Another $L^2\cap L^\infty$ approach was found by Guo \cite{Guo2,Guo} even for treating the Boltzmann equation on a general bounded domain. Note that for the hard sphere model in the torus case, a non-symmetric energy method was also developed in \cite{GMM} to obtain the asymptotic stability of solutions to the global Maxwellian with a sharp exponential time rate for initial data $F_0(x,v)$ such that $F_0-\mu$ is small enough in $L^1_vL^\infty_x((1+|v|)^k)$ with some $k>2$; see also a recent work \cite{BG} for the investigation of the Boltzmann equation on the bounded domain in a similar functional setting.    We also refer the interested reader to  \cite{HW} for  the issue of the macroscopic regularity of Boltzmann equation.

At the moment we remark that in those works in the perturbation framework mentioned above, initial data are required to have small oscillations in the $L^\infty$ norm  around the global Maxwellian. A natural problem is how to treat initial data of large amplitude, particularly allowing to contain vacuum. To remove smallness in $L^\infty$, one possible idea is to replace it by the smallness in the $L^p$ norm with $1\leq p<\infty$, so that solutions can be of large oscillation over small parts in phase space.     
Indeed, recently \cite{DHWY} has developed a $L^\infty_xL^1_v\cap L^\infty_{x,v}$ method for the global well-posedness theory of the Boltzmann equation when initial data are allowed to have large amplitude. Precisely, it was proved that there exists a unique solution globally in time for the Boltzmann equation in the whole space or torus when 
$$
F_0-\mu\in L^\infty_{x,v} ((1+|v|)^\beta \mu^{-1/2})
$$
with some $\beta>\max\{3,3+\kappa\}$ satisfying an additional smallness condition that 
$$
\mathcal{E}(F_0)+\|F_0-\mu\|_{L^1_xL^\infty_v(\mu^{-1/2})}
$$
is small enough, where $\mathcal{E}(F_0)$ is related the defect quantities of mass, energy and entropy, cf.~\cite{DHWY}. In particular, the amplitude of initial data can have large oscillations.  Note that the result is valid for the full range of both the soft and  hard potentials $-3<\kappa\leq1$, and the %macroscopic 
$C^1$ regularity of solutions was also obtained.  
%considered 
%in \cite{HW}. 

As it was mentioned before that the boundary effect on the global well-posedness is fundamental and important, we expect to further develop in the paper the large-amplitude $L^\infty$ theory for the mathematical study of the Boltzmann equation with boundaries.

%\newpage
\subsection{Strategy of the proof}\label{sec1.6}
%{To be added: List key steps and ideas to describe the method}
%$\Psi()$ $\psi$
%
%\begin{figure}[h]
%\centering
%\includegraphics[width=0.5\textwidth]{Pic.pdf}
%\caption{Time evolution of $L^\infty$ norm}
%\end{figure}

In what follows we shall explain the main strategy and key points in the proof of Theorem \ref{thm1.2}.

\medskip
\noindent(a) {\it General strategy.} First of all, given $M_0\geq 1$ which can be large such that 
$\|wf_0\|_{L^\infty}\leq M_0$ 
%\begin{equation}
%\label{ }
%\|wf_0\|_{L^\infty}\leq M_0,
%\end{equation}
at time $t=0$, our final goal is to find a large enough time $T_0=T_0(M_0,\delta)$, depending only on $M_0$ and $\delta$, such that the solution $f(t,x,v)$, if it exists up to $T_0$, satisfies
\begin{equation}
\label{s.gs}
\|wf(T_0)\|_{L^\infty}\leq \delta,
\end{equation}  
at time $t=T_0$. Note that $\delta>0$ is a fixed constant in Proposition \ref{prop1.1}. One can then apply Proposition \ref{prop1.1} to the Boltzmann equation over $[T_0,\infty)$ to obtain the global existence of solutions and also the exponential time-decay as 
\begin{equation*}
%\label{ }
\|wf(t)\|_{L^\infty}\leq C_0 \|wf(T_0)\|_{L^\infty} e^{-\vartheta t},
\end{equation*}
for all $t\geq T_0$. See the Figure 1 below.

\begin{figure}[h]
	\begin{tikzpicture}[xscale=1,yscale=0.8]
	\draw [fill=lightgray]  (0,0) circle [radius=0.8];
	\draw [fill] (0,0) circle [radius=0.05];
	\draw [->] (0,0)--(-0.8,0);
	\node [above left]  at (-0.35,0) {\small $\delta$};
	%	\draw [dotted, help lines]  (-6,-4) grid (6,4);
	\draw [fill] (-5,0)  circle [radius=0.05];
	\draw [->]  (-5,0) to [out=170, in=-70] (-7.5,1.8) to [out=110, in=180] (-4,2.3);
	\draw [->]   (-4,2.3) to [out=0, in=155] (0,2.1) to [out=-25, in=135] (1.2,1.5) to [out=-45, in=95] (1.7,0) to [out=-85, in=35] (1,-1.2) to [out=-140, in=5] (0,-1.4);
	\draw [->]   (0,-1.4) to [out=177, in=-45] (-1.4,-1) to [out=135, in=-85] (-1.8,0) to [out=85, in=220] (-1.2,1) to [out=45, in=180] (0,1.25) 
	to [out=0, in=95] (0.7,0.7) to [out=-100, in=80] (0.44, 0.1) to [out=-90, in=60] (0.35,-0.15);
	\draw [dashed, thick,->]   (0.35,-0.15) to [out=-120, in=10] (-0.09,-0.47)
	to [out=180, in=-80] (-0.39,-0.2) to [out=160, in=90] (-0.5,0) to [out=100, in=225] (-0.4,0.3)to [out=40, in=120] (0,0.4) to [out=-5, in=90] (0.2,0.2)  to [out=-90, in=0] (0.025,0.025);
	\draw [fill] (0.44,0.1) circle [radius=0.03];
	\node [below right] at (-5,-0.2) {\small $\|wf_0\|_{L^\infty}\leq M_0$};
	\node [right] at (-5,0) {\small $t=0$};
	\node [right] at (0.45,0.1) {\small $t=T_0,\ \|wf(T_0)\|_{L^\infty}\leq \f\d2$};
	\node [below] at (0,0.08) {\small $0$};
	\end{tikzpicture}
	
	\
	
	\begin{tikzpicture}[xscale=1.3, yscale=1.2]
	%??
	\draw [black]  (0,0)--(4,0);
	\draw [thick]  (0,1)--(4,1);
	\draw [->] (0,0)--(0,2.5);
	\draw [dashed] (0,1)--(4,1);
	%	\draw [dotted, help lines] (0,0) grid (10,3);
	\node [left] at (0,1) {$\mu$};
	\node [left] at (0,2.5) {$F$};
	\draw[fill=lightgray] {plot [smooth] coordinates{ (0,1) (0.3,0.95) (0.4,1.05) (0.45,2)  (0.5,1) (0.9,1) (0.95,0)}}-- (1, 0)-- (1.05,1);
	\draw[fill=lightgray] {plot [smooth] coordinates{  (1.05,1) (1.2,1.1) (1.5,0.98) (1.85,1.05) (1.9,2) (1.95,1.1) (2,1.05) (2.2,1) (2.5,0.95) (2.55,2.5) (2.63,1) (3,0.95) (3.15,1) (3.2,0)}}--(3.24,0)--(3.25,1);
	\draw[fill=lightgray] plot [smooth] coordinates{(3.25,1) (3.4,1.08) (4,1)};
	\node [below] at (2,-0.3) {$t=0$};
	%??
	\draw [->] (4.3,1.2)--(5.3,1.2);
	%??
	\draw  (6,0)--(10,0);
	\draw [->] (6,0)--(6,2.5);
	\draw [thick] (6,1)--(10,1);
	%		\node [left] at (6,0) {$0$};
	\node [left] at (6,1) {$\mu$};
	\node [left] at (6,2.5) {$F$};
	\node [below] at (8,-0.3) {$t=T_0$};			
	\draw[fill=lightgray] plot [smooth] coordinates{ (6,1) (6.1,1.05) (6.3,0.95) (6.4,1) (6.5,1.1) (6.8,0.93) (7,1) (7.1,1.05) (7.5,0.95) (7.8,1.1) (8,1) (8.1,1.1) (8.2,0.93) (9.2,1.06) (9.7,0.93) (10,1)};
	\end{tikzpicture}
	\caption{The time evolution  of  Boltzmann solutions}
\end{figure}
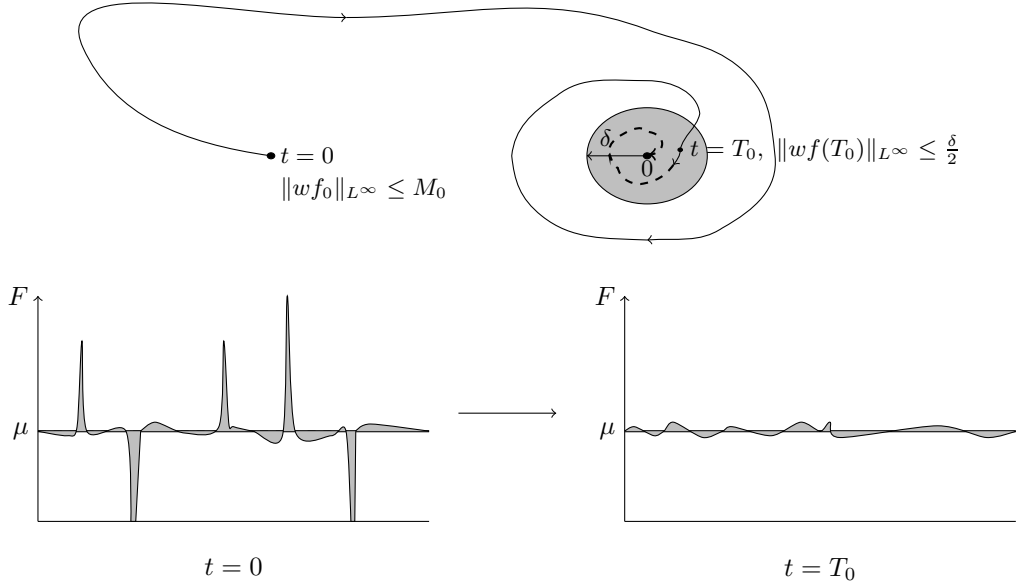

\medskip
\noindent{(b)} {\it $L^2$ estimate and time-growth}. Generally, it could be too hard to realize the final goal without postulating any smallness assumption on initial data. 
As mentioned before, an approach in the previous work \cite{DHWY} in terms of the smallness of both the relative entropy and the $L^1_xL^\infty_v$ norm for initial data was developed. One main idea there is to bound the possibly large $L^1_vL^\infty_x$ norm of the solution $f(t,x,v)$  when the time is on or after a generally small positive time slot $t_1$($\cong\frac{1}{1+M_0}$) which depends only on the $L^\infty$ norm of initial data in terms of the local-in-time existence result, namely,
$$
\int_{\R^3} |f(t,x,v)|\,\dd v\leq \int_{\R^3} e^{-\nu(v)t}|f_0(x-vt,v)|\,\dd v+\cdots\leq \frac{1}{t_1^3}\|f_0\|_{L^1_xL^\infty_v}+\cdots
$$
for any $t\geq t_1$ and $x\in \Omega=\R^3$, where $\|f_0\|_{L^1_xL^\infty_v}$ can be small enough. However, it seems difficult to carry the same procedure over to the current problem as the strategy we have to use to close the {\it a priori} estimates is based on a new Gronwall-type argument; see the item (d) below.
Inspired by \cite{Guo2}, we would make use of the interplay between $L^2$ norm and weighted $L^\infty$ norm for solutions. Thus, we assume 
\begin{equation*}
%\label{ }
\|f_0\|_{L^2_{x,v}}\leq \eps_0,
\end{equation*}
where $\eps_0=\eps_0(M_0,\delta)$, depending only $M_0$ and $\delta$, is to be determined. Notice that since initial data can be large in the $L^\infty$ norm, the solution itself can also be large in the $L^\infty$ norm up to a finite time. Thus, it is hard to expect that solutions enjoy any time-decay property  either in $L^2$ or in $L^\infty$ even up to the same finite time. Indeed, the $L^2$ norm of the solution $f(t,x,v)$ could be increasing in time. Specifically, in terms of the elementary energy method, Lemma \ref{lem.l2} shows that 
\begin{equation*}
%\label{ }
\|f(t)\|_{L^2}\leq \Psi\Big(\sup_{0\leq t< T}\{t\|wf(t)\|_{L^\infty}\}\Big) \|f_0\|_{L^2},
\end{equation*}
for all $0\leq t<T$. Here and in the sequel, $\Psi(\cdot)$ is a general function which is continuous, strictly positive and strictly increasing on $[0,\infty)$, and $T>0$ is an arbitrary time such that the solution exists over $[0,T)$.

\medskip
\noindent{(c)} {\it $L^\infty_xL^1_v$ estimate and time-decay in $L^\infty$}. We first explain the necessity of the $L^\infty_xL^1_v$ estimate. Setting $h(t,x,v)=wf(t,x,v)$, our $L^\infty$ estimate will be based on the mild formulation
 \begin{equation}\label{s.nmf}
h(t)={G^f(t,0)}h_0+\int_0^t{G^f(t,s)}K_wh(s)\,\dd s+\int_0^t{G^f(t,s})w\Gamma_+(f,f)(s)\,\dd s,
\end{equation}
according to the Boltzmann equation
\begin{equation}\nonumber%\label{ }
\pa_t h+v\cdot \na_x h +h \mathcal{A}F=K_wh+w\Ga_+(f,f).
\end{equation}
Here $\CA F(t,x,v)=\int_{\mathbb{R}^3}\int_{\mathbb{S}^2} B(v-u,\omega)F(t,x,v)\, \dd\omega \dd u$, and with given $F(t,x,v)$, ${G^f(t,s)}$ is the solution operator for the linear equation 
\begin{equation}
\label{s.eh}
\pa_t h+v\cdot \na_x h +h \mathcal{A}F=0,\quad {t>s,}
\end{equation}
with the corresponding diffuse reflection boundary. Note that we have put the loss term $\Ga_-(f,f)=f\CA(\sqrt{\mu}f)$ and the linear term $\nu(v)f=f\CA \mu$ together. Indeed, it is  essential to use \eqref{s.eh} when consider the existence of Boltzmann equation with large data. The other reason to do so is that we expect to bound the weighted nonlinear term in the pointwise sense by the product of the $L^\infty$ norm with the same weight and the $L^2_v$ norm with the fixed weight, see the key estimate \eqref{2.4} in Lemma \ref{lem2.2}, which can be regarded as an improvement of the product of the same two weighted $L^\infty$ norms as in \eqref{2.4-1}, but such estimate \eqref{2.4} is only true for the gain term, not for the loss term. 

The new reformulation \eqref{s.nmf} produces an additional difficulty in obtaining the linear time-decay of \eqref{s.eh}. Indeed, it is unclear that $\CA F(t,x,v)$ still admits a positive lower bound as for the collision frequency $\nu(v)=\CA \mu$ at the global Maxwellian. To overcome the extra difficulty, we observe that it is still possible to obtain the positive lower bound of $\CA F(t,x,v)$ over the time interval $[\tilde{t}, T_0]$ for some positive time {$\tilde{t}\cong 1+\ln{M_0}>0$} depending only on $M_0$, provided that $ f_0(x,v)$ is small enough in $L^2$ in the sense that $\|f_0\|_{L^2}\leq \eps_1=\eps_1(M_0,T_0)$ with $\eps_1$ depending only on $M_0$ and $T_0$; see Lemma \ref{lem5.3}. To achieve Lemma \ref{lem5.3}, as seen from its proof, the main goal is to obtain {a suitable upper bound of}
\begin{equation*}%\label{ }
\sup_{\tilde{t}\leq t\leq T_0}\sup_{x\in \Omega} \int_{\R^3} e^{-\frac{|v|^2}{8}} |h(t,x,v)|\,\dd v,
\end{equation*} 
in the mentioned setting. The estimate on the above term is based on the same mild formulation as in \cite{Guo2}:
 \begin{equation}\nonumber%\label{}
h(t)=G(t)h_0+\int_0^tG(t-s)K_wh(s)\,\dd s+\int_0^tG(t-s)w\Gamma(f,f)(s)\,\dd s,
\end{equation}
where $G(t)$ is the linear solution operator for the equation $\pa_t h+v\cdot \na_x h+\nu h=0$ with the corresponding diffuse reflection boundary condition. 

An immediate and very important consequence of $L^\infty_xL^1_v$ estimate is the time-decay property of ${G^f(t,0)}$ up to the finite time $T_0$. Specifically, Lemma \ref{lem5.4} shows that 
\begin{equation*}
%\label{ }
\|{G^f(t,0)}h_0\|_{L^\infty}\leq \Psi(M_0) e^{-\frac{1}{4}\nu_0 t}\|h_0\|_{L^\infty},
\end{equation*} 
for all $0\leq t\leq T_0=T_0(M_0,\delta)$, provided that $\|f_0\|_{L^2}\leq \eps_1(M_0,T_0)$. This time-decay property over the finite time interval is crucial for obtaining the global $L^\infty$ estimate.

\medskip
\noindent{(d)} {\it $L^\infty_{x,v}$ estimate}. As mentioned before, the weighted $L^\infty$ estimate is based on \eqref{s.nmf}. We must use not only the time-decay property of $G^f(t)$ in $L^\infty$ but also make the interplay between $L^\infty$ norm and $L^2$ norm. Indeed, as the first step for $L^\infty$ estimate, Lemma \ref{lem.ad1} shows that for each $(t,x,v)\in (0,T_0]\times \Omega\times \R^3$,
\begin{align}\label{s.h1}
|h(t,x,v)|
&\leq \Psi(M_0) e^{-\frac{\nu_0}{8} t}+\CJ(t)
+\Psi (M_0)\int_0^{t-\l}\Fi_{\{t_1\leq s\}} e^{-\f{\nu_0}{4}(t-s)}\Big|(K_wh)(s,x-v(t-s),v)\Big|\,\dd s\nonumber\\
&\quad+\Psi (M_0)\int_0^{t-\l}\Fi_{\{t_1\leq s\}} e^{-\f{\nu_0}{4}(t-s)}\Big|[w\Gamma_+(f,f)](s,x-v(t-s),v)\Big|\,\dd s.
%\nonumber\\
%&\quad+
%Ce^{\nu_0\tilde{t}}\left\{e^{-\f{\nu_0}{4}t} \|h_0\|_{L^\infty}+\left(\l+\v+\f{C_{\v,T_0}}{N}\right)\sup_{0\leq s\leq t}\left[\|h(s)\|_{L^\infty}+\|h(s)\|^2_{L^\infty}\right]\right.\nonumber\\
%&\qquad\qquad\quad\left.+C_{\v, N, T_0}\sup_{0\leq s\leq t}\left[\|f(s)\|_{L^2}+\|f(s)\|^2_{L^2}\right]\right\},
\end{align}
Here, the second term on the right %$\CJ(t)$ 
denotes
\begin{align*}%\label{ }
\CJ(t)=&%\Psi(M_0) e^{-c t} +
\psi\Big(\la,\varepsilon,\frac{C_{\varepsilon,T_0}}{N}\Big) \sup_{0\leq s\leq t}\left[\|h(s)\|_{L^\infty}+\|h(s)\|^2_{L^\infty}\right]\nonumber\\
&+C_{\varepsilon,N,T_0} \Psi\Big(\sup_{0\leq s\leq t}\|h(s)\|_{L^\infty}\Big)\left[\|f_0\|_{L^2}+\|f_0\|_{L^2}^2\right],
\end{align*}
where $\psi$ is a general nonnegative, continuous and strictly increasing function on $[0,\infty)$ with $\psi\to 0$ as all arguments tend to zero, $\la>0$ and $\v>0$ can be arbitrarily small, and $N>0$ can be arbitrarily large. 
%and $c>0$ is a generic positive constant. 
Notice that if there were no last two integral terms on the right-hand side of \eqref{s.h1}, then in terms of 
$\|h(t)\|_{L^\infty}\leq \Psi(M_0) e^{-\frac{\nu_0}{8} t}+\CJ(t)$,
the $L^\infty$ bound over $[0,T_0]$ could have been closed under the smallness assumption of $\|f_0\|_{L^2}$. Indeed, for the first integral term on the right of \eqref{s.h1}, we have to make an iteration once to apply the pointwise estimate \eqref{s.h1} to $h$ so that it can be bounded by $\Psi(M_0) e^{-\frac{\nu_0}{8} t}+\CJ(t)$.   

The most difficult term comes from  the second integral term on the right of \eqref{s.h1}, since it involves the nonlinear quadratic term $\Ga_+(f,f)$. Generally it is hard to estimate it in case when the $L^\infty$ norm of solutions can be large. To bound this term, we develop a {\it nonlinear iteration procedure} for the gain term $\Ga_+(f,f)$ by making use of the important technical Lemma \ref{lem2.2},
%After making use of the crucial technical Lemma \ref{lem2.2}, we obtain the estimate \eqref{5.82.a1}  and then make an iteration once again by applying \eqref{s.h1} to $h$, 
so as to deduce that this term is bounded by
\begin{equation}%\label{ }
\Psi(M_0) e^{-\frac{\nu_0}{8} t}+\CJ(t)+\Psi(M_0) e^{-\frac{\nu_0}{8} t} 
%\Big(1+
\int_0^t \|h(s)\|_{L^\infty}\,\dd s. %\Big)+\CJ(t).
\nonumber
\end{equation}
Here we emphasize  that  such nonlinear iteration is crucially necessary in the proof, and the exponential time-decay coefficient $e^{-\frac{\nu_0}{8} t}$ in front of the integral term is a key point. Therefore, in the end we obtain that $\|h(t)\|_{L^\infty}$ is also bounded by the above term for all $0\leq t\leq T_0$.
%\begin{equation*}
%%\label{ }
%\|h(t)\|_{L^\infty}\leq \Psi(M_0) e^{-c t}\int_0^t \|h(s)\|_{L^\infty}\,\dd s+\CJ(t).
%\end{equation*}  
The uniform-in-time {\it a priori} $L^\infty$ bound over $[0,T_0]$ for a suitably chosen time $T_0=T_0(M_0,\delta)$ follows by the Gronwall inequality under the smallness assumption of $\|f_0\|_{L^2}$, and \eqref{s.gs} can also be realized. For more details, refer to Section \ref{sec6}.

\medskip
\noindent{(e)} {\it Local-in-time existence}. The existence locally in time for solutions in the weighted $L^\infty$ norm is based on the iterated equations
\begin{equation}
\label{s.ite}
\pa_t F^{n+1} +v\cdot \na_x F^{n+1} +F^{n+1}\CA F^{n}=Q_+(F^n,F^n),
\end{equation} 
with initial data $F^{n+1}(0,x,v)=F_0(x,v)$ and the corresponding diffuse reflection boundary condition. The mild reformulations of those equations are given in terms of Lemma \ref{lem3.01}. The solvability of the linear equations is established in Lemma \ref{lem3.02}, and the existence of local-in-time solutions is then obtained as the limit of the sequence of approximation solutions. In the end we get 
\begin{equation}\label{local}
\sup_{0\leq t\leq \hat{t}_0}\|wf(t)\|_{L^\infty}\leq C \|wf_0\|_{L^\infty},
\end{equation}
where the time $\hat{t}_0$ depends only on the $\|wf_0\|_{L^\infty}$ with an explicit form, see Theorem \ref{LE}. 

We notice that the local existence of solutions to the Boltzmann equation with large data was also obtained in \cite[Lemma 7]{GKTT-IM} at the cost of losing velocity-weight. On the other hand, in our situation, the weight  $w(v)$ is kept in the estimate \eqref{local}, and this guarantees that one can extend the local solution to a global one with the help of the above {\it a priori} estimates.

\subsection{{Prospects}}\label{sec.pro}
{In this section we would make a few comments on the possible extensions of the current results. First of all, as mentioned before, it is also natural to study the problem on the large-amplitude initial data for other types of boundary conditions, such as in-flow, reserve reflection, and specular reflection, since the small-amplitude $L^\infty$ solutions were constructed in \cite{Guo2} for those boundary conditions; see also \cite{BG} for the mixed-type Maxwell boundary condition. Indeed, combined with \cite{Guo2,BG}, the extension of the current result on this direction seems to be doable in terms of the developed approach of the current work. Note that for the Boltzmann equation with specular reflection in convex domains, a new theory was developed in \cite{KD} under the assumption of the $C^3$ regularity of the domain, which is a great improvement of the boundary analyticity used in \cite{Guo2}. Thus, for possibly large initial data, it seems interesting to see if the current approach could be applied to the case of specular reflection boundary for the $C^3$ domain.}

{Moreover, motivated by the current work, we may also make a direct extension of the previous work \cite{DHWY} to the more general initial data for which only the smallness of $\CE(F_0)$ is needed but the smallness of the norm $\|f_0\|_{L^1_xL^\infty_v}$ can be disregarded. Indeed, the estimate in \cite[Lemma 2.7, page 385]{DHWY} is crucial for the integrability in $x$ variable when dealing with $L^\infty$ estimates, and thus the smallness of $\CE(F_0)$ should be enough to close the {\it a priori} estimates in terms of the Gronwall-type argument used in this work. Note that \cite{DHWY} only treats either the whole space or the periodic domain.  For the Boltzmann equation in general bounded domains with isothermal boundaries under the consideration of the current work, one still has the uniform-in-time bound on the relative entropy $$
\CE(F):=\int_{\Omega\times \R^3}\left(\frac{F}{\mu_{\theta}}\ln \frac{F}{\mu_{\theta}} -\frac{F}{\mu_{\theta}}+1\right)\mu_\theta \,\dd x\dd v,
$$
where $\mu_\theta=\frac{1}{2\pi \theta^2}e^{-\frac{|v|^2}{2\theta}}$ is the boundary equilibrium, cf.~\cite{Mi-VPB}, and thus it seems also possible to replace the small $L^2$ norm of initial data by the small nonnegative relative entropy $\CE(F_0)$. Note that such uniform-in-time  entropy bound fails for the non-isothermal boundary, cf.~\cite{AC}. In this sense, the approach in terms of the $L^2$-$L^\infty$ interplay could be more robust to deal with the general situation such as the diffuse reflection boundary with non-isothermal boundary temperature; see a recent work \cite{DHWZ} where $L^2$ is replaced by $L^p$ $(p<\infty)$ for a more general purpose.}

\subsection{Arrangement of the paper} The rest of the paper is arranged as follows. In Section \ref{sec2}, we present some preliminary facts on the reformulated Boltzmann equation. Particularly, we prove the key Lemma \ref{lem2.2} for the pointwise estimate on the gain term. In Section \ref{sec3}, we study the existence and continuity of solutions for the linear problem corresponding to the iteration scheme \eqref{s.ite}. The local existence is given in Section \ref{sec4}. Then, we give series of the {\it a priori} estimates in Section \ref{sec5} along the strategy described above and conclude the proof of global existence in Section \ref{sec6}. In the appendix Section \ref{sec7}, we list some basic properties obtained in \cite{Guo2} which will be used in the proof in the previous sections.

%\bigskip
%\noindent{\it Notations.}  
\subsection{Notations}
Throughout this paper, $C$ denotes a generic positive constant which may vary from line to line.  And $C_a,C_b,\cdots$ denote the generic positive constants depending on $a,~b,\cdots$, respectively, which also may vary from line to line. While $C_1, C_2,\cdots$ denote some specific positive constants.  $\|\cdot\|_{L^2}$ denotes the standard $L^2(\Omega\times\mathbb{R}^3_v)$-norm, and $\|\cdot\|_{L^\infty}$ denotes the $L^\infty(\Omega\times\mathbb{R}^3_v)$-norm.

%%%%%%%%%%%%%%%%%%%%%%%%%%%%%%%%%%%%%%%%%%%%%%%%%%%%%

%\newpage
\section{Preliminaries}\label{sec2}

\subsection{Reformulation}

Let $F(t,x,v)=\mu(v)+\sqrt{\mu(v)}f(t,x,v)$. Then, the Boltzmann equation \eqref{1.1}  is reformulated as 
\begin{equation}
\label{be.LG}
\partial_tf+v\cdot\nabla_xf+Lf=\Gamma(f,f).
\end{equation}
Here, $L$ is the linearized Boltzmann operator of the form $L=\nu(v)-K$, with 
\begin{equation}
\label{1.12-1}
\nu(v)=\int_{\mathbb{R}^3}\int_{\mathbb{S}^2}B(v-u,\omega)\mu(u)\,\dd\omega\dd u %d\omega %\cong
\sim (1+|v|)^{\k},
\end{equation} 
and $K=K_1-K_2$  defined by
\begin{align}
(K_1f)(t,x,v)&=\int_{\mathbb{R}^3}\int_{\mathbb{S}^2}B(v-u,\omega)\sqrt{\mu(v)\mu(u)}f(t,x,u)\,\dd\omega\dd u,\label{1.11}\\
(K_2f)(t,x,v)&=\int_{\mathbb{R}^3}\int_{\mathbb{S}^2}B(v-u,\omega)\sqrt{\mu(u)\mu(u')}f(t,x,v')\,\dd\omega\dd u \nonumber\\
&\quad+\int_{\mathbb{R}^3}\int_{\mathbb{S}^2}B(v-u,\omega)\sqrt{\mu(u)\mu(v')}f(t,x,u')\,\dd\omega\dd u.\label{1.12}
%\\
%\nu(v)&=\int_{\mathbb{R}^3}\int_{\mathbb{S}^2}B(v-u,\omega)\mu(u)dud\omega\cong(1+|v|)^{\k},\label{1.12-1}
\end{align}
The nonlinear term $\Ga(f,f)=\Ga_+(f,f)-\Ga_-(f,f)$ is given by
\begin{align*}%\label{1.10}
\Gamma_+(f,f)
=\f1{\sqrt{\mu}}Q_+(\sqrt{\mu}f,\sqrt{\mu}f),\quad \Ga_-(f,f)=\f1{\sqrt{\mu}}Q_-(\sqrt{\mu}f,\sqrt{\mu}f).
\end{align*}
%Denote the perturbation
%\begin{equation}\label{1.8}
%f(t,x,v):=\f{F(t,x,v)-\mu(v)}{\sqrt{\mu(v)}},%~~\mbox{and}~~h(t,x,v)=w(v)f(t,x,v),
%\end{equation}
%then, in terms of $f(t,x,v)$,  the Boltzmann  equation  becomes
It is also convenient to rewrite \eqref{be.LG} as
\begin{align}\label{1.9}
\partial_tf+v\cdot\nabla_xf+f R(f)=Kf+\Gamma_+(f,f),
\end{align}
with
%where   $K=K_1-K_2$  defined as \cite{Desvillettes-V}:
%\begin{align}
%(K_1f)(v)&=\int_{\mathbb{R}^3}\int_{\mathbb{S}^2}B(v-u,\omega)\sqrt{\mu(v)\mu(u)}f(u)dud\omega,\label{1.11}\\
%(K_2f)(v)&=\int_{\mathbb{R}^3}\int_{\mathbb{S}^2}B(v-u,\omega)\sqrt{\mu(u)\mu(u')}f(v')dud\omega\nonumber\\
%&\qquad+\int_{\mathbb{R}^3}\int_{\mathbb{S}^2}B(v-u,\omega)\sqrt{\mu(u)\mu(v')}f(u')dud\omega,\label{1.12}\\
%\nu(v)&=\int_{\mathbb{R}^3}\int_{\mathbb{S}^2}B(v-u,\omega)\mu(u)dud\omega\cong(1+|v|)^{\k},\label{1.12-1}
%\end{align}
\begin{align}\label{1.9-1}
R(f)(t,x,v)=\int_{\mathbb{R}^3}\int_{\mathbb{S}^2} B(v-u,\omega)\left[\mu(u)+\sqrt{\mu(u)}f(t,x,u)\right] \dd\omega \dd u.
\end{align}
%and
%\begin{align}\label{1.10}
%\Gamma_+(f,f)
%&=\f1{\sqrt{\mu}}Q_+(\sqrt{\mu}f,\sqrt{\mu}f)
%\end{align}
In terms of $f$,  we formulate the diffuse reflection boundary condition \eqref{diffuse1} as 
\begin{equation}\label{diffuse}
f(t,x,v)|_{\gamma _{-}}=c_{\mu }\sqrt{\mu (v)}\int_{v^{\prime }\cdot
	n(x)>0}f(t,x,v^{\prime })\sqrt{\mu (v^{\prime })}\{v^{\prime }\cdot
	n(x)\}\,\dd v^{\prime },
\end{equation}
for $(x,v)\in \ga_-$.

\subsection{Operator $K$}

Recall $K=K_1-K_2$ with \eqref{1.11} and \eqref{1.12}. One has %We have 
\begin{align*}%\label{kernel}
(Kf)(v)=\int_{\mathbb{R}^3}k(v,\eta)f(\eta)\,\dd\eta,
\end{align*}
where $k(v,\eta)=k(\eta,v)$ is a symmetric integral kernel of $K$.

\begin{lemma}[\cite{Gra,Guo2}]%\label{lem2.1}
%	The following %Grad 
%	estimate holds
There is $C>0$ such that 
	\begin{align}\label{2.2}
	|k(v,\eta)|\leq C\left\{|v-\eta| +|v-\eta|^{-1} \right\}e^{-\f{|v-\eta|^2}{8}}e^{-\f{||v|^2-|\eta|^2|^2}{8|v-\eta|^2}},
	\end{align}
	for any $v, \eta\in \R^3$ with $v\neq \eta$. 
	Moreover, let $0\leq \varpi\leq \f1{64}$, $\al\geq 0$. There is $C_\a>0$ depending only on $\al$ such that  
	\begin{align}\label{2.3}
	\int_{\mathbb{R}^3} \left\{|v-\eta| +|v-\eta|^{-1} \right\}e^{-\f{|v-\eta|^2}{16}}e^{-\f{||v|^2-|\eta|^2|^2}{16|v-\eta|^2}} \f{e^{\varpi|v|^2}}{e^{\varpi|\eta|^2}}(1+|\eta|)^{-\a}\,\dd\eta\leq C_\a(1+|v|)^{-1-\a},%~~\mbox{for}~~\a\geq0.
	\end{align}
	for any $v\in \R^3$.
\end{lemma}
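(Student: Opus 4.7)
The plan is to establish the two bounds separately. For \eqref{2.2}, I follow the classical Grad-type derivation and split $K = K_1 - K_2$, treating the kernels $k_1, k_2$ of $K_1, K_2$ in turn. For $k_1$, a direct computation using \eqref{1.11} gives $k_1(v,\eta) = \sqrt{\mu(v)\mu(\eta)}\int_{\S^2} B(v-\eta,\omega)\,\dd\omega \leq C|v-\eta|^\kappa\, e^{-(|v|^2+|\eta|^2)/4}$. The parallelogram identity $|v|^2+|\eta|^2 = \tfrac{1}{2}(|v+\eta|^2+|v-\eta|^2)$ together with $(|v|^2-|\eta|^2)^2 = [(v+\eta)\cdot(v-\eta)]^2 \leq |v+\eta|^2|v-\eta|^2$ then yields the target estimate, with decay strictly stronger than what is claimed.

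For $k_2$, the delicate step, I would invoke the Carleman change of variables $(u,\omega) \mapsto (v',u')$, rewriting each term of \eqref{1.12} as an integral over the hyperplane through $v$ orthogonal to $v'-v$ and parametrized by $\eta = v'$. The factors $\sqrt{\mu(u)\mu(u')}$ and $\sqrt{\mu(u)\mu(v')}$, evaluated on the reflection manifold where $|u|^2+|v|^2 = |u'|^2+|v'|^2$, produce precisely the Gaussian $\exp\{-[|v-\eta|^2 + (|v|^2-|\eta|^2)^2/|v-\eta|^2]/8\}$; the factor $|v-\eta|^{-1}$ comes from the Jacobian of the Carleman substitution, while the factor $|v-\eta|$ arises from $B(v-u,\omega) = |v-u|^\kappa b(\cos\theta)$ combined with the bound $b(\cos\theta) \leq C|\cos\theta|$ and $\kappa \leq 1$.

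For \eqref{2.3}, the key observation is that one can absorb the factor $e^{\varpi(|v|^2-|\eta|^2)}$ into the Gaussian when $\varpi$ is small. Setting $p := |v-\eta|$ and $q := (|v|^2-|\eta|^2)/|v-\eta|$, the full exponent reads $-p^2/16 - q^2/16 + \varpi\,pq$. The determinant of this quadratic form in $(p,q)$ is $1/256 - \varpi^2 > 0$ for $\varpi \leq 1/64$, so the form is negative definite, and a routine completion of squares splits it as $-p^2/32 - c_0 q^2$ for some $c_0 > 0$. The integrand is therefore dominated by $C\{p + p^{-1}\}\,e^{-p^2/32}\,e^{-c_0(|v|^2-|\eta|^2)^2/|v-\eta|^2}(1+|\eta|)^{-\alpha}$.

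To conclude I distinguish cases on $|v|$. If $|v| \leq 1$ the bound is trivial from direct integration against the Gaussian. For $|v| \geq 1$, in the outer regions $|\eta| \leq |v|/2$ or $|\eta| \geq 2|v|$, either $|v-\eta| \gtrsim |v|$ (so the $p$-factor gives decay $e^{-c|v|^2}$) or $|(|v|^2-|\eta|^2)/|v-\eta|| \gtrsim |v|$ (same conclusion from the $q$-factor), both much stronger than $(1+|v|)^{-1-\alpha}$. In the principal regime $|v|/2 \leq |\eta| \leq 2|v|$, where $(1+|\eta|)^{-\alpha} \sim (1+|v|)^{-\alpha}$, I substitute $z = v - \eta$ and use spherical coordinates $z = \rho\omega$: the Gaussian $e^{-c_0(2v\cdot\omega - \rho)^2}$ constrains $\rho$ to a window of width $O(1)$ around $2v\cdot\omega$, while $e^{-\rho^2/32}$ forces $\rho = O(1)$; combined, these localize $\omega$ to a band of width $O(1/|v|)$ transverse to $v$, whose spherical measure is $O(1/|v|)$, producing the missing factor $(1+|v|)^{-1}$. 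The main technical obstacles are this final $\omega$-localization argument and the careful bookkeeping in the Carleman parameterization for \eqref{2.2}; both computations are classical and detailed in \cite{Gra,Guo2}.
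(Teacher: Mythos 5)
The paper does not prove this lemma itself; it cites \cite{Gra,Guo2}. Your sketch faithfully reproduces the classical Grad argument that underlies those references: split $K=K_1-K_2$, bound $k_1$ by elementary inequalities ($|v|^2+|\eta|^2=\tfrac12(|v+\eta|^2+|v-\eta|^2)$ and $(|v|^2-|\eta|^2)^2\le|v+\eta|^2|v-\eta|^2$), treat $k_2$ via the Carleman change of variables, then for \eqref{2.3} absorb the exponential weight into the halved Gaussian, and conclude in the principal range $|\eta|\sim|v|$ by passing to polar coordinates in $z=v-\eta$ and localizing $\omega$ to a band of angular width $O(1/|v|)$ transverse to $v$. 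This is exactly the right approach and all the main ideas are in place. One arithmetic slip: the quadratic form $\f{1}{16}p^2+\f{1}{16}q^2-\varpi pq$ has matrix determinant $\f{1}{256}-\f{\varpi^2}{4}$, not $\f{1}{256}-\varpi^2$; both are positive for $\varpi\le\f1{64}$, so the completion of squares and the conclusion are unaffected, but the coefficient you quoted is off by a factor of $4$.
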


\subsection{Pointwise estimate on gain term} Define the velocity weight function 
\begin{equation}\label{WF}
w=w(v):=(1+\r^2|v|^2)^{\beta}e^{\varpi|v|^2},%\quad\mbox{for}\quad  \r\geq1,
\end{equation}
with $\rho>0$, $0\leq \varpi\leq\f1{64}$ and $\beta>0$. The below lemma implies that one can bound in the pointwise sense the $w$-weighted gain term by the product of the $w$-weighted $L^\infty$ norm and the weighted $L^2$ norm with another fixed velocity weight. The estimate plays a crucial role in treating the nonlinear term whenever the solution could have a large amplitude.   

%The following lemma is very important for us to use interation when dealing with the nonlinear term when $\|f\|_{L^\infty}$ may be  large.

\begin{lemma}\label{lem2.2}
	%Let \eqref{1.4} hold, then we have 
There is a generic constant $C_\b>0$ such that 
	\begin{align}\label{2.4}
	\left|w(v)\Gamma_{+}(f,f)(v)\right| \leq \frac{C_\b \|wf\|_{L_{v}^\infty}}{1+|v|} \left(\int_{\mathbb{R}^3}(1+|\eta|)^{4}|e^{\varpi|\eta|^2}f(\eta)|^2\,\dd\eta\right)^{\f12},
	\end{align}
	for all $v\in \R^3$.
	In particular, for $\rho\geq1$ and $\b\geq2$, one has %that
	\begin{align}\label{2.4-1}
	\left|w(v)\Gamma_{+}(f,f)(v)\right| \leq \frac{C_\b \|wf\|^2_{L^\infty}}{1+|v|},
	\end{align}
	for all $v\in \R^3$, where the constant $C_\b>0$ is independent of $\rho$ and $\varpi$. 
\end{lemma}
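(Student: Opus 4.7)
The bound \eqref{2.4} is of mixed $L^\infty$--$L^2$ type, with one copy of $f$ bounded pointwise by $w$ and the other by the lighter polynomial weight $(1+|\eta|)^2 e^{\varpi|\eta|^2}$ in $L^2$. Since a direct Cauchy-Schwarz on the full $d\omega\,du$ integral would give only polynomial \emph{growth} in $|v|$ (via $\nu(v)\sim(1+|v|)^\kappa$), the decay $1/(1+|v|)$ in \eqref{2.4} must come from a Carleman-type change of variables that exposes an integrable singular factor $|v-v'|^{-1}$. I would first use $\mu(v)\mu(u)=\mu(v')\mu(u')$ to write $\Gamma_+(f,f)(v)=\iint B(v-u,\omega)\sqrt{\mu(u)}f(u')f(v')d\omega du$, bound $|f(v')|\le\|wf\|_{L^\infty}/w(v')$, and bring $w(v)$ inside. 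By energy conservation one checks $1+\rho^2|v|^2\le(1+\rho^2|v'|^2)(1+\rho^2|u'|^2)$ and $e^{\varpi|v|^2}/e^{\varpi|v'|^2}=e^{\varpi(|u'|^2-|u|^2)}$, so that for $\rho\ge 1$ and $0\le\varpi\le 1/64$
\[
\frac{w(v)\sqrt{\mu(u)}}{w(v')}\le C\,w(u')\,e^{-(\frac14+\varpi)|u|^2},
\]
reducing the claim to bounding $\iint B(v-u,\omega)w(u')|f(u')|e^{-(\frac14+\varpi)|u|^2}d\omega du$ by $(1+|v|)^{-1}$ times the target $L^2$ norm.

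\textbf{Step 2: Carleman representation.} For fixed $v$, change variables $(u,\omega)\to(v',u')$ subject to $(v-u')\cdot(v-v')=0$, so that $u=v'+u'-v$ and $u'$ ranges over the plane $\Pi_{v,v'}$ through $v$ perpendicular to $v-v'$. The classical Jacobian $d\omega\,du=|v-v'|^{-2}\,dv'\,dE_{\Pi_{v,v'}}(u')$, together with $b(\cos\theta)\le C|\cos\theta|=C|v-v'|/|v-u|$ and $B=|v-u|^\kappa b(\cos\theta)$, yields the critical bound
\[
B(v-u,\omega)\,d\omega\,du\le C|v-u|^{\kappa-1}|v-v'|^{-1}\,dv'\,dE_{\Pi_{v,v'}}(u'),
\]
in which the integrable factor $|v-v'|^{-1}$ is the source of the sought $1/(1+|v|)$ decay. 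Now split $w(u')|f(u')|=a(u')b(u')$ with $b(u'):=(1+|u'|)^2 e^{\varpi|u'|^2}|f(u')|$ (whose square is the density in the target $L^2$ norm) and $a(u'):=w(u')/\bigl((1+|u'|)^2 e^{\varpi|u'|^2}\bigr)\le C_\beta(1+|u'|)^{2\beta-2}$ (using $\rho\ge 1$), and apply Cauchy-Schwarz in the Carleman measure.

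\textbf{Step 3: Fubini and kernel bound.} For the $b^2$-piece, swap the roles of $v'$ and $u'$ via Fubini: for each target $\eta=u'\in\R^3$ the set $\{v':\eta\in\Pi_{v,v'}\}$ is itself a $2$-plane through $v$ perpendicular to $v-\eta$, and by the symmetry of the orthogonality constraint the swap has unit Jacobian; the inner integral then yields a kernel $H(v,\eta)$ bounded by $C$ uniformly (the Gaussian decay in $|u|^2=|v'+\eta-v|^2$ dominates the other factors), producing $\bigl(\int(1+|\eta|)^4|e^{\varpi|\eta|^2}f(\eta)|^2 d\eta\bigr)^{1/2}$ as the $L^2$ factor. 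For the $a^2$-piece, the Gaussian decay in $|u|$ absorbs the polynomial growth $(1+|u'|)^{4\beta-4}$, and the integrable singular factor $|v-v'|^{-1}$ combined with moment estimates in the spirit of \eqref{2.3} gives $\iint a^2 \cdots\,dv'\,dE(u')\le C/(1+|v|)^2$. Taking square roots and multiplying assembles \eqref{2.4}.

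\textbf{Main obstacle and \eqref{2.4-1}.} The hardest point is the uniform-in-$v$ estimate $\le C/(1+|v|)^2$ of the $a^2$-piece: the polynomial $(1+|u'|)^{4\beta-4}$ must be defeated by the Gaussian in $|u|$, and the $2$-plane integration against the singular $|v-v'|^{-1}$ must in the end deliver the $(1+|v|)^{-2}$ decay, both of which require careful bookkeeping of the Carleman surface measure. The bound \eqref{2.4-1} is then immediate from \eqref{2.4}: for $\rho\ge 1$ and $\beta\ge 2$, $(1+|\eta|)^4|e^{\varpi|\eta|^2}f(\eta)|^2\le C_\beta(1+|\eta|)^{4-4\beta}\|wf\|_{L^\infty}^2$, and $4-4\beta\le -4<-3$ makes this integrable over $\R^3$, so $\bigl(\int\cdots d\eta\bigr)^{1/2}\le C_\beta\|wf\|_{L^\infty}$.
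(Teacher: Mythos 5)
You take a genuinely different route from the paper's, but the weight bookkeeping in Step~1 makes the crucial $a^2$-estimate of Step~3 (which you rightly flagged as the hardest point) impossible. By bounding $|f(v')|\le\|wf\|_{L^\infty}/w(v')$ and then transferring $w(v)\sqrt{\mu(u)}/w(v')\le C\,w(u')\,e^{-(\frac14+\varpi)|u|^2}$, you leave the \emph{full} weight $w(u')\sim(1+|u'|)^{2\beta}e^{\varpi|u'|^2}$ on the $L^2$ variable $u'$; the excess over the target $L^2$ weight is precisely your growing factor $a(u')\lesssim(1+|u'|)^{2\beta-2}$. In the Carleman frame $u=v'+u'-v$ with $u'-v\perp v'-v$, energy conservation gives $|u|^2=|u'|^2+|v'|^2-|v|^2$, so the surviving Gaussian is $O(1)$ along the Carleman surface where $|u'|^2+|v'|^2\approx|v|^2$, and for $|u'|\lesssim|v|$ the factor $a(u')^2\sim(1+|u'|)^{4\beta-4}$ can be of size $(1+|v|)^{4\beta-4}$ there and is not absorbed. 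Already with $a\equiv1$ and $\kappa=1$, writing $v'-v=r\omega$, the inner integral over $\Pi_{v,v'}$ equals $\frac{\pi}{c}e^{-c(v\cdot\omega+r)^2}$, so your $a^2$-piece is
\[
\sim\int_{\mathbb{S}^2}\int_0^\infty r\,e^{-c(v\cdot\omega+r)^2}\,dr\,d\omega\ \sim\ |v|
\]
for large $|v|$, which \emph{grows}; the claimed $(1+|v|)^{-2}$ decay fails. The paper sidesteps this by the dichotomy $w(v)\le 2^{\beta}w(u')e^{\varpi|v'|^2}$ or $w(v)\le 2^{\beta}w(v')e^{\varpi|u'|^2}$: in each branch the entire heavy weight is handed to the $L^\infty$ factor, leaving only the bounded remnant $e^{\varpi|\cdot|^2}$ on the $L^2$ variable, and the Cauchy--Schwarz is applied on the already-reduced 3D integral $\int|e^{\varpi|\eta|^2}f(\eta)|\,|\eta-v|^{-1}d\eta$ with the \emph{decaying} weight $(1+|\eta|)^{-2}$ on the $a$-side, so that $\int|\eta-v|^{-2}(1+|\eta|)^{-4}\,d\eta\lesssim(1+|v|)^{-2}$.

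A second flaw: the Fubini swap in your $b^2$-estimate does not have unit Jacobian. By the coarea formula $dv'\,dE_{\Pi_{v,v'}}(u')=|v-v'|\,\delta\bigl((u'-v)\cdot(v'-v)\bigr)\,du'\,dv'$, so interchanging the roles of $v'$ and $u'$ carries the factor $|v-v'|/|v-u'|$; once tracked, the $b^2$-kernel retains a residual singularity $|v-u'|^{-1}$ that is not controlled by $\|b\|_{L^2}^2$ alone. That singularity is exactly what the paper's $a$-side factor $(1+|\eta|)^{-2}/|\eta-v|$ is engineered to absorb, whereas in your split it has no partner. Your derivation of \eqref{2.4-1} from \eqref{2.4} is fine as stated.
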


%\noindent{\bf Proof.}  
\begin{proof}
By the energy conservation law in \eqref{1.3-1}, we see that
%It follows from \eqref{1.3-1} that 
$
|v|^2\leq |u'|^2+|v'|^2.
$ 
%which immediately yields
This implies that 
\begin{equation}%\label{2.5}
\mbox{either}\quad \f12|v|^2\leq |u'|^2\quad \mbox{or}\quad \f12|v|^2\leq |v'|^2.\notag
\end{equation} 
Hence it holds that either $w(v)\leq 2^\b w(u')e^{\varpi|v'|^2}$ or $w(v)\leq 2^\b w(v') e^{\varpi|u'|^2}$.
%Hence one obtains that 
Then it follows that
\begin{align}\label{2.6}
\Big|w(v)\Gamma_{+}(f,f)(v)\Big|&=
\f{w (v)}{\sqrt{\mu(v)}}\Big|Q_+(\sqrt{\mu}f,\sqrt{\mu}f)(v)\Big|\notag\\
&\leq   2^{\b}
\int_{\mathbb{R}^3}\int_{\mathbb{S}^2}B(v-u,\omega)\sqrt{\mu(u)}\Big|w(u')f(u')\cdot e^{\varpi|v'|^2}f(v')
\Big|\,\dd \omega \dd u \nonumber\\
&\quad+2^{\b}
\int_{\mathbb{R}^3}\int_{\mathbb{S}^2}B(v-u,\omega)\sqrt{\mu(u)}\Big|e^{\varpi|u'|^2}f(u')\cdot w(v')f(v')
\Big|\,\dd \omega \dd u.
%\nonumber\\
%&:=I_1+I_2.
\end{align}
We denote two integrals on the right as $I_1$ and $I_2$, respectively. 
%We estimate the first term on the right as follows. 
To estimate $I_{1}$, 
as in \cite{Glassey},  %we use the change of variables
we rewrite \eqref{def.popr} as 
%\begin{align}%\label{2.8}
$u'=v+z_{\perp}$, %\quad 
$v'=v+z_{\shortparallel}$, %\nonumber
%\end{align}
with
%\begin{align}%\label{2.7}
$z=u-v$, %\quad 
$z_{\shortparallel}=(z\cdot\omega)\omega$,
%\quad 
$z_{\perp}=z-(z\cdot\omega)\omega$. %\nonumber%~~ \eta=v+z_{\shortparallel}.
%\end{align}
%Then it holds that 
%\begin{align}\label{2.8}
%u'=v+z_{\perp},~~~v'=v+z_{\shortparallel},
%\end{align}
%and 
Moreover, by \eqref{1.4}, one has 
\begin{equation}\nonumber%\label{2.9}
B(v-u,\omega)\leq C |v-u|^\ka \left|\frac{v-u}{|v-u|}\cdot \omega\right|\leq C |z_{\shortparallel}|\cdot |z|^{\ka-1}= C|z_{\shortparallel}|\left(|z_{\shortparallel}|^2+|z_{\perp}|^2\right)^{\frac{\ka-1}{2}}.
\end{equation}
Therefore, by making a change of variables $u\to z$, $I_1$ is bounded as
\begin{equation}\nonumber%\label{ }
I_{1}
\leq C_\b\|wf\|_{L^\infty_v}\int_{\mathbb{R}^3}\int_{\mathbb{S}^2}
%\f{|z_{\shortparallel}|}{(|z_{\shortparallel}|+|z_{\perp}|)^{1-\k}}
|z_{\shortparallel}|\left(|z_{\shortparallel}|^2+|z_{\perp}|^2\right)^{\frac{\ka-1}{2}}
e^{-\f{|v+z|^2}{4}}|e^{\varpi|v+z_{\shortparallel}|^2}f(v+z_{\shortparallel})|\,\dd\omega \dd z.
\end{equation} 
By noting $\dd z=\dd z_\perp \dd |z_{\shortparallel}|$ and further making a change of variables $(\omega,|z_{\shortparallel}|)\to \eta:=v+z_{\shortparallel}$ with $\dd |z_{\shortparallel}|\dd \omega=\frac{1}{|z_{\shortparallel}|^2}\dd \eta$, it follows that
\begin{equation} \nonumber
I_1\leq C_\b\|wf\|_{L^\infty_v}\int_{\R^3_\eta}\frac{|e^{\varpi|\eta|^2}f(\eta)|}{|\eta-v|}\int_{\Pi_{v,\eta}}|z_\perp|^{\kappa-1}e^{-\f{|\eta+z_\perp|^2}{4}} \,\dd z_\perp \dd \eta,
\end{equation}
where $\Pi_{v,\eta}:=\{z_\perp\in \R^3: (\eta-v)\cdot z_\perp=0\}$ and we have used the identity $|v+z|=|\eta+z_\perp|$. As $0\leq \kappa\leq 1$, it holds that 
\begin{eqnarray}
\int_{\Pi_{v,\eta}}|z_\perp|^{\kappa-1}e^{-\f{|\eta+z_\perp|^2}{4}} \,\dd z_\perp 
&&= \left(\int_{|z_\perp|\leq 1}+\int_{|z_\perp|\geq 1}\right)|z_\perp|^{\kappa-1}e^{-\f{|\eta+z_\perp|^2}{4}} \,\dd z_\perp \nonumber\\
&&\leq \int_{|z_\perp|\leq 1} |z_\perp|^{\kappa-1} \,\dd z_\perp +\int_{|z_\perp|\geq 1}e^{-\f{|\eta+z_\perp|^2}{4}} \,\dd z_\perp \nonumber\\
&&\leq C,\nonumber
\end{eqnarray}
%\begin{equation}
%\label{ }
%\int_{\Pi_{v,\eta}}|z_\perp|^{\kappa-1}e^{-\f{|\eta+z_\perp|^2}{4}} \,\dd z_\perp \leq C,
%\end{equation}
for a constant $C>0$.  Thus, it follows that
\begin{equation}\nonumber
I_1\leq C_\b\|w f\|_{L^\infty} \int_{\mathbb{R}^3}\f{|e^{\varpi|\eta|^2}f(\eta)|}{|\eta-v|}
\, \dd\eta.
\end{equation}
Using the Cauchy-Schwarz to deduce that
\begin{equation}\nonumber
\int_{\mathbb{R}^3}\f{|e^{\varpi|\eta|^2}f(\eta)|}{|\eta-v|}
\, \dd\eta\leq \left(\int_{\R^3} \frac{1}{|\eta-v|^2} \frac{1}{(1+|\eta|)^4}\dd \eta\right)^{1/2}\left(\int_{\R^3} (1+|\eta|)^4|e^{\varpi|\eta|^2}f(\eta)|^2\dd \eta\right)^{1/2},
\end{equation}
and noting that
\begin{equation}\nonumber
\int_{\R^3} \frac{1}{|\eta-v|^2} \frac{1}{(1+|\eta|)^4}\dd \eta\leq \frac{C}{(1+|v|)^2},
\end{equation}
for a constant $C>0$, it further follows that
%Hence  it follows from the change of variables \eqref{2.7}-\eqref{2.9} that 
\begin{align*}%\label{2.10}
I_{1}
%&\leq C\|wf\|_{L^\infty}\int_{\mathbb{R}^3}\int_{\mathbb{S}^2}\f{|z_{\shortparallel}|}{(|z_{\shortparallel}|+|z_{\perp}|)^{1-\k}}
%e^{-\f{|v+z|^2}{4}}|f(v+z_{\shortparallel})| dz_{\perp}d|z_{\shortparallel}|d\omega\nonumber\\
%&\leq C\|w f\|_{L^\infty}\int_{\mathbb{R}^3}\int_{z_{\perp}}\f{|z_{\perp}|^{\k-1}}{|\eta-v|}
%e^{-\f{|\eta+z_{\perp}|^2}{4}}|f(\eta)| dz_{\perp}d\eta
%\leq C\|w f\|_{L^\infty} \int_{\mathbb{R}^3}\f{1}{|\eta-v|}
%|f(\eta) | d\eta \nonumber\\
\leq \frac{C_\b\|wf\|_{L^\infty_v}}{1+|v|}\left(\int_{\mathbb{R}^3}(1+|\eta|)^4|e^{\varpi|\eta|^2}f(\eta)|^2\dd\eta\right)^{\f12}.
\end{align*}
With \eqref{2.6}, this proves \eqref{2.4} for $I_1$ on one hand. 
On the other hand, for $I_2$ corresponding to the second term on the right hand of \eqref{2.6}, it is noticed that by the rotation, one can make an interchange of $v'$ and $u'$, and thus  change $I_{2}$  to the same form as $I_{1}$, cf~\cite{Glassey}.  Hence, one can bound $I_2$ as for $I_1$ above. Finally, \eqref{2.4-1} is an immediate consequence of \eqref{2.4} for $\rho\geq1$ and $\beta\geq 2$. This then completes the proof of Lemma \ref{lem2.2}.  
\end{proof}

\subsection{Back-time cycle and relative definitions}

For each $x\in \pa\Omega$, we introduce the velocity space for the outgoing particles
\begin{equation}
\label{def.snu}
\mathcal{V}(x)=\{v'\in\mathbb{R}^3:~v'\cdot n(x)>0\},
\end{equation}
associated with the probability measure $\dd\s=\dd\s(x)$ by
\begin{align}\label{def.bm}
	\dd\s(x)=c_\mu \mu(v')\{v'\cdot n(x)\}\,\dd v'.
\end{align}
%
%Denote $h(t,x,v):=w(v)f(t,x,v)$. Then the  diffuse reflection boundary condition \eqref{diffuse} can be rewriten as
%\begin{equation}\label{4.1-2}
%	h(t,x,v)|_{\g_-}=\f{1}{\widetilde{w}(v)}\int_{\mathcal{V}(x)}h(t,x,v')\widetilde{w}(v')\dd\sigma.
%\end{equation}
%where
%\begin{align}\label{4.1-1}
%	\mathcal{V}(x)=\{v'\in\mathbb{R}^3:~v'\cdot n(x)>0\},~~\widetilde{w}(v):=\f1{w(v)\sqrt{\mu(v)}},
%\end{align}

Given $(t,x,v),$ let $[X(s),V(s)]$ %=[X(s;t,x,v),V(s;t,x,v)]=[x+(s-t)v,v]$ 
be the backward bi-characteristics for the Boltzmann equation, which is determined by %\eqref{1.9} 
\begin{equation*}%\label{ode}
\left\{\begin{aligned}
&\frac{dX(s)}{ds}=V(s),\\%\quad %\text{ \ \ \ \ \ \ \ \ \ \ \ \ \ \ \ \ \ \ }
&\frac{dV(s)}{ds}=0,\\
&[X(t),V(t)]=[x,v].
\end{aligned}\right.
\end{equation*}
The solution is then given by
\begin{equation}
\label{def.bic}
[X(s),V(s)]=[X(s;t,x,v),V(s;t,x,v)]=[x-(t-s)v,v].
\end{equation}
%with the initial condition: $[X(t;t,x,v),V(t;t,x,v)]=[x,v].$ 

For each $(x,v)$ with $x\in \overline{\Omega}$ and $v\neq 0,$ we define its {\it backward exit time} $t_{\mathbf{b}}(x,v)\geq 0$ to be the the last moment at which the
back-time straight line $[X(s;0,x,v),V(s;0,x,v)]$ remains in $\overline{\Omega}:$ 
\begin{equation}\label{exit}
t_{\mathbf{b}}(x,v)=\inf \{\tau \geq 0:x-\tau v\notin\Omega\}.
\end{equation}
We therefore have $x-t_{\mathbf{b}}v\in \partial \Omega $ and $\xi (x-t_{\mathbf{b}}v)=0.$ We also define 
\begin{equation}\label{xb}
x_{\mathbf{b}}(x,v)=x(t_{\mathbf{b}})=x-t_{\mathbf{b}}v\in \partial \Omega .
\end{equation}
%We always have 
Note that $v\cdot n(x_{\mathbf{b}})=v\cdot n(x_{\mathbf{b}}(x,v)) \leq 0$ always holds true.
%\begin{definition}\label{diffusecycles}

%Fix any point 
Let $x\in \overline{\Omega}$, $(x,v)\notin \gamma _{0}\cup \g_{-}$ and %let 
$
(t_{0},x_{0},v_{0})=(t,x,v)$. For $v_{k+1}\in \mathcal{V}_{k+1}:=\{v_{k+1}\cdot n(x_{k+1})>0\}$, the back-time cycle is defined as 
\begin{equation}\label{2.18}
\left\{\begin{aligned}
X_{cl}(s;t,x,v)&=\sum_{k}\mathbf{1}_{[t_{k+1},t_{k})}(s)\{x_{k}-v_k(t_{k}-s)\},\\[1.5mm]
V_{cl}(s;t,x,v)&=\sum_{k}\mathbf{1}_{[t_{k+1},t_{k})}(s)v_{k},
\end{aligned}\right.
\end{equation}
with
\begin{equation}\label{diffusecycle}
(t_{k+1},x_{k+1},v_{k+1})=(t_{k}-t_{\mathbf{b}}(x_{k},v_{k}),x_{\mathbf{b}}(x_{k},v_{k}),v_{k+1}).% \quad k=0,1,2,\cdots.
\end{equation}
%And 
For $k\geq 2$, 
%We define 
the iterated integral  means that %for $k\geq 2$ 
\begin{equation*}%\label{sigma}
\int_{\Pi _{l=1}^{k-1}\mathcal{V}_{l}}\Pi _{l=1}^{k-1}\dd\sigma _{l}:=
\int_{\mathcal{V}_{1}}\cdots\left\{ \int_{\mathcal{V}_{k-1}}\,\dd\sigma_{k-1}\right\} \cdots\dd\sigma _{1}.  
\end{equation*}
%\end{definition}
Note that all $v_{l}$ ($l=1,2,\cdots$) are independent variables, and $t_k$, $x_k$ depend on $t_l$, $x_l$, $v_l$ for $l\leq k-1$, and the velocity space $\mathcal{V}_{l}$ implicitly depends on $(t,x,v,v_{1},v_{2},\cdots,v_{l-1}).$  

\section{Linear problem}\label{sec3}

In the next section we will consider the local existence of the Boltzmann equation with large-amplitude initial data under the diffuse reflection boundary condition. For that, we are concerned in this section with the linear problem only. 

\subsection{Mild form with variable collision frequency}

Recall the homogeneous transport equation \eqref{lhe.nu} with the collision frequency $\nu(v)$ which is induced by the global Maxwellian $\mu(v)$ as in \eqref{1.12-1}. In order to avoid treating the loss term in the nonlinear $L^\infty$ estimate, we combine $\Ga_-(f,f)$ together with $\nu(v)f$ so as to deal with \eqref{1.9} with the collision frequency given by $R(f)$ in \eqref{1.9-1} which is equivalent with
\begin{equation}%\label{Rfrequency}
R(f)(t,x,v)=\int_{\R^3}\int_{\S^2} B(v-u,\omega) F(t,x,v)\,\dd\omega\dd u \geq 0,\nonumber
\end{equation} 
as $F(t,x,v)\geq 0$. Thus, to treat weighted $L^\infty$ estimates on \eqref{1.9}, we start from
% We firstly consider 
 the following linear  transport equation with a nonnegative variable collision frequency function  %\eqref{4.1-2}
\begin{align}\label{3.1}
\partial_th+v\cdot\nabla h+h R(\varphi)=q(t,x,v),
\end{align}
where $\varphi=\varphi(t,x,v)$ is a given function satisfying 
\begin{equation}\label{3.3}
\mu(v)+\sqrt{\mu(v)}\varphi(t,x,v)\geq 0,\quad \|\varphi(t)\|_{L^\infty}<\infty.
\end{equation}
%with diffuse boundary condition
%\begin{align}\label{3.2}
%h(t,x,v)|_{\g_-}=\f{1}{\widetilde{w}(v)}\int_{\mathcal{V}(x)}h(t,x,v')\widetilde{w}(v')\dd\sigma,
%\end{align}

\begin{lemma}\label{lem3.01}
Let $\varphi(t,x,v)$ satisfy \eqref{3.3}, and $q(t,x,v)\in L^\infty$ be given. Assume that $h(t,x,v)\in L^\infty$ is a solution to  \eqref{3.1}
%$h_t+v\cdot\nabla h+h R(\varphi)=q(t,x,v)$ 
supplemented with the diffuse reflection boundary condition
\begin{align}\label{lp.bc}
h(t,x,v)|_{\g_-}=\f{1}{\widetilde{w}(v)}\int_{\mathcal{V}(x)}h(t,x,v')\widetilde{w}(v')\,\dd\sigma,
\end{align}
for $(x,v)\in \ga_-$, where $\widetilde{w}(v):=\f1{w(v)\sqrt{\mu(v)}}$. Recall the diffusive back-time cycles in \eqref{2.18} and \eqref{diffusecycle}. Define
\begin{equation}\label{def.iv}
I^\varphi(t,s):=\exp\left\{-\int_{s}^tR(\varphi)\big(\tau,X_{cl}(\tau),V_{cl}(\tau)\big)\,\dd\tau\right\}.
\end{equation}
Then for any $0\leq s\leq t$, for almost every $x,v$, if $t_1(t,x,v)\leq s$, we have 
\begin{align*}%\label{3.4}
h(t,x,v)=I^\varphi(t,s)h(s,x-v(t-s),v)+\int_s^tI^\varphi(t,\tau)q(\tau,x-v(t-\tau),v)\,\dd\tau. 
\end{align*}
If $t_1(t,x,v)>s$, then we have that for $k\geq2$,
\begin{align}\label{3.5}
h(t,x,v)=&\int_{t_{1}}^{t}I^\varphi(t,\tau)q(\tau ,x-v(t-\tau ),v)\,\dd\tau\nonumber\\
&+\frac{I^\varphi(t,t_1)}{\widetilde{w}(v)}\int_{\Pi _{j=1}^{k-1}\mathcal{V}_{j}}\sum_{l=1}^{k-1}\mathbf{1}_{\{t_{l+1}\leq s<t_l\}}h(s,x_{l}-(t_{l}-s)v_{l},v_{l})\,\dd\Sigma^\varphi_{l}(s) \nonumber\\
&+\frac{I^\varphi(t,t_1)}{\widetilde{w}(v)}\int_{\Pi _{j=1}^{k-1}\mathcal{V}_{j}}\sum_{l=1}^{k-1}\int_{s}^{t_{l}}\mathbf{1}_{\{t_{l+1}\leq s<t_l\}}q(\tau ,x_{l}-(t_{l}-\tau)v_{l},v_{l})\,\dd\Sigma^\varphi _{l}(\tau)\dd \tau  \nonumber\\
&+\frac{I^\varphi(t,t_1)}{\widetilde{w}(v)}\int_{\Pi _{j=1}^{k-1}\mathcal{V}_{j}}\sum_{l=1}^{k-1}\int_{t_{l+1}}^{t_{l}}\mathbf{1}_{\{t_{l+1}>s\}}q(\tau,x_{l}-(t_{l}-\tau)v_{l},v_{l})\,\dd\Sigma^\varphi _{l}(\tau)\dd\tau  \nonumber \\
&+\frac{I^\varphi(t,t_1)}{\widetilde{w}(v)}\int_{\Pi _{j=1}^{k-1}\mathcal{V}_{j}}\mathbf{1}_{\{t_{k}>s\}}h(t_{k},x_{k},v_{k-1})\,\dd\Sigma^\varphi _{k-1}(t_k),
\end{align}
where
%\begin{equation}\label{3.6}
%I(t,s):=\exp\left\{-\int_{s}^t(R\varphi)(\tau,X_{cl}(\tau),V_{cl}(\tau))\dd\tau\right\},
%\end{equation}
%and 
$\dd\Sigma^\varphi_{l}(s)$ is given by
\begin{equation*}%\label{3.7}
\dd\Sigma^\varphi _{l}(s):=\{\Pi _{j=l+1}^{k-1}\dd\sigma _{j}\}\cdot \widetilde{w}(v_l) I^\varphi(t_l,s)\dd\s_l\cdot\Pi_{j=1}^{l-1}\{I^\varphi(t_{j},t_{j+1})\dd\s_j\},
\end{equation*}
and $\dd\Sigma^\varphi _{k-1}(t_k)$ is the value of $\dd\Sigma^\varphi _{k-1}(s)$ at $s=t_k$.

\end{lemma}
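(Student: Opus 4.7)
The formula \eqref{3.5} is derived by alternating two elementary moves along the backward cycle: Duhamel along a free-transport segment inside $\Omega$, and substitution of the diffuse reflection boundary condition \eqref{lp.bc} at each boundary crossing. The full expansion then follows by a finite induction on the number of bounces $k$.

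\textbf{Duhamel along a characteristic.} On the straight line $\tau\mapsto(x-v(t-\tau),v)$, equation \eqref{3.1} becomes the scalar linear ODE
\begin{equation*}
\frac{d}{d\tau}h(\tau,x-v(t-\tau),v)+R(\varphi)(\tau,x-v(t-\tau),v)\,h(\tau,x-v(t-\tau),v)=q(\tau,x-v(t-\tau),v).
\end{equation*}
With integrating factor $I^\varphi(t,\tau)^{-1}$ from \eqref{def.iv}, Duhamel's principle yields, for any $a\in[0,t]$ such that the segment $[a,t]$ stays in $\overline{\Omega}$,
\begin{equation*}
h(t,x,v)=I^\varphi(t,a)\,h(a,x-v(t-a),v)+\int_a^t I^\varphi(t,\tau)\,q(\tau,x-v(t-\tau),v)\,\dd\tau.
\end{equation*}
Taking $a=s$ settles the case $t_1\leq s$. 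When $t_1>s$, the choice $a=t_1$ produces the first line of \eqref{3.5}, and what remains is to expand $h(t_1,x_1,v)$.

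\textbf{Boundary substitution and iteration.} Since $v\cdot n(x_1)\leq 0$, plugging \eqref{lp.bc} in gives
\begin{equation*}
h(t_1,x_1,v)=\frac{1}{\widetilde{w}(v)}\int_{\mathcal{V}_1}h(t_1,x_1,v_1)\,\widetilde{w}(v_1)\,\dd\sigma_1.
\end{equation*}
For each outgoing $v_1\in\mathcal{V}_1$, I reapply the Duhamel step on the segment issued from $(t_1,x_1)$ with velocity $v_1$: either $t_2\leq s$, in which case a terminal value at time $s$ plus a source integral over $[s,t_1]$ is produced, or $t_2>s$, in which case $I^\varphi(t_1,t_2)h(t_2,x_2,v_1)$ plus a source integral over $[t_2,t_1]$ appears. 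Iterating this dichotomy, at each internal bounce $l<k$ the boundary substitution contributes a factor $\widetilde{w}(v_l)\,\dd\sigma_l$ that cancels the $\widetilde{w}(v_l)^{-1}$ generated by the next boundary substitution at $t_{l+1}$, leaving $I^\varphi(t_l,t_{l+1})\,\dd\sigma_l$; at the terminating bounce, where the trajectory first reaches time $s$ inside $\Omega$, the surviving factor is $\widetilde{w}(v_l)\,I^\varphi(t_l,s)\,\dd\sigma_l$; and the dummy variables $v_j$ with $l<j\leq k-1$ are integrated against the probability measures $\dd\sigma_j$. Collecting all factors reproduces exactly the weighted measure $\dd\Sigma^\varphi_l(s)$ defined in the statement.

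\textbf{Induction and main obstacle.} The base case $k=2$ is the direct combination of the previous two steps. Assuming \eqref{3.5} for some $k\geq 2$, I expand the residual term $\mathbf{1}_{\{t_k>s\}}h(t_k,x_k,v_{k-1})$ by applying the boundary condition at $(t_k,x_k)$ and then the Duhamel step on the segment with velocity $v_k\in\mathcal{V}_k$; the new factor $\widetilde{w}(v_k)^{-1}$ cancels the $\widetilde{w}(v_{k-1})$ that sits in $\dd\Sigma^\varphi_{k-1}(t_k)$, and after relabelling the splitting indicators one recovers \eqref{3.5} with $k$ replaced by $k+1$. I expect the main obstacle to be purely algebraic bookkeeping: chaining the $I^\varphi$ factors along the cycle, verifying the exact cancellation of the $\widetilde{w}$ weights between consecutive bounces, and organizing at each level the two indicators $\mathbf{1}_{\{t_{l+1}\leq s<t_l\}}$ and $\mathbf{1}_{\{t_{l+1}>s\}}$ produced by the dichotomy in the Duhamel step. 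The almost-everywhere restriction on $(x,v)$ is only needed to exclude the grazing set on which $t_{\mathbf{b}}$ fails to be well defined, a Lebesgue-null set by the standard arguments collected in the appendix.
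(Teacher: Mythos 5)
Your argument is correct and is exactly the standard Duhamel--boundary-substitution--induction chain that the paper intends when it cites Lemma 24 of Guo (the constant frequency $\nu(v)$ there simply becomes the variable $R(\varphi)$ here, with $e^{-\nu(v)(t-s)}$ replaced by $I^\varphi(t,s)$); the paper omits the details for precisely this reason. One small slip: in the induction step the factor produced by the boundary substitution at $(t_k,x_k)$ with incoming velocity $v_{k-1}$ is $\widetilde{w}(v_{k-1})^{-1}$, not $\widetilde{w}(v_k)^{-1}$, and it is this factor that cancels the $\widetilde{w}(v_{k-1})$ sitting in $\dd\Sigma^\varphi_{k-1}(t_k)$.
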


%\noindent{\bf Proof.} %The proof is very 
\begin{proof}
It is similar to \cite[Lemma 24]{Guo2}, so we omit the details here. 
\end{proof}%$\hfill\Box$

\subsection{%Linear problem 
Existence for inflow boundary}

For later use we still consider \eqref{3.1} but supplemented with the inflow boundary condition.

%
%For later use, we need the following lemma on the inflow boundary value  problem

\begin{lemma}\label{lemA1}
Let $\varphi(t,x,v)$ satisfy \eqref{3.3}, and $q(t,x,v)=0$. Let $h(t,x,v)$ be the solution to  the following initial-boundary value problem:
\begin{equation}\label{lemA1.p}%\nonumber
\left\{\begin{aligned}
&\partial_th+v\cdot\nabla_x h+h R(\varphi)=0,\\
&h(t,x,v)|_{t=0}=h_0(x,v)\in L^\infty_{x,v},\\
&h(t,x,v)|_{\gamma_-}=g(t,x,v)\in L^\infty(\gamma_-).
\end{aligned}\right.
\end{equation}
Then, for $(t,x,v)\in (0,\infty)\times \overline{\Omega}\times \R^3$ with $(x,v)\notin \gamma_0\cup \gamma_-$, it holds that 
\begin{align}\label{A1}
h(t,x,v)&=\Fi_{t-t_{\bf b}< 0}\   h_0(x-vt,v) \exp\left\{-\int_0^tR(\varphi)(\tau,x-v(t-\tau),v)\,\dd\tau\right\}\nonumber\\
&\quad+I_{t-t_{\bf b}\geq 0} \ g(t-t_{\bf b}, x_{\bf b}, v) \exp\left\{-\int_{t-t_{\bf b}}^tR(\varphi)(\tau,x-v(t-\tau),v)\,\dd\tau\right\},
\end{align}
where $t_{\bf b}=t_{\bf b}(x,v)$ and $x_{\bf b}=x_{\bf b}(x,v)$ are  defined in \eqref{exit} and \eqref{xb}, respectively. Moreover, it holds that 
\begin{align}\label{A2}
\|h(t)\|_{L^\infty}\leq \|h_0\|_{L^\infty}+\sup_{0\leq s\leq t}\|g(s)\|_{L^\infty},%\quad\mbox{for}\quad t\geq0.
\end{align}
for all $t\geq 0$.
\end{lemma}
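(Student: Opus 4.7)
The plan is to solve the transport equation in \eqref{lemA1.p} directly by the method of characteristics along the straight backward trajectory $s\mapsto x-v(t-s)$. Setting $H(s):=h(s,x-v(t-s),v)$, the PDE reduces to the linear ODE
\begin{equation*}
\frac{d}{ds}H(s)+R(\varphi)(s,x-v(t-s),v)\,H(s)=0,
\end{equation*}
which is integrated explicitly via the integrating factor $\exp\bigl\{-\int_a^s R(\varphi)(\tau,x-v(t-\tau),v)\,d\tau\bigr\}$. The proper starting point $a$ is determined by where the backward characteristic first leaves $\overline{\Omega}$, i.e.\ by the exit time $t_{\bf b}(x,v)$ defined in \eqref{exit}.

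Next I split into two cases according to the value of $t_{\bf b}(x,v)$. If $t_{\bf b}(x,v)>t$ (equivalently $t-t_{\bf b}<0$), then $\{x-v(t-s):s\in[0,t]\}\subset\overline{\Omega}$, so $H$ is defined on the whole interval $[0,t]$, and integrating the ODE from $s=0$ gives $H(t)=h_0(x-vt,v)\exp\bigl\{-\int_0^t R(\varphi)(\tau,x-v(t-\tau),v)\,d\tau\bigr\}$, which is the first term in \eqref{A1}. If instead $t_{\bf b}(x,v)\leq t$, the characteristic meets $\partial\Omega$ at time $s=t-t_{\bf b}$ at the point $x_{\bf b}=x-vt_{\bf b}$ with $v\cdot n(x_{\bf b})\leq 0$; excluding the grazing set $\gamma_0$, we have $(x_{\bf b},v)\in\gamma_-$ and the inflow boundary condition gives $H(t-t_{\bf b})=g(t-t_{\bf b},x_{\bf b},v)$. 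Integrating the ODE from $s=t-t_{\bf b}$ up to $s=t$ then yields the second term in \eqref{A1}. The two cases are exclusive and jointly exhaustive, so combining them via $\mathbf{1}_{\{t-t_{\bf b}<0\}}$ and $\mathbf{1}_{\{t-t_{\bf b}\geq 0\}}$ produces the claimed representation \eqref{A1}.

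For the $L^\infty$ bound \eqref{A2}, the key observation is that the integrating factor is always in $[0,1]$ because $R(\varphi)\geq 0$. Indeed, factoring $\sqrt{\mu(u)}$ out of \eqref{1.9-1} gives
\begin{equation*}
R(\varphi)(t,x,v)=\int_{\mathbb{R}^3}\int_{\mathbb{S}^2} B(v-u,\omega)\sqrt{\mu(u)}\bigl[\sqrt{\mu(u)}+\varphi(t,x,u)\bigr]\,\dd\omega\,\dd u,
\end{equation*}
and the hypothesis \eqref{3.3} forces $\sqrt{\mu(u)}+\varphi(t,x,u)\geq 0$ pointwise, while $B\geq 0$ by \eqref{1.4}. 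Hence each exponential factor in \eqref{A1} is bounded by $1$, and the two terms are controlled by $\|h_0\|_{L^\infty}$ and $\sup_{0\leq s\leq t}\|g(s)\|_{L^\infty}$ respectively, which together give \eqref{A2} after taking the essential supremum over $(x,v)$; note that the grazing set $\gamma_0$ is negligible and can be excluded without affecting the $L^\infty$ norm.

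There is no serious obstacle: the argument is the standard characteristic representation for first-order transport with a non-negative damping term, and the only point requiring a brief verification is the sign of $R(\varphi)$. The proof is essentially bookkeeping once the case split based on $t_{\bf b}$ is set up, and it is in fact a degenerate version of the diffuse mild form of Lemma \ref{lem3.01} in which the reflection on $\gamma_-$ is replaced by the prescribed inflow datum $g$.
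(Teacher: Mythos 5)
Your proof is correct and follows essentially the same route as the paper: integrate along the backward characteristic, split into the two cases $t-t_{\bf b}<0$ and $t-t_{\bf b}\geq 0$ according to whether the trajectory first reaches the initial plane or the boundary, and use $R(\varphi)\geq 0$ (via the factorization $\mu+\sqrt{\mu}\varphi=\sqrt{\mu}(\sqrt{\mu}+\varphi)\geq 0$ from \eqref{3.3}) to bound the exponential factors by $1$ and obtain \eqref{A2}. The only cosmetic point is that the multiplier $\exp\{+\int R\}$, not $\exp\{-\int R\}$, is the actual integrating factor for $H'+RH=0$ (the same slip appears in the paper's display \eqref{A3}), but the resulting representation \eqref{A1} is stated correctly in both.
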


%\noindent{\bf Proof.} 
\begin{proof}
For almost every $x,v$, along the bicharacteristics \eqref{def.bic}, 
%line $\f{dx}{dt}=v$, $\f{dv}{dt}=0$, 
one may write the first equation in \eqref{lemA1.p} as 
\begin{align}\label{A3}
\f{d}{ds}\left[h(s,x-v(t-s),v)\exp\Big\{-\int_0^s R(\varphi)(\tau,x-v(t-\tau),v)\,\dd\tau\Big\} \right]=0.
\end{align}
If $t-t_{\bf b}< 0$, then the backward trajectory first hits the initial plane. Thus, integrating the above equation over $s\in[0,t]$, one obtains the first part on the right hand of \eqref{A1}.
%\begin{align}\label{A4}
%h(t,x,v)=h_0(x-vt,v) \exp\left\{-\int_0^t(R\varphi)(\tau,x-v(t-\tau),v)\dd\tau\right\},\quad\mbox{for}\quad t-t_{\bf b}\leq 0.
%\end{align}
On the other hand, if $t-t_{\bf b}\geq 0$, then the  backward trajectory first hits the boundary. Integrating \eqref{A3} over $s\in [t-t_{\bf b}, t]$ yields the second part of the right hand of \eqref{A1}. 
%\begin{align}\label{A5}
%h(t,x,v)=g(t-t_{\bf b}, x_{\bf b}, v) \exp\left\{-\int_{t-t_{\bf b}}^t(R\varphi)(\tau,x-v(t-\tau),v)\dd\tau\right\},\quad\mbox{for}\quad t-t_{\bf b}\geq 0.
%\end{align}
Finally, the estimate \eqref{A2} follows directly from \eqref{A1} under the assumption $R(\varphi)\geq 0$ in  \eqref{3.3}. This then completes the proof of Lemma \ref{lemA1}.
%,  \eqref{A4} and \eqref{A5}.   
%$\hfill\Box$
\end{proof}

\subsection{%Linear problem with 
Existence for diffusive boundary}

We treat $L^\infty$ estimates on solutions to  the initial-boundary value problem for the linear equation \eqref{3.1} with the diffuse reflection boundary condition \eqref{lp.bc} under the assumption \eqref{3.3}. To do that, we first use Lemma \ref{lemA1} to establish an abstract lemma.
%, which will used to prove Lemma \ref{lem3.02} below.

\begin{lemma}\label{lemA2}
Let $\varphi(t,x,v)$ satisfy \eqref{3.3}, and $q(t,x,v)=0$.  Let $\mathcal{M}: L^\infty(\gamma_+)\rightarrow L^\infty(\gamma_-)$ be a bounded linear operator with $\|\mathcal{M}\|_{\mathcal{L}(L^\infty_{\ga_+},L^\infty_{\ga_-})}=1$. Then, for each $0<\v<1$, there exist a unique solution $h(t,x,v)\in L^\infty$ with  $h_\gamma\in L^\infty$ for the following initial-boundary value problem %solving 
\begin{equation}\label{lemA2.lp.dbc}%\nonumber
\left\{\begin{aligned}
&\partial_th+v\cdot\nabla_x h+h R(\varphi)=0, \\ %
& h(t,x,v)|_{t=0}=h_0(x,v),\\
&h_{\gamma_-}=(1-\v)\mathcal{M} h_{\gamma_+}. %\quad 
\end{aligned}\right.
\end{equation}
\end{lemma}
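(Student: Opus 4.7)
The plan is to build $h$ by a Picard-type iteration that converts the diffuse-type boundary condition in \eqref{lemA2.lp.dbc} into a sequence of inflow boundary value problems of the form handled by Lemma \ref{lemA1}, exploiting the fact that the prefactor $(1-\v)<1$ combined with the unit operator norm of $\CM$ will produce a contraction in $L^\infty$. Specifically, set $h^0\equiv 0$ and, for each $n\geq 0$, let $h^{n+1}$ be the unique $L^\infty$ solution supplied by Lemma \ref{lemA1} to
\begin{equation*}
\partial_t h^{n+1}+v\cdot\nabla_x h^{n+1}+h^{n+1}R(\varphi)=0,\quad h^{n+1}|_{t=0}=h_0,\quad h^{n+1}|_{\ga_-}=(1-\v)\CM\bigl(h^n|_{\ga_+}\bigr).
\end{equation*}
The outgoing trace $h^n|_{\ga_+}$ is well-defined pointwise (a.e.) by the explicit representation \eqref{A1}, and since $\CM$ maps $L^\infty(\ga_+)$ into $L^\infty(\ga_-)$ with unit norm, the inflow datum at step $n+1$ lies in $L^\infty(\ga_-)$, so the iteration is consistent.

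Applying \eqref{A2} to $h^{n+1}$ yields
\begin{equation*}
\|h^{n+1}\|_{L^\infty}\leq \|h_0\|_{L^\infty}+(1-\v)\|h^n|_{\ga_+}\|_{L^\infty(\ga_+)}\leq \|h_0\|_{L^\infty}+(1-\v)\|h^n\|_{L^\infty},
\end{equation*}
which iterates to the uniform bound $\sup_n\|h^n\|_{L^\infty}\leq \v^{-1}\|h_0\|_{L^\infty}$. For convergence, the difference $h^{n+1}-h^n$ solves the same linear transport equation with zero initial data and inflow boundary $(1-\v)\CM\bigl[(h^n-h^{n-1})|_{\ga_+}\bigr]$; a second application of \eqref{A2} (with $h_0\equiv 0$) gives
\begin{equation*}
\|h^{n+1}-h^n\|_{L^\infty}\leq (1-\v)\|h^n-h^{n-1}\|_{L^\infty}.
\end{equation*}
Hence $\{h^n\}$ is Cauchy in $L^\infty$, and the same contraction transfers to the trace sequence in $L^\infty(\ga_+)$ via \eqref{A1}. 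Passing to the limit in the integral representation \eqref{A1} shows that the limit $h$ satisfies \eqref{lemA2.lp.dbc} with $h_\ga\in L^\infty$. Uniqueness follows by applying the same estimate to the difference of two solutions, which forces $\|h-\tilde h\|_{L^\infty}\leq (1-\v)\|h-\tilde h\|_{L^\infty}$ and hence $h=\tilde h$.

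The main subtlety is conceptual rather than computational: one must recast the diffuse-type boundary condition as an inflow condition whose datum is the outgoing trace of the previous iterate, and verify that this trace is meaningful in $L^\infty(\ga_+)$. This is precisely what the pointwise representation \eqref{A1} provides, under the nonnegativity $R(\varphi)\geq 0$ built into \eqref{3.3} (so that the exponential factors in \eqref{A1} are bounded by $1$). Once this is in place, the strict domination of $\|\CM\|=1$ by the factor $(1-\v)^{-1}$ makes the estimate \eqref{A2} deliver both existence and uniqueness at once, uniformly in time and without any finite-horizon restriction.
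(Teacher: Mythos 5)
Your proposal is correct and follows essentially the same strategy as the paper: the same Picard-type iteration that replaces the diffuse-type boundary condition by an inflow condition built from the previous iterate's outgoing trace, with the $(1-\v)$ prefactor yielding a geometric contraction. The only cosmetic difference is that the paper first shows the outgoing-trace sequence $\{h^m_{\ga_+}\}$ is Cauchy in $L^\infty(\ga_+)$ and then transfers to the interior via Lemma \ref{lemA1}, whereas you bound the interior $L^\infty$ differences directly — these are equivalent by the representation formula \eqref{A1}, which you correctly invoke to make the outgoing trace meaningful and control it by the same quantities appearing in \eqref{A2}.
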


%\noindent{\bf Proof.} 
\begin{proof}
Take $0<\v<1$. We consider the approximation solution sequence $\{h^m(t,x,v)\}$ defined by 
\begin{equation}\label{A6}
\left\{\begin{aligned}
&\left\{\partial_t +v\cdot\nabla_x +R(\varphi)\right\}h^{m+1} =0, \\
& h^{m+1}(t,x,v)|_{t=0}=h_0(x,v),\\
& h_{\gamma_-}^{m+1}=(1-\v)\mathcal{M} h^m_{\gamma_+}, 
\end{aligned}\right.
\end{equation}
with $m=0,1,2,\cdots$, and $h^0_{\gamma_+}\equiv 0$. Note that for each $m\geq 0$, \eqref{A6} is an inflow boundary-value problem, so the existence of solutions to the problem \eqref{A6} follows directly from Lemma \ref{lemA1}. Thus, $\{h^m(t,x,v)\}$ is well-defined. 

Next we claim that $h^m$ and $h_{\gamma}^m$ are Cauchy in $L^\infty$. Indeed, from \eqref{A6}, one has 
\begin{equation}\label{A7}
\left\{\begin{aligned}
&\left\{\partial_t +v\cdot\nabla_x +R(\varphi)\right\}(h^{m+1}-h^m) =0, \\ 
& h_{\gamma_-}^{m+1}-h_{\gamma_-}^{m}=(1-\v)\mathcal{M} \left[h^m_{\gamma_+}-h^{m-1}_{\gamma_+}\right] , \\  
& (h^{m+1}-h^m)(t,x,v)|_{t=0}=0.
\end{aligned}\right.
\end{equation}
Then, by Lemma \ref{lemA1}, %that 
\begin{equation}\nonumber%\label{ }
\sup_{s}\|(h^{m+1}_{\gamma_+}-h^{m}_{\gamma_+})(s)\|_{L^\infty}\leq \sup_{s}\|(h^{m+1}_{\gamma_-}-h^{m}_{\gamma_-})(s)\|_{L^\infty}.
\end{equation}
Using the second equation of \eqref{A7} and also the assumption that $\|\mathcal{M}\|_{\mathcal{L}(L^\infty_{\ga_+},L^\infty_{\ga_-})}=1$, it follows that 
\begin{equation}\nonumber
\sup_{s}\|(h^{m+1}_{\gamma_-}-h^{m}_{\gamma_-})(s)\|_{L^\infty}\leq (1-\v) \sup_{s}\|(h^{m}_{\gamma_+}-h^{m-1}_{\gamma_+})(s)\|_{L^\infty}.
\end{equation}
Combining both estimates above, it holds that
\begin{equation}\nonumber
\sup_{s}\|(h^{m+1}_{\gamma_+}-h^{m}_{\gamma_+})(s)\|_{L^\infty}\leq (1-\v) \sup_{s}\|(h^{m}_{\gamma_+}-h^{m-1}_{\gamma_+})(s)\|_{L^\infty},
\end{equation}
which by induction further gives
\begin{align}\nonumber
\sup_{s}\|(h^{m+1}_{\gamma_+}-h^{m}_{\gamma_+})(s)\|_{L^\infty}
\leq (1-\v)^{m}\sup_{s}\|h^{1}_{\gamma_+}(s)\|_{L^\infty}.
\end{align}
Hence, $\{h^m_{\gamma_+}\}_{m=1}^{\infty}$ is a Cauchy sequence in $L^\infty(\mathbb{R}_+\times \gamma_{+})$. Again by applying Lemma \ref{lemA1}, one sees that both $\{h^m_{\gamma_-}\}_{m=1}^{\infty}$ and $\{h^m\}_{m=1}^{\infty}$ are Cauchy in $L^\infty$. Then, the existence of solutions to \eqref{lemA2.lp.dbc} follows by taking $m\rightarrow\infty$.   This completes the proof of Lemma \ref{lemA2}.
%$\hfill\Box$
\end{proof}

\begin{lemma}\label{lem3.02}
%Let $h_0\in L^\infty$.  Assume $\b>\f32$ and $\r\geq1$ large enough in the weight function \eqref{WF}, 
Let $\r>1$ be suitably large and $\beta>3/2$ in the velocity weight function  \eqref{WF}. 
Assume $h_{0}=h_0(x,v)\in L^{\infty }$ and $q(t,x,v) \equiv 0$.
Then there exists a unique solution $h(t)={{G}^\varphi(t,s)}h_0$ to the equation \eqref{3.1} with {$h(t)|_{t=s}=h_0$} and the diffuse reflection boundary condition \eqref{lp.bc} under the assumption \eqref{3.3} on $\varphi(t,x,v)$. 
%\eqref{3.1}-\eqref{3.4}. 
Furthermore, there is a constant $C_3\geq 1$  independent of $\rho$ such that 
\begin{align}\label{3.9}
\|{{G}^\varphi(t,s)}h_0 \|_{L^\infty} \leq C_3\r^{2\b} \|h_0\|_{L^\infty}, %\quad\mbox{for}\quad  0\leq t\leq \r.
%\|\tilde{G}(t)h_0 I_{t_1\leq0}\|_{L^\infty} \leq  \|h_0\|_{L^\infty}
\end{align}
for all $0\leq s\leq t\leq \rho$.
\end{lemma}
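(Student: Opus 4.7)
The plan is to construct $G^\varphi(t,s)h_0$ by a damped-boundary iteration relying on Lemma \ref{lemA2}, and then extract the quantitative bound \eqref{3.9} from the explicit representation in Lemma \ref{lem3.01}. After translating time so that $s=0$, I introduce the reflection operator $(\mathcal{M}g)(v)=\widetilde{w}(v)^{-1}\int_{\mathcal{V}(x)}\widetilde{w}(v')g(v')\,\dd\sigma(v')$ for $v\in\gamma_-$. Its $L^\infty(\gamma_+)\to L^\infty(\gamma_-)$ operator norm equals some finite constant $C_\rho<\infty$; applying Lemma \ref{lemA2} to the renormalized operator $\mathcal{M}/C_\rho$ produces, for each small $\varepsilon>0$, a unique $h^\varepsilon\in L^\infty$ solving the $(1-\varepsilon)$-damped problem. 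Uniform-in-$\varepsilon$ bounds furnished by the representation estimate below then allow the limit $\varepsilon\to 0^+$ and yield the desired $h$ solving \eqref{3.1} together with \eqref{lp.bc}.

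For the bound \eqref{3.9}, invoke \eqref{3.5} with $q\equiv 0$ and $s=0$. Because $R(\varphi)\geq 0$, every exponential attenuation factor $I^\varphi$ is bounded by $1$, so for any $k\geq 2$,
\[
|h(t,x,v)|\leq \frac{1}{\widetilde{w}(v)}\int_{\Pi_{j=1}^{k-1}\mathcal{V}_j}\Big[\sum_{l=1}^{k-1}\mathbf{1}_{\{t_{l+1}\leq 0<t_l\}}\|h_0\|_{L^\infty}\,\dd\Sigma^\varphi_l(0)+\mathbf{1}_{\{t_k>0\}}|h(t_k,x_k,v_{k-1})|\,\dd\Sigma^\varphi_{k-1}(t_k)\Big].
\]
Each term in the first sum is controlled by $\|h_0\|_{L^\infty}$ times a factor of the form $\widetilde{w}(v)^{-1}\int_{\mathcal{V}_l}\widetilde{w}(v_l)\,\dd\sigma(v_l)=w(v)\sqrt{\mu(v)}\cdot c_\mu\int_{v_l\cdot n>0}\frac{\sqrt{\mu(v_l)}\{v_l\cdot n\}}{w(v_l)}\,\dd v_l$. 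Since $w(v)\sqrt{\mu(v)}=(2\pi)^{-3/4}(1+\rho^2|v|^2)^\beta e^{-(1/4-\varpi)|v|^2}$ with $\varpi\leq 1/64<1/4$, a one-variable optimization at the critical radius $|v|^2=\beta/(1/4-\varpi)-\rho^{-2}$ yields $\sup_v w(v)\sqrt{\mu(v)}\leq C_{\beta,\varpi}\rho^{2\beta}$, while the velocity integral on the right is bounded uniformly in $\rho\geq 1$ thanks to $\beta>3/2$. Hence the total contribution of the initial-data cycle sum is at most $C\rho^{2\beta}\|h_0\|_{L^\infty}$.

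The principal obstacle is the residual $k$-cycle term, which a priori re-introduces the unknown norm $\|h\|_{L^\infty}$ on the right-hand side and prevents direct closure. The resolution, adapted from \cite{Guo2} and recorded in the appendix, is the small-measure estimate for long diffuse cycles: on any time slab of length $\leq \rho$, taking $k=k(\rho,\beta)$ sufficiently large makes the product measure of $\{t_k>0\}$ under $\Pi_{j=1}^{k-1}\dd\sigma_j$ smaller than $(1/2)^k$, and this geometric gain beats the $\rho^{2\beta}$ loss coming from the weight, so that the residual term is majorized by $\tfrac12\sup_{0\leq \tau\leq t}\|h(\tau)\|_{L^\infty}$ and can be absorbed into the left-hand side. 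An elementary iteration on finitely many subintervals of $[0,\rho]$ then closes the estimate and produces \eqref{3.9} with $C_3$ independent of $\rho$; this same uniform $L^\infty$ bound also vindicates the $\varepsilon\to 0^+$ passage used to obtain existence.
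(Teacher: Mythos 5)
Your plan — damped-boundary iteration via Lemma~\ref{lemA2}, then extraction of \eqref{3.9} from the mild formula of Lemma~\ref{lem3.01} together with the cycle estimate \eqref{4.8} — is the paper's, but the existence step has a genuine gap. The operator $\mathcal{M}g(v)=\widetilde{w}(v)^{-1}\int_{\mathcal{V}(x)}\widetilde{w}(v')g(v')\,\dd\sigma$ has $L^\infty(\gamma_+)\to L^\infty(\gamma_-)$ norm $\sup_{v}\big(w\sqrt{\mu}\big)(v)\cdot\int_{\mathcal{V}}\widetilde{w}\,\dd\sigma\sim C\rho^{2\beta-3}$, which for $\beta>3/2$ and $\rho$ large far exceeds $1$. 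Applying Lemma~\ref{lemA2} to the renormalized operator $\mathcal{M}/C_\rho$ produces, as $\varepsilon\to0^+$, a solution to the \emph{wrong} boundary condition $h|_{\gamma_-}=\mathcal{M}h|_{\gamma_+}/C_\rho$; and if you instead keep $\mathcal{M}$ as is, the contraction in the proof of Lemma~\ref{lemA2} requires $(1-\varepsilon)\|\mathcal{M}\|<1$, so $\varepsilon$ is pinned away from $0$ and the damping cannot be removed. The device you are missing is the change of unknown $\widetilde{h}^m:=\widetilde{w}\,h^m$: for $\widetilde{h}^m$, the boundary condition collapses to the bare average $\widetilde{h}^m|_{\gamma_-}=(1-\tfrac1m)\int_{\mathcal{V}}\widetilde{h}^m\,\dd\sigma$ against the probability measure $\dd\sigma$, whose $L^\infty$ operator norm is exactly $1$, and only then is Lemma~\ref{lemA2} applicable. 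Moreover, since $\widetilde{w}(v)\sim e^{(1/4-\varpi)|v|^2}/(1+\rho^2|v|^2)^\beta$ grows, you must also truncate the data to $\{|v|\leq m\}$ so that $\widetilde{w}h_0\mathbf{1}_{\{|v|\leq m\}}\in L^\infty$, recovering $h^m=\widetilde{h}^m/\widetilde{w}$ afterward; you omit both steps, and without them the argument does not start.

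In the $L^\infty$ estimate your structure matches the paper's (drop $I^\varphi\leq1$, bound the initial-data cycle sum, absorb the residual $k$-cycle term via \eqref{4.8}); however, asserting that $\int_{\mathcal{V}}\widetilde{w}\,\dd\sigma$ is merely ``bounded uniformly in $\rho$'' is too weak to close the argument. You need the actual decay $\int_{\mathcal{V}}\widetilde{w}\,\dd\sigma\leq C_\beta\rho^{-3}$: with $k\sim C_1\rho^{5/4}$ diffuse cycles, the product factor $(1+C_\beta\rho^{-3})^k$ stays $O(1)$ and the absorption coefficient $C\rho^{2\beta-3}(1/2)^{C_2\rho^{5/4}}$ can be made $\leq\tfrac12$, whereas with only a uniform bound on the weight integral the $k$-fold factor would grow like $e^{Ck}\sim e^{C\rho^{5/4}}$ (and the sum of $k$ initial-data terms would pick up an extra $\rho^{5/4}$), overwhelming the stated $C_3\rho^{2\beta}$.
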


%\noindent{\bf Proof.} 
\begin{proof}
%Given $m\geq1$, 
{We only construct the solution operator $G^\varphi(t,0)$ because $G^\varphi(t,s)$ with $t>s>0$ can be constructed in a similar way.} We firstly construct the approximate solution sequence $\{h^m(t,x,v)\}_{m\geq 2}$ by solving the following initial-boundary value problems
\begin{equation}\label{3.10}
\left\{\begin{aligned}
&\partial_t h^m+v\cdot\nabla h^m+h^m R(\varphi)=0,\\
&h^m (t,x,v)|_{\g_-}=\left(1-\f1m\right)\f1{\widetilde{w}(v)} \int_{\mathcal{V}} h^m(t,x,v')\widetilde{w}(v')\,\dd\s(x),\quad (x,v)\in \ga_-,\\
&h^m(t)|_{t=0}=h^m_0:=h_0 \Fi_{\{|v|\leq m\}}.
\end{aligned}\right.
\end{equation}
To solve \eqref{3.10} for each $m\geq 2$, we 
%define a new function 
introduce $\widetilde{h}^m(t,x,v):=\widetilde{w}(v)h^m(t,x,v)$ which satisfies 
\begin{equation}\label{3.11}
\left\{\begin{aligned}
&\partial_t\widetilde{h}^m+v\cdot\nabla \widetilde{h}^m+\widetilde{h}^m R(\varphi)=0,\\
&\widetilde{h}^m (t,x,v)|_{\g_-}=\left(1-\f1m\right)\int_{\mathcal{V}} \widetilde{h}^m(t,x,v')\,\dd\s(x),\quad (x,v)\in \ga_-,\\
&\widetilde{h}^m(t)|_{t=0}=\widetilde{h}_0^m:=\widetilde{w}(v)h_0 \Fi_{\{|v|\leq m\}}.
\end{aligned}\right.
\end{equation}
It is direct to see that the boundary operator maps $L^\infty_{\ga_+}$ to $L^\infty_{\ga_-}$ with the norm $1-\f1m<1$, and the initial data $\widetilde{h}^m_0$ satisfy
\begin{align*}%\label{3.12}
\|\widetilde{h}_0^m\|_{L^\infty}\leq C(m) \|h_0\|_{L^\infty}<\infty,
\end{align*}
for a constant $C(m)$ depending on $m$.
From Lemma \ref{lemA2} above, there exists a unique solution $\widetilde{h}^m(t,x,v)\in L^\infty$ to the IBVP  \eqref{3.11}. Thus we have constructed an approximate solution sequence $h^m(t,x,v)=\f1{\widetilde{w}(v)}\widetilde{h}^m(t,x,v)$ to the IBVP \eqref{3.10}.

In order to take the limit $m\rightarrow \infty$, we need to obtain a uniform $L^\infty$ bound for 
$h^m(t,x,v)$.
%the solution  $h^m$  to \eqref{3.10}. 
Let $(t,x,v)\in (0,\infty)\times \overline\Omega\times \R^3$ with %For any fixed  
$(x,v)\notin\g_0\cup\g_-$. We divide the proof by two cases. %\\[1mm]

\medskip
\noindent{\it Case 1:} $t_1(t,x,v)\leq 0$. Then it holds that $\Fi_{\{t_1\leq 0\}}h^m(t,x,v)=I^\varphi(t,0)h_0^m(x-vt,v)$, and 
\begin{align}\label{3.13}
\|\Fi_{\{t_1\leq 0\}}h^m(t)\|_{L^\infty}\leq \|h_0\|_{L^\infty}.
\end{align}

\noindent{\it Case 2:} $t_1(t,x,v)> 0$. Then the back-time trajectory first hits the boundary. It follows from \eqref{3.5} with $q(t,x,v)=0$ that 
\begin{align}\label{3.14}
\left|\Fi_{\{t_1>0\}}h^m(t,x,v)\right|\leq&\frac{I^\varphi(t,t_1)}{\widetilde{w}(v)}\int_{\Pi _{j=1}^{k-1}\mathcal{V}_{j}}\sum_{l=1}^{k-1}\mathbf{1}_{\{t_{l+1}\leq 0<t_l\}}|h^m_0(x_{l}-t_{l}v_{l}, v_{l})| \,\dd\Sigma^\varphi _{l}(0) \nonumber\\
&+\frac{I^\varphi(t,t_1)}{\widetilde{w}(v)}\int_{\Pi _{j=1}^{k-1}\mathcal{V}_{j}} \Fi_{\{t_k>0\}} \left|h^m(t_k,x_{k},v_{k-1})\right| \,\dd\Sigma^\varphi _{k-1}(t_k).
\end{align}
Using the boundary condition \eqref{3.10},  the second term on the right-hand side of \eqref{3.14} can be bounded by
\begin{align}%\label{3.15}
&\frac{I^\varphi(t,t_1)}{\widetilde{w}(v)}\int_{\Pi _{j=1}^{k}\mathcal{V}_{j}} \Fi_{\{t_{k+1}\leq0<t_k\}} |h^m_0(x_{k}-v_kt_k,v_{k})| \,\dd\Sigma^\varphi _{k}(0) \nonumber\\
&\qquad\qquad+\frac{I^\varphi(t,t_1)}{\widetilde{w}(v)}\int_{\Pi _{j=1}^{k}\mathcal{V}_{j}} \Fi_{\{t_{k+1}>0\}} |h^m(t_k,x_{k},v_{k})| \,\dd\Sigma^\varphi _{k}(t_k).\nonumber
\end{align}
Substituting %\eqref{3.15} 
the above bound back into \eqref{3.14}, one obtains that
\begin{align}\label{3.16}
\left|\Fi_{\{t_1>0\}}h^m(t,x,v)\right|&\leq\frac{I^\varphi(t,t_1)}{\widetilde{w}(v)}\int_{\Pi _{j=1}^{k}\mathcal{V}_{j}}\sum_{l=1}^{k}\mathbf{1}_{\{t_{l+1}\leq 0<t_l\}}|h^m_0(x_{l}-t_{l}v_{l}, v_{l})| \,\dd\Sigma^\varphi _{l}(0) \nonumber\\
&\qquad+\frac{I^\varphi(t,t_1)}{\widetilde{w}(v)}\int_{\Pi _{j=1}^{k}\mathcal{V}_{j}} \Fi_{\{t_{k+1}>0\}} |h^m(t_k,x_{k},v_{k})| \,\dd\Sigma^\varphi _{k}(t_k)\nonumber\\
&:=J_1+J_2,
\end{align}
where $J_1$, $J_2$ denote two terms on the right, respectively. 
Recall \eqref{4.8} in the Appendix and note that
\begin{equation}\nonumber
\left|\Fi_{\{t_{k+1}>0\}}h^m(t_k,x_k,v_k)\right|\leq \sup_{y,u}\left|\Fi_{\{t_{1}(t_k,y,u)>0\}}h^m(t_k,y,u)\right|.
\end{equation}
Then, for $J_2$, it follows that
%$|I_{\{t_{k+1}>0\}}h(t_k,x_k,v_k)|\leq \sup_{y,u}|I_{\{t_{1}(t_k,y,u)>0\}}h(t_k,y,u)|$, 
\begin{equation}\nonumber
J_2\leq\f{1}{\widetilde{w}(v)}\sup_{y,u}\left|\Fi_{\{t_{1}(t_k,y,u)>0\}}h^m(t_k,y,u)\right| \int_{\Pi _{j=1}^{k}\mathcal{V}_{j}} \Fi_{\{t_{k+1>0}\}} \widetilde{w}(v_k) \,\dd\s_k\cdots \dd\s_1.
\end{equation}
Using the facts that
\begin{align*}%\label{3.18}
\f{1}{\widetilde{w}(v)}\leq C_\b \r^{2\b}(1+|v|)^{-1}, %\quad %\mbox{and}  \quad
%\int_{\nu(x)} \widetilde{w}(v)\,\dd\s(x)\leq C_\b \r^{-3},
\end{align*}
and 
\begin{equation*}%\label{3.18-1}
\int_{\mathcal{V}(x)} \widetilde{w}(v)\,\dd\s(x) \leq C_\b \r^{-3},
\end{equation*}
and taking %Choosing 
$k=C_1\r^{\f54}$,
one can further bound $J_2$ as
%we bound the second term on RHS of \eqref{3.16} as
\begin{align}\label{3.17}
J_2
%&\leq\f{1}{\widetilde{w}(v)}\sup_{y,u}|I_{\{t_{1}(t_k,y,u)>0\}}h(t_k,y,u)|\int_{\Pi _{j=1}^{k}\mathcal{V}_{j}} I_{\{t_{k+1>0}\}} \widetilde{w}(v_k) \dd\s_k\cdots \dd\s_1\nonumber\\
&\leq C\r^{2\b}\sup_{0\leq s\leq t\leq \r}\|\Fi_{\{t_{1}>0\}}h^m(s)\|_{L^\infty} \int_{\Pi _{j=1}^{k-1}\mathcal{V}_{j}} \Fi_{\{t_{k}>0\}}\, \dd\s_{k-1}\cdots \dd\s_1\nonumber\\
&\leq C_4\r^{2\b-3}\left(\f12\right)^{C_2\r^{\f54}}\sup_{0\leq s\leq t\leq \r}\|\Fi_{\{t_{1}>0\}}h^m(s)\|_{L^\infty},
\end{align}
where $C_4\geq 1$ is some constant. 
%, and  we have chosen $k=C_1\r^{\f54}$ and  used the facts
%\begin{align}\label{3.18}
%\f{1}{\widetilde{w}(v)}\leq C_\b \r^{2\b}(1+|v|)^{-1} \quad \mbox{and}  \quad
%\int \widetilde{w}\dd\s\leq C_\b \r^{-3}.
%\end{align}
%On the other hand, for the first term on RHS of \eqref{3.16},  we can bound it as 
Next, for $J_1$, similarly as for treating $J_1$ before, it can be bounded as
\begin{align}
J_1&\leq \f{1}{\widetilde{w}(v)} \int_{\Pi _{j=1}^{k}\mathcal{V}_{j}}\sum_{l=1}^{k}\mathbf{1}_{\{t_{l+1}\leq 0<t_l\}}|h^m_0(x_{l}-t_{l}v_{l}, v_{l})| \widetilde{w}(v_l)\, \dd\s_k\cdots \dd\s_l\cdots \dd\s_1\nonumber\\
&\leq C\r^{2\b} \|h_0\|_{L^\infty} \Pi _{j=1}^{k} \int_{\mathcal{V}_j} \left[1+\widetilde{w}(v_j)\right]\, \dd\s_j,\nonumber
%\\
%&\leq C\r^{2\b} \left(1+\f{C_\b}{\r^3}\right)^{k}\|h_0\|_{L^\infty} \nonumber\\
%&=C\r^{2\b} \left(1+\f{C_\b}{\r^3}\right)^{C_2\r^{\f54}}\|h_0\|_{L^\infty} 
%\leq C\r^{2\b}\|h_0\|_{L^\infty}.
\end{align}
and hence it further follows that
\begin{equation}\label{3.19}
J_1\leq C\r^{2\b} \left(1+\f{C_\b}{\r^3}\right)^{k}\|h_0\|_{L^\infty} 
=C\r^{2\b} \left(1+\f{C_\b}{\r^3}\right)^{C_2\r^{\f54}}\|h_0\|_{L^\infty} 
\leq C\r^{2\b}\|h_0\|_{L^\infty}.
\end{equation}
Substituting  \eqref{3.17} and \eqref{3.19} into \eqref{3.16}, one obtains that 
\begin{align}%\label{3.20}
\|\Fi_{\{t_1>0\}}h^m(t,x,v)\|_{L^\infty}&\leq C_4\r^{2\b-3}\left(\f12\right)^{C_2\r^{\f54}}\sup_{0\leq s\leq \r}\|\Fi_{\{t_{1}>0\}}h^m(s)\|_{L^\infty}+C\r^{2\b}\|h_0\|_{L^\infty}.\nonumber
\end{align}
Taking $\r$ large enough such that 
\begin{equation}\nonumber
C_4\r^{2\b-3}\left(\f12\right)^{C_2\r^{\f54}}\leq \f12,
\end{equation}
 one then gets that 
\begin{equation}\label{3.21}
\|\Fi_{\{t_1>0\}}h^m(t,x,v)\|_{L^\infty}\leq C_3\r^{2\b}\|h_0\|_{L^\infty},%\quad\mbox{for} \  0\leq t\leq \r.
\end{equation}
for all $0\leq t\leq \r$. 
Based on the uniform estimate \eqref{3.21} as well as \eqref{3.13} obtained before, it holds that %one can prove 
$h^m(t,x,v)\rightarrow h(t,x,v)$ a.e.~$x$, $v$ by using \eqref{3.5} and the Lebesgue convergence Theorem.  Thus one can conclude the proof of Lemma \ref{lem3.02}
by taking $m\rightarrow\infty$.   
\end{proof}

\subsection{Exponential time-decay in $L^\infty$ }

Note that Lemma \ref{lem3.02} is used  for  the linear $L^\infty$ estimates in finite time. In the long time, the behavior of ${G^\varphi(t,0)}$ is given by the following lemma which will be used in Section \ref{sec5} later.
  
%However, we need the following lemma,  which will be used in section 4 below, for the long time estimate.

\begin{lemma}\label{lem3.3}
Let all the assumptions in Lemma \ref{lem3.02} hold. We further suppose that 
\begin{equation}\label{3.22}
R(\varphi)(t,x,v)\geq \f12\nu(v),% \  \mbox{for} \ t\geq0,
\end{equation}
for all $(t,x,v)\in [0,\infty)\times \Omega\times \R^3$.
Then it holds that 
\begin{align}\label{3.23}
%\begin{cases}
\|{{G}^\varphi(t,s)}h_0 \|_{L^\infty} \leq C_3\r^{2\b} {e^{-\f12\nu_0 (t-s)}}\|h_0\|_{L^\infty}\quad\mbox{for all}\  0\leq s\leq t\leq \rho,
%\|\tilde{G}(t)h_0 I_{t_1\leq0}\|_{L^\infty} \leq  e^{-\f12\nu_0 t}\|h_0\|_{L^\infty},
%\end{cases}
%\mbox{for} \  0\leq t\leq \r.
\end{align}
%for all $0\leq t\leq \r$, 
where $C_3$ is the constant  given in Lemma \ref{lem3.02}. Moreover,  there exists a %positive 
constant $C_\r\geq1$ such that 
\begin{align}\label{3.24}
\|{{G}^\varphi (t,s)}h_0\|_{L^\infty}\leq C_\r {e^{-\f14 \nu_0 (t-s)}} \|h_0\|_{L^\infty},%\quad\mbox{for}\quad t\geq0.
\end{align}
for all $t\geq s\geq 0$, {where the constant $C_\rho$ is independent of $s$.}
\end{lemma}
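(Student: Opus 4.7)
\textbf{Proof proposal for Lemma \ref{lem3.3}.}

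The plan is to first upgrade the $L^\infty$ bound of Lemma \ref{lem3.02} on a single interval of length $\rho$ to an exponentially decaying one by tracking the collision-frequency factor $I^\varphi$, and then to iterate via the non-autonomous semigroup property to obtain decay for all $t\geq s\geq 0$.

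For the first estimate \eqref{3.23}, I would revisit the proof of Lemma \ref{lem3.02} and observe that the hypothesis \eqref{3.22} together with $\nu(v)\geq \nu_0$ yields, along any back-time trajectory,
\begin{equation*}
I^\varphi(t,s)=\exp\Bigl\{-\int_s^t R(\varphi)(\tau,X_{cl}(\tau),V_{cl}(\tau))\,\dd\tau\Bigr\}\leq e^{-\frac{\nu_0}{2}(t-s)}.
\end{equation*}
Because each time the cycle $(t_0,t_1,\ldots)$ bounces off the boundary the factors $I^\varphi(t,t_1)$, $I^\varphi(t_j,t_{j+1})$, $I^\varphi(t_l,0)$ in $d\Sigma^\varphi_l(0)$ appear as a telescoping product whose arguments cover $[0,t]$, each term in $J_1$ and $J_2$ of \eqref{3.16} picks up a uniform factor $e^{-\frac{\nu_0}{2}t}$ that can be pulled out of the integrals. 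Repeating the absorption argument for $\rho$ sufficiently large so that $C_4\rho^{2\beta-3}(\tfrac12)^{C_2\rho^{5/4}}\leq \tfrac12$ (and likewise for translated initial times, replacing $t$ by $t-s$ throughout) yields \eqref{3.23} on every interval $0\le s\le t\le \rho$.

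For the second estimate \eqref{3.24}, I would use the non-autonomous semigroup identity $G^\varphi(t,s)=G^\varphi(t,\tau)G^\varphi(\tau,s)$ for any $s\le \tau\le t$, which holds since both sides produce the unique solution of \eqref{3.1} with \eqref{lp.bc} and initial datum $h_0$ at time $s$ (uniqueness follows from Lemma \ref{lem3.02}). Given $t>s\geq 0$, write $t-s=n\rho+r$ with $0\leq r<\rho$, split $[s,t]$ into $n+1$ subintervals of length at most $\rho$, and apply \eqref{3.23} on each. This gives
\begin{equation*}
\|G^\varphi(t,s)h_0\|_{L^\infty}\leq (C_3\rho^{2\beta})^{n+1} e^{-\frac{\nu_0}{2}(t-s)}\|h_0\|_{L^\infty}.
\end{equation*}
Choosing $\rho$ even larger, if necessary, so that $\log(C_3\rho^{2\beta})\leq \tfrac{\nu_0}{4}\rho$, one has $(n+1)\log(C_3\rho^{2\beta})\leq \tfrac{\nu_0}{4}(t-s)+\log(C_3\rho^{2\beta})$, and hence
\begin{equation*}
\|G^\varphi(t,s)h_0\|_{L^\infty}\leq C_\rho\, e^{-\frac{\nu_0}{4}(t-s)}\|h_0\|_{L^\infty},
\end{equation*}
with $C_\rho:=C_3\rho^{2\beta}$. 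Crucially, the decomposition is translation-invariant in time, so $C_\rho$ does not depend on $s$.

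I expect the main (though still mild) obstacle to be verifying that \eqref{3.23} is valid on intervals $[s,t]\subset[0,\rho]$ with arbitrary starting time $s$, not just $s=0$, and that the constant $C_3$ produced by Lemma \ref{lem3.02} really is insensitive to this translation — for this one simply notes that the proof of Lemma \ref{lem3.02} is driven by the back-time cycle and the diffuse-reflection boundary bound, both of which are translation invariant in $t$, so repeating the argument with starting time $s$ in place of $0$ changes nothing. Once this is in place, the telescoping of the $I^\varphi$ factors and the semigroup iteration deliver both \eqref{3.23} and \eqref{3.24}.
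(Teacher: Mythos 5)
Your proof is correct and follows essentially the same strategy as the paper: extracting the exponential factor $e^{-\frac{\nu_0}{2}(t-s)}$ from the telescoping product of $I^\varphi$ factors in the boundary-cycle expansion to get \eqref{3.23}, then iterating via the two-parameter evolution identity on subintervals of length $\leq\rho$ and absorbing the power $(C_3\rho^{2\b})^{n+1}$ into half the decay rate by choosing $\rho$ large. The paper defers the iteration step to an analogous argument in \cite{Guo2}, whereas you spell it out; the content is the same.
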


%\noindent{\bf Proof.}  
\begin{proof}
{We only prove the estimate for ${G}^\varphi (t,0)$ since the estimate for ${G}^\varphi (t,s)$ can be obtained by similar arguments.} First of all, recalling \eqref{1.9-1} and \eqref{def.iv}, and noting the assumption \eqref{3.22}, one has 
\begin{equation}\nonumber
I^\varphi(t,0)\leq e^{-\f12\nu_0 t}.
\end{equation}
As in Lemma \ref{lem3.02}, we denote {$h(t)={G}^\varphi (t,0)h_0$}. Then, for $t_1(t,x,v)\leq 0$, we have 
\begin{align*}%\label{3.25}
%\|I_{\{t_1\leq 0\}}\tilde{G}h_0(t)\|_{L^\infty}=\|I(t,0) h_0(x-vt,v)\|_{L^\infty}
|\Fi_{\{t_1\leq 0\}} h(t,x,v)|=|I^\varphi(t,0) h_0(x-vt,v)|
\leq e^{-\f12\nu_0 t} \|h_0\|_{L^\infty}.
\end{align*}
For $t_1(t,x,v)> 0$, similar to obtaining \eqref{3.16}, it holds  that
\begin{align}\label{3.26}
|\Fi_{\{t_1>0\}}h(t,x,v)|\leq&\frac{I^\varphi(t,t_1)}{\widetilde{w}(v)}\int_{\Pi _{j=1}^{k}\mathcal{V}_{j}}\sum_{l=1}^{k}\mathbf{1}_{\{t_{l+1}\leq 0<t_l\}}|h_0(x_{l}-t_{l}v_{l}, v_{l})| \,\dd\Sigma^\varphi _{l}(0) \nonumber\\
&\qquad+\frac{I^\varphi(t,t_1)}{\widetilde{w}(v)}\int_{\Pi _{j=1}^{k}\mathcal{V}_{j}} \Fi_{\{t_{k+1}>0\}} |h(t_k,x_{k},v_{k})| \,\dd\Sigma^\varphi _{k}(t_k)\nonumber\\
&:=J_1+J_2,
\end{align}
where $J_1$, $J_2$ denote those respective terms on the right.
It follows from \eqref{3.22} that 
\begin{equation}\nonumber%\label{3.27}
I^\varphi(t,t_1)I^\varphi(t_1,t_2)\cdots I^\varphi(t_l,s)\leq e^{-\f12\nu_0(t-s)}.
\end{equation}
%which yields that 
Then one can deduce that
\begin{align}
J_1&\leq C\r^{2\b}e^{-\f12\nu_0t} \|h_0\|_{L^\infty} \Pi _{j=1}^{k} \int_{\mathcal{V}_j} [1+\widetilde{w}(v_j)] \,\dd\s_j \nonumber\\
&\leq C\r^{2\b}e^{-\f12\nu_0t}  \left(1+\f{C_\b}{\r^3}\right)^{C_2\r^{\f54}}\|h_0\|_{L^\infty}\nonumber \\
&\leq C\r^{2\b}e^{-\f12\nu_0t} \|h_0\|_{L^\infty},\label{3.28}
\end{align}
and 
\begin{align}\label{3.29}
J_2&\leq\f{1}{\widetilde{w}(v)}\int_{\Pi _{j=1}^{k}\mathcal{V}_{j}} e^{-\f12\nu_0(t-t_k)} \sup_{y,u}|\Fi_{\{t_{1}(t_k,y,u)>0\}}h(t_k,y,u)| \Fi_{\{t_{k+1>0}\}} \widetilde{w}(v_k) \,\dd\s_k\cdots \dd\s_1\nonumber\\
&\leq C\r^{2\b} e^{-\f12\nu_0t}\sup_{0\leq s\leq t\leq \r}\left\|\Fi_{\{t_{1}>0\}}e^{\f12\nu_0s}h(s)\right\|_{L^\infty} \int_{\Pi _{j=1}^{k-1}\mathcal{V}_{j}} \Fi_{\{t_{k}>0\}} \,\dd\s_{k-1}\cdots \dd\s_1\nonumber\\
&\leq C_4\r^{2\b-3}\left(\f12\right)^{C_2\r^{\f54}}e^{-\f12\nu_0t}\sup_{0\leq s\leq t\leq \r}\left\|\Fi_{\{t_{1}>0\}}e^{\f12\nu_0s}h(s)\right\|_{L^\infty}.
\end{align}
Since $\r$ can be fixed large enough such that 
\begin{equation}\nonumber
C_4\r^{2\b-3}\left(\f12\right)^{C_2\r^{\f54}}\leq \f12,
\end{equation}
by substituting \eqref{3.28} and \eqref{3.29} into \eqref{3.26}, one obtains that
\begin{equation*}%\label{3.30}
\sup_{0\leq t\leq \r} \left\{e^{\f12\nu_0 t} \|\Fi_{\{t_1>0\}}h(t)\|_{L^\infty}\right\}\leq C_3\r^{2\b} \|h_0\|_{L^\infty}.
\end{equation*}
This then proves \eqref{3.23}. Finally, \eqref{3.24} follows from \eqref{3.23} with the help of similar arguments as in \cite[Lemma 5]{Guo2}; details are omitted for simplicity of presentation. Thus, the proof of Lemma \ref{lem3.3} is complete. 
\end{proof}

%
%Finally, using \eqref{3.23} and by similar arguments as in Lemma 25 of  \cite{Guo2}, one can get \eqref{3.24}. We omit the details for simplicity of presentation.  Thus the proof of this lemma is completed.  $\hfill\Box$

\subsection{Continuity %for linear problem
}

As for continuity of $L^\infty$ solutions, one has

\begin{lemma}\label{lemA3}
Let $\Omega$ be strictly convex as in \eqref{convexity} and $T>0$ be given. Assume that $h_0(x,v)$ is continuous for $(x,v)\notin\gamma_0$, $\varphi(t,x,v)$ and $q(t,x,v)$ are continuous in %the interior of 
$[0,T)\times\Omega\times\mathbb{R}^3 $ with 
\begin{equation}\nonumber
%\label{ }
\sup_{0\leq t< T}\|\varphi(t,x,v)\|_{L^\infty_{x,v}}<\infty,\quad \sup_{0\leq t<T}\|q(t,x,v)\|_{L^\infty_{x,v}}<\infty, 
\end{equation}
and $\varphi$ satisfies  \eqref{3.3}. 
Let $h=h(t,x,v)$ satisfy
\begin{equation}\nonumber
%\label{ }
\left\{\begin{aligned}
&\partial_th+v\cdot\nabla_x h+R(\varphi) h=q,\\  
& h(t,x,v)|_{t=0}=h_0(x,v),\\
& h(t,x,v)|_{\g_-}=\f{1}{\widetilde{w}(v)}\int_{\mathcal{V}(x)}h(t,x,v')\widetilde{w}(v')\,\dd\sigma,
\end{aligned}\right.
\end{equation}
over $[0,T)$, where the boundary condition also holds for $h_0(x,v)$, namely,
%for $t\in[0,T)$, where  If $h_0(x,v)$ satisfies the following compatibility condition
\begin{align}\label{3.41-1}
h_0(x,v)|_{\gamma_-}=\f{1}{\widetilde{w}(v)}\int_{\mathcal{V}(x)}h_0(x,v')\widetilde{w}(v')\,\dd\sigma.
\end{align}
Then $h(t,x,v)$ is continuous  at any $(t,x,v)\in[0,T)\times \Omega\times \R^3$ with $(x,v)\notin\gamma_0$.
\end{lemma}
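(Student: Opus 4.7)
\medskip
\noindent\textbf{Proof proposal for Lemma \ref{lemA3}.}
The plan is to follow the strategy of \cite{Guo2} for continuity of the linear diffuse problem, adapted to the variable collision frequency $R(\varphi)$. The key resource is the explicit back-time cycle representation from Lemma \ref{lem3.01}, combined with the strict convexity \eqref{convexity} of $\Omega$ which guarantees that the backward exit time $t_{\mathbf{b}}(x,v)$, the exit point $x_{\mathbf{b}}(x,v)$ and the entire back-time cycle $(t_k, x_k)$ depend continuously on $(t,x,v,v_1,\dots,v_{k-1})$ provided none of the intermediate data lies on the grazing set $\gamma_0$. Since the grazing velocities form a set of measure zero in each $\mathcal{V}_j$ and the measure $d\sigma_j$ is absolutely continuous, this continuity will survive integration.

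First I would split the argument into the two cases of Lemma \ref{lem3.01}. When $t_1(t,x,v)\leq 0$, the trajectory does not hit the boundary and the formula
\begin{equation*}
h(t,x,v)=I^\varphi(t,0)h_0(x-vt,v)+\int_0^t I^\varphi(t,\tau)q(\tau,x-v(t-\tau),v)\,d\tau
\end{equation*}
is manifestly continuous in $(t,x,v)$ away from $\gamma_0$, using continuity of $h_0$, $\varphi$, $q$ and the fact that $R(\varphi)$ is bounded and continuous along the straight characteristic. When $t_1(t,x,v)>0$, I would use the representation \eqref{3.5} for $q=0$ augmented by the source terms from $q$, i.e.\ integrate over the diffusive back-time cycle up to some large level $k$.

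The second and main step is to control the tail. Using the bounds established in the proof of Lemma \ref{lem3.02}, in particular the estimate
\begin{equation*}
\frac{1}{\widetilde{w}(v)}\int_{\Pi_{j=1}^{k-1}\mathcal{V}_j}\mathbf{1}_{\{t_k>0\}}\,d\Sigma^\varphi_{k-1}(t_k)\leq C_4\rho^{2\beta-3}\Bigl(\tfrac12\Bigr)^{C_2\rho^{5/4}},
\end{equation*}
the tail term $J_2^{(k)}$ (the contribution of trajectories that have not yet reached $t=0$ after $k$ bounces) can be made arbitrarily small uniformly on any compact set of $(t,x,v)$ with $0\leq t\leq T$, by taking $k$ sufficiently large. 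Then I would show that for each fixed $k$, every finite-level term in the truncated expansion is continuous in $(t,x,v)$: strict convexity of $\Omega$ yields continuity of the maps $(t,x,v,v_1,\dots,v_{j-1})\mapsto (t_j,x_j)$ off a set of zero $\prod d\sigma_j$-measure (the set where some $v_j$ is grazing at $x_j$), and the factors $\widetilde{w}(v_j)$, $I^\varphi(t_j,t_{j+1})$ are continuous there. Dominated convergence, using the $L^\infty$ control inherited from Lemma \ref{lem3.02} to majorize, then gives continuity of each finite-level integrand after integration against $d\sigma_1\cdots d\sigma_{k-1}$.

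The remaining delicate point, and the one I expect to be the main obstacle, is continuity across the interface $\{t=t_1(t,x,v)\}$ and at the initial slice. At an outgoing boundary point $(x,v)\in\partial\Omega\times\mathbb{R}^3$ with $v\cdot n(x)>0$, the interior representation must reconcile with the representation coming from reflection; this is automatic because both sides are read off from the same back-time cycle. At an incoming boundary point $(x,v)\in\gamma_-$, continuity of $h$ at $t=0$ requires the trace of $h_0$ as seen from inside $\Omega$ to match the value prescribed by the diffuse reflection law, and this is precisely the compatibility assumption \eqref{3.41-1}: as $t\to 0^+$ along a trajectory whose first bounce occurs at time $t_1\to 0^+$, the reflected integral $\widetilde{w}(v)^{-1}\int_{\mathcal{V}(x)}h(t_1,x_1,v')\widetilde{w}(v')\,d\sigma$ tends to $\widetilde{w}(v)^{-1}\int_{\mathcal{V}(x)}h_0(x,v')\widetilde{w}(v')\,d\sigma=h_0(x,v)|_{\gamma_-}$. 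Combining the three ingredients (tail smallness, continuity of each finite-level term off grazing, and boundary/initial compatibility) and letting $k\to\infty$ yields the continuity of $h(t,x,v)$ on $\{(t,x,v)\in[0,T)\times\overline{\Omega}\times\mathbb{R}^3:\,(x,v)\notin\gamma_0\}$.
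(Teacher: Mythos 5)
Your proposal is correct and follows essentially the same route the paper has in mind: the paper's own ``proof'' of Lemma~\ref{lemA3} is a single sentence deferring to the arguments of Guo's Lemma~26 in \cite{Guo2} together with the continuity of $\varphi$ and the bounds from Lemma~\ref{lem3.02}, and your sketch is a faithful reconstruction of what that entails. You have the right ingredients in the right places: the case split $t_1\leq 0$ versus $t_1>0$ via the mild form of Lemma~\ref{lem3.01}, continuity of the backward exit data $(t_{\mathbf b},x_{\mathbf b})$ away from $\gamma_0$ via strict convexity, the exponential tail smallness $C_4\rho^{2\beta-3}(\tfrac12)^{C_2\rho^{5/4}}$ from the proof of Lemma~\ref{lem3.02} to truncate the bounce expansion, dominated convergence for each finite-level term (using that the grazing subsets of each $\mathcal V_j$ are $\dd\sigma_j$-null), and the compatibility condition \eqref{3.41-1} to glue across $t=0$ along trajectories that are about to hit $\gamma_-$. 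In short you have written out what the paper chose to suppress by citation, and nothing in your argument contradicts or bypasses what the paper relies on.
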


%\noindent{\bf Proof.} 
\begin{proof}
%Noting 
Using the continuity of $\varphi(t,x,v)$  as well as %and 
Lemma \ref{lem3.02}, %then 
one can prove this lemma  by similar arguments as %Lemma 26 
in \cite[Lemma 26]{Guo2}. The details are omited for simplicity of presentation. 
\end{proof}

\section{Local-in-time existence in $L^\infty_{x,v}$}\label{sec4}
This section is devoted to showing the following:

\begin{theorem}[Local Existence]\label{LE}
%Let $\r$ be suitably large,  $0\leq \g\leq 1$ and $\b\geq 5/2%\f52$.  
Let $\r>1$ be sufficiently large, $0\leq\varpi\leq \f1{64}$ and $\beta>5/2$ in the velocity weight function  \eqref{WF}. 
Supose $F_0=\mu(v)+\sqrt{\mu(v)}f_0(x,v)\geq0$ and $\|wf_0\|_{L^\infty}<\infty$. Then  exists a positive time $\hat{t}_0:=(\hat{C}_\r[1+\|h_0\|_{L^\infty}])^{-1}$ such that %the Boltzmann equation \eqref{1.1}, \eqref{1.5-1}, and \eqref{1.4}
 the initial-boundary value problem \eqref{1.1}, \eqref{1.5-1}, \eqref{diffuse1} on the Boltzmann equation
has a unique solution $F(t,x,v)=\mu(v)+\sqrt{\mu(v)}f(t,x,v)\geq0$ on $t\in[0,\hat{t}_0]$, satisfying
%and satisfies
\begin{equation*}%\label{3.31}
\sup_{0\leq t\leq \hat{t}_0}\|wf(t)\|_{L^\infty}\leq 2C_3 \r^{2\b}\|wf_0\|_{L^\infty},
\end{equation*}
where $\hat{C}_\r$ is a positive constant depending only on $\r$, and  $C_3$ is the positive constant defined in Lemma \ref{lem3.02}.   Moreover, if domain $\Omega$ is strictly convex, and if the initial data $f_0(x,v)$ are continuous except on $\g_0$ with 
\begin{equation}\label{3.31-1}
f_0(x,v)|_{\gamma_-}=c_{\mu }\sqrt{\mu (v)}\int_{v^{\prime }\cdot
	n(x)>0}f_0(x,v^{\prime })\sqrt{\mu (v^{\prime })}\{v'\cdot n(x)\}\,\dd v^{\prime },
\end{equation} 
then the solution $f(t,x,v)$ is continuous in $[0,\hat{t}_0]\times\{\Omega\times\mathbb{R}^3\backslash \g_0\}$.
\end{theorem}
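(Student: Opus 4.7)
The plan is to construct $F$ as the limit of the linear iteration scheme \eqref{s.ite} outlined in Section~\ref{sec1.6}, starting from $F^0=\mu$ (so $f^0\equiv 0$) and defining $F^{n+1}=\mu+\sqrt{\mu}f^{n+1}$ recursively as the unique solution of
\begin{equation*}
\pa_t F^{n+1} + v\cdot\na_x F^{n+1} + F^{n+1}\CA F^n = Q_+(F^n,F^n),\qquad F^{n+1}|_{t=0}=F_0,
\end{equation*}
with the diffuse reflection boundary condition \eqref{diffuse1}. Subtracting the trivial Maxwellian equation converts this into a linear equation of exactly the type handled in Section~\ref{sec3}:
\begin{equation*}
\pa_t f^{n+1} + v\cdot\na_x f^{n+1} + f^{n+1}R(f^n) = q^n,
\end{equation*}
with source $q^n = \mu^{-1/2}[Q_+(F^n,F^n)-\mu\CA F^n]$. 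Assuming inductively that $F^n\geq 0$ and $\|wf^n\|_{L^\infty}<\infty$, the hypothesis \eqref{3.3} is satisfied, so Lemma~\ref{lem3.02} together with the mild representation of Lemma~\ref{lem3.01} produces a unique $L^\infty$ solution $f^{n+1}$. Non-negativity of $F^{n+1}$ is read off the mild formula: the source $Q_+(F^n,F^n)$ is nonnegative, the damping coefficient $\CA F^n$ is nonnegative, and the diffuse reflection kernel $c_\mu\mu(v)\{v'\cdot n(x)\}$ is positive, so positivity propagates along the back-time cycle.

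The heart of the argument is a uniform bound on $h^n:=wf^n$ over a short time interval. Multiplying the equation for $f^{n+1}$ by $w$ and applying the $L^\infty$ estimate \eqref{3.9} to the Duhamel formula yields
\begin{equation*}
\|h^{n+1}(t)\|_{L^\infty}\leq C_3\r^{2\b}\|h_0\|_{L^\infty}+C_3\r^{2\b}\int_0^t\|wq^n(s)\|_{L^\infty}\,\dd s.
\end{equation*}
The source $q^n$ decomposes into the linear-in-$f^n$ contributions from $Q_+(\sqrt{\mu}f^n,\mu)$, $Q_+(\mu,\sqrt{\mu}f^n)$ and $\mu\CA(\sqrt{\mu}f^n)$ — each controlled in weighted $L^\infty$ by $C\|h^n\|_{L^\infty}$ through standard kernel estimates such as \eqref{2.3} — together with the nonlinear gain term $\Ga_+(f^n,f^n)$, for which Lemma~\ref{lem2.2} (in particular \eqref{2.4-1}) delivers the crucial bound $\|w\Ga_+(f^n,f^n)\|_{L^\infty}\leq C_\b\|h^n\|_{L^\infty}^2$. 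Setting $M:=2C_3\r^{2\b}\|h_0\|_{L^\infty}$ and assuming $\|h^n(s)\|_{L^\infty}\leq M$ for $s\leq t\leq\hat{t}_0$, one gets
\begin{equation*}
\|h^{n+1}(t)\|_{L^\infty}\leq C_3\r^{2\b}\|h_0\|_{L^\infty}+C_3\r^{2\b}\,\hat{t}_0\,(CM+C_\b M^2),
\end{equation*}
and choosing $\hat{t}_0=(\hat{C}_\r[1+\|h_0\|_{L^\infty}])^{-1}$ with $\hat{C}_\r$ large enough (depending on $\r$) absorbs the last term into $C_3\r^{2\b}\|h_0\|_{L^\infty}$, closing the induction from $h^0\equiv 0$.

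For convergence, the difference $g^n:=f^{n+1}-f^n$ satisfies
\begin{equation*}
\pa_t g^n+v\cdot\na g^n+g^n R(f^n)=(q^n-q^{n-1})-f^n[R(f^n)-R(f^{n-1})],
\end{equation*}
with vanishing initial data. Using bilinearity, $\Ga_+(f^n,f^n)-\Ga_+(f^{n-1},f^{n-1})=\Ga_+(g^{n-1},f^n)+\Ga_+(f^{n-1},g^{n-1})$, and Lemma~\ref{lem2.2} again bounds the weighted $L^\infty$ norm of this difference by $C_\b M\|wg^{n-1}\|_{L^\infty}$; combined with analogous linear estimates one obtains
\begin{equation*}
\|wg^n(t)\|_{L^\infty}\leq C_3\r^{2\b}(1+M)\int_0^t\|wg^{n-1}(s)\|_{L^\infty}\,\dd s,
\end{equation*}
so $\{h^n\}$ is Cauchy on $[0,\hat{t}_0]$ after possibly enlarging $\hat{C}_\r$. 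The uniform limit $h=wf$ solves the Boltzmann equation, inherits the stated bound, and $F=\mu+\sqrt{\mu}f\geq 0$. Uniqueness follows by applying the same contraction argument to the difference of two candidate solutions. Continuity when $\Omega$ is strictly convex and $f_0$ is continuous off $\g_0$ with the compatibility \eqref{3.31-1} is obtained by applying Lemma~\ref{lemA3} inductively to each $f^{n+1}$ (with $\varphi=f^n$ and source $q^n$, both continuous off $\g_0$ by the inductive hypothesis and the continuity of $Q_+$ and $\CA$), and transferring continuity to the uniform limit $f$.

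\textbf{Main obstacle.} The decisive step is the $L^\infty$ closure in the second paragraph: without the pointwise gain-term estimate of Lemma~\ref{lem2.2}, the quadratic nonlinearity $\Ga_+(f^n,f^n)$ could not be tamed when $\|h_0\|_{L^\infty}$ is large, and it is precisely the $M^2$ growth of the source that dictates the explicit size $\hat{t}_0\sim 1/(1+\|h_0\|_{L^\infty})$ appearing in the theorem.
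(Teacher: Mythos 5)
Your blueprint coincides with the paper's construction in Steps~1, 2, 4 and~5: the same iteration scheme solved via Lemma~\ref{lem3.02}, the same use of \eqref{2.4-1} from Lemma~\ref{lem2.2} to tame the quadratic gain term, the same choice $\hat{t}_0\sim(1+\|h_0\|_{L^\infty})^{-1}$, and continuity via Lemma~\ref{lemA3} applied inductively. Your positivity sentence ("propagates along the back-time cycle") is the right idea; the paper makes it precise by showing all contributions in the mild formula for $F^{m+1}$ are explicitly nonnegative except the last one, which is made arbitrarily small as the number of bounces $k\to\infty$.

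There is, however, a genuine gap in your contraction step. Once you move $f^n\bigl[R(f^n)-R(f^{n-1})\bigr]$ to the right-hand side of the difference equation, this source has size
\begin{equation*}
w(v)\,\bigl|f^n\bigr|\cdot\bigl|R(f^n)-R(f^{n-1})\bigr|\;\lesssim\;|h^n(t,x,v)|\cdot\nu(v)\,\|w g^{n-1}(t)\|_{L^\infty},
\end{equation*}
and since $\nu(v)\sim(1+|v|)^{\kappa}$ is unbounded for $\kappa>0$, the $w$-weighted $L^\infty$ norm of this source is infinite. The Duhamel-type bound $\|w g^n(t)\|_{L^\infty}\leq C(1+M)\int_0^t\|w g^{n-1}(s)\|_{L^\infty}\,\dd s$ therefore cannot be obtained directly from Lemma~\ref{lem3.02}. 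The same obstruction appears if one instead differences the two mild formulations: the factor $|I^{n}(t,s)-I^{n-1}(t,s)|$ is bounded only by $C\nu(v)\int_s^t\|(f^{n}-f^{n-1})(\tau)\|_{L^\infty}\,\dd\tau$, carrying the same $\nu(v)$ growth.

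The paper resolves this (Step~3) by proving the Cauchy estimate in the strictly weaker weighted norm $\|\hat{h}^n\|_{L^\infty}$ with $\hat{h}^n=\hat{w}f^n$, $\hat{w}(v):=w(v)/\sqrt{1+\rho^2|v|^2}$. The spare factor $\sqrt{1+\rho^2|v|^2}$ simultaneously absorbs the $\nu(v)$ growth and supplies the $O(\rho^{-1/2})$ smallness needed so the boundary sums over the back-time cycle converge after $k\sim\rho^{5/4}$ bounces. The iterates thus contract in the $\hat{w}$-norm, while the uniform $w$-weighted bound from Step~2 guarantees the limit still lies in the original space and satisfies $\sup_{0\le t\le\hat{t}_0}\|wf(t)\|_{L^\infty}\le 2C_3\rho^{2\beta}\|h_0\|_{L^\infty}$. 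You should make this weight loss explicit; as written, the contraction claim is not justified for any $\kappa>0$.
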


%\noindent{\bf Proof.} 
\begin{proof}
We divide it by five steps. 

\medskip
\noindent{\bf Step 1.} To consider the local existence of solutions for the Boltzmann equation \eqref{1.1}, we start from %consider 
the iteration that for $n=0,1,2,\cdots$, 
\begin{equation}\label{3.32}
\left\{\begin{aligned}
&\partial_tF^{n+1}+v\cdot\nabla_x F^{n+1}+F^{n+1}\cdot\int_{\mathbb{R}^3}\int_{\mathbb{S}^2}B(v-u,\omega)F^{n}(t,x,u)\,\dd\omega \dd u=Q_+(F^n,F^n),\\ %[2mm]
&F^{n+1}(t,x,v)\Big|_{t=0}=F_0(x,v)\geq0,%~~~~\mbox{and}~~~F^0(t,x,v)=\mu(v),
\end{aligned}\right.
\end{equation}
with the diffuse reflection boundary condition
\begin{equation}\label{3.33}
F^{n+1}(t,x,v)|_{\g_-}=c_\m \mu(v)\int_{\mathcal{V}} F^{n+1}(t,x,v') \mu(v') \{v'\cdot n(x)\}\,\dd v',
\end{equation}
where we set $F^0(t,x,v)\equiv \mu(v)$ for $n=0$. As before, we 
denote 
\begin{equation}\nonumber
f^{n+1}(t,x,v)=\f{F^{n+1}(t,x,v)-\mu(v)}{\sqrt{\mu(v)}},%\quad  %\mbox{and} \quad 
%h^{n+1}(t,x,v)=w(v)f^{n+1}(t,x,v),
\end{equation}
and 
\begin{equation}%\label{ }
h^{n+1}(t,x,v)=w(v)f^{n+1}(t,x,v).\nonumber
\end{equation}
Then %\eqref{3.32} 
the above iteration
can be written equivalently as, for $n=0,1,2,\cdots$, 
\begin{equation}\label{3.34}
\left\{\begin{aligned}
&\partial_th^{n+1}+v\cdot\nabla_x h^{n+1}+h^{n+1}\cdot R(f^n)=K_wh^n+w(v)\Gamma_+(f^n,f^n),\\ %[2mm]
&h^{n+1}(t,x,v)\Big|_{t=0}=h_0(x,v),%~~~\mbox{and}~~~h^0(t,x,v)=0,
\end{aligned}\right.
\end{equation}
with %the diffuse boundary condition
\begin{align}\label{3.35}
h^{n+1}(t,x,v)|_{\g_-}=\f{1}{\widetilde{w}(v)}\int_{\mathcal{V}} h^{n+1}(t,x,v') \widetilde{w}(v') \,\dd\s,
\end{align}
and $h^0(t,x,v)\equiv 0$. Here we have denoted $K_wh:=\int_{\mathbb{R}^3}k_w(v,\eta) h(\eta)d\eta$ with $k_w(v,\eta)\equiv k(v,\eta)\f{w(v)}{w(\eta)}$.

%\

\medskip
\noindent{\bf Step 2.} 
Next, we shall use the induction argument on $n=0,1,\cdots$ to  prove that there exists a positive time $\hat{t}_1>0$,  independent of $n$, such that \eqref{3.34} and \eqref{3.35}, or equivalently \eqref{3.32} and \eqref{3.33}, admits a unique mild solution on the time interval $[0,\hat{t}_1]$,  and  the following uniform bound and positivity hold true:
\begin{align}\label{3.40}
\|h^n(t)\|_{L^\infty}\leq 2C_3 \r^{2\b}\|h_0\|_{L^\infty}, \quad % \  \mbox{and} \ 
F^{n}(t,x,v)\geq0, %\ 
%0\leq t\leq \hat{t}_0.
\end{align}
for $0\leq t\leq \hat{t}_1$.

%\

Firstly, for $n=0$,  recalling $f^0(t,x,v)\equiv 0$, it follows from Lemma \ref{lem3.02} that there exists a solution operator {${G}^1(t,0)$} to the IBVP \eqref{3.34}, \eqref{3.35} with $n=0$, such that %and satisfies
\begin{align}\label{3.36}
\|h^1(t)\|_{L^\infty}=\|{{G}^1(t,0)}h_0\|_{L^\infty}\leq C_3\r^{2\b} \|h_0\|_{L^\infty}, %\  \mbox{for} \ 0\leq t\leq \r.
\end{align}
for $0\leq t\leq \r$. 
For the positivity of $F^1(t,x,v)$,  we use the mild formulation %have
\begin{align}\label{3.37}
F^1(t,x,v)&= \Fi_{\{t_1\leq 0\}} I^0(t,0) F_0(x-vt,v)\nonumber\\
&\quad+  \Fi_{\{t_1>0\}}c_\mu\mu(v) I^0(t,t_1)  \int_{\Pi_{j=1}^{k-1}\mathcal{V}_j} \sum_{l=1}^{k-1} \Fi_{\{t_{l+1}\leq0< t_l\}} F_0(x-vt,v)\,\dd\Sigma_{l}^1(0)\nonumber\\
&\quad+\Fi_{\{t_1>0\}}c_\mu\mu(v) I^0(t,t_1)\int_{\Pi_{j=1}^{k-1}\mathcal{V}_j} \Fi_{\{t_k>0\}}F^1(t_k,x_k,v_{k-1})\,\dd\Sigma_{k-1}^1(t_k).
\end{align}
%where 
Here and in the sequel we denote that for $n=0,1,\cdots$,
\begin{equation}\nonumber
%\label{ }
I^n(t,s):=\exp\Big\{-\int_{s}^t(Rf^n)(\tau,X_{cl}(\tau),V_{cl}(\tau))\dd\tau\Big\},
\end{equation}
and 
\begin{equation}\nonumber
%\label{ }
\dd\Sigma _{l}^n(\tau):=\{\Pi _{j=l+1}^{k-1}\dd\sigma _{j}\}\cdot  I^n(t_l,\tau)[v_l\cdot n(v_l)]dv_l\cdot\Pi_{j=1}^{l-1}\{I^n(t_{j},t_{j+1})\dd\s_j\}.
\end{equation}
%\begin{equation}\label{3.38}
%\left\{\begin{split}
%&I^n(t,s):=\exp\Big\{-\int_{s}^t(Rf^n)(\tau,X_{cl}(\tau),V_{cl}(\tau))\dd\tau\Big\},\\ %[3mm]
%&\dd\Sigma _{k-1}^n(\tau):=\{\Pi _{j=l+1}^{k-1}\dd\sigma _{j}\}\cdot  I^n(t_l,\tau)[v_l\cdot n(v_l)]dv_l\cdot\Pi_{j=1}^{l-1}\{I^n(t_{j},t_{j+1})\dd\s_j\}.
%\end{split}\right.
%%n=0,1,\cdots.
%\end{equation}
It follows from \eqref{3.36} that 
\begin{equation*}
|F^1(t,x,v)|=\left|\mu(v)+\sqrt{\mu(v)}\f{h^1}{w}\right|\leq C_3\r^{2\b}(1+\|h_0\|_{L^\infty})\sqrt{\mu(v)},
\end{equation*}
which, together with \eqref{3.37} and \eqref{4.7}, yields that 
\begin{align}%\label{3.39}
F^1(t,x,v)&\geq -C\r^{2\b}\mu(v) (1+\|h_0\|_{L^\infty})  \int_{\Pi_{j=1}^{k-1}\mathcal{V}_j} \Fi_{\{t_k>0\}}\sqrt{\mu{(v_{k-1})}} |v_{k-1}| \,\dd v_{k-1} \dd\s_{k-2}\cdots \dd\s_1\nonumber\\
&\geq -C\r^{2\b}(1+\|h_0\|_{L^\infty})  \int_{\Pi_{j=1}^{k-2}\mathcal{V}_j} \Fi_{\{t_{k-1}>0\}} \,\dd\s_{k-2}\cdots \dd\s_1\nonumber\\
&\geq -C\r^{2\b} (1+\|h_0\|_{L^\infty})  \v,\nonumber
\end{align}
provided that $k\geq k_0(\v,\r)$ is true.  Since $\v>0$ is arbitrary, by letting $\v\to 0$, we deduce that $F^1(t,x,v)\geq0$ holds true for $0\leq t\leq \r$. Thus we have proved \eqref{3.40} for $n=0$.

Now, by supposing that \eqref{3.40} holds true for $n=0,1,2,\cdots m$, we consider it for $n=m+1$. Let {${G}^{m+1}(t,s)$} be the solution operator defined in Lemma \ref{lem3.02} with $\varphi(t,x,v)\equiv f^{m}(t,x,v)$. Then it holds that 
\begin{align}%\label{3.41}
h^{m+1}(t)={{G}^{m+1}(t,0)}h_0+\int_0^t {{G}^{m+1}(t,s)}\left[K_wh^m(s)+w\Gamma_+(f^m,f^m)(s)\right]\,\dd s.\nonumber
\end{align}
This form together with \eqref{3.40}, \eqref{3.9} and \eqref{2.4-1} give 
\begin{align}%\label{3.42}
\|h^{m+1}(t)\|_{L^\infty}
&\leq C_3\r^{2\b}\|h_0\|_{L^\infty}+C_3\r^{2\b}\int_0^t\|K_wh^m(s)\|_{L^\infty}+\|w\Gamma_+(f^m,f^m)(s)\|_{L^\infty}\,\dd s\nonumber\\
&\leq C_3\r^{2\b}\|h_0\|_{L^\infty}+CC_3\r^{2\b}\int_0^t\|h^m(s)\|_{L^\infty}+\|h^m(s)\|^2_{L^\infty}\,\dd s\nonumber\\
&\leq  C_3\r^{2\b}\|h_0\|_{L^\infty} 
\left[1+C_{\r,1} t\left(1+\|h_0\|_{L^\infty}\right)\right],\nonumber
\end{align}
where $C_{\r,1}\geq1$ is some positive constant depending only on $\r$.
Taking $\hat{t}_1:=(C_{\r,1}[1+\|h_0\|_{L^\infty}])^{-1}$, then it holds that 
\begin{align}%\label{3.43}
\|h^{m+1}(t)\|_{L^\infty}\leq 2 C_3\r^{2\b}\|h_0\|_{L^\infty},\nonumber
\end{align}
for $0\leq t\leq\hat{t}_1$.
Next, to show the positivity of $F^{m+1}$, we notice 
\begin{align}\label{3.44}
F^{m+1}(t,x,v)&= \Fi_{\{t_1\leq 0\}} \Big\{I^m(t,0) F_0(x-vt,v)+\int_0^tI^m(t,s) Q_+(F^m,F^m)(s,x-v(t-s),v)\,\dd s\Big\}\nonumber\\
&\quad+  \Fi_{\{t_1>0\}}c_\mu\mu(v) I^m(t,t_1)  \Big\{\int_{\Pi_{j=1}^{k-1}\mathcal{V}_j} \sum_{l=1}^{k-1} I_{\{t_{l+1}\leq0< t_l\}} F_0(x-vt,v)\,\dd\Sigma_{l}^m(0)\nonumber\\
&\quad +\int_{\Pi_{j=1}^{k-1}\mathcal{V}_j} \sum_{l=1}^{k-1} \int_0^{t_l}I_{\{t_{l+1}\leq0< t_l\}} Q_+(F^m,F^m)(\tau,X_{cl}(\tau),V_{cl}(\tau))\, \dd\Sigma_{l}^{m}(\tau) \dd\tau\nonumber\\
&\quad +\int_{\Pi_{j=1}^{k-1}\mathcal{V}_j} \sum_{l=1}^{k-1} \int_{t_{l+1}}^{t_l}I_{\{t_{l+1}\leq0< t_l\}} Q_+(F^m,F^m)(\tau,X_{cl}(\tau),V_{cl}(\tau)) \,\dd\Sigma_{l}^{m}(\tau) \dd\tau\nonumber\\
&\quad+\int_{\Pi_{j=1}^{k-1}\mathcal{V}_j} I_{\{t_k>0\}}F^m(t_k,x_k,v_{k-1})\,\dd\Sigma_{k-1}^m(t_k)\Big\}.%\nonumber\\
%&\geq -c_\mu \mu(v)\int_{\Pi_{j=1}^{k-2}\mathcal{V}_j} I_{\{t_k>0\}} \dd\s_{k-2}\cdots \dd\s_1 \int_{\mathcal{V}_{k-1}} (1+\|h^m_0\|_{L^\infty} ) \sqrt{\mu(v_{k-1})} |v_{k-1}|dv_{k_1}\nonumber\\
%&\geq -C(1+\|h^m_0\|_{L^\infty} ) \v,
\end{align} 
Then, under the induction assumption \eqref{3.40}, it holds that 
\begin{equation}\nonumber
%\label{ }
|F^m(t,x,v)|\leq 2C_3\r^{2\b}(1+\|h_0\|_{L^\infty})\sqrt{\mu(v)},
\end{equation}
then it follows from \eqref{3.44} that $F^{m+1}(t,x,v)$ has the lower bound as 
\begin{equation}\nonumber
-C\r^{2\b} \mu(v)\int_{\Pi_{j=1}^{k-2}\mathcal{V}_j} \Fi_{\{t_k>0\}}\, \dd\s_{k-2}\cdots \dd\s_1 \int_{\mathcal{V}_{k-1}} (1+\|h^m_0\|_{L^\infty} ) \sqrt{\mu(v_{k-1})} |v_{k-1}|\,\dd v_{k-1}.
\end{equation}
Using \eqref{4.7}, it further follows that
\begin{equation}\nonumber%\label{ }
F^{m+1}(t,x,v)\geq -C\r^{2\b}(1+\|h^m_0\|_{L^\infty} ) \v.
\end{equation}
Since $\v>0$ is arbitrary, we deduce $F^{m+1}(t,x,v)\geq0$ by letting $\v\to 0$.
Hence, by induction, we have proved \eqref{3.40} with $t\in[0, \hat{t}_1]$ for all $n=0,1,2,\cdots$.  Note that $\hat{t}_1>0$ is independent of $n$.

\medskip
\noindent{\bf Step 3.} 
To prove the convergence, we denote $\hat{w}(v)=\f{w(v)}{\sqrt{1+\r^2|v|^2}}$ and
\begin{align*}%\label{3.45}
\hat{h}^{n+1}(t,x,v):=\hat{w}(v)f^{n+1}(t,x,v)\equiv(1+\r^2|v|^2)^{-\f12} h^{n+1}(t,x,v),\quad n=0,1,\cdots.
\end{align*}
In the following, we shall prove that $\hat{h}^{n+1},n=0,1,\cdots$ is a Cauchy sequence. %Take $(t,x,v)$.

\medskip
\noindent{\it Case $t_1(t,x,v)\leq 0$}. In this case, %If $t_1(t,x,v)\leq 0$, then 
it holds, for $n=0,1,2,\cdots$, that
\begin{equation}\label{3.46}
\Fi_{\{t_1\leq 0\}}\hat{h}^{n+1}(t,x,v)
=I^{n}(t,0) \hat{h}_0(x-vt,v)
+\int_0^t I^{n}(t,s) q^{n}(s,x-v(t-s),v)\,\dd s,
\end{equation}
where we have denoted %used the notation
\begin{equation}
q^n(t,x,v):=(K_{\hat{w}}\hat{h}^{n})(t,x,v)+(\hat{w}\Gamma_+(f^n,f^n))(t,x,v),\quad %\  \mbox{for} \ 
n=0,1,2,\cdots.\nonumber
\end{equation}
%which yields that 
It then follows that
\begin{align}\label{3.47}
&\left|\Fi_{\{t_1\leq 0\}}(\hat{h}^{n+2}-\hat{h}^{n+1})(t,x,v)\right|\nonumber\\
&\leq |I^{n+1}(t,0)-I^n(t,0)| \cdot |\hat{h}_0(x-vt,v)|\nonumber\\
&\quad+\int_0^t |I^{n+1}(t,s)-I^n(t,s)| \cdot |q^{n+1}(s,x-v(t-s),v)|\,\dd s\nonumber\\
&\quad +\int_0^t I^n(t,s) \cdot |(q^{n+1}-q^n)(s,x-v(t-s),v)|\,\dd s\nonumber\\
&:=J_1^n+J_2^n+J_3^n.
\end{align}
We first estimate $J_1^n+J_2^n$ and then $J_3^n$. Since $t_{1}(t,x,v)\leq 0$, a direct calculation shows, for $0\leq s\leq t$, that 
\begin{align}\label{3.48}
|I^{n+1}(t,s)-I^n(t,s)| \leq C\nu(v)\int_s^t\|(f^{n+1}-f^n)(\tau)\|_{L^\infty}\,\dd\tau. 
\end{align}
Then, in terms of \eqref{3.40} and $t\leq \hat{t}_1$,  it follows from \eqref{3.48} that 
\begin{align}\label{3.49}
J_1^n+J_2^n&\leq C\nu(v)|\hat{h}_0(x-vt,v)|\cdot\int_0^t\|(f^{n+1}-f^n)(\tau)\|_{L^\infty}\,\dd\tau\nonumber\\
&\quad +C\int_0^t\|(f^{n+1}-f^n)(\tau)\|_{L^\infty}\dd\tau\cdot \int_0^t  \nu(v) |q^{n+1}(s,x-v(t-s),v)|\,\dd s\nonumber\\
&\leq Ct\Big[\|h_0\|_{L^\infty}+t\sup_{0\leq s\leq t}\left\{\|h^{n+1}(s)\|_{L^\infty}+\|h^{n+1}(s)\|^2_{L^\infty}\right\}\Big]\cdot \sup_{0\leq \tau\leq t}\|(f^{n+1}-f^n)(\tau)\|_{L^\infty}\nonumber\\
&\leq C_\r t[1+\|h_0\|_{L^\infty}]\cdot\sup_{0\leq \tau\leq t}\|(f^{n+1}-f^n)(\tau)\|_{L^\infty},
\end{align}
where we also have used %\eqref{3.40}, $t\leq \hat{t}_0$ and 
\begin{align}%\label{3.50}
|\nu(v)q^{n+1}(s)|&\leq 
 C|K_{w}h^{n+1}|+|w(v)\Gamma_+(f^{n+1},f^{n+1})(s)|\nonumber\\
&\leq C\|h^{n+1}(s)\|_{L^\infty}+C\|h^{n+1}(s)\|^2_{L^\infty}.\nonumber
\end{align}
For $J_3$, we notice from \eqref{2.4-1}  that 
\begin{align}%\label{3.51}
|(q^{n+1}-q^n)(s)|&\leq \Big|K_{\hat{w}}(\hat{h}^{n+1}-\hat{h}^{n})(s)\Big|+\hat{w}(v)\Big|\Gamma_+(f^{n+1},f^{n+1})(s)-\Gamma_+(f^n,f^n)(s)\Big| \nonumber\\
&\leq C\left[1+\|\hat{h}^{n}(s)\|_{L^\infty}+\|\hat{h}^{n+1}(s)\|_{L^\infty}\right]\cdot \|(\hat{h}^{n+1}-\hat{h}^{n+1})(s)\|_{L^\infty},\nonumber
\end{align}
where $\b\geq 5/2 %\f52
$ is needed when using \eqref{2.4-1}.
Then, %together with 
using \eqref{3.40} again,  we have
\begin{align}\label{3.52}
J_3^n\leq C_\r t[1+\|h_0\|_{L^\infty}]\cdot \sup_{0\leq s\leq t} \|(\hat{h}^{n+1}-\hat{h}^{n+1})(s)\|_{L^\infty}.
\end{align}
Thus, substituting  \eqref{3.52} and \eqref{3.49} into \eqref{3.47}, one obtains that 
\begin{equation}\label{3.53}
\left|\Fi_{\{t_1\leq 0\}}(\hat{h}^{n+2}-\hat{h}^{n+1})(t,x,v)\right|\leq Ct[1+\|h_0\|_{L^\infty}]\cdot \sup_{0\leq s\leq t} \|(\hat{h}^{n+1}-\hat{h}^{n+1})(s)\|_{L^\infty}.
\end{equation}

%\medskip
\noindent{\it Case $t_1(t,x,v)> 0$}.
%On the other hand,  if $t_1(t,x,v)> 0$, 
In this case, 
we have, for $n=0,1,2,\cdots$, that 
\begin{align}\label{3.55}
\hat{h}^{n+1} \Fi_{\{t_1>0\}}
&=\int_{t_1}^{t} I^{n}(t,s) q^n(\tau,x-v(t-\tau),v)\,\dd\tau\nonumber\\
&\quad+\f{I^{n}(t,t_1)}{\widetilde{w}(v)\sqrt{1+\r^2|v|^2}} \int_{\Pi_{j=1}^{k-1}\mathcal{V}_j} \sum_{l=1}^{k-1} \Fi_{\{t_{l+1}\leq0< t_l\}} \hat{h}_0(x_l-v_lt_l,v_l)\,\dd\hat\Sigma_{l}^n(0)\nonumber\\
&\quad +\f{I^{n}(t,t_1)}{\widetilde{w}(v)\sqrt{1+\r^2|v|^2}}\int_{\Pi_{j=1}^{k-1}\mathcal{V}_j} \sum_{l=1}^{k-1} \int_0^{t_l}\Fi_{\{t_{l+1}\leq0< t_l\}} q^n(\tau,X_{cl}(\tau),V_{cl}(\tau)) \,\dd\hat\Sigma_{l}^{n}(\tau) \dd\tau\nonumber\\
&\quad +\f{I^{n}(t,t_1)}{\widetilde{w}(v)\sqrt{1+\r^2|v|^2}}\int_{\Pi_{j=1}^{k-1}\mathcal{V}_j} \sum_{l=1}^{k-1} \int_{t_{l+1}}^{t_l}\Fi_{\{t_{l+1}>0\}} q^n(\tau,X_{cl}(\tau),V_{cl}(\tau))\, \dd\hat\Sigma_{l}^{n}(\tau) \dd\tau\nonumber\\
&\quad+\f{I^{n}(t,t_1)}{\widetilde{w}(v)\sqrt{1+\r^2|v|^2}}\int_{\Pi_{j=1}^{k-1}\mathcal{V}_j} \Fi_{\{t_k>0\}}h^{n+1}(t_k,x_k,v_{k-1})\,\dd\hat\Sigma_{k-1}^n(t_k)\nonumber\\
&:=J_4^n+J_5^n+J_6^n+J_7^n+J_8^{n},
\end{align}
where  we have denoted
\begin{equation*}%\label{3.55-1}
d\hat\Sigma_{k-1}^n(\tau):=\{\Pi _{j=l+1}^{k-1}\dd\sigma _{j}\}\cdot \sqrt{1+\r^2|v|^2} \widetilde{w}(v_l) I^n(t_l,\tau)\dd\s_l\cdot\Pi_{j=1}^{l-1}\{I^n(t_{j},t_{j+1})\dd\s_j\}.
\end{equation*}
Then it follows from \eqref{3.55} that 
\begin{align}\label{3.56}
 \Fi_{\{t_1>0\}} \left|\hat{h}^{n+2}-\hat{h}^{n+1}\right |\leq \sum_{i=4}^8 |J_i^{n+1}-J_i^n|.
\end{align}
We estimate the right-hand difference terms as follows. First of all,  
using similar arguments in \eqref{3.49} and \eqref{3.52}, it holds that 
\begin{equation}\label{3.58}
|J_4^{n+1}-J_4^n|\leq C_\r t[1+\|h_0\|_{L^\infty}]\cdot \sup_{0\leq s\leq t} \|(\hat{h}^{n+1}-\hat{h}^{n+1})(s)\|_{L^\infty}.
\end{equation}
%To prove the remaining terms, 
To consider other difference terms, 
we notice that 
\begin{align}\label{3.59-a1}
&\left|\int_{\Pi_{j=1}^{k-1}\mathcal{V}_j} \sum_{l=1}^{k-1} \Fi_{\{t_{l+1}\leq0< t_l\}}\,\dd\hat\Sigma_{l}^{n+1}(\tau)-\int_{\Pi_{j=1}^{k-1}\mathcal{V}_j} \sum_{l=1}^{k-1} \Fi_{\{t_{l+1}\leq0\leq t_l\}}\,\dd\hat\Sigma_{l}^n(\tau)\right| \nonumber\\
&\leq\int_{\Pi_{j=1}^{k-1}\mathcal{V}_j} \sum_{l=1}^{k-1} \Fi_{\{t_{l+1}\leq0< t_l\}}\sqrt{1+\r^2|v_l|^2}\widetilde{w}(v_l)\Big|
I^{n+1}(t_l,\tau)I^{n+1}(t_{l-1},t_{l})\cdots I^{n+1}(t_1,t_2)\nonumber\\
&\qquad\qquad\qquad\qquad\qquad\qquad\qquad\qquad-I^{n}(t_l,\tau)I^{n}(t_{l-1},t_{l})\cdots I^{n}(t_1,t_2)\Big|
\,\dd \s_{k-1}\cdots \dd\s_1.
\end{align}
Here, to estimate the above integral on the right, one can deduce that 
\begin{align}
&\Big|I^{n+1}(t_l,\tau)I^{n+1}(t_{l-1},t_{l})\cdots I^{n+1}(t_1,t_2)-I^{n}(t_l,\tau)I^{n}(t_{l-1},t_{l})\cdots I^{n}(t_1,t_2)\Big|\nonumber\\
&\leq |I^{n+1}(t_l,\tau)-I^{n}(t_l,\tau)+|I^{n+1}(t_{l-1},t_{l})-I^{n}(t_{l-1},t_{l})|
+\cdots+|I^{n+1}(t_1,t_2)-I^{n}(t_1,t_2)|\nonumber\\
&\leq C\nu(v_l)\int_{\tau}^{t_l}\|(f^{n+1}-f^n)(s)\|_{L^\infty}\, \dd s+C\nu(v_{l-1})\int_{t_l}^{t_{l-1}}\|(f^{n+1}-f^n)(\tau)\|_{L^\infty}\,\dd\tau\nonumber\\
&\quad+\cdots+C\nu(v_1)\int_{t_2}^{t_1}\|(f^{n+1}-f^n)(\tau)\|_{L^\infty}\,\dd\tau \nonumber\\
&\leq C\left[\nu(v_1)+\cdots+\nu(v_l)\right]\int_0^t\|(f^{n+1}-f^n)(s)\|_{L^\infty}\,\dd s.\label{3.59-a2}
\end{align}
Notice that 
%Thus, \eqref{3.59-a1} is further bounded by
\begin{align}
& \int_{\Pi_{j=1}^{k-1}\mathcal{V}_j} \sum_{l=1}^{k-1} \Fi_{\{t_{l+1}\leq0< t_l\}} \sqrt{1+\r^2|v_l|^2}\widetilde{w}(v_l)[\nu(v_l)+\cdots+\nu(v_1)] \,\Pi_{j=1}^{k-1} \dd \si_j\notag\\
&\leq  \sum_{l=1}^{k-1} \int_{\Pi_{j=1}^{k-1}\mathcal{V}_j}  \sqrt{1+\r^2|v_l|^2}\widetilde{w}(v_l)[\nu(v_l)+\cdots+\nu(v_1)] \,\Pi_{j=1}^{k-1} \dd \si_j\notag\\
&=  \sum_{l=1}^{k-1} \int_{\Pi_{j=1}^{l}\mathcal{V}_j}  \sqrt{1+\r^2|v_l|^2}\widetilde{w}(v_l)[\nu(v_l)+\cdots+\nu(v_1)] \,\Pi_{j=1}^{l} \dd \si_j \notag\\
&\leq  \sum_{l=1}^{k-1} k\times \frac{C}{\rho^3}\leq \frac{Ck^2}{\rho^3},\notag
\end{align}
where we have used the inequalities that $\nu(v_j)\leq C (1+|v_j|)$ $(j=1,2,\cdots,l)$, and
\begin{equation}\label{3.59-a3}
\left\{\begin{split}
&\int_{\mathcal{V}_l}(1+|v_l|)\sqrt{1+\r^2|v_l|^2}\,\widetilde{w}(v_l)\,\dd\si_l\leq \f{C}{\r^3},\\
&\int_{\mathcal{V}_j} (1+|v_j|)\,\dd \si_j\leq C,\quad j=1,2,\cdots,l-1, 
\end{split}\right.
\end{equation}
for a generic constant $C>0$.
%\begin{align}
%& C\int_{\Pi_{j=1}^{k-1}\mathcal{V}_j} \sum_{l=1}^{k-1} \Fi_{\{t_{l+1}\leq0< t_l\}} \sqrt{1+\r^2|v_l|^2}\widetilde{w}(v_l)[\nu(v_l)+\cdots+\nu(v_1)] \,\dd\s_{k-1}\cdots \dd\s_1\nonumber\\
%&\qquad\qquad\qquad\qquad\qquad\qquad\qquad\qquad\qquad\times\int_{0}^{t}\|(f^{n+1}-f^n)(\tau)\|_{L^\infty}\,\dd\tau\nonumber\\
%&\leq Ck\int_{0}^{t}\|(f^{n+1}-f^n)(\tau)\|_{L^\infty}\dd\tau \cdot\Pi_{l=l}^{k-1}\int_{\mathcal{V}_l} [1+\nu(v_l)\sqrt{1+\r^2|v_l|^2}\widetilde{w}(v_l)] \,\dd\s_l.\nonumber
%\end{align}
%Due to $\beta\geq 5/2$, using 
%\begin{equation}\label{3.59-a3}
%\int_{\mathcal{V}} [1+(1+|v|)\sqrt{1+\r^2|v|^2}\widetilde{w}(v)]\,\dd\s\leq 1+\f{C}{\r^3},
%\end{equation}
%for a generic constant $C>0$, it then holds that 
%where $\beta\geq 5/2$ has been used. 
%which yields, for any $1\leq l\leq k-1$,  that 
Then \eqref{3.59-a1} is further bounded by
\begin{align}
&\left|\int_{\Pi_{j=1}^{k-1}\mathcal{V}_j} \sum_{l=1}^{k-1} \Fi_{\{t_{l+1}\leq0< t_l\}}\,\dd\hat\Sigma_{l}^{n+1}(\tau)-\int_{\Pi_{j=1}^{k-1}\mathcal{V}_j} \sum_{l=1}^{k-1} \Fi_{\{t_{l+1}\leq0\leq t_l\}}\,\dd\hat\Sigma_{l}^n(\tau)\right|\nonumber\\
%&=\int_{\Pi_{j=1}^{k-1}\mathcal{V}_j} \sum_{l=1}^{k-1} I_{\{t_{l+1}\leq0< t_l\}}\sqrt{1+\r^2|v_l|^2}\widetilde{w}(v_l)\Big[I^{n+1}(t_l,\tau)I^{n+1}(t_{l-1},t_{l})\cdots I^{n+1}(t_1,t_2)\nonumber\\
%&\qquad\qquad\qquad\qquad\qquad-I^{n}(t_l,\tau)I^{n}(t_{l-1},t_{l})\cdots I^{n}(t_1,t_2)\Big]d_{k-1}\cdots \dd\s_1\nonumber\\
%&\leq C\int_{\Pi_{j=1}^{k-1}\mathcal{V}_j} \sum_{l=1}^{k-1} I_{\{t_{l+1}\leq0< t_l\}} \sqrt{1+\r^2|v_l|^2}\widetilde{w}(v_l)[\nu(v_l)+\cdots+\nu(v_1)] d_{k-1}\cdots \dd\s_1\nonumber\\
%&\qquad\qquad\qquad\qquad\times\int_{0}^{t}\|(f^{n+1}-f^n)(\tau)\|_{L^\infty}\dd\tau\nonumber\\
%&\leq Ck\int_{0}^{t}\|(f^{n+1}-f^n)(\tau)\|_{L^\infty}\dd\tau \cdot\Pi_{l=l}^{k-1}\int_{\mathcal{V}_l} [1+\nu(v_l)\sqrt{1+\r^2|v_l|^2}]\widetilde{w}(v_l) \dd\s_l\nonumber\\
&\leq C\frac{k^2}{\rho^3}\int_{0}^{t}\|(f^{n+1}-f^n)(\tau)\|_{L^\infty}\dd\tau\notag\\
&=\frac{C}{\rho^{\frac{1}{2}}}\int_{0}^{t}\|(f^{n+1}-f^n)(\tau)\|_{L^\infty}\dd\tau,
%\cdot\left(1+\f{C}{\r^3}\right)^{k}
%\nonumber\\
%&\leq C\r^{\f54}(1+\f{C}{\r^3})^{C_2\r^{\f54}}\int_{0}^{t}\|(f^{n+1}-f^n)(\tau)\|_{L^\infty}\dd\tau\nonumber\\
%&\leq C\r^{\f54}\int_{0}^{t}\|(f^{n+1}-f^n)(\tau)\|_{L^\infty}\dd\tau,
\label{3.59}
\end{align}
where we have taken $k=C_1\r^{\f54}$. In a similar way, by applying \eqref{3.59-a2} and \eqref{3.59-a3} and then taking $k=C_1\r^{\f54}$, one has  
\begin{align}\label{3.60}
&\left|\int_{\Pi_{j=1}^{k-1}\mathcal{V}_j} \sum_{l=1}^{k-1} \Fi_{\{t_{l+1}>0\}}\,\dd\hat\Sigma_{l}^{n+1}(\tau)-\int_{\Pi_{j=1}^{k-1}\mathcal{V}_j} \sum_{l=1}^{k-1} \Fi_{\{t_{l+1}>0\}}\,\dd\hat\Sigma_{l}^n(\tau)\right|\nonumber\\
&\leq \int_{\Pi_{j=1}^{k-1}\mathcal{V}_j} \sum_{l=1}^{k-1} \Fi_{\{t_{l+1}>0\}}\sqrt{1+\r^2|v_l|^2}\widetilde{w}(v_l)\Big|I^{n+1}(t_l,\tau)I^{n+1}(t_{l-1},t_{l})\cdots I^{n+1}(t_1,t_2)\nonumber\\
&\qquad\qquad\qquad\qquad\qquad-I^{n}(t_l,\tau)I^{n}(t_{l-1},t_{l})\cdots I^{n}(t_1,t_2)\Big|\,\dd\s_{k-1}\cdots \dd\s_1\nonumber\\
%&\leq C\int_{\Pi_{j=1}^{k-1}\mathcal{V}_j} \sum_{l=1}^{k-1} I_{\{t_{l+1}>0\}} \sqrt{1+\r^2|v_l|^2}\widetilde{w}(v_l)[\nu(v_l)+\cdots+\nu(v_1)] d_{k-1}\cdots \dd\s_1\nonumber\\
%&\qquad\qquad\qquad\qquad\times\int_{0}^{t}\|(f^{n+1}-f^n)(\tau)\|_{L^\infty}\dd\tau\nonumber\\
%&\leq Ck^2\int_{0}^{t}\|(f^{n+1}-f^n)(\tau)\|_{L^\infty}\dd\tau\cdot\Pi_{l=l}^{k-1}\int_{\mathcal{V}_l} [1+\nu(v_l)\sqrt{1+\r^2|v_l|^2}\widetilde{w}(v_l)]\, \dd\s_l\nonumber\\
%&\leq C\r^{\f52}(1+\f{C}{\r^3})^{C_2\r^{\f54}}\int_0^t\|(f^{n+1}-f^n)(\tau)\|_{L^\infty}\,\dd\tau\\
&\leq \frac{C}{\r^{\f12}}\int_{0}^{t}\|(f^{n+1}-f^n)(\tau)\|_{L^\infty}\,\dd\tau.
\end{align}
%where we have taken $k=C_2\r^{\f54}$ in the above two inequality.
%It is noted that 
Further note that
\begin{equation}\label{3.63}
\Big[ \sqrt{1+\r^2|v|^2}\widetilde{w}(v)\Big]^{-1}\leq C\f{\r^{2\b-1}}{1+|v|}.
\end{equation}
%which, together with  \eqref{3.59} and \eqref{3.60}, yields that 
Thus, to treat \eqref{3.56}, with the estimates  \eqref{3.59} and \eqref{3.60} as well as \eqref{3.63}, one has
\begin{align}\label{3.64}
&\left|J^{n+1}_5-J^n_5\right|\nonumber\\
&\leq C\f{\r^{2\b}}{1+|v|}\left|I^{n+1}(t,t_1)-I^n(t,t_1)\right|\int_{\Pi_{j=1}^{k-1}\mathcal{V}_j} \sum_{l=1}^{k-1} \Fi_{\{t_{l+1}\leq0< t_l\}} |\hat{h}_0(x_l-v_lt_l,v_l)|\,\dd\hat\Sigma_{l}^{n+1}(0)\nonumber\\
&\quad +C\r^{2\b}\|h_0\|_{L^\infty} \left|\int_{\Pi_{j=1}^{k-1}\mathcal{V}_j} \sum_{l=1}^{k-1} \Fi_{\{t_{l+1}\leq0< t_l\}}\left\{\dd\hat\Sigma_{l}^{n+1}(0)-\dd\hat\Sigma_{l}^n(0)\right\}\right|\nonumber\\
&\leq C_\r t\|h_0\|_{L^\infty}\cdot\sup_{0\leq\tau\leq t}\|(\hat{h}^{n+1}-\hat{h}^{n})(\tau)\|_{L^\infty},
\end{align}
and 
\begin{align}\label{3.65}
|J^{n+1}_6-J^n_6|+|J^{n+1}_7-J^n_7|\leq C_\r t(1+\|h_0\|_{L^\infty})\cdot\sup_{0\leq\tau\leq t}\|(\hat{h}^{n+1}-\hat{h}^{n})(\tau)\|_{L^\infty}.
\end{align}
For the last difference term on the right-hand of \eqref{3.56}, %$J_8^{n}$, 
it follows from \eqref{3.63} and \eqref{4.8} that 
\begin{align}
\left|J_8^{n+1}-J^{n}_8\right|&\leq C\r^{2\b-1}\int_{\Pi_{j=1}^{k-1}\mathcal{V}_j} \Fi_{\{t_k>0\}}|(\hat{h}^{n+2}-\hat{h}^{n+1})(t_k,x_k,v_{k-1})|\,\dd\hat\Sigma_{k-1}^n(t_k)\nonumber\\
&\quad+C\r^{2\b-1}\f{|I^{n+1}(t,t_1)-I^n(t,t_1)|}{1+|v|}\sup_{0\leq\tau\leq t}\|\hat{h}^{n+2}(\tau)\|_{L^\infty}\int_{\Pi_{j=1}^{k-1}\mathcal{V}_j} \Fi_{\{t_k>0\}}\,\dd\hat\Sigma_{k-1}^{n+1}(t_k)\nonumber\\
&\quad+C\r^{2\b-1}\sup_{0\leq\tau\leq t}\|\hat{h}^{n+1}(\tau)\|_{L^\infty}\left|\int_{\Pi_{j=1}^{k-1}\mathcal{V}_j} \Fi_{\{t_k>0\}}\left\{\dd\hat\Sigma_{k-1}^{n+1}(t_k)-\dd\hat\Sigma_{k-1}^{n}(t_k)\right\}\right|,\nonumber 
%&\leq C\r^{2\b}\left(\f12\right)^{C_2\r^{\f54}} \sup_{0\leq\tau\leq t}\|(\hat{h}^{n+2}-\hat{h}^{n+1})(\tau)\|_{L^\infty}\nonumber\\
%&\qquad\qquad\qquad+C_{\r}t \|h_0\|_{L^\infty}\sup_{0\leq\tau\leq t}\|(\hat{h}^{n+1}-\hat{h}^{n})(\tau)\|_{L^\infty}.
\end{align}
and hence
\begin{equation}
\label{3.66}
\left|J_8^{n+1}-J^{n}_8\right|\leq C\r^{2\b}\left(\f12\right)^{C_2\r^{\f54}} \sup_{0\leq\tau\leq t}\|(\hat{h}^{n+2}-\hat{h}^{n+1})(\tau)\|_{L^\infty} %\\
+C_{\r}t \|h_0\|_{L^\infty}\sup_{0\leq\tau\leq t}\|(\hat{h}^{n+1}-\hat{h}^{n})(\tau)\|_{L^\infty}.
\end{equation}
Now, by substituting \eqref{3.58}, \eqref{3.64}, \eqref{3.65} and \eqref{3.66} into \eqref{3.56}, and then combining \eqref{3.53}, one obtains that 
\begin{multline}\label{3.67}
\|(\hat{h}^{n+1}-\hat{h}^{n})(t)\|_{L^\infty}
\leq C_5\r^{2\b}\left(\f12\right)^{C_2\r^{\f54}} \sup_{0\leq\tau\leq t}\|(\hat{h}^{n+2}-\hat{h}^{n+1})(\tau)\|_{L^\infty}\\
+C_{\r}t \left(1+\|h_0\|_{L^\infty}\right)\sup_{0\leq\tau\leq t}\|(\hat{h}^{n+1}-\hat{h}^{n})(\tau)\|_{L^\infty},
\end{multline}
where $C_5\geq1$ is a positive constant.
As before, we let $\r>0$ be fixed suitably large such that 
\begin{equation}%\label{ }
C_5\r^{2\b}\left(\f12\right)^{C_2\r^{\f54}} \leq \f12.\nonumber
\end{equation}
%one obtains that 
Then it follows from \eqref{3.67} that
\begin{align}\label{3.68}
\sup_{0\leq s\leq t}\|(\hat{h}^{n+2}-\hat{h}^{n+1})(s)\|_{L^\infty}
\leq C_{\r,2}t[1+\|h_0\|_{L^\infty}]\cdot\sup_{0\leq\tau\leq t}\|(\hat{h}^{n+1}-\hat{h}^{n})(\tau)\|_{L^\infty},
\end{align}
where $C_{\r,2}$ is a positive constant depending only on $\r$.  We denote 
%\begin{equation*}
%\label{ }
$\hat{C}_\r=\max\left\{C_{\r,1}, 2C_{\r,2}\right\}$,
%\end{equation*}
and choose 
\begin{equation*}
%\label{ }
\hat{t}_0:= \frac{1}{\hat{C}_{\r}\left(1+\|h_0\|_{L^\infty}\right)}(\leq \hat{t}_1).
\end{equation*} 
Then it holds that 
%\begin{equation}
%\label{ }
$\hat{C}_{\r,2}\hat{t}_0[1+\|h_0\|_{L^\infty}]\leq \f12$. %\nonumber
%\end{equation}
Thus it follows from \eqref{3.68} that 
\begin{equation}\nonumber%\label{3.69}
\sup_{0\leq t\leq \hat{t}_0}\|(\hat{h}^{n+2}-\hat{h}^{n+1})(t)\|_{L^\infty}
\leq \f12 \sup_{0\leq t\leq \hat{t}_0}\|(\hat{h}^{n+1}-\hat{h}^{n})(t)\|_{L^\infty}.
\end{equation}
Further by induction in $n$, it is direct to obtain 
\begin{equation}\nonumber%\label{3.70}
\sup_{0\leq t\leq \hat{t}_0}\|(\hat{h}^{n+2}-\hat{h}^{n+1})(t)\|_{L^\infty}\leq \left(\f12\right)^n\sup_{0\leq s\leq \hat{t}_0}\|\hat{h}^1(s)\|_{L^\infty}\leq 2C_3\r^{2\b}\|h_0\|_{L^\infty}\left(\f12\right)^n.
\end{equation}
This implies that $f^{n+1}(t,x,v), n=0,1,2,\cdots$, is a Cauchy sequence. Therefore, there exists a limit function $f(t,x,v)$ such that 
\begin{equation}\label{3.71}
\sup_{0\leq t\leq\hat{t}_0}\left\|\f{w(v)}{\sqrt{1+\r^2|v|^2}}(f^{n+1}-f)(t)\right\|_{L^\infty}\rightarrow 0,\quad\mbox{as}\quad n\rightarrow\infty.
\end{equation}
The limit function $f(t,x,v)$ is indeed a solution to the Boltzmann equation \eqref{1.9} with the diffuse reflection boundary conditon \eqref{diffuse}.

\medskip
\noindent{\bf Step 4.}
We now consider the uniqueness of solutions. Let $\tilde{f}(t,x,v)$ be another solution to the Boltzmann equation \eqref{1.9} and \eqref{diffuse} with the same initial data as for $f(t,x,v)$ and the bound 
\begin{equation}\nonumber
%\label{ }
\sup_{0\leq t\leq \hat{t}_0}\|w\tilde{f}(t)\|_{L^\infty}<\infty.
\end{equation}
By similar arguments as in \eqref{3.46}-\eqref{3.71}, one has that 
\begin{equation}
\left\|\f{w(v)}{\sqrt{1+\r^2|v|^2}}(f-\tilde{f})(t)\right\|_{L^\infty} %\nonumber\\
\leq C\{1+\sup_{0\leq t\leq \hat{t}_0}[\|w\tilde{f}(t)\|_{L^\infty}+\|wf(t)\|_{L^\infty}]\}\int_0^t\left\|\f{w(v)}{\sqrt{1+\r^2|v|^2}}(f-\tilde{f})(\tau)\right\|_{L^\infty}\dd\tau.\nonumber
\end{equation}
Then the uniqueness follows immedately from the Gronwall inequality, i.e., $f(t,x,v)\equiv\tilde{f}(t,x,v)$.

\medskip
\noindent{\bf Step 5.}
Finally, we consider the continuity of $f(t,x,v)$ if the domain $\Omega$ is strictly convex. It is noted that \eqref{3.31-1}  is equivalent to \eqref{3.41-1}. Hence,  it follows from Lemma \ref{lemA3} that  $h^{1}(t,x,v)$ is continuous in $[0,\hat{t}_0]\times\{\Omega\times\mathbb{R}^3\backslash \g_0\}$   since $h^0\equiv0$. By induction arguments and also Lemma \ref{lemA3}, it is straightforward to verify that $h^{n+1}(t,x,v)$ is continuous in $[0,\hat{t}_0]\times\{\Omega\times\mathbb{R}^3\backslash \g_0\}$. Then it follows from \eqref{3.71}  that $f(t,x,v)$ is continuous in $[0,\hat{t}_0]\times\{\Omega\times\mathbb{R}^3\backslash \g_0\}$. 
Therefore, the proof of %this lemma 
Theorem \ref{LE} is completed. 
\end{proof}%$\hfill\Box$

\section{A priori estimates}\label{sec5}

With the local existence of unique solutions to the IBVP \eqref{1.9} and \eqref{diffuse}
with arbitrarily large initial data established in Theorem \ref{LE},  in order to further prove the global-in-time existence of solutions, it suffices to get uniform estimates on solutions.  In this section, we devoted ourselves to do that.

From now on, we fix $\r>0$ and $\b\geq\f52$ such that all the results in Sections 2, 3 and 4 hold and also the following inequality
\begin{equation}\label{5.00}
\left(C_3\r^{2\b}\right)^{\f{1}{\r}}\leq e^{\f12\nu_0},
\end{equation}
is satisfied which will be used in Lemma \ref{lem5.4} below. 
Recall that initial data satisfy $\|wf_0\|\leq M_0$ for a given constant $M_0\geq 1$ which could be large. Let $f(t,x,v)$ with
\begin{equation}
\label{ap.po}
F(t,x,v)=\mu(v)+\sqrt{\mu(v)} f(t,x,v)\geq 0
\end{equation}
 be the solution to  the IBVP \eqref{1.9} and \eqref{diffuse} with initial data $f_0(x,v)$ over the time interval $[0,T)$ for $0<T\leq \infty$. %For simplicity we let $T>0$ be large enough. 
 Throughout this section, we make the {\it a priori} {assumption}:
\begin{equation}\label{5.0}
\sup_{0\leq t< T}\|h(t)\|_{L^\infty}\leq \bar{M},
\end{equation}
for $h(t,x,v):=w(v)f(t,x,v)$, where $\bar{M}\geq1$ is a large positive constant depending only on $M_0$, not on the solution and $T$, and  it will be  determined in the end of the proof. We also remark that the constants in this section  may depend on $\r$ and $\beta$, but we shall not point it out explicitly when without any confusion. %since we have already fixed it above.

\subsection{Estimate in $L^2_{x,v}$}

%Firstly, we the Boltzmann  equation  to be the following equivalent form
Recall that $f(t,x,v)$ satisfies
\begin{align}\label{1.9-10}
\partial_tf+v\cdot\nabla_xf+Lf=\Gamma(f,f),
\end{align}
%where the linearized operator of the Boltzmann equation is 
with
\begin{align}%\label{1.9-11}
Lf=\nu(v) f-Kf=-\f1{\sqrt{\mu}}\Big\{Q(\mu,\sqrt{\mu}f)+Q(\sqrt{\mu}f,\mu)\Big\},\nonumber
\end{align}
and 
\begin{align}%\label{1.9-12}
\Gamma(f,f)=\f1{\sqrt{\mu}}Q(\sqrt{\mu}f,\sqrt{\mu}f)
&%=\f1{\sqrt{\mu}}Q_+(\sqrt{\mu}f,\sqrt{\mu}f)-\f1{\sqrt{\mu}}Q_-(\sqrt{\mu}f,\sqrt{\mu}f)
%\nonumber\\
%&
=\Gamma_+(f,f)-\Gamma_-(f,f).\nonumber
\end{align}
For later use we also introduce some notations. For a function $g(x,v)$ defined on the phase boundary $\ga=\pa\Omega\times \R^3$, 
we denote the boundary integration 
\begin{align}\nonumber
\int_{\gamma_{\pm}}g(x,v)\,\dd\gamma=\int_{\gamma_{\pm}}g(x,v)|v\cdot n(x)|\,\dd S_x \dd v,
\end{align}
where $\dd S_x$ is the standard surface measure on $\partial\Omega$. We also denote 
$$
\|g\|_{L^2_\g}=\|g\|_{L^2_{\g_+}}+\|g\|_{L^2_{\g_-}},
$$
to be the norm of $g$ in the space $L^2(\g)$ with respect to the measure $|v\cdot n(x)|\,\dd S_x\dd v$. For any fixed $x\in\partial\Omega$, we denote the boundary inner product over $\gamma_{\pm}$ in $v$ as 
\begin{align}\nonumber
\langle g_1,g_2\rangle_{\g_{\pm}}(x)=\int_{\pm v\cdot n(x)>0} g_1(x,v) g_2(x,v) |v\cdot n(x)|\,\dd v.
\end{align}
By \eqref{diffusenormal}, we define the projection operator $P_\gamma$ by 
%a $L^2_v$-projection for any boundary function $g(x,v)$ toward the unit vector $\sqrt{c_\mu \mu(v)}$ with respect to $\langle\cdot ,\cdot\rangle_{\g_{\pm}}$ as:
\begin{align}\nonumber
P_{\g}g=c_\mu \sqrt{\mu(v)}\int_{n(x)\cdot v'>0}g(t,x,v') \sqrt{\mu(v')}\{n(x)\cdot v' \}\,\dd v'.
\end{align}

%Then %we have the following $L^2$-estimate:
The goal of this section is to obtain the $L^2$-estimate in terms of the direct energy method. Note that as the $L^\infty$-norm of solutions is not small, the $L^2$-norm of solutions may increase exponentially in time.   

\begin{lemma}\label{lem.l2}
Under the a priori assumption \eqref{5.0}, it holds that 
\begin{align}\label{5.5}
\|f(t)\|_{L^2}\leq e^{\tilde{C}_1\bar{M}t}\|f_0\|_{L^2},
\end{align}
for all $0\leq t<T$, 
where $\tilde{C}_1\geq 1$ is a generic constant.
\end{lemma}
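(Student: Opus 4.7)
The plan is to perform a standard $L^2$ energy estimate on the reformulated Boltzmann equation \eqref{1.9-10} and close it via Gronwall, using the a priori $L^\infty$ bound \eqref{5.0} to absorb the nonlinear term at the cost of the large factor $\bar M$ in the exponential rate. Testing \eqref{1.9-10} against $f$ and integrating over $\Omega\times\R^3_v$ produces
\begin{equation*}
\frac{1}{2}\frac{d}{dt}\|f(t)\|_{L^2}^2 + \frac{1}{2}\bigl(\|f\|_{L^2_{\gamma_+}}^2 - \|f\|_{L^2_{\gamma_-}}^2\bigr) + \langle Lf,f\rangle = \langle \Gamma(f,f), f\rangle.
\end{equation*}

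First I would show that the left-hand side is bounded below by $\frac{1}{2}\frac{d}{dt}\|f\|_{L^2}^2$ alone. For the boundary term, the diffuse reflection condition \eqref{diffuse} gives $f|_{\gamma_-}=P_\gamma f$, and a direct Cauchy--Schwarz against the probability measure $d\sigma(x)=c_\mu\mu(v')\{v'\cdot n(x)\}\,dv'$ yields $\|P_\gamma f\|_{L^2_{\gamma_-}}^2\le \|f\|_{L^2_{\gamma_+}}^2$, so the boundary contribution is nonnegative. The linear term $\langle Lf,f\rangle\geq 0$ by nonnegativity of $L$. Hence both may be discarded.

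The main step is to bound $|\langle \Gamma(f,f),f\rangle|$ by $\tilde C_1 \bar M\|f\|_{L^2}^2$. I split $\Gamma = \Gamma_+-\Gamma_-$. For the loss term, the pointwise bound $|f(u)|\le \bar M/w(u)$ combined with the integrability of $(1+|v-u|)^\kappa\sqrt{\mu(u)}/w(u)$ in $u$ gives
\[
|\Gamma_-(f,f)(v)| \leq |f(v)|\int B(v-u,\omega)\sqrt{\mu(u)}|f(u)|\,d\omega du \leq C\bar M\,\nu(v)|f(v)|.
\]
For the gain term I would apply Lemma \ref{lem2.2} to get
\[
|w(v)\Gamma_+(f,f)(v)| \leq \frac{C\bar M}{1+|v|}\biggl(\int (1+|\eta|)^4|e^{\varpi|\eta|^2}f(\eta)|^2d\eta\biggr)^{1/2},
\]
and bound the weighted $L^2$ factor by $C\bar M$ via $|f|\le \bar M/w$ together with the integrability of $(1+|\eta|)^4/(1+\rho^2|\eta|^2)^{2\beta}$, which holds since $\beta\ge 5/2$. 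Pairing these pointwise bounds with $f$ and invoking once more $|f|^2\le(\bar M/w)|f|$ with Cauchy--Schwarz converts the $\nu$-weighted integrals into the plain $L^2$ norm, producing $|\langle \Gamma(f,f),f\rangle|\le \tilde C_1\bar M\|f\|_{L^2}^2$. Gronwall's inequality applied to $\frac{d}{dt}\|f\|_{L^2}^2\le 2\tilde C_1\bar M\|f\|_{L^2}^2$ then yields \eqref{5.5}.

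The main obstacle is squeezing out a factor linear in $\bar M$ (rather than $\bar M^2$) in the nonlinear estimate. The naive bilinear bound produces $\bar M\|f\|_{L^2_\nu}^2$, and when $\kappa>0$ the weight $\nu(v)\sim(1+|v|)^\kappa$ cannot be dominated by a constant on $L^2$. The resolution is to spend the $L^\infty$ control $|f|\le\bar M/w$ only once, using the fast velocity decay of $w^{-1}$ guaranteed by $\beta\geq 5/2$ to ensure that all auxiliary weighted integrals in $v$ are finite, and to interpolate the $\nu$-weighted $L^2$ quantity against the plain $L^2$ norm with exactly one factor of $\bar M$ so that the exponential Gronwall rate in \eqref{5.5} comes out linear in $\bar M$.
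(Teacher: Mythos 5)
Your overall strategy (energy estimate plus Gronwall) matches the paper's, and both the treatment of the boundary term (your Jensen/Cauchy--Schwarz argument giving $\|P_\gamma f\|_{L^2_{\gamma_-}}^2\le\|f\|_{L^2_{\gamma_+}}^2$ is equivalent to the paper's identity $\frac12\int_{\gamma_+}|(I-P_\gamma)f|^2\ge 0$) and the nonnegativity of $\langle Lf,f\rangle$ are correct. The gap is in the nonlinear estimate, and your closing paragraph correctly identifies the obstacle but does not resolve it consistently with the estimates you wrote in the body.

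Concretely: your pointwise bounds spend $|f|\le\bar M/w$ \emph{inside} the collision. For the loss term this locks you into $\int\Gamma_-(f,f)f\,dv\le C\bar M\int\nu(v)|f(v)|^2\,dv$, and for the gain term you have already used $\bar M$ twice (once in $\|wf\|_{L^\infty_v}$, once to kill the weighted $L^2$ factor from Lemma~\ref{lem2.2}), giving $\int\Gamma_+(f,f)f\,dv\le C\bar M^2\|f\|_{L^2_v}$. In both cases the $\nu$-weight ends up sitting on factors that must remain in $L^2$, and there is no interpolation that converts $\bar M\|\nu^{1/2}f\|_{L^2_v}^2$ into $\bar M\|f\|_{L^2_v}^2$ (for $\kappa>0$, $\nu$ is unbounded) without spending a second $\bar M$; the best one gets is $\bar M^2\|f\|_{L^2}$, which after summing in $x$ feeds $\tfrac{d}{dt}\|f\|_{L^2}^2\le C\bar M^2\|f\|_{L^2}$ and does not yield the exponential Gronwall bound \eqref{5.5}. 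The fix is to \emph{not} spend $L^\infty$ inside the collision: first use the standard trilinear bound $\int_{\R^3}\Gamma(f,f)\,f\,dv\le C\|\nu f\|_{L^2_v}\,\|f\|_{L^2_v}^2$ (obtained by Cauchy--Schwarz in the collision integral, so both outer factors remain as plain $L^2_v$-norms and the $\nu$-weight lands on a single factor), and only then bound $\|\nu f(x,\cdot)\|_{L^2_v}\le C\|wf(x,\cdot)\|_{L^\infty_v}\le C\bar M$ using $\beta\ge 5/2$. This is exactly what the paper does in \eqref{A12}: the $L^\infty$ control is applied once, precisely on the $\nu$-weighted factor, leaving $\|f\|_{L^2_v}^2$ intact and producing $C\bar M\|f\|_{L^2}^2$ after integrating in $x$.
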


%\noindent{\bf Proof.}  
\begin{proof}
By multiplying the Boltzmann equation \eqref{1.9-10} by $f(t,x,v)$ and integrating it over $\Omega\times\mathbb{R}^3$, one obtains that 
\begin{align}\label{A8}
&\frac{1}{2}\f{d}{dt}\|f(t)\|_{L^2}^2+\int_{\Omega\times\mathbb{R}^3}v\cdot\nabla_x(\frac{1}{2}|f(t)|^2)\,\dd x\dd v
+\int_{\Omega\times\mathbb{R}^3}f(t)Lf(t)\,\dd x\dd v\nonumber\\
&=\int_{\Omega\times\mathbb{R}^3}\Gamma(f,f)(t)f(t)\,\dd x\dd v.
\end{align}
For the second term on the left, it follows from integration by part that 
\begin{align}\label{A9}
\int_{\Omega\times\mathbb{R}^3}v\cdot\nabla_x(\frac{1}{2}|f(t)|^2)\,\dd x\dd v
&=\frac{1}{2}\int_{\gamma_+}|f(t)|^2\,\dd\gamma-\frac{1}{2}\int_{\gamma_-}|f(t)|^2\,\dd\gamma.
\end{align}
Note from the diffuse reflection boundary condition \eqref{diffuse} that 
\begin{align}\nonumber
\int_{\gamma_-}|f(t)|^2\,\dd\gamma=\int_{\gamma_+}|P_\g f(t)|^2\,\dd\gamma.
\end{align}
Plugging it back to  \eqref{A9} yields  that 
\begin{align}\label{A10}
\int_{\Omega\times\mathbb{R}^3}v\cdot\nabla_x(\frac{1}{2}|f(t)|^2)\,\dd x\dd v
&=\frac{1}{2}\int_{\gamma_+}|f(t)|^2\,\dd\gamma-\frac{1}{2}\int_{\gamma_+}|P_\g f(t)|^2\,\dd\gamma\nonumber\\
&=\frac{1}{2}\int_{\gamma_+}|(I-P_\g) f(t)|^2\,\dd\gamma\geq 0.
\end{align}
For the third term on the left-hand side of \eqref{A8}, it holds that 
\begin{equation}\label{A11}
\int_{\Omega\times\mathbb{R}^3}f(t)Lf(t)\,\dd x\dd v\geq0.
\end{equation}
Finally, %for the nonlinear term on RHS, we note that
to estimate the right-hand nonlinear term of \eqref{A8}, we note that %is bounded as 
\begin{align}
\int_{\mathbb{R}^3} \Gamma(f,f)(t)f(t)\,\dd v&\leq C\left(\int_{\mathbb{R}^3}|\nu(v)f(t)|^2\,\dd v\right)^{\f12}\|f(t)\|^2_{L^2_v}\nonumber\\
&\leq C \|wf(t)\|_{L^\infty_v}\left(\int_{\mathbb{R}^3}(1+|v|)^{-4\b+2}\,\dd v\right)^{\f12} \|f(t)\|^2_{L^2_v}\nonumber\\
&\leq C\|h(t)\|_{L^\infty_v}\|f(t)\|^2_{L^2_v},\nonumber
\end{align}
where  $\b\geq 5/2$ has been used. Hence, it holds that
\begin{equation}
\label{A12}
\int_{\mathbb{R}^3} \Gamma(f,f)(t)f(t)\,\dd x\dd v \leq C\|h(t)\|_{L^\infty_{x,v}}\|f(t)\|^2_{L^2_{x,v}}.
\end{equation}
Substituting \eqref{A12}, \eqref{A11} and \eqref{A10} back into \eqref{A8}, one obtains that 
\begin{align}\nonumber
\f{d}{dt}\|f(t)\|_{L^2}^2\leq \tilde{C}_1\|h(t)\|_{L^\infty}\|f(t)\|^2_{L^2},
\end{align}
where $\tilde{C}_1>0$ is a generic positive constant. Then it follows from the {\it a priori} assumption \eqref{5.0} and Gronwall inequality that 
\begin{align}\nonumber
\|f(t)\|_{L^2}\leq \|f_0\|_{L^2}e^{\tilde{C}_1\bar{M}t}.
\end{align}
Therefore the proof of this lemma is completed.  
\end{proof} %$\hfill\Box$

\subsection{Estimate in $L^\infty_xL^1_v$}

%Since $F(t,x,v)\geq0$, 
By \eqref{def.iv} and \eqref{ap.po}, it is direct to see that 
\begin{equation*}
%\label{ }
I^f(t,s)=\exp\left\{-\int_{s}^tR(f)\big(\tau,X_{cl}(\tau),V_{cl}(\tau)\big)\,\dd\tau\right\}\leq 1,
\end{equation*}
as $R(f)\geq 0$. 
%$I(t,s)\leq 1$. 
%But, to establish the global estimate, one needs $I(t,s)$ to have time decay property, i.e., 
In order to extend the local-in-time solutions to all the positive time, it is necessary to further obtain the time-decay property of $I^f(t,s)$, namely,
\begin{align*}%\label{5.6}
I^f(t,s)\leq Ce^{-\f{1}{C}(t-s)},
\end{align*}
for some generic large constant $C$. For solutions of large amplitude, the above time-decay property seems impossible to be obtained, because the vacuum could not be excluded.
However, if the $L^2$-norm of initial data is small,  such time-decay property may still hold in some sense even if the  solution is allowed to have large oscillations.  Indeed, we have the following useful lemma.

\begin{lemma}\label{lem5.3}
Under the a priori assumption \eqref{5.0},  there exists a generic constant $\tilde{C}_2\geq1 $  such that given any $T_0>\tilde{t}$ with
\begin{equation}\label{5.12}
\tilde{t}:=\f{2}{\nu_0}\ln\left(\tilde{C}_2 M_0\right)>0,
\end{equation}
there is a generally small positive constant $\epsilon_1=\epsilon_1(\bar{M},T_0)>0$, depending only on $\bar{M}$ and $T_0$, 
% exists a positive time
%\begin{equation}\label{5.12}
%\tilde{t}:=\f{2}{\nu_0}\ln\left(\tilde{C}_2 M_0\right)>0,
%\end{equation}
%and a generally small positive constant $\epsilon_1=\epsilon_1(\bar{M},T_0)$,  
such that if $\|f_0\|_{L^2}\leq \epsilon_1$, then one has
\begin{align}\label{5.13}
R(f)(t,x,v)\geq \f12\nu(v),%\quad \mbox{for}\quad \tilde{t}\leq t<T_0,
\end{align}
for all $(t,x,v)\in [\tilde{t},T_0)\times \Omega\times \R^3$. 
%where $\tilde{C}_2(\geq1)$ is a given postive constant. And 
Here $\epsilon_1$ is decreasing in $\bar{M}$ and $T_0$.
\end{lemma}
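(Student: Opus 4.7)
The plan is to reduce \eqref{5.13} to a smallness bound on the weighted $L^\infty_xL^1_v$-quantity
\[
N(t):=\sup_{x\in\Omega}\int_{\R^3}e^{-|v|^2/16}|h(t,x,v)|\,\dd v,
\]
and then control $N(t)$ on $[\tilde t,T_0)$ via the mild formulation with the $\nu$-collision-frequency semigroup together with the $L^2$ smallness from Lemma~\ref{lem.l2}. First, splitting $R(f)=\nu(v)+\Delta$ with $\Delta:=\int B(v-u,\omega)\sqrt{\mu(u)}f(t,x,u)\,\dd\omega\dd u$ and using $\int_{\S^2}B\,\dd\omega\lesssim|v-u|^\kappa\lesssim(1+|v|)^\kappa(1+|u|)^\kappa$ together with $\sqrt{\mu(u)}(1+|u|)^\kappa\lesssim e^{-|u|^2/16}$ and $|f|=|h|/w\leq|h|$ (since $w\geq 1$), one obtains a fixed constant $C_*\geq 1$ with $|\Delta(t,x,v)|\leq C_*\nu(v)N(t)$. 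Hence \eqref{5.13} reduces to showing $N(t)\leq 1/(2C_*)$ on $[\tilde t,T_0)$.

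Rewriting \eqref{1.9} as $\partial_th+v\cdot\na_x h+\nu h=K_wh+w\Gamma(f,f)$ and letting $G(t):=G^0(t,0)$ be the diffuse-reflection semigroup of $\partial_t+v\cdot\na+\nu$ (the $\varphi\equiv 0$ case of Lemmas~\ref{lem3.02}--\ref{lem3.3}), we have the mild formulation
\[
h(t)=G(t)h_0+\int_0^t G(t-s)\bigl[K_wh(s)+w\Gamma(f,f)(s)\bigr]\,\dd s.
\]
For the homogeneous piece, unfolding the back-time diffuse-cycle representation from Lemma~\ref{lem3.01} and using (i)~the decay $I^0(t,s)\leq e^{-\nu_0(t-s)}$ along each straight segment and (ii)~the Maxwellian $\mu(v)$ reborn at each reflection (which is integrable against $e^{-|v|^2/16}$), I expect to derive the refined estimate
\[
\sup_x\int e^{-|v|^2/16}|G(t)h_0(x,v)|\,\dd v\leq CM_0 e^{-\nu_0 t/2},\qquad t\geq 0.
\]
At $t=\tilde t=(2/\nu_0)\ln(\tilde C_2 M_0)$ this equals $C/\tilde C_2$, so choosing $\tilde C_2$ a sufficiently large generic constant bounds the homogeneous contribution to $N(t)$ by $1/(4C_*)$ for every $t\geq\tilde t$.

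For the integral term, interpolate the a priori bound $\|h\|_{L^\infty}\leq\bar M$ against the $L^2$ bound $\|f(s)\|_{L^2}\leq e^{\tilde C_1\bar MT_0}\epsilon_1$ of Lemma~\ref{lem.l2}. For $K_wh$, the kernel estimates \eqref{2.2}--\eqref{2.3} give $|K_wh|\leq C\bar M/(1+|v|)$ pointwise, and a velocity cutoff $|v|\leq m$ splits this into a high-velocity tail of size $C\bar M/(1+m)$ and a low-velocity piece bounded in $L^2_{x,v}$ by $C(m)\|f(s)\|_{L^2}$. For $w\Gamma_+(f,f)$, Lemma~\ref{lem2.2} combined with a high/low-$|\eta|$ split of the weighted $L^2_v$ factor (using $|f|\leq\bar M/w$ on the tail) gives
\[
|w\Gamma_+(f,f)(s,v)|\leq \frac{C\bar M}{1+|v|}\bigl[C(m)\|f(s)\|_{L^2}+\bar M\,a(m)\bigr],
\]
with $a(m)\to 0$ as $m\to\infty$, while $|w\Gamma_-(f,f)|\leq C\bar M^2(1+|v|)^\kappa$ is handled by the same splitting. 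Integrating against the $L^\infty$-decay $\|G(t-s)h\|_{L^\infty}\leq C_\rho e^{-\nu_0(t-s)/4}\|h\|_{L^\infty}$ from Lemma~\ref{lem3.3} and the weight $e^{-|v|^2/16}$, the integral term contributes to $N(t)$ at most $\Psi_1(\bar M)a(m)+C(m)\Psi_2(\bar M,T_0)\epsilon_1$ uniformly for $t\in[0,T_0)$. Choosing $m=m(\bar M,C_*)$ large and then $\epsilon_1=\epsilon_1(\bar M,T_0,m)>0$ small makes this bound $\leq 1/(4C_*)$, and combining yields $N(t)\leq 1/(2C_*)$ on $[\tilde t,T_0)$, which proves \eqref{5.13}.

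The main obstacle is the decay estimate for $G(t)h_0$: the only $L^\infty$ decay rate of $G$ available uniformly on $[0,T_0]$ (with $T_0$ possibly much larger than $\rho$) is $\nu_0/4$ from Lemma~\ref{lem3.3}, which would yield only $M_0^{1/2}/\tilde C_2^{1/2}$ at $t=\tilde t$---not uniform in $M_0$. The resolution is to work directly in the $L^\infty_xL^1_v$ topology rather than going through $L^\infty_{x,v}$: the reflection Maxwellian $\mu(v)$, merely bounded in $L^\infty$, becomes integrable against $e^{-|v|^2/16}$ and restores the correct rate $\nu_0/2$ uniformly in $t$. This refinement of the proof of Lemma~\ref{lem3.3} adapted to the $L^\infty_xL^1_v$ norm is the technical heart of the argument.
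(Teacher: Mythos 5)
Your overall strategy — reduce \eqref{5.13} to a smallness bound on a weighted $L^\infty_xL^1_v$ quantity, represent $h$ via the Duhamel formula with the $\nu$-collision-frequency semigroup $G(t)$, and extract smallness from the $L^2$ bound of Lemma~\ref{lem.l2} — is the same as the paper's. However, your ``main obstacle'' paragraph is misconceived: the paper does not need to refine Lemma~\ref{lem3.3}; it simply invokes Proposition~\ref{lem4.2} (cited verbatim from \cite{Guo2}), which already gives $\|G(t)h_0\|_{L^\infty}\leq C_\rho e^{-\nu_0 t/2}(\cdots)$ for all $t\geq 0$, and then integrates this $L^\infty$ decay against $e^{-|v|^2/8}\,\dd v$. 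No $L^\infty_xL^1_v$ ``reproof'' of the linear decay is required.

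The genuine gap is in your treatment of the Duhamel term. You split the source $K_wh(s)+w\Gamma(f,f)(s)$ into a high-velocity tail (uniformly small in $L^\infty$) and a low-velocity piece whose $L^2_{x,v}$ norm is controlled by $C(m)\|f(s)\|_{L^2}$, and then propose to ``integrate against the $L^\infty$-decay $\|G(t-s)h\|_\infty\leq C_\rho e^{-\nu_0(t-s)/4}\|h\|_\infty$.'' This doesn't close: the $L^\infty\!\to\!L^\infty$ decay of $G$ only helps if the source is small in $L^\infty$, but the low-velocity piece is only small in $L^2_{x,v}$, not in $L^\infty$. Since $(G(t-s)\phi(s))(t,x,v)$ evaluates $\phi$ at single trajectory points $(s,x_l-v_l(t_l-s),v_l)$, an $L^2_x$ bound on $\phi$ cannot be used unless one first converts the $v$-integration into an $x$-integration. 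The paper does this by (a) splitting off a thin temporal slice $[t-\lambda,t]$ (and, inside the back-time cycle, near-boundary slices $[t_l-\tfrac1N,t_l]$) whose contribution is $\lesssim\lambda+\tfrac1N$ times an $L^\infty$ quantity, and (b) on the remaining set, unfolding $G(t-s)$ via Proposition~\ref{lem3.1} and performing the change of variables $v\mapsto y=x-v(t-s)$ (resp.\ $v_l\mapsto y_l=x_l-v_l(t_l-s)$) with Jacobian $(t-s)^3\geq\lambda^3$ (resp.\ $(t_l-s)^3\geq N^{-3}$), as in \eqref{5.29} and \eqref{5.34}. Without this $\lambda$/$1/N$ split and the Jacobian-bounded change of variables, the $L^2$ smallness never enters the estimate; that is the missing step in your proposal.
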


%\noindent{\bf Proof.} 
\begin{proof}
Recall that
\begin{equation*}
%\label{ }
R(f)(t,x,v)=\int_{\mathbb{R}^3}\int_{\mathbb{S}^2} B(v-u,\omega)[\mu(u)+\sqrt{\mu(u)}f(t,x,u)]\,\dd\omega \dd u.
\end{equation*}
%A directly calculation shows 
%It directly follows that
Then $R(f)(t,x,v)$ has the lower bound as 
\begin{align}
R(f)(t,x,v)
%&=\int_{\mathbb{R}^3}\int_{\mathbb{S}^2} B(v-u,\omega)[\mu(u)+\sqrt{\mu(v)}f(t,x,u)]d\omega du\nonumber\\
\geq \nu(v)\left[1-C_6\int_{\mathbb{R}^3} e^{-\f{|u|^2}8}|f(t,x,u)|\,\dd u\right],\nonumber
\end{align}
for a generic constat $C_6\geq1 $ which is  
%is given positive constant 
independent of $\r$.
Therefore, to show \eqref{5.13}, it suffices to prove that
\begin{equation}\label{5.13.ap1}
\int_{\mathbb{R}^3} e^{-\f{|v|^2}8}|h(t,x,v)|\,\dd v\leq \f1{2C_6},%\quad \mbox{for}\quad t\geq \tilde{t}.
\end{equation}
for all $t\geq \tilde{t}$ and $x\in \Omega$, where $\tilde{t}$  is a positive constant to be suitably chosen.
Note that $f(t,x,v)$ satisfies \eqref{1.9-10}. By using the operator $G(t)$ defined in Proposition \ref{lem4.2}, we can represent the solution %of Boltzmann equation 
to \eqref{1.9-10} in terms of $h(t,x,v)=w(v)f(t,x,v)$ as
\begin{align}
h(t,x,v)=G(t)h_0+\int_0^t G(t-s)\Big[K_wh(s)+w(v)\Gamma(f,f)(s)\Big]\,\dd s.\nonumber
\end{align}
This expression  immediately yields  that 
\begin{align}
&\int_{\mathbb{R}^3} e^{-\f{|v|^2}8}|h(t,x,v)|\,\dd v\nonumber\\
&\leq \int_{\mathbb{R}^3} e^{-\f{|v|^2}8}|(G(t)h_0)(t,x,v)|\,\dd v\nonumber\\
&\quad+\left\{\int_{t-\l}^t+\int_0^{t-\l}\right\}\int_{\mathbb{R}^3}e^{-\f{|v|^2}8} |\Big(G(t-s)K_wh(s)\Big)(t,x,v)|\,\dd v\dd s\nonumber\\
&\quad+\left\{\int_{t-\l}^t+\int_0^{t-\l}\right\}\int_{\mathbb{R}^3}e^{-\f{|v|^2}8} \Big|\Big(G(t-s)[w\Gamma(f,f)(s)]\Big)(t,x,v)\Big|\,\dd v\dd s,\label{l53.p1}
\end{align}
where $\l>0$ is a small positive constant to be chosen later.

We estimate the right-hand three terms of \eqref{l53.p1} as follows. For the first term, it is direct to deduce from \eqref{4.17} that 
\begin{align*}%\label{5.18}
\int_{\mathbb{R}^3} e^{-\f{|v|^2}8}|(G(t)h_0)(t,x,v)|\,\dd v\leq C\|G(t)h_0\|_{L^\infty}\leq C e^{-\f{\nu_0}2t}\|h_0\|_{L^\infty}.
\end{align*}
%It is noted that 
For other two terms, we note from \eqref{4.17} that 
\begin{multline}\label{5.19}
\Big|\Big(G(t-s)K_wh(s)\Big)(t,x,v)\Big|+\Big|\Big(G(t-s)[w\Gamma(f,f)(s)]\Big)(t,x,v)\Big|\\
\leq Ce^{-\f{\nu_0}2(t-s)}\Big[\|h(s)\|_{L^\infty}+\|h(s)\|_{L^\infty}^2\Big].
\end{multline}
%which yields immediately that 
Hence, for integrals over $[t-\lambda,t]$, they are bounded as 
\begin{multline*}%\label{5.20}
\int_{t-\l}^t\int_{\mathbb{R}^3}e^{-\f{|v|^2}8} \left\{\Big|\Big(G(t-s)K_wh(s)\Big)(t,x,v)\Big|+\Big|\Big(G(t-s)[w\Gamma(f,f)(s)]\Big)(t,x,v)\Big| \right\}\,\dd v\dd s\\
\leq C\l \sup_{0\leq s\leq t}\Big[\|h(s)\|_{L^\infty}+\|h(s)\|_{L^\infty}^2\Big].
\end{multline*}
Over $[0,t-\lambda]$ and for large velocities $|v|\geq N$, it follows from \eqref{5.19} that
\begin{multline*}%\label{5.20-1}
\int_0^{t-\l}\int_{|v|\geq N}e^{-\f{|v|^2}8} \left\{\Big|\Big(G(t-s)K_wh(s)\Big)(t,x,v)\Big|+\Big|\Big(G(t-s)[w\Gamma(f,f)(s)]\Big)(t,x,v)\Big| \right\}\,\dd v\dd s\\
\leq \f{C}{N}\sup_{0\leq s\leq t}\Big[\|h(s)\|_{L^\infty}+\|h(s)\|_{L^\infty}^2\Big].
\end{multline*}

It remains to estimate integrals over $\{0\leq s\leq t-\la, |v|\leq N\}$ for the last two terms on the right-hand side of \eqref{l53.p1}.  Firstly, it follows from Proposition \ref{lem3.1} that 
\begin{align}\label{5.21}
&\Big(G(t-s)K_wh(s)\Big)(t,x,v)\nonumber\\
&=\Fi_{\{t_1\leq s\}} e^{-\nu(v)(t-s)} (K_wh)(s,x-v(t-s),v)\nonumber\\
&\quad+\f{e^{-\nu(v)(t-t_1)}}{\widetilde{w}(v)}
\int_{\Pi_{j=1}^{k-1}\mathcal{V}_j} \sum_{l=1}^{k-1} \Fi_{\{t_{l+1}\leq s< t_l\}} (K_wh)(s,x_l-v_l(t_l-s),v_l)\,\dd\Sigma_{l}(s)\nonumber\\
&\quad+\f{e^{-\nu(v)(t-t_1)}}{\widetilde{w}(v)}
\int_{\Pi_{j=1}^{k-1}\mathcal{V}_j} \Fi_{\{t_k>s\}} \Big(G(t-s)K_wh(s)\Big)(t_k,x_k,v_{k-1})\,\dd\Sigma_{k-1}(t_k)\nonumber\\
&:=J_{11}+J_{12}+J_{13},
\end{align}
and
\begin{align}\label{5.22}
&\Big(G(t-s)[w\Gamma(f,f)(s)]\Big)(t,x,v)\nonumber\\
&=\Fi_{\{t_1\leq s\}} e^{-\nu(v)(t-s)} [w\Gamma(f,f)](s,x-v(t-s),v)\nonumber\\
&\quad+\f{e^{-\nu(v)(t-t_1)}}{\widetilde{w}(v)}
\int_{\Pi_{j=1}^{k-1}\mathcal{V}_j} \sum_{l=1}^{k-1} \Fi_{\{t_{l+1}\leq s< t_l\}} [w\Gamma(f,f)](s,x_l-v_l(t_l-s),v_l)\,\dd\Sigma_{l}(s)\nonumber\\
&\quad+\f{e^{-\nu(v)(t-t_1)}}{\widetilde{w}(v)}
\int_{\Pi_{j=1}^{k-1}\mathcal{V}_j} \Fi_{\{t_k>s\}} \Big(G(t-s)[w\Gamma(f,f)(s)]\Big)(t_k,x_k,v_{k-1})\,\dd\Sigma_{k-1}(t_k)\nonumber\\
&:=J_{21}+J_{22}+J_{23}.
\end{align}
We estimate integrals of those terms in \eqref{5.21} and \eqref{5.22} over $\{0\leq s\leq t-\la, |v|\leq N\}$ as follows. 
From  \eqref{4.17}, it is noted  that 
\begin{multline*}%\label{5.23}
\Big|\Big(G(t-s)K_wh(s)\Big)(t_k,x_k,v_{k-1})\Big|+\Big|\Big(G(t-s)[w\Gamma(f,f)(s)]\Big)(t_k,x_k,v_{k-1})\Big|\\
\leq Ce^{-\f{\nu_0}2(t_k-s)}\Big[\|h(s)\|_{L^\infty}+\|h(s)\|_{L^\infty}^2\Big].
\end{multline*}
For $J_{13}$ and $J_{23}$, it then follows that %which yields that 
\begin{align}
&\int_0^{t-\l}\int_{|v|\leq N} e^{-\f{|v|^2}{8}}(|J_{13}|+|J_{23}|)\,\dd v\dd s\nonumber\\
&\leq C \sup_{0\leq s\leq t} \Big[\|h(s)\|_{L^\infty}+\|h(s)\|_{L^\infty}^2\Big]\nonumber\\
&%\qquad\qquad\qquad\qquad\qquad\qquad\qquad
\qquad\times\int_0^{t-\l} \int_{|v|\leq N} e^{-\f{|v|^2}{10}}e^{-\f{\nu_0}2(t-s)}
\int_{\Pi_{j=1}^{k-1}\mathcal{V}_j} \Fi_{\{t_k>s\}}\widetilde{w}(v_{k-1})\,\dd\s_{k-1} \cdots \dd\s_1 \dd v\dd s\nonumber\\
&\leq C \sup_{0\leq s\leq t} \Big[\|h(s)\|_{L^\infty}+\|h(s)\|_{L^\infty}^2\Big]\nonumber\\
&%\qquad\qquad\qquad\qquad\qquad\qquad\qquad
\qquad\times \int_0^{t-\l} \int_{|v|\leq N} e^{-\f{|v|^2}{10}}e^{-\f{\nu_0}2(t-s)}
\int_{\Pi_{j=1}^{k-1}\mathcal{V}_j} \Fi_{\{t_k>s\}}\,\dd\s_{k-2} \cdots \dd\s_1 \dd v\dd s.
\nonumber
\end{align}
Since $t-s\geq\l>0$, one can choose $k=k(\v,T_0)+1$ large enough as in Proposition \ref{lem3.1} such that \eqref{4.7} is valid. Then the above estimate further implies that 
\begin{align*}%\label{5.25}
&\int_0^{t-\l}\int_{|v|\leq N} e^{-\f{|v|^2}{8}}(|J_{13}|+|J_{23}|)\,\dd v\dd s\leq C\v \sup_{0\leq s\leq t} \Big[\|h(s)\|_{L^\infty}+\|h(s)\|_{L^\infty}^2\Big].
\end{align*}
For $J_{11}$, one can first write that
\begin{align}
&\int_0^{t-\l}\int_{|v|\leq N} e^{-\f{|v|^2}{10}} |J_{11}|\,\dd v\dd s\nonumber\\
&\leq \int_0^{t-\l}\int_{|v|\leq N}e^{-\nu_0(t-s)} \Fi_{\{t_1\leq s\}} e^{-\f{|v|^2}{10}}\left\{\int_{|v'|\leq 2N} +\int_{|v'|\geq2N} \right\}\nonumber\\
&\qquad\qquad\qquad\qquad\qquad\times |k_{w}(v,v')h(s,x-v(t-s),v')|\,\dd v' \dd v\dd s.
\nonumber
\end{align}
It can be further bounded by
\begin{multline}
\label{l53.p2}
Ce^{-\f{N^2}{32}}\sup_{0\leq s\leq t} \|h(s)\|_{L^\infty} 
\int_0^{t-\l}\int_{|v|\leq N}e^{-\nu_0(t-s)} e^{-\f{|v|^2}{10}}\int_{|v'|\geq2N}|k_w(v,v')e^{\f{|v-v'|^2}{32}}|\,\dd v'\dd v\dd s\\
+\int_0^{t-\l}\int_{|v|\leq N}e^{-\nu_0(t-s)} \Fi_{\{t_1\leq s\}} e^{-\f{|v|^2}{10}} \int_{|v'|\leq 2N} |k_{w}(v,v')h(s,x-v(t-s),v')|\,\dd v' \dd v\dd s.
\end{multline}
Here, the first term on the right is bounded by
%\begin{equation}\nonumber
%\label{ }
$\f{C}{N}\sup_{0\leq s\leq t} \|h(s)\|_{L^\infty}$,
%\end{equation}
due to the fact that
\begin{equation}\nonumber
%\label{ }
\int_{\mathbb{R}^3}|k_w(v,v')|e^{\f{|v-v'|^2}{32}}\,\dd v'\leq C,
\end{equation}
by \eqref{2.3}. Using \eqref{2.2}, it is direct to verify
\begin{equation}\nonumber
\int_{|v'|\leq 2N}|k_{w}(v,v')|^2\,\dd v'\leq C,
\end{equation}
which, together with   the Cauchy-Schwarz inequality,  yields that the second term on the right-hand side of \eqref{l53.p2} is bounded by
\begin{equation}\nonumber
\begin{split}
&C_N\int_0^{t-\l}e^{-\nu_0(t-s)} \Big(\int_{|v|\leq N} \int_{|v'|\leq 2N}e^{-\f{|v|^2}{10}}|k_{w}(v,v')|^2\dd v'\dd v\Big)^{\f12} \\
&\quad \times \Big(\int_{|v|\leq N} \int_{|v'|\leq 2N}e^{-\f{|v|^2}{10}}\Fi_{\{t_1\leq s\}} |f(s,x-v(t-s),v')|^2\dd v' \dd v\Big)^{\f12}\,\dd s\\
&\leq C_N\int_0^{t-\l}e^{-\nu_0(t-s)}\Big(\int_{|v|\leq N} \int_{|v'|\leq 2N}e^{-\f{|v|^2}{10}}\Fi_{\{t_1\leq s\}} |f(s,x-v(t-s),v')|^2\dd v' \dd v\Big)^{\f12}\,\dd s.
\end{split}
\end{equation}
Since $y:=x-v(t-s)\in\Omega$, by making a change of variable $v\mapsto y$ with $|\f{dy}{dv}|=(t-s)^3\geq \l^3$ for $t-s\geq\l$, it holds that 
\begin{equation}\label{5.29}
\int_{|v|\leq N} \int_{|v'|\leq 2N}e^{-\f{|v|^2}{10}}\Fi_{\{t_1\leq s\}} |f(s,x-v(t-s),v')|^2\,\dd v' \dd v
\leq \f{C}{\la^{3}}\|f(s)\|^2_{L^2}.
\end{equation}
Plugging those estimates back to \eqref{l53.p2}, it holds that
\begin{align*}%\label{5.30}
\int_0^{t-\l}\int_{|v|\leq N} e^{-\f{|v|^2}{10}} |J_{11}|\,\dd v\dd s
\leq \f{C}{N}\sup_{0\leq s\leq t} \|h(s)\|_{L^\infty}+C_{N,\l}\sup_{0\leq s\leq t}\|f(s)\|_{L^2}.
\end{align*}
For  $J_{12}$, one has that for fixed $(x,v)$,
\begin{align}
%\begin{split}
\int_0^{t-\l}|J_{12}|\,\dd s&\leq \int_0^{t-\l}\f{e^{-\nu(v)(t-t_1)}}{\widetilde{w}(v)}
\int_{\Pi_{j=1}^{k-1}\mathcal{V}_j} \sum_{l=1}^{k-1} \Fi_{\{t_{l+1}\leq s< t_l\}} \Big|(K_wh)(s,x_l-v_l(t_l-s),v_l)\Big|\,\dd\Sigma_{l}(s)\dd s\notag\\
&\leq C\sum_{l=1}^{k-1}\int_0^{t-\l} e^{-\nu_0(t-s)} \int_{\Pi_{j=1}^{l-1}\mathcal{V}_j}\int_{\mathcal{V}_l} \Fi_{\{t_{l+1}\leq s< t_l\}}\notag\\
&\qquad\quad\times  \int_{\mathbb{R}^3}\Big|k_w(v_l,v')h(s,x_l-v_l(t_l-s),v')\Big|e^{-\f{|v_l|^2}8}\,\dd v'\dd v_{l}\dd\s_{l-1}\cdots \dd\s_1\dd s.\label{5.31}
%\end{split}
\end{align}
For %any fixed $1\leq l\leq k-1$, 
each $l=1,\cdots,k-1$, we estimate the right-hand term in the way that 
\begin{align}
&\int_0^{t-\la}\int_{\mathcal{V}_l} \int_{\mathbb{R}^3}\Fi_{\{t_{l+1}\leq s< t_l\}}\{\cdots\}\,\dd v' \dd v_l \dd s\nonumber\\
&=\int_0^{t-\la}\left\{\int_{|v_l|\geq N} \int_{\mathbb{R}^3}+\int_{|v_l|\leq N} \int_{|v'|\geq2N}\right\}\Fi_{\{t_{l+1}\leq s< t_l\}}\{\cdots\}\,\dd v' \dd v_l \dd s\nonumber\\
&\quad +\int_0^{t-\la}\int_{|v_l|\leq N} \int_{|v'|\leq2N}\Fi_{\{t_{l+1}\leq s< t_l-\frac{1}{N}\}}\{\cdots\}\,\dd v' \dd v_l \dd s\nonumber\\
&\quad +\int_0^{t-\la}\int_{|v_l|\leq N} \int_{|v'|\leq2N}\Fi_{\{ t_l-\frac{1}{N}\leq s< t_l\}}\{\cdots\}\,\dd v' \dd v_l \dd s,\label{in.vvs}
\end{align}
so that each term $(l=1,\cdots,k-1)$  on the right-hand side of \eqref{5.31} is bounded by 
\begin{align}\label{5.32}
%&\int_0^{t-\l} e^{-\nu_0(t-s)} \int_{\Pi_{j=1}^{l-1}\mathcal{V}_j}\int_{\mathcal{V}_l}  \int_{\mathbb{R}^3}I_{\{t_{l+1}\leq0< t_l\}}|k_w(v_l,v')h(s,x_l-v_l(t_l-s),v')|e^{-\f{|v_l|^2}8}\dd v'dv_{l}\dd\s_{l-1}\cdots \dd\s_1ds\nonumber\\
%&=\int_0^{t-\l} e^{-\nu_0(t-s)} \int_{\Pi_{j=1}^{l-1}\mathcal{V}_j}
%\Big\{\int_{|v_l|\geq N} \int_{\mathbb{R}^3}+\int_{|v_l|\leq N} \int_{|v'|\geq2N}+\int_{|v_l|\leq N} \int_{|v'|\leq2N}\Big\}(\cdots)\nonumber\\
%&\leq \f{C}{N}\sup_{0\leq s\leq t}\|h(s)\|_{L^\infty}+C\int_0^{t-\l} e^{-\nu_0(t-s)} \int_{\Pi_{j=1}^{l-1}\mathcal{V}_j}\int_{|v_l|\leq N} \int_{|v'|\leq2N}[I_{\{t_{l+1}\leq s\leq t_l- \f{1}{N}\}}+I_{\{t_l-\f{1}{N}\leq s\leq t_l\}}]\nonumber\\
&%\leq 
\f{C}{N}\sup_{0\leq s\leq t}\|h(s)\|_{L^\infty}+C\int_0^{t-\l} e^{-\nu_0(t-s)} \int_{\Pi_{j=1}^{l-1}\mathcal{V}_j}\int_{|v_l|\leq N} \int_{|v'|\leq2N}\Fi_{\{t_{l+1}\leq s\leq t_l- \f{1}{N}\}}\{\cdots\}.
\end{align}
The last term above can be bounded in terms of the Cauchy-Schwarz by
\begin{multline}%\label{5.33}
C\int_0^{t-\l} e^{-\nu_0(t-s)} \int_{\Pi_{j=1}^{l-1}\mathcal{V}_j}\dd\s_{l-1}\cdots \dd\s_1\dd s\left(\int_{|v_l|\leq N} \int_{|v'|\leq2N} |k_w(v_l,v')|^2 e^{-\f{|v_l|^2}{8}}\dd v'\dd v_l \right)^{\f12}\nonumber\\
\times\left(\int_{|v_l|\leq N} \int_{|v'|\leq2N}\Fi_{\{t_{l+1}\leq s\leq t_l- \f{1}{N}\}} |h(s,x_l-v_l(t_l-s),v')|^2 e^{-\f{|v_l|^2}{8}}\dd v'\dd v_l \right)^{\f12},\nonumber
\end{multline}
and is further bounded by
\begin{multline}%\label{5.33}
%&\int_0^{t-\l} e^{-\nu_0(t-s)} \int_{\Pi_{j=1}^{l-1}\mathcal{V}_j}\left(\int_{|v_l|\leq N} \int_{|v'|\leq2N} |k_w(v_l,v')|^2 e^{-\f{|v_l|^2}{8}}\dd v'dv_l \right)^{\f12}\nonumber\\
%&\qquad\times\left(\int_{|v_l|\leq N} \int_{|v'|\leq2N}I_{\{t_{l+1}\leq s\leq t_l- \f{1}{N}\}} |h(s,x_l-v_l(t_l-s),v')|^2 e^{-\f{|v_l|^2}{8}}\dd v'dv_l \right)^{\f12}\dd\s_{l-1}\cdots \dd\s_1ds\nonumber\\
%\leq 
C\int_0^{t-\l} e^{-\nu_0(t-s)} \int_{\Pi_{j=1}^{l-1}\mathcal{V}_j}\dd\s_{l-1}\cdots \dd\s_1\dd s\\
\times\left(\int_{|v_l|\leq N} \int_{|v'|\leq2N}\Fi_{\{t_{l+1}\leq s\leq t_l- \f{1}{N}\}} |h(s,x_l-v_l(t_l-s),v')|^2 e^{-\f{|v_l|^2}{8}} \dd v'dv_l\right)^{\f12}.\label{5.32.p1}
\end{multline}
Since $s\in[t_{l+1},t_l-1/N]$, one has $y_l:=x_l-v_l(t_l-s)\in\Omega$. By taking a change of variable $v_l\mapsto y_l=x_l-v_l(t_l-s)$ with 
$$
\left|\f{dy_l}{dv_l}\right|=(t_l-s)^3\geq \f{1}{N^3},
$$ for $t_l-s\geq 1/N$, we see that \eqref{5.32.p1} and hence the second term of \eqref{5.32}  are bounded by
\begin{align}\label{5.34}
C_N\int_0^{t-\l}e^{-\nu_0(t-s)}\|f(s)\|_{L^2}\,\dd s\leq C_N \sup_{0\leq s\leq t}\|f(s)\|_{L^2}.
\end{align}
Combining \eqref{5.34}, \eqref{5.32} and \eqref{5.31}, one obtains that 
\begin{align*}
\int_0^{t-\l}\int_{|v|\leq N} e^{-\f{|v|^2}{8}}|J_{12}|\,\dd v\dd s
\leq \f{C_{\v,T_0}}{N}\sup_{0\leq s\leq t}\|h(s)\|_{L^\infty}
+C_{\v,N,T_0}\sup_{0\leq s\leq t}\|f(s)\|_{L^2},
\end{align*}
since  $k=k(\v,T_0)+1$.

It remains to estimate the same integrals involving $J_{21}$ and $J_{22}$. Setting $y:=x-v(t-s)$, 
it follows from  \eqref{2.4} and the Cauchy-Schwarz that
\begin{align}\label{5.36}
&|w(v)\Gamma(f,f)(s,y,v)|\nonumber\\
&\leq |w(v)\Gamma_+(f,f)(s,y,v)|+ |w(v)\Gamma_-(f,f)(s,y,v)|\nonumber\\
&\leq C\|h(s)\|_{L^\infty} \left(\int_{\mathbb{R}^3}(1+|\eta|)^{4}|e^{\varpi|\eta|^2}f(s,y,\eta)|^2\,\dd\eta\right)^{\f12}
+C\nu(v)\|h(s)\|_{L^\infty}\int_{\mathbb{R}^3}e^{-\f{|u|^2}{8}} |f(s,y,u)|\,\dd u\nonumber\\
&\leq C\nu(v)\|h(s)\|_{L^\infty}\left(\int_{\mathbb{R}^3}(1+|\eta|)^{-4\b+4}|h(s,y,\eta)|^2\,\dd\eta\right)^{\f12}.
\end{align}
%Noting , 
%which yields that 
For $J_{21}$, by applying the above estimate, one then has
\begin{align*}%\label{}
&\int_0^{t-\l}\int_{|v|\leq N} e^{-\f{|v|^2}{8}}|J_{21}|\,\dd v\dd s\nonumber\\
&\leq C\int_0^{t-\l}\int_{|v|\leq N} e^{-\nu_0(t-s)} e^{-\f{|v|^2}{10}}  \|h(s)\|_{L^\infty}\left(\int_{\mathbb{R}^3}(1+|\eta|)^{-4\b+4}\Fi_{\{t_1\leq s\}} |h(s,y,\eta)|^2\,\dd\eta\right)^{\f12}\dd v\dd s,\notag
%\nonumber\\
%&\leq C\int_0^{t-\l}e^{-\nu_0(t-s)} \|h(t)\|_{L^\infty} \left(\int_{|v|\leq N} \int_{\mathbb{R}^3}e^{-\f{|v|^2}{10}} (1+|\eta|)^{-4\b+4}\Fi_{\{t_1\leq s\}} |h(t,y,\eta)|^2\,\dd\eta \dd v\right)^{\f12}\dd s. 
%\nonumber
%&\leq C_{N}\int_0^{t-\l}e^{-\nu_0(t-s)} \|h(t)\|_{L^\infty} \left(\int_{|v|\leq N} \int_{|\eta|\leq N} I_{\{t_1\leq s\}} |f(t,y,\eta)|^2d\eta dv\right)^{\f12}ds+\f{C}{N} \sup_{0\leq s\leq t}\|h(s)\|^2_{L^\infty}\nonumber\\
%&\leq \f{C}{N} \sup_{0\leq s\leq t}\|h(s)\|^2_{L^\infty}+C_{\l,N}\int_0^{t-\l}e^{-\nu_0(t-s)} \|f(s)\|_{L^2}\|h(s)\|_{L^\infty} ds\nonumber\\
%&\leq \f{C}{N} \sup_{0\leq s\leq t}\|h(s)\|^2_{L^\infty}+C_{\l,N} \sup_{0\leq s\leq t}\|f(s)\|^2_{L^2}, \quad\mbox{for}\quad \b\geq\f52,
\end{align*}
which by the H\"older is further bounded by
\begin{equation}
\label{5.37.p1}
C\int_0^{t-\l}e^{-\nu_0(t-s)} \|h(s)\|_{L^\infty} \left(\int_{|v|\leq N} \int_{\mathbb{R}^3}e^{-\f{|v|^2}{10}} (1+|\eta|)^{-4\b+4}\Fi_{\{t_1\leq s\}} |h(s,y,\eta)|^2\,\dd\eta \dd v\right)^{\f12}\dd s. 
\end{equation}
For the above term, we consider the integral in $\eta$ over $\{|\eta|>N\}\cup \{|\eta|\leq N\}$. On one hand, over $\{|\eta|>N\}$, by using the fact that 
\begin{align}
\int_{|v|\leq N}\int_{|\eta|\geq N}e^{-\f{|v|^2}{10}} (1+|\eta|)^{-4\b+4}\,\dd\eta \dd v\leq \f{C}{N^2},\nonumber
\end{align}
for $\be\geq 5/2$, we see that  \eqref{5.37.p1} is bounded by
%\begin{equation}\nonumber
%\label{ }
$\f{C}{N} \sup_{0\leq s\leq t}\|h(s)\|^2_{L^\infty}$.
%\end{equation}
On the other hand, over $\{|\eta|\leq N\}$,  \eqref{5.37.p1} is bounded by
\begin{align}
%\label{ }
& C_{N}\int_0^{t-\l}e^{-\nu_0(t-s)} \|h(s)\|_{L^\infty} \left(\int_{|v|\leq N} \int_{|\eta|\leq N} \Fi_{\{t_1\leq s\}} |f(s,y,\eta)|^2\dd\eta \dd v\right)^{\f12}\dd s\nonumber\\
&\leq C_{\l,N}\int_0^{t-\l}e^{-\nu_0(t-s)} \|h(s)\|_{L^\infty}\|f(s)\|_{L^2}\,\dd s\nonumber\\
&\leq \f{C}{N} \sup_{0\leq s\leq t}\|h(s)\|^2_{L^\infty}+C_{\l,N} \sup_{0\leq s\leq t}\|f(s)\|^2_{L^2}, \nonumber
\end{align}
where as in \eqref{5.29}, since $y=x-v(t-s)\in\Omega$, we have made a change of variable $v\mapsto y$ with $|\f{dy}{dv}|=(t-s)^3\geq \l^3$ for $t-s\geq\l$. Combining those estimates, we then have
\begin{equation*}%\label{5.38}
\int_0^{t-\l}\int_{|v|\leq N} e^{-\f{|v|^2}{8}}|J_{21}|\,\dd v\dd s\leq \f{C}{N} \sup_{0\leq s\leq t}\|h(s)\|^2_{L^\infty}+C_{\l,N} \sup_{0\leq s\leq t}\|f(s)\|^2_{L^2}.
\end{equation*} 
%\begin{align}\label{5.37}
%&\int_0^{t-\l}\int_{|v|\leq N} e^{-\f{|v|^2}{8}}|J_{21}|\,\dd v\dd s\nonumber\\
%&\leq C\int_0^{t-\l}\int_{|v|\leq N} e^{-\nu_0(t-s)} e^{-\f{|v|^2}{10}}  \|h(t)\|_{L^\infty}\left(\int_{\mathbb{R}^3}(1+|\eta|)^{-4\b+4}\Fi_{\{t_1\leq s\}} |h(t,y,\eta)|^2\,\dd\eta\right)^{\f12}\dd v\dd s\nonumber\\
%&\leq C\int_0^{t-\l}e^{-\nu_0(t-s)} \|h(t)\|_{L^\infty} \left(\int_{|v|\leq N} \int_{\mathbb{R}^3}e^{-\f{|v|^2}{10}} (1+|\eta|)^{-4\b+4}\Fi_{\{t_1\leq s\}} |h(t,y,\eta)|^2\,\dd\eta \dd v\right)^{\f12}\dd s\nonumber\\
%&\leq C_{N}\int_0^{t-\l}e^{-\nu_0(t-s)} \|h(t)\|_{L^\infty} \left(\int_{|v|\leq N} \int_{|\eta|\leq N} I_{\{t_1\leq s\}} |f(t,y,\eta)|^2d\eta dv\right)^{\f12}ds+\f{C}{N} \sup_{0\leq s\leq t}\|h(s)\|^2_{L^\infty}\nonumber\\
%&\leq \f{C}{N} \sup_{0\leq s\leq t}\|h(s)\|^2_{L^\infty}+C_{\l,N}\int_0^{t-\l}e^{-\nu_0(t-s)} \|f(s)\|_{L^2}\|h(s)\|_{L^\infty} ds\nonumber\\
%&\leq \f{C}{N} \sup_{0\leq s\leq t}\|h(s)\|^2_{L^\infty}+C_{\l,N} \sup_{0\leq s\leq t}\|f(s)\|^2_{L^2}, \quad\mbox{for}\quad \b\geq\f52,
%\end{align}
%where we have used the change of variable $v\mapsto y$ as in \eqref{5.29} and the following fact
%\begin{align}
%\int_{|v|\leq N}\int_{|\eta|\geq N}e^{-\f{|v|^2}{10}} (1+|\eta|)^{-4\b+4}d\eta dv\leq \f{C}{N^2},\ \mbox{for} \ \b\geq\f52.\nonumber
%\end{align} 
For $J_{22}$, setting $y_l=x_l-v_l(t_l-s)$, we notice that for fixed $(x,v)$,
 \begin{equation}\nonumber
\int_0^{t-\l}|J_{22}|\,\dd s\leq \int_0^{t-\l}\f{e^{-\nu(v)(t-t_1)}}{\widetilde{w}(v)}
\int_{\Pi_{j=1}^{k-1}\mathcal{V}_j} \sum_{l=1}^{k-1} \Fi_{\{t_{l+1}\leq s< t_l\}}\Big| [w\Gamma(f,f)](s,y_l,v_l)\Big|\,\dd\Sigma_{l}(s)\dd s.
\end{equation}
From \eqref{5.36}, the term on the right can be bounded as
\begin{multline}\nonumber
C\sum_{l=1}^{k-1}\int_0^{t-\l} e^{-\nu_0(t-s)} \int_{\Pi_{j=1}^{l-1}\mathcal{V}_j}\int_{\mathcal{V}_l} \Fi_{\{t_{l+1}\leq s< t_l\}}e^{-\f{|v_l|^2}8}\nu(v_l) \|h(s)\|_{L^\infty}\\
\times\left(\int_{\mathbb{R}^3}{(1+|\eta|)^{-4\b+4}}|h(s,y_l,\eta)|^2\,\dd\eta\right)^{\f12} \,\dd v_{l}\dd\s_{l-1}\cdots \dd\s_1\dd s.
\end{multline}
To estimate the above term, we consider the integral over either $\{|v_l|\geq N\}$ or $\{|\eta|\geq N\}$ or $\{t_l-1/N\leq s<t_l\}$ or $\{|v_l|<N,|\eta|<N, t_{l+1}\leq s<t_l-1/N\}$. Over $\{|v_l|\geq N\}$ or $\{|\eta|\geq N\}$ or $\{t_l-1/N\leq s<t_l\}$, it is bounded by
\begin{equation}\nonumber
\f{C_{\v,T_0}}{N} \sup_{0\leq s\leq t}\|h(s)\|^2_{L^\infty}, 
\end{equation}
and over $\{|v_l|<N,|\eta|<N, t_{l+1}\leq s<t_l-1/N\}$, it is bounded by
\begin{equation*}
%\label{ }
\begin{split}
&C_N\sum_{l=1}^{k-1}\int_0^{t-\l} e^{-\nu_0(t-s)} \|h(s)\|_{L^\infty}\nonumber\\
&\quad\times\int_{\Pi_{j=1}^{l-1}\mathcal{V}_j}\left(\int_{|v_l|\leq N} \int_{|\eta|\leq N}\Fi_{\{t_{l+1}\leq s< t_l-\f{1}{N}\}} |f(s,y_l,\eta)|^2\,\dd\eta \dd v_{l}\right)^{\f12} \dd\s_{l-1}\cdots \dd\s_1\dd s\nonumber\\
&\leq C_{\v,N,T_0}\int_0^{t-\l}e^{-\nu_0(t-s)}\|h(s)\|_{L^\infty}\|f(s)\|_{L^2}\,\dd s\nonumber\\
&\leq  \f{C_{\v,T_0}}{N} \sup_{0\leq s\leq t}\|h(s)\|^2_{L^\infty}+ C_{\v,N,T_0}\sup_{0\leq s\leq t}\|f(s)\|^2_{L^2}, 
\end{split}
\end{equation*}
%\begin{align}\label{5.38}
%\int_0^{t-\l}|J_{22}|ds&\leq \int_0^{t-\l}\f{e^{-\nu(v)(t-t_1)}}{\widetilde{w}(v)}
%\int_{\Pi_{j=1}^{k-1}\mathcal{V}_j} \sum_{l=1}^{k-1} I_{\{t_{l+1}\leq s< t_l\}} [w\Gamma(f,f)](s,y_l,v_l)\dd\Sigma_{l}(s)ds\nonumber\\
%&\leq C\sum_{l=1}^{k-1}\int_0^{t-\l} e^{-\nu_0(t-s)} \int_{\Pi_{j=1}^{l-1}\mathcal{V}_j}\int_{\mathcal{V}_l} I_{\{t_{l+1}\leq s< t_l\}}e^{-\f{|v_l|^2}8}\nu(v_l) \|h(s)\|_{L^\infty}\nonumber\\
%&\qquad\quad\times\left(\int_{\mathbb{R}^3}(1+|\eta|)^{-2\b+4}|h(s,y_l,\eta)|^2d\eta\right)^{\f12} dv_{l}\dd\s_{l-1}\cdots \dd\s_1ds\nonumber\\
%&\leq \f{C_{\v,T_0}}{N} \sup_{0\leq s\leq t}\|h(s)\|^2_{L^\infty} +C\sum_{l=1}^{k-1}\int_0^{t-\l} e^{-\nu_0(t-s)} \|h(s)\|_{L^\infty}\int_{\Pi_{j=1}^{l-1}\mathcal{V}_j}\nonumber\\
%&\quad\times\left(\int_{|v_l|\leq N} \int_{|\eta|\leq N}I_{\{t_{l+1}\leq s< t_l-\f{1}{N}\}} |f(s,y_l,\eta)|^2d\eta dv_{l}\right)^{\f12} \dd\s_{l-1}\cdots \dd\s_1ds\nonumber\\
%&\leq \f{C_{\v,T_0}}{N}  \sup_{0\leq s\leq t}\|h(s)\|^2_{L^\infty}+C_{N,\v,T_0}\int_0^{t-\l}e^{-\nu_0(t-s)}\|h(s)\|_{L^\infty}\|f(s)\|_{L^2}ds\nonumber\\
%&\leq  \f{C_{\v,T_0}}{N} \sup_{0\leq s\leq t}\|h(s)\|^2_{L^\infty}+ C_{\v,N,T_0}\sup_{0\leq s\leq t}\|f(s)\|^2_{L^2}, 
%\end{align}
where we have used the change of variable $v_l\rightarrow y_l$ as in \eqref{5.34} above. Hence, for $J_{22}$, it follows that
\begin{equation}\nonumber
\int_0^{t-\l}|J_{22}|\,\dd s\leq\f{C_{\v,T_0}}{N} \sup_{0\leq s\leq t}\|h(s)\|^2_{L^\infty}+ C_{\v,N,T_0}\sup_{0\leq s\leq t}\|f(s)\|^2_{L^2},
\end{equation}
and hence
\begin{align*}%\label{5.38-1}
\int_0^{t-\l}\int_{|v|\leq N} e^{-\f{|v|^2}{8}}|J_{22}|\,\dd v\dd s &\leq \int_{\R^3}e^{-\f{|v|^2}{8}}\,\dd v \int_0^{t-\l}|J_{22}|\,\dd s\nonumber\\
&\leq \f{C_{\v,T_0}}{N}\sup_{0\leq s\leq t}\|h(s)\|_{L^\infty}^2
+C_{\v,N,T_0}\sup_{0\leq s\leq t}\|f(s)\|_{L^2}^2.
\end{align*}
Therefore, we have completed all estimates on those terms on the right-hand side of \eqref{l53.p1}. Combining them all, %all the above estimates, 
one obtains that 
\begin{align}\label{5.39}
\int_{\mathbb{R}^3} e^{-\f{|v|^2}{8}} |h(t,x,v)|\,\dd v&\leq C_7e^{-\f{\nu_0}{2}t}\|h_0\|_{L^\infty}
+C_7\Big(\v+\l+\f{C_{\v,T_0}}{N} \Big) \sup_{0\leq s\leq t}\Big[\|h(s)\|_{L^\infty}+\|h(s)\|^2_{L^\infty}\Big]\nonumber\\
&\quad + C_{\v,\l,N,T_0} \sup_{0\leq s\leq t}\Big[\|f(s)\|_{L^2}+\|f(s)\|^2_{L^2}\Big],
\end{align}
where $C_7\geq1$ is a generic constant.
Letting
$$
\tilde{t}:=\f{2}{\nu_0}\ln\left(4C_6C_7M_0\right),
$$
it then holds that 
\begin{align}\label{5.40}
C_7e^{-\f{\nu_0}{2}t}\|h_0\|_{L^\infty}\leq C_7e^{-\f{\nu_0}{2}t}M_0\leq \f1{4C_6},
%\quad\mbox{for}\quad t\geq \tilde{t}.
\end{align}
for all $t\geq \tilde{t}$.
On the other hand,  it follows from \eqref{5.0} and \eqref{5.5} that the remaining part on the right-hand side of \eqref{5.39} is bounded as
\begin{multline}
 C_7\Big(\v+\l+\f{C_{\v,T_0}}{N} \Big) \sup_{0\leq s\leq t}\Big[\|h(s)\|_{L^\infty}+\|h(s)\|^2_{L^\infty}\Big]+ C_{\v,\l,N,T_0} \sup_{0\leq s\leq t}\Big[\|f(s)\|_{L^2}+\|f(s)\|^2_{L^2}\Big]
%\nonumber
\\
\leq 2C_7\Big(\v+\l+\f{C_{\v,T_0}}{N} \Big)\bar{M}^2+C_{\v,\l,N,T_0} e^{2\tilde{C}_1\bar{M} T_0} \Big[\|f_0\|_{L^2}+\|f_0\|^2_{L^2}\Big],
\label{5.40.ap1}
%\nonumber
\end{multline}
for $0\leq t\leq T_0$. %, where $T_0\geq \tilde{t}$ is to be determined later. 
We first take $\l>0$ and $\v>0$ small enough, then let $N>0$ suitably large, and finally take initial data $f_0$ such that $\|f_0\|_{L^2}\leq \epsilon_1$ with $\epsilon_1=\epsilon_1(\bar{M},T_0)>0$ sufficiently small, so that \eqref{5.40.ap1} is bounded by
\begin{align}\label{5.42}
2C_7\Big(\v+\l+\f{C_{\v,T_0}}{N} \Big)\bar{M}^2+C_{\v,\l,N,T_0} e^{2\tilde{C}_1\bar{M} T_0} \left[\epsilon_1+\epsilon_1^2\right]\leq \f1{4C_6}.
\end{align}
Notice that $\epsilon_1(\bar{M},T_0)$ can be decreasing in $\bar{M}$ and $T_0$. Applying  \eqref{5.40}, \eqref{5.40.ap1} and \eqref{5.42} for \eqref{5.39}, one has
\begin{align}\nonumber
\int_{\mathbb{R}^3} e^{-\f{|v|^2}{8}} |h(t,x,v)|\,\dd v\leq \f1{2C_6},
%\quad\mbox{for}\quad \tilde{t}\leq t\leq T_0.
\end{align}
for all $(t,x)\in [\tilde{t},T_0)\times \Omega$. This proves \eqref{5.13.ap1} and hence \eqref{5.13} follows. 
Setting $\tilde{C}_2:=4C_6C_7\geq1$, we then complete the proof of Lemma \ref{lem5.3}.   
\end{proof}

\subsection{Estimate in $L^\infty_{x,v}$}

To treat the nonlinear $L^\infty$ estimate for solutions with large amplitude, we need to use the time-decay property of the linear solution operator over the time interval $[0,T_0]$, where we recall that $T_0(>\tilde{t})$ depending only on $M_0$ is to be determined in the end of the proof. From the proof of the below lemma, such time-decay property is an immediate consequence of Lemma \ref{lem5.3}. Note that Lemma \ref{lem5.3} is based on the $L^\infty_xL^1_v$ estimate on the solution. It thus means that for the nonlinear equation we are going to make a bootstrap argument from $L^\infty_xL^1_v$ to $L^\infty$ in finite time. 
%Now we shall use the above $L^\infty_xL^1_v$ estimate to prove the following key lemma.

\begin{lemma}\label{lem5.4}
	Let ${\tilde{G}(t,s):=G^f(t,s)}$ be the linear operator defined in Lemma \ref{lem3.02} with $\varphi(t,x,v)$ replaced by the solution of the Boltzmann equation $f(t,x,v)$. Then, under the a priori assumption \eqref{5.0}, it holds that 
	\begin{align}\label{5.43}
	\|{\tilde{G}(t,s)}h_0\|_{L^\infty}\leq Ce^{\f34\nu_0 \tilde{t}} {e^{-\f14\nu_0(t-s)}}\|h_0\|_{L^\infty}\quad\mbox{for all}\ 0\leq s\leq t\leq T_0,
	%\quad\mbox{for}\quad t\in[0,T_0],
	\end{align}
	provided that initial data $f_0(x,v)$ satisfies $\|f_0\|_{L^2}\leq \epsilon_1(\bar{M},T_0)$, where $C>0$ is a generic constant.
\end{lemma}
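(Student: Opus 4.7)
The plan is to combine the finite-time bound on $\tilde{G}$ from Lemma \ref{lem3.02} on the ``bad'' interval $[0,\tilde{t}]$, where the lower bound $R(f)\geq \tfrac12\nu$ is not yet available, with the exponentially decaying estimate from Lemma \ref{lem3.3} on the ``good'' interval $[\tilde{t},T_0]$, where Lemma \ref{lem5.3} guarantees the lower bound. The two pieces are glued together by the semigroup identity $\tilde{G}(t,s)=\tilde{G}(t,\tilde{t})\tilde{G}(\tilde{t},s)$.

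First, I would invoke Lemma \ref{lem5.3} (under the smallness $\|f_0\|_{L^2}\leq \epsilon_1(\bar M,T_0)$) to conclude that $R(f)(t,x,v)\geq \tfrac12\nu(v)$ for every $(t,x,v)\in[\tilde{t},T_0)\times\Omega\times\R^3$. Then I split into three cases:
\begin{itemize}
\item\textbf{Case 1: }$0\leq s\leq t\leq \tilde{t}$. Here Lemma \ref{lem3.02} gives the unit-step bound $\|\tilde{G}(\tau+\rho,\tau)\|_{L^\infty\to L^\infty}\leq C_3\rho^{2\beta}$ for any $\tau\geq 0$. Iterating this over $\lceil (t-s)/\rho\rceil$ intervals of length $\rho$ and using the key calibration \eqref{5.00}, namely $(C_3\rho^{2\beta})^{1/\rho}\leq e^{\nu_0/2}$, yields
\begin{equation*}
\|\tilde G(t,s)h_0\|_{L^\infty}\leq (C_3\rho^{2\beta})^{\lceil(t-s)/\rho\rceil}\|h_0\|_{L^\infty}\leq C\,e^{\frac12\nu_0(t-s)}\|h_0\|_{L^\infty}.
\end{equation*}
Since $t-s\leq\tilde{t}$, this can be rewritten as $C\,e^{\frac34\nu_0\tilde{t}}e^{-\frac14\nu_0(t-s)}\|h_0\|_{L^\infty}$.
\item\textbf{Case 3: }$\tilde{t}\leq s\leq t\leq T_0$. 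On this interval the hypothesis \eqref{3.22} holds with $\varphi=f$, so one can run the argument of Lemma \ref{lem3.3} verbatim with the role of the initial time $0$ played by $\tilde{t}$ (the bound $I^f(t,s)\leq e^{-\frac12\nu_0(t-s)}$ is valid for all $\tilde{t}\leq s\leq t\leq T_0$, and the stochastic cycle expansion in \eqref{3.26}--\eqref{3.29} does not see the absolute time) to obtain
\begin{equation*}
\|\tilde G(t,s)h_0\|_{L^\infty}\leq C\,e^{-\frac14\nu_0(t-s)}\|h_0\|_{L^\infty}.
\end{equation*}
\item\textbf{Case 2: }$0\leq s\leq \tilde{t}\leq t\leq T_0$. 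Apply the semigroup property and chain the bounds from Cases 3 and 1:
\begin{equation*}
\|\tilde G(t,s)h_0\|_{L^\infty}\leq C\,e^{-\frac14\nu_0(t-\tilde t)}\|\tilde G(\tilde t,s)h_0\|_{L^\infty}\leq C\,e^{\frac34\nu_0\tilde t}e^{-\frac14\nu_0(t-s)}\|h_0\|_{L^\infty}.
\end{equation*}
\end{itemize}

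In all three cases we arrive at the claimed estimate \eqref{5.43}, which completes the proof.

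The main obstacle is Case 1: Lemma \ref{lem3.02} is sharp only on a single ``short'' interval of length $\rho$, and a priori the operator norm could grow like $(C_3\rho^{2\beta})^{\tilde t/\rho}$, which is $M_0$ to a large power because $\tilde t\sim \ln M_0$. The key point that saves us is precisely the calibration \eqref{5.00} imposed earlier in the section, which converts this potentially catastrophic growth into the mild factor $e^{\frac12\nu_0(t-s)}$, and then the shortness of the bad interval ($t-s\leq\tilde t$) converts it into the prefactor $e^{\frac34\nu_0\tilde t}$ in the final estimate. Once this is in place, Case 3 is a direct adaptation of Lemma \ref{lem3.3}, and Case 2 is just composition.
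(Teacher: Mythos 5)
Your proposal is correct and follows essentially the same strategy as the paper: split the time interval at $\tilde{t}$, control the growth on $[0,\tilde{t}]$ via the calibration \eqref{5.00} applied to iterated Lemma~\ref{lem3.02} bounds, invoke Lemma~\ref{lem3.3} on $[\tilde{t},T_0]$ where Lemma~\ref{lem5.3} gives $R(f)\geq\tfrac12\nu$, and glue with the semigroup identity. The only difference is that the paper writes out only the case $s=0$ and asserts the general $s$ is similar, whereas you make the three subcases for arbitrary $s$ explicit.
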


%\noindent{\bf Proof.}  
\begin{proof}
We assume $\|f_0\|_{L^2}\leq \epsilon_1=\epsilon_1(\bar{M},T_0)$. 
%Noting $\|f_0\|_{L^2}\leq \epsilon_1$, 
Then, by Lemma \ref{lem5.3}, it holds that 
\begin{equation}\label{5.44}
R(f)(t,x,v)\geq 
\left\{\begin{aligned}
&0,\qquad\quad t\in[0,\tilde{t}],\\
&\f12\nu(v),\quad t\in[\tilde{t},T_0],
\end{aligned}\right.
\end{equation}
for all $(x,v)\in \Omega\times \R^3$. {We only consider the estimate for $\tilde{G}(t,0)$ since the estimate for $\tilde{G}(t,s)$ can be obtained similarly.} Thus, \eqref{5.43} follows by two cases:
%We divide the proof into two cases.\\

\medskip
\noindent{\it Case of $t\in[0,\tilde{t}]$.} In this case, one can see $t/\rho\in [m,m+1)$ for some  $m\in \{0,1,\cdots, [\tilde{t}/\rho ]\}$. Thus it follows from \eqref{5.00} and  Lemma \ref{lem3.02} that 
\begin{align}\label{5.45}
\|{\tilde{G}(t,0)}h_0\|_{L^\infty}\leq C_3\r^{2\b} \|{\tilde{G}(m\r,0)}h_0\|_{L^\infty}&\leq C_3\r^{2\b} (C_3\r^{2\b})^m\|h_0\|_{L^\infty}\nonumber\\
&\leq C_3\r^{2\b} \left[(C_3\r^{2\b})^{\f1\r}\right]^{\tilde{t}}\|h_0\|_{L^\infty}\leq C e^{\f12\nu_0 \tilde{t}}\|h_0\|_{L^\infty},
\end{align}
and hence
\begin{equation}
\label{5.45-1}
\|{\tilde{G}(t,0)}h_0\|_{L^\infty}\leq Ce^{\f34\nu_0\tilde{t}}e^{-\f14\nu_0t}\|h_0\|_{L^\infty}.
\end{equation}

%\noindent{Case 2.} For 
\medskip
\noindent{\it Case of $t\in[\tilde{t},T_0]$}. In this case  we write 
%\begin{align}\nonumber
${\tilde{G}(t,0)}h_0={\tilde{G}(t,\tilde{t})}{\tilde{G}(\tilde{t},0)}h_0$.
%\end{align}
From \eqref{5.44}, we know  that ${\tilde{G}(t,\tilde{t})}$  has the property in \eqref{3.24}.  Thus it follows from \eqref{3.24} and \eqref{5.45} that 
\begin{equation}\label{5.45-2}
%\begin{split}
\|{\tilde{G}(t,0)}h_0\|_{L^\infty}\leq C e^{-\f14\nu_0(t-\tilde{t})} \|{\tilde{G}(\tilde{t},0)}h_0\|_{L^\infty} %\\&
\leq C e^{-\f14\nu_0(t-\tilde{t})} e^{\f12\nu_0\tilde{t}}\|h_0\|_{L^\infty}
%\\&
\leq Ce^{\f34\nu_0\tilde{t}} e^{-\f14\nu_0t}\|h_0\|_{L^\infty}.
%\end{split}
\end{equation}
Therefore, combing both cases, \eqref{5.45-1} and \eqref{5.45-2}   yield \eqref{5.43}.  The proof of Lemma \ref{lem5.4} is completed.     
\end{proof}%$\hfill\Box$

%\

In what follows we try to apply Lemma \ref{lem5.4} to treat the nonlinear $L^\infty$ estimate over the time interval $[0,T_0]$. For that purpose, we recall the Boltzmann equation \eqref{1.9} and use the Duhamel principle to rewrite it in terms of $h(t,x,v)=w(v)f(t,x,v)$ as %, the Boltzmann equation can be rewritten as 
\begin{align}\label{hw}
h(t,x,v)&={\tilde{G}(t,0)}h_0+\int_0^t{\tilde{G}(t,s)}K_wh(s)\,\dd s+\int_0^t{\tilde{G}(t,s)}w\Gamma_+(f,f)(s)\,\dd s,%\nonumber\\
%&:=\tilde{G}(t)h_0+\int_{t-\l}^t{\tilde{G}(t,s)}K_wh(s)+{\tilde{G}(t,s)}w\Gamma_+(f,f)(s)ds\nonumber\\
%&\quad+\int_0^{t-\l}{\tilde{G}(t,s)}K_wh(s)ds+\int_0^{t-\l}{\tilde{G}(t,s)}w\Gamma_+(f,f)(s)ds
\end{align}
where ${\tilde{G}(t,s):=G^f(t,s)}$ is defined as in Lemma \ref{lem5.4}.  As the first step for the $L^\infty$ estimate, we have the following 

\begin{lemma}\label{lem.ad1}
%Let $t\in[0,T_0]$ and  
Assume $\|f_0\|_{L^2}\leq \epsilon_1=\epsilon_1(\bar{M},T_0)$. Let $(t,x,v)\in (0,T_0]\times \Omega\times \R^3$, then it holds that 
%\begin{align}\label{5.46}
%\|h(t)\|_{L^\infty}&\leq \tilde{C}_3e^{2\nu_0 \tilde{t}} \|h_0\|_{L^\infty} \left[1+\int_0^t\|h(s)\|_{L^\infty}ds\right]e^{-\f{\nu_0}{10}t}\nonumber\\
%&\quad+  \tilde{C}_3e^{2\nu_0 \tilde{t}} \Big\{(\v+\l+\f{C_{\v,T_0}}{N} )  \sup_{0\leq s\leq t}[\|h(s)\|_{L^\infty}+\|h(s)\|^3_{L^\infty}]\nonumber\\
%&\qquad\qquad\qquad\qquad\qquad+C_{\v,\l,N,T_0}\sup_{0\leq s\leq t}[\|f(s)\|_{L^2}+\|f(s)\|^3_{L^2}]\Big\},
%\end{align}
\begin{align}\label{5.65}
|h(t,x,v)|
&\leq e^{\f{\nu_0}{4}\tilde{t}}\int_0^{t-\l}\Fi_{\{t_1\leq s\}} e^{-\f{\nu_0}{4}(t-s)}\Big|(K_wh)(s,x-v(t-s),v)\Big|\,\dd s\nonumber\\
&\quad+e^{\f{\nu_0}{4}\tilde{t}}\int_0^{t-\l}\Fi_{\{t_1\leq s\}} e^{-\f{\nu_0}{4}(t-s)}\Big|[w\Gamma_+(f,f)](s,x-v(t-s),v)\Big|\,\dd s\nonumber\\
&\quad+Ce^{\nu_0\tilde{t}}\left\{e^{-\f{\nu_0}{4}t} \|h_0\|_{L^\infty}+\Big(\l+\v+\f{C_{\v,T_0}}{N}\Big)\sup_{0\leq s\leq t}\left[\|h(s)\|_{L^\infty}+\|h(s)\|^2_{L^\infty}\right]\right.\nonumber\\
&\qquad\qquad\quad\left.+C_{\v, N, T_0}\sup_{0\leq s\leq t}\left[\|f(s)\|_{L^2}+\|f(s)\|^2_{L^2}\right]\right\},
\end{align}
where $\la>0$ and $\v>0$ can be arbitrarily small, and $N>0$ can be arbitrarily large. 
%where $\tilde{C}_3(\geq1)$ is a given positive constant.
\end{lemma}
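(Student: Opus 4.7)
The plan is to start from the Duhamel representation \eqref{hw}, treat the homogeneous contribution $\tilde{G}(t,0)h_0$ directly via Lemma \ref{lem5.4} (which produces the $Ce^{\nu_0\tilde{t}}e^{-\nu_0 t/4}\|h_0\|_{L^\infty}$ term), and then decompose each of the two Duhamel integrals into the time windows $[t-\lambda,t]$ and $[0,t-\lambda]$. On the near-time piece $[t-\lambda,t]$ I would apply Lemma \ref{lem5.4} in operator form, $\|\tilde{G}(t,s)\cdot\|_{L^\infty}\leq Ce^{3\nu_0\tilde{t}/4}e^{-\nu_0(t-s)/4}\|\cdot\|_{L^\infty}$, and then use the pointwise bounds $\|K_wh\|_{L^\infty}\leq C\|h\|_{L^\infty}$ (from \eqref{2.3}) and $\|w\Gamma_+(f,f)\|_{L^\infty}\leq C\|h\|_{L^\infty}^2$ (from \eqref{2.4-1}, which needs $\beta\geq 2$). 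This contributes a term of order $C\lambda e^{\nu_0\tilde{t}}\sup_{s\leq t}[\|h(s)\|_{L^\infty}+\|h(s)\|_{L^\infty}^2]$, matching the $\lambda$-piece on the last line of \eqref{5.65}.

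On the far piece $[0,t-\lambda]$ I would unpack the operator $\tilde{G}(t,s)$ using the explicit representation \eqref{3.5} of Lemma \ref{lem3.01} with $q\equiv 0$, applied with initial time $s$ and initial datum either $K_wh(s,\cdot,\cdot)$ or $w\Gamma_+(f,f)(s,\cdot,\cdot)$. The key observation is that $I^f(t,s)\leq e^{\nu_0\tilde{t}/4}e^{-\nu_0(t-s)/4}$ uniformly on $0\leq s\leq t\leq T_0$: on $[\tilde{t},T_0]$ Lemma \ref{lem5.3} gives $I^f(t,s)\leq e^{-\nu_0(t-s)/2}$, and on $[0,\tilde{t}]$ the trivial bound $I^f\leq 1$ combined with the explicit factor $e^{\nu_0\tilde{t}/4}$ absorbs the defect. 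Inserting \eqref{3.5}, the straight-line transport term $\mathbf{1}_{\{t_1\leq s\}}I^f(t,s)(\cdot)(s,x-v(t-s),v)$ produces exactly the first two integrals kept on the right-hand side of \eqref{5.65}.

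The remaining ingredients from \eqref{3.5} are the sum over bounces $l=1,\dots,k-1$ indexed by $\dd\Sigma_l^f(s)$, and the $k$-th iterated residual $\dd\Sigma_{k-1}^f(t_k)$. These I would estimate by the same strategy used in the proof of Lemma \ref{lem5.3}: choose $k=k(\varepsilon,T_0)+1$ large enough that the $k$-fold residual is bounded by $C\varepsilon\sup_s[\|h(s)\|_{L^\infty}+\|h(s)\|_{L^\infty}^2]$ via the small-measure estimate \eqref{4.7}; for each bounce $l$, split the $v_l$-integration into $|v_l|\geq N$ and $|v_l|\leq N$ (the former contributes $C/N\cdot \sup\|h\|_{L^\infty}$ after using $\int \widetilde w\, \dd\sigma\lesssim \rho^{-3}$ as in \eqref{3.59-a3}); on $|v_l|\leq N$ split further into the thin time layer $\{t_l-1/N\leq s<t_l\}$ (contributing a factor $1/N$) and the bulk $\{t_{l+1}\leq s<t_l-1/N\}$ where the diffeomorphism $v_l\mapsto y_l=x_l-v_l(t_l-s)$ has Jacobian $\geq 1/N^3$. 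Cauchy--Schwarz in $v'$ against the kernel bound \eqref{2.2} (for the $K_wh$ integrand), and the Cauchy--Schwarz bound from \eqref{2.4} (for the $w\Gamma_+$ integrand, together with the polynomial weight control from $\beta\geq 5/2$), then convert the remaining integrals into the $L^2$ norm $\|f(s)\|_{L^2}$, producing the $C_{\varepsilon,N,T_0}\sup_s[\|f(s)\|_{L^2}+\|f(s)\|_{L^2}^2]$ piece of \eqref{5.65}.

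The main obstacle is the bookkeeping on the bounce terms: one must simultaneously track the $I^f$-decay along each broken trajectory (repeatedly applying the bound $I^f(t_j,t_{j+1})\leq e^{\nu_0\tilde{t}/4}e^{-\nu_0(t_j-t_{j+1})/4}$ so that the product telescopes to an overall $e^{\nu_0\tilde{t}/4}e^{-\nu_0(t-s)/4}$), verify that the thin time-window estimate is legitimate for every $l$, and make sure the $k=C_1\rho^{5/4}$-fold product of $\int_{\mathcal{V}_j}$ factors does not blow up the constants. Up to that care, the argument is a direct $L^\infty_{x,v}$ analogue of the $L^\infty_xL^1_v$ bootstrap already carried out in Lemma \ref{lem5.3}, with the extra input being Lemma \ref{lem5.4} to furnish the exponential decay of $\tilde{G}$ on $[0,T_0]$ rather than the constant-frequency propagator $G$.
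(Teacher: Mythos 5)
Your proposal follows the paper's proof essentially step for step: the same decomposition of the Duhamel formula \eqref{hw} into the homogeneous term, a near-time slab $[t-\lambda,t]$, and a far piece $[0,t-\lambda]$; the same unpacking of $\tilde G(t,s)$ on the far piece via the back-time cycle of Lemma \ref{lem3.01}; the same uniform bound $I^f(t,s)\le e^{\nu_0\tilde t/4}e^{-\nu_0(t-s)/4}$ (the paper's \eqref{5.50}) fed by Lemma \ref{lem5.3}; the same retention of the straight-line transport term as the two integrals kept in \eqref{5.65}; and the same $\varepsilon$/$N$/$\lambda$ splitting with the change of variable $v_l\mapsto y_l$ to access $\|f\|_{L^2}$. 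The only gloss is on the residual $k$-th term, where one must first bound the interior value $(\tilde G(t,s)\,q)(t_k,x_k,v_{k-1})$ through Lemma \ref{lem5.4} before invoking the small-measure estimate \eqref{4.7}; you note that Lemma \ref{lem5.4} is the "extra input" at the end, so the step is present, just not spelled out at the point it is used.
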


%\noindent{\bf Proof.} 
\begin{proof}
Take $(t,x,v)\in (0,T_0]\times \Omega\times \R^3$. We rewrite \eqref{hw} as 
%Noting \eqref{1.9} and by the Duhamel principle, in terms of $h$, the solution of Boltzmann equation can be represent as 
\begin{align}\label{5.47}
h(t,x,v)
%&=\tilde{G}(t)h_0+\int_0^t{\tilde{G}(t,s)}K_wh(s)ds+\int_0^t{\tilde{G}(t,s)}w\Gamma_+(f,f)(s)ds\nonumber\\
&={\tilde{G}(t,0)}h_0+\int_{t-\l}^t\Big[{\tilde{G}(t,s)}K_wh(s)+{\tilde{G}(t,s)}w\Gamma_+(f,f)(s)\Big]\,\dd s\nonumber\\
&\quad+\int_0^{t-\l}{\tilde{G}(t,s)}K_wh(s)\,\dd s+\int_0^{t-\l}{\tilde{G}(t,s)}w\Gamma_+(f,f)(s)\,\dd s,
\end{align}
where $\la>0$ is to be fixed suitably small later. 
Firstly, it follows from Lemma \ref{lem5.4}  that 
\begin{align}\label{5.48}
\Big|{\tilde{G}(t,0)}h_0\Big|\leq Ce^{\f34\nu_0\tilde{t}}e^{-\f{\nu_0}{4}t} \|h_0\|_{L^\infty},
\end{align}
and 
\begin{equation}\label{5.48-1}
\int_{t-\l}^t{\tilde{G}(t,s)}K_wh(s)\,\dd s+\int_{t-\l}^t{\tilde{G}(t,s)}w\Gamma_+(f,f)(s)\,\dd s %\\
\leq C\l e^{\f34\nu_0\tilde{t}} \sup_{0\leq s\leq t}\Big[\|h(s)\|_{L^\infty}+\|h(s)\|^2_{L^\infty}\Big].
\end{equation}
It remains to estimate the last two terms on the right-hand of \eqref{5.47}.

\medskip
\noindent{\underline{Estimate on $\int_0^{t-\l}{\tilde{G}(t,s)}K_wh(s)\,\dd s$:}} It is noticed that 
\begin{align}\label{5.49}
I_1:=&\Big({\tilde{G}(t,s)}K_wh(s)\Big)(t,x,v)\nonumber\\
=&\Fi_{\{t_1\leq s\}} I^f(t,s) (K_wh)(s,x-v(t-s),v)\nonumber\\
&+\f{I^f(t,t_1)}{\widetilde{w}(v)}
\int_{\Pi_{j=1}^{k-1}\mathcal{V}_j} \sum_{l=1}^{k-1} \Fi_{\{t_{l+1}\leq s< t_l\}} (K_wh)(s,x_l-v_l(t_l-s),v_l)\,\dd\Sigma^f_{l}(s)\nonumber\\
&+\f{I^f(t,t_1)}{\widetilde{w}(v)}
\int_{\Pi_{j=1}^{k-1}\mathcal{V}_j} \Fi_{\{t_k>s\}} \Big({\tilde{G}(t,s)}K_wh(s)\Big)(t_k,x_k,v_{k-1})\,\dd\Sigma^f_{k-1}(t_k)\nonumber\\
:= &I_{11}+I_{12}+I_{13},
\end{align}
where $I_{11}$, $I_{12}$ and $I_{13}$ denote the right-hand terms, respectively.  
By the definition of $I^f(t,s)$ and Lemma \ref{lem5.3}, it holds that
\begin{equation}\label{5.50}
\left\{\begin{aligned}
&I^f(t,s)\leq e^{\f{\nu_0}{4}\tilde{t}} e^{-\f{\nu_0}{4}(t-s)},\\
&I^f(t,t_1)I^f(t_1,t_2)\cdots I^f(t_{l-1},t_{l})I^f(t_l,s)\leq e^{\f{\nu_0}{4}\tilde{t}} e^{-\f{\nu_0}{4}(t-s)}.
\end{aligned}\right.
\end{equation}
For $I_{13}$, it follows from Lemma \ref{lem5.4} that
\begin{align}\label{5.52}
\Big|\Big({\tilde{G}(t,s)}K_wh(s)\Big)(t_k,x_k,v_{k-1})\Big|
&\leq Ce^{\f{3\nu_0}{4}\tilde{t}} e^{-\f14\nu_0(t_k-s)}\|K_wh(s)\|_{L^\infty}\nonumber\\
&\leq Ce^{\f{3\nu_0}{4}\tilde{t}} e^{-\f14\nu_0(t_k-s)}\|h(s)\|_{L^\infty}.
\end{align}
Thus, for $t-s\geq\l>0$, then it follows from  \eqref{5.50}, \eqref{5.52} and \eqref{4.7} that
\begin{align}\label{5.53}
I_{13}&\leq Ce^{\nu_0\tilde{t}} e^{-\f14\nu_0(t-s)}\|h(s)\|_{L^\infty} 
\int_{\Pi_{j=1}^{k-1}\mathcal{V}_j} \Fi_{\{t_{k}>s\}} \widetilde{w}(v_{k-1})\,\dd\s_{k-1}\dd\s_{k-2}\cdots \dd\s_1\nonumber\\
& \leq C\v e^{\nu_0\tilde{t}} e^{-\f14\nu_0(t-s)}\|h(s)\|_{L^\infty},
\end{align}
where we have taken $k=k(\v,T_0)+1$ such that \eqref{4.7} holds.

For $I_{12}$, we estimate it as follows. By denoting $I_{12l}$ to be each term $l=1,\cdots,k-1$ in the summation and using  \eqref{5.50}, one has
\begin{multline}\nonumber
\int_0^{t-\l} I_{12l}\,\dd s \leq \f{e^{\f{\nu_0}{4}\tilde{t}}}{\widetilde{w}(v)}\int_0^{t-\l} e^{-\f14\nu_0(t-s)} \int_{\Pi_{j=1}^{l-1}\mathcal{V}_j} \int_{\mathcal{V}_l}\Fi_{\{t_{l+1}\leq s< t_l\}} \widetilde{w}(v_l)\\
 \times\int_{\mathbb{R}^3} |k_{w}(v_l,v') h(s,x_l-v_l(t_l-s),v')|\,\dd v' \dd\s_l \dd\s_{l-1}\cdots \dd\s_1\dd s.
\end{multline}
For the term on the right, as for treating \eqref{5.31}, we consider the integral $\dd v'\dd v_l\dd s$ in the way of \eqref{in.vvs}. Then, the right-hand term above is bounded by
\begin{multline}\nonumber
 \f{C}{N}e^{\f{\nu_0}{4}\tilde{t}}\sup_{0\leq s\leq t}\|h(s)\|_{L^\infty}
 +Ce^{\f{\nu_0}{4}\tilde{t}}\int_0^{t-\l} e^{-\f14\nu_0(t-s)} \int_{\Pi_{j=1}^{l-1}\mathcal{V}_j} \int_{|v_l|\leq N}\int_{|v'|\leq2 N} \Fi_{\{t_{l+1}\leq s< t_l-\f1N\}}\\
\times e^{-\f{|v_l|^2}{8}}|k_{w}(v_l,v') h(s,x_l-v_l(t_l-s),v')|\,\dd v'\dd v_l \dd\s_{l-1}\cdots \dd\s_1\dd s\\
\leq \f{C}{N}e^{\f{\nu_0}{4}\tilde{t}}\sup_{0\leq s\leq t}\|h(s)\|_{L^\infty}+C_Ne^{\f{\nu_0}{4}\tilde{t}} \sup_{0\leq s\leq t}\|f(s)\|_{L^2},
\end{multline}
%For any fixed $1\leq l\leq k-1$, by using \eqref{5.50}, one has that
%\begin{align}
%\int_0^{t-\l} I_{12l}ds&\leq \f{e^{\f{\nu_0}{4}\tilde{t}}}{\widetilde{w}(v)}\int_0^{t-\l} e^{-\f14\nu_0(t-s)} \int_{\Pi_{j=1}^{l-1}\mathcal{V}_j} \int_{\mathcal{V}_l}I_{\{t_{l+1}\leq s< t_l\}} \widetilde{w}(v_l)\nonumber\\
%&\qquad \times\int_{\mathbb{R}^3} |k_{w}(v_l,v') h(s,x_l-v_l(t_l-s),v')|\dd v' \dd\s_l \dd\s_{l-1}\cdots \dd\s_1\nonumber\\
%&\leq Ce^{\f{\nu_0}{4}\tilde{t}}\int_0^{t-\l} e^{-\f14\nu_0(t-s)} \int_{\Pi_{j=1}^{l-1}\mathcal{V}_j} \int_{\mathcal{V}_l}[I_{\{t_{l+1}\leq s< t_l-\f1N\}} +I_{\{t_{l}-\f1N\leq s< t_l\}} ](\cdots)\nonumber\\
%&\leq \f{C}{N}e^{\f{\nu_0}{4}\tilde{t}}\sup_{0\leq s\leq t}\|h(s)\|_{L^\infty}+Ce^{\f{\nu_0}{4}\tilde{t}}\int_0^{t-\l} e^{-\f14\nu_0(t-s)} \int_{\Pi_{j=1}^{l-1}\mathcal{V}_j} \int_{|v_l|\leq N}\int_{|v'|\leq2 N} I_{\{t_{l+1}\leq s< t_l-\f1N\}} \nonumber\\
%&\qquad\qquad\qquad\times e^{-\f{|v_l|^2}{8}}|k_{w}(v_l,v') h(s,x_l-v_l(t_l-s),v')|\dd v'dv_l \dd\s_{l-1}\cdots \dd\s_1\nonumber\\
%&\leq \f{C}{N}e^{\f{\nu_0}{4}\tilde{t}}\sup_{0\leq s\leq t}\|h(s)\|_{L^\infty}+C_Ne^{\f{\nu_0}{4}\tilde{t}} \sup_{0\leq s\leq t}\|f(s)\|_{L^2},\nonumber
%\end{align}
where we have used the change of variable $v_l\mapsto y_l=x_l-v_l(t_l-s)$ as before.
Thus, after taking $k=k(\v,T_0)+1$,  it holds that
\begin{align}\label{5.55}
\int_0^{t-\l} I_{12}\,\dd s=\sum_{l=1}^{k-1}\int_0^{t-\l} I_{12l}\,\dd s
\leq \f{C_{\v,T_0}}{N}e^{\f{\nu_0}{4}\tilde{t}}\sup_{0\leq s\leq t}\|h(s)\|_{L^\infty}+C_{\v,N,T_0}e^{\f{\nu_0}{4}\tilde{t}} \sup_{0\leq s\leq t}\|f(s)\|_{L^2}.
\end{align}
Plugging \eqref{5.53} and \eqref{5.55} to \eqref{5.49},  and then applying \eqref{5.50} to $I_{11}$, one obtains that 
\begin{align}\label{5.56}
&\left|\int_0^{t-\l}{\tilde{G}(t,s)}K_wh(s)\,\dd s\right|\nonumber\\
&\leq \int_0^{t-\l}\Fi_{\{t_1\leq s\}} e^{\f{\nu_0}{4}\tilde{t}}e^{-\f{\nu_0}{4}(t-s)} \Big|(K_wh)(s,x-v(t-s),v)\Big|\,\dd s\nonumber\\
&\quad +C\Big(\v+\f{C_{\v,T_0}}{N}\Big)e^{\nu_0\tilde{t}}\sup_{0\leq s\leq t}\|h(s)\|_{L^\infty}+C_{\v,N,T_0}e^{\nu_0\tilde{t}} \sup_{0\leq s\leq t}\|f(s)\|_{L^2}.
\end{align}

\medskip
\noindent{\underline{Estimate on $\int_0^{t-\l}{\tilde{G}(t,s)}w\Gamma_+(f,f)(s)\,\dd s$:}} It is noticed that 
\begin{align}\label{5.57}
I_2:=&\Big({\tilde{G}(t,s)}[w\Gamma_+(f,f)(s)]\Big)(t,x,v)\nonumber\\
=&\Fi_{\{t_1\leq s\}} I^f(t,s)[w\Gamma(f,f)](s,x-v(t-s),v)\nonumber\\
&+\f{I^f(t,t_1)}{\widetilde{w}(v)}
\int_{\Pi_{j=1}^{k-1}\mathcal{V}_j} \sum_{l=1}^{k-1} \Fi_{\{t_{l+1}\leq s< t_l\}} [w\Gamma_+(f,f)](s,x_l-v_l(t_l-s),v_l)\,\dd\Sigma_{l}^f(s)\nonumber\\
&+\f{I^f(t,t_1)}{\widetilde{w}(v)}
\int_{\Pi_{j=1}^{k-1}\mathcal{V}_j} I_{\{t_k>s\}} \Big({\tilde{G}(t,s)}[w\Gamma_+(f,f)(s)]\Big)(t_k,x_k,v_{k-1})\,\dd\Sigma_{k-1}^f(t_k)\nonumber\\
:=&I_{21}+I_{22}+I_{23}.
\end{align}
We estimate the integral of each term on the right as follows. For $I_{23}$, it follows from \eqref{2.4-1} and \eqref{5.43} that 
\begin{equation}\nonumber
\Big|\Big({\tilde{G}(t,s)}[w\Gamma_+(f,f)(s)]\Big)(t_k,x_k,v_{k-1})\Big|
\leq Ce^{\f34\nu_0 \tilde{t}} e^{-\f14\nu_0(t_k-s)}\|h(s)\|^2_{L^\infty}.
\end{equation}
This together with \eqref{5.50} and \eqref{4.7} yield that 
\begin{align*}%\label{}
I_{23}&\leq Ce^{\nu_0 \tilde{t}} e^{-\f14\nu_0(t-s)}\|h(s)\|^2_{L^\infty}\int_{\Pi_{j=1}^{k-1}\mathcal{V}_j} \Fi_{\{t_{k}>s\}} \widetilde{w}(v_{k-1})\,\dd\s_{k-1}\dd\s_{k-2}\cdots \dd\s_1\nonumber\\
 &\leq C\v e^{\nu_0\tilde{t}} e^{-\f14\nu_0(t-s)}\|h(s)\|^2_{L^\infty},\nonumber
\end{align*}
and hence
\begin{equation}
\label{5.59}
\int_0^{t-\la} I_{23}\,\dd s\leq C\v e^{\nu_0\tilde{t}}\sup_{0\leq s\leq t}\|h(s)\|^2_{L^\infty}.
\end{equation}
To estimate $I_{22}$, denoting each term $l=1,\cdots, k-1$ in the summation by $I_{22l}$, 
one has that 
\begin{multline}\label{5.60.a1}
\int_0^{t-\l} I_{22l}\,\dd s\leq C\int_0^{t-\l}e^{\f14\nu_0\tilde{t}} e^{-\f14\nu_0(t-s)}\,\dd s\int_{\Pi_{j=1}^{l-1}\mathcal{V}_j} \dd\s_{l-1}\cdots \dd\s_1\\
%\left(\Fi_{\{t_{l+1}\leq s< t_l-\f1N\}} +\Fi_{\{t_{l}-\f1N\leq s< t_l\}} \right)
\times \int_{\mathcal{V}_l}\Fi_{\{t_{l+1}\leq s<t_l\}}e^{-\f{|v_l|^2}{8}}\Big|[w\Gamma_+(f,f)](s,x_l-v_l(t_l-s),v_l)\Big|\,\dd v_l .
%&\leq C\int_0^{t-\l}e^{\f14\nu_0\tilde{t}} e^{-\f14\nu_0(t-s)}ds\int_{\Pi_{j=1}^{l-1}\mathcal{V}_j} \dd\s_{l-1}\cdots \dd\s_1\int_{|v_l|\leq N} I_{\{t_{l+1}\leq s< t_l-\f1N\}}e^{-\f{|v_l|^2}{8}}\nonumber\\
%&\qquad\times |[w\Gamma_+(f,f)](s,x_l-v_l(t_l-s),v_l)| dv_l+\f{C}{N}e^{\f14\nu_0\tilde{t}}\sup_{0\leq s\leq t}\|h(s)\|^2_{L^\infty}.
\end{multline}
Over $\{|v_l|>N\}$ or $\{t_l-1/N\leq s<t_l\}$, one can see that \eqref{5.60.a1} is bounded by
\begin{equation}\nonumber
\f{C}{N}e^{\f14\nu_0\tilde{t}}\sup_{0\leq s\leq t}\|h(s)\|^2_{L^\infty}.
\end{equation}
Thus, it remains to estimate
 \begin{multline}\label{5.60}
 C\int_0^{t-\l}e^{\f14\nu_0\tilde{t}} e^{-\f14\nu_0(t-s)}\,\dd s\int_{\Pi_{j=1}^{l-1}\mathcal{V}_j} \dd\s_{l-1}\cdots \dd\s_1\\
\times \int_{\mathcal{V}_l\cap \{|v_l|\leq N\}}\Fi_{\{t_{l+1}\leq s<t_l-\frac{1}{N}\}}e^{-\f{|v_l|^2}{8}}\Big|[w\Gamma_+(f,f)](s,x_l-v_l(t_l-s),v_l)\Big|\,\dd v_l .
\end{multline}
%\begin{align}\label{5.60}
%\int_0^{t-\l} I_{22l}ds&\leq C\int_0^{t-\l}e^{\f14\nu_0\tilde{t}} e^{-\f14\nu_0(t-s)}ds\int_{\Pi_{j=1}^{l-1}\mathcal{V}_j} \dd\s_{l-1}\cdots \dd\s_1\nonumber\\
%&\quad\times\int_{\mathcal{V}_l}[I_{\{t_{l+1}\leq s< t_l-\f1N\}} +I_{\{t_{l}-\f1N\leq s< t_l\}} ]\Big|[w\Gamma_+(f,f)](s,x_l-v_l(t_l-s),v_l)\Big| e^{-\f{|v_l|^2}{8}}dv_l\nonumber\\
%&\leq C\int_0^{t-\l}e^{\f14\nu_0\tilde{t}} e^{-\f14\nu_0(t-s)}ds\int_{\Pi_{j=1}^{l-1}\mathcal{V}_j} \dd\s_{l-1}\cdots \dd\s_1\int_{|v_l|\leq N} I_{\{t_{l+1}\leq s< t_l-\f1N\}}e^{-\f{|v_l|^2}{8}}\nonumber\\
%&\qquad\times |[w\Gamma_+(f,f)](s,x_l-v_l(t_l-s),v_l)| dv_l+\f{C}{N}e^{\f14\nu_0\tilde{t}}\sup_{0\leq s\leq t}\|h(s)\|^2_{L^\infty}.
%\end{align}
%To bound the first term on RHS of \eqref{5.60}, it follows from \eqref{2.4} and Cauchy inequality that
%{\small
In fact, it follows from \eqref{2.4} that
\begin{align}\label{5.60.a2}
&\int_{|v_l|\leq N} \Fi_{\{t_{l+1}\leq s< t_l-\f1N\}}e^{-\f{|v_l|^2}{8}}\Big|[w\Gamma_+(f,f)](s,x_l-v_l(t_l-s),v_l)\Big| \,\dd v_l\nonumber\\
&\leq C\|h(s)\|_{L^\infty}
\int_{|v_l|\leq N} \dd v_l\, \Fi_{\{t_{l+1}\leq s< t_l-\f1N\}}e^{-\f{|v_l|^2}{8}}\nonumber\\
&\qquad\qquad\qquad\qquad\times\left(\int_{\mathbb{R}^3}(1+|\eta|)^{-4\b+4}|h(s,x_l-v_l(t_l-s),\eta)|^2d\eta\right)^{\f12},
\end{align}
which by the Cauchy-Schwarz is further bounded by
\begin{equation*}
%\label{ }
C\|h(s)\|_{L^\infty}\left(\int_{|v_l|\leq N}\int_{\mathbb{R}^3} \Fi_{\{t_{l+1}\leq s< t_l-\f1N\}}e^{-\f{|v_l|^2}{8}}(1+|\eta|)^{-4\b+4}|h(s,x_l-v_l(t_l-s),\eta)|^2\dd\eta \dd v_l\right)^{\f12}.
\end{equation*}
Over $\{|\eta|>N\}$, the above term is bounded by
\begin{equation}\nonumber
\f{C}{N}\sup_{0\leq s\leq t}\|h(s)\|^2_{L^\infty},
\end{equation}
and over $\{|\eta|\leq N\}$, it is bounded by making the change of variable $v_l\mapsto y_l=x_l-v_l(t_l-s)$ as before by
\begin{equation}\nonumber
C_N\|h(s)\|_{L^\infty} \|f(s)\|_{L^2}\leq \f{C}{N}\|h(s)\|^2_{L^\infty}+C_N \|f(s)\|^2_{L^2}.
\end{equation}
%\begin{align}
%&\int_{|v_l|\leq N} I_{\{t_{l+1}\leq s< t_l-\f1N\}}e^{-\f{|v_l|^2}{8}}|[w\Gamma_+(f,f)](s,x_l-v_l(t_l-s),v_l)| dv_l\nonumber\\
%&\leq C\|h(s)\|_{L^\infty}\int_{|v_l|\leq N} I_{\{t_{l+1}\leq s< t_l-\f1N\}}e^{-\f{|v_l|^2}{8}}\Big(\int_{\mathbb{R}^3}(1+|\eta|)^{-4\b+4}|h(s,x_l-v_l(t_l-s),\eta)|^2d\eta\Big)^{\f12}dv_l\nonumber\\
%&\leq C\|h(s)\|_{L^\infty}\Big(\int_{|v_l|\leq N}\int_{\mathbb{R}^3} I_{\{t_{l+1}\leq s< t_l-\f1N\}}e^{-\f{|v_l|^2}{8}}(1+|\eta|)^{-4\b+4}|h(s,x_l-v_l(t_l-s),\eta)|^2d\eta dv_l\Big)^{\f12}\nonumber\\
%&\leq C\|h(s)\|_{L^\infty}\Big(\int_{|v_l|\leq N}\int_{|\eta|\leq N} I_{\{t_{l+1}\leq s< t_l-\f1N\}}e^{-\f{|v_l|^2}{8}}(1+|\eta|)^{-4\b+4}|h(s,x_l-v_l(t_l-s),\eta)|^2d\eta dv_l\Big)^{\f12}\nonumber\\
%&\qquad+\f{C}{N}\sup_{0\leq s\leq t}\|h(s)\|^2_{L^\infty}\nonumber\\
%&\leq \f{C}{N}\|h(s)\|^2_{L^\infty}+C_N\|h(s)\|_{L^\infty} \|f(s)\|_{L^2}\nonumber\\
%&\leq \f{C}{N}\|h(s)\|^2_{L^\infty}+C_N \|f(s)\|^2_{L^2},\nonumber
%\end{align}
%}
%where we have used the change of variable $v_l\mapsto y_l=x_l-v_l(t_l-s)$ as previous. 
Thus, substituting the above estimates to \eqref{5.60.a2}, one can see that \eqref{5.60} is bounded by
\begin{equation}\nonumber
e^{\f14\nu_0\tilde{t}} \left(\f{C}{N}\|h(s)\|^2_{L^\infty}+C_N \|f(s)\|^2_{L^2}\right).
\end{equation}
Hence, by applying those estimates for \eqref{5.60.a1}, 
it holds that
\begin{align}\nonumber
\int_0^{t-\l} I_{22l}\,\dd s\leq \f{C}{N}e^{\f14\nu_0\tilde{t}}\sup_{0\leq s\leq t}\|h(s)\|^2_{L^\infty}+C_Ne^{\f14\nu_0\tilde{t}}\sup_{0\leq s\leq t} \|f(s)\|^2_{L^2},
\end{align}
which yields that 
\begin{align}\label{5.63}
\int_0^{t-\l} I_{22}\,\dd s=\sum_{l=1}^{k-1}\int_0^{t-\l} I_{22l}\,\dd s\leq \f{C_{\v,T_0}}{N}e^{\f14\nu_0\tilde{t}}\sup_{0\leq s\leq t}\|h(s)\|^2_{L^\infty}+C_{\v,N,T_0}e^{\f14\nu_0\tilde{t}}\sup_{0\leq s\leq t} \|f(s)\|^2_{L^2}.
\end{align}
Combining \eqref{5.63}, \eqref{5.59} and \eqref{5.57}, one obtains that 
\begin{align}\label{5.64}
&\left|\int_0^{t-\l}{\tilde{G}(t,s)}w\Gamma_+(f,f)(s)\,\dd s\right|\nonumber\\
&\leq \int_0^{t-\l}\Fi_{\{t_1\leq s\}} e^{\f{\nu_0}{4}\tilde{t}}e^{-\f{\nu_0}{4}(t-s)}\Big|[w\Gamma_+(f,f)](s,x-v(t-s),v)\Big|\,\dd s\nonumber\\
&\quad+C\Big(\v+\f{C_{\v,T_0}}{N}\Big)e^{\nu_0\tilde{t}}\sup_{0\leq s\leq t}\|h(s)\|^2_{L^\infty}+C_{\v,N,T_0}e^{\nu_0\tilde{t}}\sup_{0\leq s\leq t} \|f(s)\|^2_{L^2}.
\end{align}
Therefore, the desired estimate \eqref{5.65} follows by plugging \eqref{5.64}, \eqref{5.56}, \eqref{5.48-1} and \eqref{5.48} into \eqref{5.47}. This completes the proof of Lemma \ref{lem.ad1}. 
%that 
%\begin{align}\label{5.65}
%|h(t,x,v)|
%&\leq e^{\f{\nu_0}{4}\tilde{t}}\int_0^{t-\l}I_{\{t_1\leq s\}} e^{-\f{\nu_0}{4}(t-s)}\Big|(K_wh)(s,x-v(t-s),v)\Big|ds\nonumber\\
%&\quad+e^{\f{\nu_0}{4}\tilde{t}}\int_0^{t-\l}I_{\{t_1\leq s\}} e^{-\f{\nu_0}{4}(t-s)}\Big|[w\Gamma_+(f,f)](s,x-v(t-s),v)\Big|ds\nonumber\\
%&\quad+Ce^{\nu_0\tilde{t}}\Big\{e^{-\f{\nu_0}{4}t} \|h_0\|_{L^\infty}+(\l+\v+\f{C_{\v,T_0}}{N})\sup_{0\leq s\leq t}[\|h(s)\|_{L^\infty}+\|h(s)\|^2_{L^\infty}]\nonumber\\
%&\qquad\qquad\quad+C_{\v, N, T_0}\sup_{0\leq s\leq t}[\|f(s)\|_{L^2}+\|f(s)\|^2_{L^2}]\Big\}.
%\end{align}
\end{proof}%\qed

\begin{lemma}\label{lem5.5}
Assume $\|f_0\|_{L^2}\leq \epsilon_1=\epsilon_1(\bar{M},T_0)$. There exists a generic constant  $\tilde{C}_3\geq1$ such that  
%Let $t\in[0,T_0]$ and  $\|f_0\|_{L^2}\leq \epsilon_1$, then it holds that 
\begin{align}\label{5.46}
\|h(t)\|_{L^\infty}&\leq \tilde{C}_3e^{2\nu_0 \tilde{t}} \|h_0\|_{L^\infty} \left[1+\int_0^t\|h(s)\|_{L^\infty}\,\dd s\right]e^{-\f{\nu_0}{8}t}\nonumber\\
&\quad+  \tilde{C}_3e^{2\nu_0 \tilde{t}} \left\{\Big(\v+\l+\f{C_{\v,T_0}}{N} \Big)  \sup_{0\leq s\leq t}\Big[\|h(s)\|_{L^\infty}+\|h(s)\|^3_{L^\infty}\Big]\right.\nonumber\\
&\qquad\qquad\qquad\qquad\qquad\left.+C_{\v,\l,N,T_0}\sup_{0\leq s\leq t}\Big[\|f(s)\|_{L^2}+\|f(s)\|^3_{L^2}\Big]\right\},
\end{align}
holds true for all $0\leq t\leq T_0$, where $\la>0$ and $\v>0$ can be arbitrarily small, and $N>0$ can be arbitrarily large. 
%where $\tilde{C}_3(\geq1)$ is a given positive constant.
\end{lemma}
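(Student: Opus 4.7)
The plan is to iterate the pointwise bound \eqref{5.65} of Lemma \ref{lem.ad1} once more into its own right-hand side — this is the ``iteration procedure on the nonlinear term'' mentioned in Section \ref{sec1.6}. Only the two integral terms in \eqref{5.65} require further work:
\begin{align*}
\mathcal{I}_K(t,x,v)&:=\int_0^{t-\l}\Fi_{\{t_1\leq s\}}e^{-\f{\nu_0}{4}(t-s)}|(K_wh)(s,x-v(t-s),v)|\,\dd s,\\
\mathcal{I}_\Gamma(t,x,v)&:=\int_0^{t-\l}\Fi_{\{t_1\leq s\}}e^{-\f{\nu_0}{4}(t-s)}|[w\Gamma_+(f,f)](s,x-v(t-s),v)|\,\dd s.
\end{align*}

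The heart of the argument is the nonlinear term $\mathcal{I}_\Gamma$. By \eqref{2.4-1} of Lemma \ref{lem2.2} (recall $\r\geq1$, $\b\geq 5/2$),
\begin{equation*}
\mathcal{I}_\Gamma(t,x,v)\leq\f{C}{1+|v|}\int_0^{t-\l}e^{-\f{\nu_0}{4}(t-s)}\|h(s)\|_{L^\infty}^2\,\dd s.
\end{equation*}
The crucial step is to split $\|h(s)\|_{L^\infty}^2=\|h(s)\|_{L^\infty}\cdot\|h(s)\|_{L^\infty}$ and substitute \eqref{5.65} into only one of the two factors, leaving the other intact; substituting into both would produce an uncontrollable $\|h\|_{L^\infty}^4$. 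Using $e^{-\nu_0(t-s)/4}\cdot e^{-\nu_0 s/4}=e^{-\nu_0 t/4}\leq e^{-\nu_0 t/8}$, the ``free'' contribution $Ce^{\nu_0\tilde t}e^{-\nu_0 s/4}\|h_0\|_{L^\infty}$ of \eqref{5.65} then generates exactly
\begin{equation*}
Ce^{2\nu_0\tilde t}\|h_0\|_{L^\infty}e^{-\f{\nu_0}{8}t}\int_0^t\|h(s)\|_{L^\infty}\,\dd s,
\end{equation*}
which is the $\|h_0\|_{L^\infty}\int_0^t\|h(s)\|_{L^\infty}\,\dd s$ part of \eqref{5.46}. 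The ``small'' contributions $(\v+\l+C_{\v,T_0}/N)(\|h\|_{L^\infty}+\|h\|_{L^\infty}^2)$ and $C_{\v,N,T_0}(\|f\|_{L^2}+\|f\|_{L^2}^2)$ of \eqref{5.65}, when multiplied by the remaining $\|h(s)\|_{L^\infty}$, produce the cubic $\|h\|_{L^\infty}^3$ of \eqref{5.46} and, after absorbing mixed cross products $\|h\|_{L^\infty}\cdot\|f\|_{L^2}^k$ by Young's inequality (together with $\|h\|_{L^\infty}^2\leq\|h\|_{L^\infty}+\|h\|_{L^\infty}^3$), the $\|f\|_{L^2}^3$ term as well. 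The two remaining integral terms in \eqref{5.65}, once substituted back in, are reduced to $L^2_{x,v}$ norms of $f$ by the trajectory change of variables $v_\ell\mapsto y_\ell=x_\ell-v_\ell(t_\ell-s)$ (Jacobian bounded below by $\l^3$ or $N^{-3}$) already used in the proofs of Lemmas \ref{lem5.3} and \ref{lem.ad1}.

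The linear term $\mathcal{I}_K$ is treated in the same spirit: substitute \eqref{5.65} for $h(s,y,\eta)$ inside the kernel representation $K_wh(s,y,v)=\int_{\R^3}k_w(v,\eta)h(s,y,\eta)\,\dd\eta$, use $\int_{\R^3}|k_w(v,\eta)|\,\dd\eta\leq C/(1+|v|)$ from \eqref{2.3} to conclude that the free term produces the ``$1$'' in $\|h_0\|_{L^\infty}[1+\int_0^t\|h(s)\|_{L^\infty}\,\dd s]e^{-\nu_0 t/8}$, while the small and iterated-integral terms yield additional contributions with small prefactors by the same velocity/time splittings. Collecting all contributions and absorbing multiplicative constants into a single $\tilde{C}_3\geq 1$ then yields \eqref{5.46}. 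The main technical obstacle is precisely the asymmetric substitution in $\mathcal{I}_\Gamma$: if one were to apply \eqref{5.65} to both copies of $\|h(s)\|_{L^\infty}$, the resulting $\|h\|_{L^\infty}^4$ would prevent closing the subsequent bootstrap in Section \ref{sec6}; keeping one factor intact is what forces the $\int_0^t\|h(s)\|_{L^\infty}\,\dd s$ factor on the right-hand side of \eqref{5.46}, but also what makes the Gronwall-type argument of Section \ref{sec6} succeed.
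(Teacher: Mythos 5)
Your proposal identifies the correct overall structure (iterate \eqref{5.65} once more into its own right--hand side) and the treatment you sketch for $\mathcal{I}_K$ is essentially the paper's: you keep the kernel representation $K_wh(s,y,v)=\int k_w(v,\eta)h(s,y,\eta)\,\dd\eta$, so the velocity variable $\eta$ and the geometric point $y$ survive, and the change of variables $v'\mapsto z_1$ is still available when you iterate. The decisive problem is in $\mathcal{I}_\Gamma$, where you discard exactly that structure. By bounding $w\Gamma_+(f,f)$ via \eqref{2.4-1} you pass immediately to $\|h(s)\|_{L^\infty}^2$, i.e.\ you take the sup over $(x,v)$ before iterating. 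Once you do this, the factorisation $\|h(s)\|_{L^\infty}\cdot\|h(s)\|_{L^\infty}$ and ``substitute \eqref{5.65} into one factor'' produces only scalar contributions: the two iterated--integral terms of \eqref{5.65} are then pointwise evaluations along the back--trajectory of whichever $(x',v')$ achieves the sup, and the only available bounds are $|K_wh(\tau,\cdot,\cdot)|\le C\|h(\tau)\|_{L^\infty}$ and $|w\Gamma_+(f,f)(\tau,\cdot,\cdot)|\le C\|h(\tau)\|_{L^\infty}^2$. There is no velocity integral left in which to perform the change of variables $v_\ell\mapsto y_\ell$ that you invoke, so no $\|f(\tau)\|_{L^2}$ can appear, and, more fatally, no small prefactor $\v+\la+\f{C_{\v,T_0}}{N}$ multiplies these iterated contributions. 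You would be left with terms of size $\sup\|h\|_{L^\infty}^2$ and $\sup\|h\|_{L^\infty}^3$ with $O(1)$ constants, which is incompatible with \eqref{5.46}.

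The paper's proof of this lemma hinges on using \eqref{2.4}, not \eqref{2.4-1}, for the gain term: it keeps $w\Gamma_+(f,f)(s,z,v)$ as $\|h(s)\|_{L^\infty}\cdot\bigl(\int(1+|v'|)^{-4\b+4}|h(s,z,v')|^2\,\dd v'\bigr)^{1/2}/(1+|v|)$. One factor is indeed $\|h(s)\|_{L^\infty}$ and is left intact (so your intuition about ``substituting into only one factor'' is right in spirit), but the factor one iterates into is a pointwise $L^2_{v'}$ norm of $h$ at the concrete point $z=x-v(t-s)\in\Omega$. Because $v'$ is still an integration variable and $z$ is a geometric point, the paper can truncate $|v'|\le N$ to extract $C/N$ smallness, apply \eqref{5.65} under the integral, apply Cauchy--Schwarz, and then change variables $v'\mapsto z_1=z-v'(s-\tau)$ with Jacobian bounded below by $\la^3$ to produce $\|f(\tau)\|_{L^2}$ (this is what yields the terms $B_1,B_2$ in \eqref{5.84}--\eqref{5.86}). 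Note also that the reason for not squaring the $L^\infty$ norm is not, as you state, to avoid $\|h\|_{L^\infty}^4$; it is to retain the $L^2_{v'}$ structure that makes the change of variables possible at all. To repair the proposal you should replace \eqref{2.4-1} by \eqref{2.4} in the bound on $\mathcal{I}_\Gamma$, carry \eqref{5.65} into the pointwise $L^2_{v'}$ factor (not into a scalar $\|h\|_{L^\infty}$), and only after Cauchy--Schwarz and the trajectory change of variables pass to global norms.
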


%\noindent{\bf Proof.} 
\begin{proof}
In order to obtain \eqref{5.46}, in terms of Lemma \ref{lem.ad1}, we still need to 
estimate the first and second terms on the right-hand side of \eqref{5.65}. Take $(t,x,v)\in (0,T_0]\times \Omega\times \R^3$. Denoting $z:=x-v(t-s)$, one has $z\in\Omega$ since $t_1(t,x,v)\leq s$. Then,  to estimate the first term on the right-hand side of \eqref{5.65}, we notice 
\begin{equation*}
%\label{ }
\int_0^{t-\l}\Fi_{\{t_1\leq s\}} e^{-\f{\nu_0}{4}(t-s)} \Big|(K_wh)(s,z,v)\Big|\,\dd s\leq \int_0^{t-\l}\Fi_{\{t_1\leq s\}} e^{-\f{\nu_0}{4}(t-s)}\int_{\mathbb{R}^3}|k_{w}(v,v')h(s,z,v')|\,\dd v'\dd s,
\end{equation*}
and then apply \eqref{5.65} again to bound the term $|h(s,z,v')|$ in the above integral, so as to obtain
\begin{align}\label{5.66}
&\int_0^{t-\l}\Fi_{\{t_1\leq s\}} e^{-\f{\nu_0}{4}(t-s)} \Big|(K_wh)(s,z,v)\Big|\,\dd s
%\leq \int_0^{t-\l}I_{\{t_1\leq s\}} e^{-\f{\nu_0}{4}(t-s)}\int_{\mathbb{R}^3}|k_{w}(v,v')h(s,z,v')|\dd v'ds
\nonumber\\
&\leq Ce^{\nu_0\tilde{t}}\int_0^{t-\l}e^{-\f14\nu_0(t-s)}\bigg\{e^{-\f{\nu_0}{4}s} \|h_0\|_{L^\infty}+\Big(\l+\v+\f{C_{\v,T_0}}{N}\Big)\sup_{0\leq s\leq t}\left[\|h(s)\|_{L^\infty}+\|h(s)\|^2_{L^\infty}\right]\nonumber\\
&\qquad\qquad\qquad\qquad\qquad\qquad\qquad+C_{\v, N, T_0}\sup_{0\leq s\leq t}\left[\|f(s)\|_{L^2}+\|f(s)\|^2_{L^2}\right]\bigg\}\,\dd s\nonumber\\
&\quad+Ce^{\f{\nu_0}{4}\tilde{t}}\int_0^{t-\l} \int_0^{s-\l}\int_{\mathbb{R}^3}\int_{\mathbb{R}^3}\Fi_{\{t_1\leq s\}}  \Fi_{\{t_1'\leq \tau\}}  e^{-\f{\nu_0}{4}(t-\tau)} |k_{w}(v,v')k_{w}(v',v'')|\nonumber\\
&\qquad\qquad\qquad\qquad\qquad\qquad\qquad\qquad\times|h(\tau,z-v'(s-\tau),v'')|\,\dd v''\dd v'\dd\tau \dd s\nonumber\\
&\quad+Ce^{\f{\nu_0}{4}\tilde{t}}\int_0^{t-\l} \int_0^{s-\l}\int_{\mathbb{R}^3}\Fi_{\{t_1\leq s\}}  \Fi_{\{t_1'\leq \tau\}}  e^{-\f{\nu_0}{4}(t-\tau)} |k_{w}(v,v')|\nonumber\\ &\qquad\qquad\qquad\qquad\qquad\qquad\qquad\qquad\times\left|[w\Gamma_+(f,f)](\tau,z-v'(s-\tau),v')\right|\,\dd v'\dd\tau \dd s\nonumber\\
&:=A_1+A_2+A_3,
\end{align}
where we have denoted $t'_1:=t_1(s,z,v')$, and $A_1$, $A_2$ and $A_3$ denote the corresponding terms on the right. A direct computation shows that 
\begin{multline}\label{5.67}
A_1\leq Ce^{\nu_0\tilde{t}}\bigg\{e^{-\f{1}{8}\nu_0t} \|h_0\|_{L^\infty}+\Big(\l+\v+\f{C_{\v,T_0}}{N}\Big)\sup_{0\leq s\leq t}\left[\|h(s)\|_{L^\infty}+\|h(s)\|^2_{L^\infty}\right]\\
+C_{\v, N, T_0}\sup_{0\leq s\leq t}\left[\|f(s)\|_{L^2}+\|f(s)\|^2_{L^2}\right]\bigg\}.
\end{multline}
For $A_2$, we divide the estimate by %its proof  into 
the following cases. %\\

\medskip
\noindent{\it Case 1: $|v|\geq N$}.   It is straightforward to see that
\begin{align}%\label{5.68}
A_2\leq \f{C}{N}e^{\f{1}{4}\nu_0\tilde{t}} \sup_{0\leq s\leq t}\|h(s)\|_{L^\infty}.\nonumber
\end{align}

\noindent{\it Case 2: $|v|\leq N, |v'|\geq 2N$ or $|v'|\leq 2N, |v''|\geq 3N$}. 
%For $|v|\leq N, |v'|\geq 2N$ or $|v'|\leq 2N, |v''|\geq 3N$, it is noted that 
%we have 
In this case, note that either $|v-v'|\geq N$ or $|v'-v''|\geq N$ holds true. Thus, at least one of the following estimates is valid: 
\begin{align}\label{5.69}
|k_w(v,v')|\leq  Ce^{-\f{N^2}{32}}\Big|k_w(v,v')e^{\f{|v-v'|^2}{32}}\Big|,\quad
|k_w(v',v'')|\leq Ce^{-\f{N^2}{32}}\Big|k_w(v',v'')e^{\f{|v''-v'|^2}{32}}\Big|. 
\end{align}
Using \eqref{2.3}, it is direct to verify that 
\begin{equation}\label{5.70}
\left\{\begin{aligned}
&\int_{\mathbb{R}^3}\Big|k_w(v,v')e^{\f{|v-v'|^2}{32}}\Big|\,\dd v'\leq C(1+|v|)^{-1},\\
&\int_{\mathbb{R}^3}\Big|k_w(v',v'')e^{\f{|v''-v'|^2}{32}}\Big|\,\dd v''\leq C(1+|v'|)^{-1}.
\end{aligned}\right.
\end{equation}
Then, over $|v|\leq N, |v'|\geq 2N$ or $|v'|\leq 2N, |v''|\geq 3N$,   from \eqref{5.69} and \eqref{5.70}, we can bounded $A_2$ by
\begin{align*}%\label{5.71}
%& Ce^{\f{1}{4}\nu_0\tilde{t}} \sup_{0\leq s\leq t}\|h(s)\|_{L^\infty}\int_0^{t-\l} \int_0^{s-\l} e^{-\f{\nu_0}{4}(t-\tau)}\nonumber\\
%&\qquad\qquad\times\left(\int_{|v|\leq N, |v'|\geq 2N}+\int_{|v'|\leq 2N, |v''|\geq 3N}\right) |k_{w}(v,v')k_{w}(v',v'')|\,\dd v''\dd v'\dd\tau \dd s\nonumber\\
% A_2&\leq
 Ce^{-\f{N^2}{32}}e^{\f{1}{4}\nu_0\tilde{t}} \sup_{0\leq s\leq t}\|h(s)\|_{L^\infty}.
\end{align*}

\noindent{\it Case 3: $|v|\leq N, |v'|\leq 2N, |v''|\leq 3N$}. 
%For  $|v|\leq N, |v'|\leq 2N, |v''|\leq 3N$, we have that 
Using the Cauchy-Schwarz, it follows that
\begin{multline}\nonumber
%& e^{\f{\nu_0}{4}\tilde{t}}\int_0^{t-\l} \int_0^{s-\l}e^{-\f{\nu_0}{4}(t-\tau)}\int_{|v'|\leq2N}\int_{|v''|\leq3N}I_{\{t_1\leq s\}}  I_{\{t_1'\leq \tau\}} \nonumber\\
%&\qquad\qquad  \times|k_{w}(v,v')k_{w}(v',v'')h(\tau,z-v'(s-\tau),v'')|\dd v''\dd v'\dd\tau ds\nonumber\\
% A_2\leq 
Ce^{\f{\nu_0}{4}\tilde{t}}\int_0^{t-\l} \int_0^{s-\l}e^{-\f{\nu_0}{4}(t-\tau)}
\Big( \int_{|v'|\leq2N}\int_{|v''|\leq3N} |k_{w}(v,v')k_{w}(v',v'')|^2\dd v''\dd v'\Big)^{\f12}\\
%&\qquad\qquad\qquad
\times \Big(\int_{|v'|\leq2N}\int_{|v''|\leq3N}\Fi_{\{t_1\leq s\}}  \Fi_{\{t_1'\leq \tau\}} |h(\tau,z-v'(s-\tau),v'')|^2\dd v''\dd v' \Big)^{\f12}\dd\tau \dd s.
%\nonumber\\
%&\leq C_N e^{\f{\nu_0}{4}\tilde{t}}\int_0^{t-\l} \int_0^{s-\l}e^{-\f{\nu_0}{4}(t-\tau)}
%\Big(\int_{|v'|\leq2N}\int_{|v''|\leq3N}\Fi_{\{t_1\leq s\}}  \Fi_{\{t_1'\leq \tau\}} \nonumber\\
%&\qquad\qquad\qquad\qquad\qquad\qquad\qquad\qquad\times|f(\tau,z-v'(s-\tau),v'')|^2\dd v''\dd v' \Big)^{\f12}\dd\tau \dd s.
\end{multline}
Then, recalling $h=wf$, we can bound the above term as
\begin{equation}\nonumber %\label{5.72}
%A_2\leq 
C_N e^{\f{\nu_0}{4}\tilde{t}}\int_0^{t-\l} \int_0^{s-\l}e^{-\f{\nu_0}{4}(t-\tau)}
\Big(\int_{|v'|\leq2N}\int_{|v''|\leq3N}\Fi_{\{t_1\leq s\}}  \Fi_{\{t_1'\leq \tau\}} %\\
%\times
|f(\tau,z-v'(s-\tau),v'')|^2\dd v''\dd v' \Big)^{\f12}\dd\tau \dd s.
\end{equation}
Here, we note $z-v'(s-\tau)\in\Omega$ in the case when $t_1\leq s$ and $t'_1\leq \tau$. By taking the change of variable $v'\mapsto z_1:=z-v'(s-\tau)$, one can further bound the above term as  
\begin{align}\nonumber %\label{5.73}
C_{\l,N}e^{\f{\nu_0}{4}\tilde{t}}\int_0^{t-\l} \int_0^{s-\l}e^{-\f{\nu_0}{4}(t-\tau)} \|f(\tau)\|_{L^2}\,\dd\tau \dd s\leq C_{\l,N}e^{\f{\nu_0}{4}\tilde{t}} \sup_{0\leq \tau \leq t}\|f(\tau)\|_{L^2}.
\end{align} 
Combining all estimates in three cases above, %\eqref{5.68}, \eqref{5.71} \eqref{5.72}-\eqref{5.73}, 
one obtains that 
\begin{align}\label{5.74}
A_2\leq \f{C}{N}e^{\f{\nu_0}{4}\tilde{t}} \sup_{0\leq s\leq t}\|h(s)\|_{L^\infty}
+ C_{\l,N}e^{\f{\nu_0}{4}\tilde{t}} \sup_{0\leq \tau \leq t}\|f(\tau)\|_{L^2}.
\end{align}
Next, we consider $A_3$. It follows from \eqref{2.4} that 
\begin{multline}\label{5.75}
A_3\leq Ce^{\f{\nu_0}{4}\tilde{t}}\sup_{0\leq \tau\leq t}\|h(\tau)\|_{L^\infty}\int_0^{t-\l} \int_0^{s-\l}\int_{\mathbb{R}^3}\Fi_{\{t_1\leq s\}}  \Fi_{\{t_1'\leq \tau\}}  e^{-\f{\nu_0}{4}(t-\tau)} |k_{w}(v,v')|\\ 
%&\qquad\qquad\qquad\qquad\times
\times\left(\int_{\mathbb{R}^3}(1+|\eta|)^{-4\b+4}|h(\tau,z-v'(s-\tau),\eta)|^2\,\dd\eta\right)^{\f12}\dd v'\dd\tau \dd s.
\end{multline}
We also divide the further estimate on \eqref{5.75} by the following cases. %\\

\medskip
\noindent{\it Case 1: $|v|\geq N$}.  %For $|v|\geq N$, one has 
It is direct to obtain
\begin{align}\nonumber
A_3\leq \f{C}{N}e^{\f{\nu_0}{4}\tilde{t}}\sup_{0\leq \tau\leq t}\|h(\tau)\|^2_{L^\infty}.
\end{align}
\noindent{\it Case 2: $|v|\leq N, |v'|\geq 2N$}. 
Using \eqref{5.69} and \eqref{5.70}, it is also direct to bound the corresponding part of  $A_3$ as
\begin{align}\nonumber
Ce^{-\f{N^2}{32}}e^{\f{\nu_0}{4}\tilde{t}}\sup_{0\leq \tau\leq t}\|h(\tau)\|^2_{L^\infty}.
\end{align}

\noindent{\it Case 3: $|v|\leq N$, $|v'|\leq 2N$, $|\eta|\geq N$}.  In this case, we can bound the corresponding part of  $A_3$ by
\begin{align}\nonumber
Ce^{\f{\nu_0}{4}\tilde{t}}\sup_{0\leq \tau\leq t}\|h(\tau)\|^2_{\infty}
\left(\int_{|\eta|\geq N}(1+|\eta|)^{-4\b+4}\dd \eta\right)^{\f12}
\leq  \f{C}{N}e^{\f{\nu_0}{4}\tilde{t}}\sup_{0\leq \tau\leq t}\|h(\tau)\|^2_{L^\infty}.
\end{align}
\noindent{\it Case 4: $|v|\leq N$, $|v'|\leq 2N$, $|\eta|\leq N$}. It follows from the Cauchy-Schwarz that  the corresponding part of $A_3$ can be bounded as
%$|v|\leq N, |v'|\leq 2N, |\eta|\leq N$. This is the last remaining case.
\begin{multline}\nonumber
%&Ce^{\f{\nu_0}{4}\tilde{t}}\sup_{0\leq \tau\leq t}\|h(\tau)\|_{\infty}\cdot\int_0^{t-\l} \int_0^{s-\l}\int_{|v'|\leq2N}  e^{-\f{\nu_0}{4}(t-\tau)} I_{\{t_1\leq s\}}  I_{\{t_1'\leq \tau\}} |k_{w}(v,v')|\nonumber\\ 
%&\qquad\qquad\qquad\qquad\qquad\times
%\left(\int_{|\eta|\leq N}(1+|\eta|)^{-4\b+4}|h(\tau,z-v'(s-\tau),\eta)|^2d\eta\right)^{\f12}\dd v'\dd\tau ds\nonumber\\
C_N e^{\f{\nu_0}{4}\tilde{t}}\sup_{0\leq \tau\leq t}\|h(\tau)\|_{\infty}\int_0^{t-\l} \int_0^{s-\l}e^{-\f{\nu_0}{4}(t-\tau)} \Big(\int_{|v'|\leq2N}  |k_{w}(v,v')|^2\dd v'\Big)^{\f12}\\ 
%&\qquad\qquad\qquad\qquad\times
\times\left(\int_{|v'|\leq2N}\int_{|\eta|\leq N}\Fi_{\{t_1\leq s\}}  \Fi_{\{t_1'\leq \tau\}}|f(\tau,z-v'(s-\tau),\eta)|^2\dd\eta \dd v'\right)^{\f12}\dd\tau \dd s.
%\nonumber\\
%&\leq C_{\l,N} e^{\f{\nu_0}{4}\tilde{t}}\sup_{0\leq \tau\leq t}\|h(\tau)\|_{\infty}\cdot\sup_{0\leq\tau\leq t}\|f(\tau)\|_{L^2}\nonumber\\
%&\leq \f{C}{N}e^{\f{\nu_0}{4}\tilde{t}}\sup_{0\leq \tau\leq t}\|h(\tau)\|^2_{L^\infty}
%+C_{\l,N} e^{\f{\nu_0}{4}\tilde{t}}\sup_{0\leq\tau\leq t}\|f(\tau)\|^2_{L^2},
\end{multline}
After making the change of variable $v'\mapsto z_1=z-v'(s-\tau)$ as before, the above estimate can be further bounded as 
\begin{equation}\nonumber
C_{\l,N} e^{\f{\nu_0}{4}\tilde{t}}\sup_{0\leq \tau\leq t}\|h(\tau)\|_{\infty}\sup_{0\leq\tau\leq t}\|f(\tau)\|_{L^2} 
\leq \f{C}{N}e^{\f{\nu_0}{4}\tilde{t}}\sup_{0\leq \tau\leq t}\|h(\tau)\|^2_{L^\infty}
+C_{\l,N} e^{\f{\nu_0}{4}\tilde{t}}\sup_{0\leq\tau\leq t}\|f(\tau)\|^2_{L^2}.
\end{equation}
Hence, combining all the above estimates in four cases, one obtains that
\begin{align}\label{est.a3}
A_3\leq \f{C}{N}e^{\f{\nu_0}{4}\tilde{t}}\sup_{0\leq \tau\leq t}\|h(\tau)\|^2_{L^\infty}
+C_{\l,N} e^{\f{\nu_0}{4}\tilde{t}}\sup_{0\leq\tau\leq t}\|f(\tau)\|^2_{L^2}.
\end{align}
Therefore, collecting three estimates  \eqref{5.67}, \eqref{5.74} and \eqref{est.a3} for  \eqref{5.66}
%which, together with \eqref{5.66}, \eqref{5.67} and \eqref{5.74}, 
yields that 
\begin{align}\label{5.81}
&\int_0^{t-\l}\Fi_{\{t_1\leq s\}} e^{-\f{\nu_0}{4}(t-s)} \Big|(K_wh)(s,z,v)\Big|\,\dd s\nonumber\\
&\leq Ce^{\nu_0\tilde{t}}\bigg\{e^{-\f{1}{8}\nu_0t} \|h_0\|_{L^\infty}+\Big(\l+\v+\f{C_{\v,T_0}}{N}\Big)\sup_{0\leq s\leq t}[\|h(s)\|_{L^\infty}+\|h(s)\|^2_{L^\infty}]\nonumber\\
&\qquad\qquad\qquad\qquad+C_{\v, \l, N, T_0}\sup_{0\leq s\leq t}[\|f(s)\|_{L^2}+\|f(s)\|^2_{L^2}]\bigg\}.
\end{align}

Next, we consider the second term on the right-hand side of \eqref{5.65}.  To estimate this term, we need to make an iteration again in the nonlinear term, and the inequality \eqref{2.4} plays a key role in this procedure. Indeed, it follows from \eqref{2.4} that 
\begin{multline}\label{5.82.a1}
\int_0^{t-\l}\Fi_{\{t_1\leq s\}} e^{-\f{\nu_0}{4}(t-s)}\Big|[w\Gamma_+(f,f)](s,z,v)\Big|\,\dd s\\
\leq \int_0^{t-\l}\Fi_{\{t_1\leq s\}} e^{-\f{\nu_0}{4}(t-s)}
\|h(s)\|_{L^\infty}\left(\int_{\mathbb{R}^3}(1+|v'|)^{-4\b+4}|h(s,z,v')|^2\dd v'\right)^{\f12}\dd s.
\end{multline}
Hence by considering the above integral in $v'$ over $\{|v'|> N\}\cup\{|v'|\leq N\}$, 
%it further follows that 
\eqref{5.82.a1} is further bounded by
\begin{equation}\label{5.82}
% \int_0^{t-\l}\Fi_{\{t_1\leq s\}} e^{-\f{\nu_0}{2}(t-s)}\Big|[w\Gamma_+(f,f)](s,z,v)\Big|\,\dd s
%\leq 
\f{C}{N}\sup_{0\leq \tau\leq t}\|h(\tau)\|^2_{L^\infty} %\\
%&\leq \int_0^{t-\l}I_{\{t_1\leq s\}} e^{-\f{\nu_0}{2}(t-s)}
%\|h(s)\|_{L^\infty}\Big(\int_{\mathbb{R}^3}(1+|v'|)^{-4\b+4}|h(s,z,v')|^2\dd v'\Big)^{\f12}ds\nonumber\\
+\int_0^{t-\l}\Fi_{\{t_1\leq s\}} e^{-\f{\nu_0}{4}(t-s)}
\|h(s)\|_{L^\infty}\left(\int_{|v'|\leq N} (1+|v'|)^{-4\b+4}|h(s,z,v')|^2\dd v'\right)^{\f12}\dd s.
%\nonumber\\
%&\qquad+\f{C}{N}\sup_{0\leq \tau\leq t}\|h(\tau)\|^2_{L^\infty}.
\end{equation}
Noting $z=x-v(t-s)\in\Omega$, we apply  \eqref{5.65} again for  $|h(s,z,v')|$ to get 
\begin{align}\label{5.83}
&\left(\int_{|v'|\leq N} (1+|v'|)^{-4\b+4}|h(s,z,v')|^2\dd v'\right)^{\f12}\nonumber\\
&\leq Ce^{\nu_0\tilde{t}}\bigg\{e^{-\f{\nu_0}{4}s} \|h_0\|_{L^\infty}+\Big(\l+\v+\f{C_{\v,T_0}}{N}\Big)\sup_{0\leq \tau\leq s}[\|h(\tau)\|_{L^\infty}+\|h(\tau)\|^2_{L^\infty}]\nonumber\\
&\qquad\qquad\qquad\qquad\qquad\qquad\qquad\qquad+C_{\v, N, T_0}\sup_{0\leq \tau\leq s}[\|f(\tau)\|_{L^2}+\|f(\tau)\|^2_{L^2}]\bigg\}\nonumber\\
&\quad+e^{\f{\nu_0}{4}\tilde{t}}\left(\int_{|v'|\leq N} (1+|v'|)^{-4\b+4}\Big|\int_0^{s-\l}\Fi_{\{t_1'\leq \tau\}} e^{-\f{\nu_0}{4}(s-\tau)}|(K_wh)(\tau,z_1,v')|\dd\tau\Big|^2\dd v'\right)^{\f12}\nonumber\\
&\quad+e^{\f{\nu_0}{4}\tilde{t}}\left(\int_{|v'|\leq N} (1+|v'|)^{-4\b+4}\Big|\int_0^{s-\l}\Fi_{\{t_1'\leq \tau\}} e^{-\f{\nu_0}{4}(s-\tau)}|[w\Gamma_+(f,f)](\tau,z_1,v')|ds\Big|^2\dd v'\right)^{\f12},
\end{align}
where we have denoted $z_1:=z-v'(s-\tau)$.
%Denoting $z_1:=z-v'(s-\tau)$, 
Then, substituting \eqref{5.83} into \eqref{5.82},  and using the Cauchy inequality as well as  \eqref{2.4}, one gets that 
\begin{align}
 & \int_0^{t-\l}\Fi_{\{t_1\leq s\}} e^{-\f{\nu_0}{4}(t-s)}\Big|[w\Gamma_+(f,f)](s,z,v)\Big|\,\dd s\nonumber\\
 &\leq Ce^{\nu_0\tilde{t}}\bigg\{\|h_0\|_{L^\infty} e^{-\f{\nu_0}{8}t}\int_0^t\|h(s)\|_{L^\infty} \,\dd s+\Big(\l+\v+\f{C_{\v,T_0}}{N}\Big)\sup_{0\leq \tau\leq t}[\|h(\tau)\|_{L^\infty}+\|h(\tau)\|^3_{L^\infty}]\nonumber\\
 &\qquad\qquad\qquad\qquad+ C_{\v, N, T_0}\sup_{0\leq \tau\leq t}[\|f(\tau)\|_{L^2}+\|f(\tau)\|^3_{L^2}] \bigg\}\nonumber\\
 &\quad+Ce^{\f14\nu_0\tilde{t}}\int_0^{t-\l}\Fi_{\{t_1\leq s\}} e^{-\f{\nu_0}{4}(t-s)}
 \|h(s)\|_{L^\infty}\cdot B_1\,\dd s\nonumber\\
% \nonumber\\
% &\quad\times \underbrace{\Big(\int_0^{s-\l}e^{-\f{\nu_0}{2}(s-\tau)}\int_{|v'|\leq N}I_{\{t_1'\leq \tau\}} (1+|v'|)^{-4\b+4}\Big|\int_{\mathbb{R}^3}k_w(v',v'')h(\tau,z_1,v'')\dd v''\Big|^2\dd v' \dd\tau \Big)^{\f12}}_{B_1}\nonumber\\
 &\quad+Ce^{\f14\nu_0\tilde{t}}\int_0^{t-\l}\Fi_{\{t_1\leq s\}} e^{-\f{\nu_0}{4}(t-s)}
\sup_{0\leq \tau\leq s} \|h(\tau)\|^2_{L^\infty}\cdot B_2\,\dd s,\label{5.84}
%\nonumber\\
%  &\quad\times \underbrace{\Big(\int_0^{s-\l}e^{-\f{\nu_0}{2}(s-\tau)} \int_{|v'|\leq N}\int_{\mathbb{R}^3}I_{\{t_1'\leq \tau\}}(1+|v'|)^{-4\b+4}(1+|v''|)^{-4\b+4}|h(\tau,z_1,v'')|^2\dd v'' \dd v' \dd\tau \Big)^{\f12}}_{B_2}.
\end{align}
with  
\begin{equation}\nonumber
B_1=
\left(\int_0^{s-\l}e^{-\f{\nu_0}{4}(s-\tau)}\int_{|v'|\leq N}\Fi_{\{t_1'\leq \tau\}} (1+|v'|)^{-4\b+4}\Big|\int_{\mathbb{R}^3}k_w(v',v'')h(\tau,z_1,v'')\,\dd v''\Big|^2\dd v' \dd\tau \right)^{\f12},
\end{equation}
and  $B_2=$
\begin{equation}\nonumber
\left(\int_0^{s-\l}e^{-\f{\nu_0}{4}(s-\tau)} \int_{|v'|\leq N}\int_{\mathbb{R}^3}\Fi_{\{t_1'\leq \tau\}}(1+|v'|)^{-4\b+4}(1+|v''|)^{-4\b+4}|h(\tau,z_1,v'')|^2\,\dd v'' \dd v' \dd\tau \right)^{\f12}.
\end{equation}
For $B_1$, it follows from \eqref{5.69} and \eqref{5.70} that 
\begin{multline}\label{5.85}
B_1\leq C_N\left(\int_0^{s-\l}e^{-\f{\nu_0}{4}(s-\tau)}\int_{|v'|\leq N} \int_{|v''|\leq2N}\Fi_{\{t_1'\leq \tau\}}|f(\tau,z_1,v'')|^2\,\dd v''\dd v' \dd\tau \right)^{\f12}\\
%&\quad
+Ce^{-\f{N^2}{32}}\sup_{0\leq\tau\leq s}\|h(\tau)\|_{L^\infty}
%\nonumber\\
%&
\leq Ce^{-\f{N^2}{32}}\sup_{0\leq\tau\leq s}\|h(\tau)\|_{L^\infty}+C_{\l,N}\sup_{0\leq\tau\leq s}\|f(\tau)\|_{L^2},
\end{multline}
where we have used the fact that $z_1=z-v'(s-\tau)\in\Omega$ since $t_1'\equiv t_1(s,z,v')\leq\tau$, and also made the change of variable
$v' \mapsto z_1$ with $\f{dz_1}{\dd v'}=(s-\tau)^{3}\geq\l^3>0$. Similarly, for $B_2$, one has that 
\begin{multline}\label{5.86}
B_2\leq C_N\left(\int_0^{s-\l}e^{-\f{\nu_0}{4}(s-\tau)} \int_{|v'|\leq N}
\int_{|v''|\leq N}\Fi_{\{t_1'\leq \tau\}}|f(\tau,z_1,v'')|^2\,\dd v'' \dd v' \dd\tau \right)^{\f12}\\
%&\quad
+\f{C}{N}\sup_{0\leq\tau\leq s}\|h(\tau)\|_{L^\infty} 
%\nonumber\\
%&
\leq \f{C}{N}\sup_{0\leq\tau\leq s}\|h(\tau)\|_{L^\infty}
+C_{\l,N}\sup_{0\leq\tau\leq s}\|f(\tau)\|_{L^2},
\end{multline}
where $\b\geq 5/2$ has been used.
Thus, substituting \eqref{5.86} and \eqref{5.85} into \eqref{5.84}, we obtain 
\begin{align}\label{5.87}
 & \int_0^{t-\l}\Fi_{\{t_1\leq s\}} e^{-\f{\nu_0}{4}(t-s)}\Big|[w\Gamma_+(f,f)](s,z,v)\Big|\,\dd s\nonumber\\
&\leq Ce^{\nu_0\tilde{t}}\bigg\{\|h_0\|_{L^\infty} e^{-\f{\nu_0}{8}t}\int_0^t\|h(s)\|_{L^\infty} \,\dd s
+\Big(\l+\v+\f{C_{\v,T_0}}{N}\Big)\sup_{0\leq \tau\leq t}\left[\|h(\tau)\|_{L^\infty}+\|h(\tau)\|^3_{L^\infty}\right]\nonumber\\
&\qquad\qquad\qquad\qquad+ C_{\l, \v, N, T_0}\sup_{0\leq \tau\leq t}\left[\|f(\tau)\|_{L^2}+\|f(\tau)\|^3_{L^2}\right] \bigg\}.
\end{align}
This then completes the estimate on the second term on the right-hand side of \eqref{5.65}. 
Plugging \eqref{5.87} and \eqref{5.81} into \eqref{5.65}, one obtains
%, for $0\leq t\leq T_0$, 
that 
\begin{align}\label{5.88}
|h(t,x,v)|
&\leq C_8e^{2\nu_0\tilde{t}}\left[e^{-\f{\nu_0}{8}t} \|h_0\|_{L^\infty}+\|h_0\|_{L^\infty} e^{-\f{\nu_0}{8}t}\int_0^t\|h(s)\|_{L^\infty} \,\dd s\right]\nonumber\\
&\quad+C_8e^{2\nu_0\tilde{t}}\bigg\{\Big(\l+\v+\f{C_{\v,T_0}}{N}\Big)\sup_{0\leq s\leq t}\left[\|h(s)\|_{L^\infty}+\|h(s)\|^3_{L^\infty}\right]\nonumber\\
&\qquad\qquad\qquad\qquad\qquad+C_{\l,\v, N, T_0}\sup_{0\leq s\leq t}\left[\|f(s)\|_{L^2}+\|f(s)\|^3_{L^2}\right]\bigg\}, 
\end{align}
where $C_8\geq1$ is a generic constant. Then, \eqref{5.46} follows immediately from \eqref{5.88} by taking $\tilde{C}_3:=C_8$. Therefore, the proof of Lemma \ref{lem5.5} is completed.  
\end{proof}
%$\hfill\Box$

%%%%%%%%%%%%%%%%%%%%%%%%%%%%%%%%%%%%%%%%%%%%%%%%%%%%%%%%%%%%%%

\section{Global-in-time existence in $L^\infty_{x,v}$}\label{sec6}

\noindent{\bf Proof of Theorem \ref{thm1.2}.}  Recall \eqref{5.46} and also \eqref{5.12} for the definition of $\tilde{t}$. We now take  
$$
\tilde{C}_4:=\max\left\{C_0,C_3\r^{2\b},\tilde{C}_3\right\}>1,
$$
and define
\begin{align}\label{5.89}
\bar{M}:=4\tilde{C}_4^2M_0 \exp\left\{2\nu_0\tilde{t}+\f{8}{\nu_0}\tilde{C}_4M_0 e^{2\nu_0\tilde{t}}\right\}
\equiv 4\tilde{C}_4^2\tilde{C}_2^4M_0^5\exp\left\{ \f8{\nu_0}\tilde{C}_4\tilde{C}_2^4M_0^5\right\},
\end{align}
and 
\begin{align}\label{5.90}
T_0:=\f{16}{\nu_0}\Big[\ln\bar{M}+|\ln\d|\Big].
\end{align}
Here $\de>0$ is introduced in \eqref{def.delta}. 
%Noting the definition of $\tilde{t}$  in \eqref{5.12}, we know 
We see that the above $\bar{M}$  in \eqref{5.89} depends only on $M_0$, %\r$, 
and $T_0$ depends only on $\d$ %, $\r$ 
and $M_0$.  Assume $\|f_0\|_{L^2}\leq \epsilon_1=\epsilon_1(\bar{M},T_0)$, where $\epsilon_1$ is defined in Lemma  \ref{lem5.3}. Then, it follows from  the {\it a priori} assumption \eqref{5.0} and Lemmas \ref{lem.l2} and \ref{lem5.5} that 
\begin{align}\label{5.91}
\|h(t)\|_{L^\infty}\leq \tilde{C}_4\tilde{C}_2^4  M_0^5  \left[1+\int_0^t\|h(s)\|_{L^\infty}\,\dd s\right]e^{-\f{\nu_0}{8}t}+D,
\end{align}
for all $0\leq t\leq T_0$, where 
\begin{equation}
\begin{split}
D:= &\tilde{C}_4\tilde{C}_2^4  M_0^4 \bigg\{\Big(\v+\l+\f{C_{\v,T_0}}{N} \Big)  \left[\bar{M}+\bar{M}^3\right] \\
&+C_{\v,\l,N,T_0}\left[e^{\tilde{C}_1\bar{M}T_0}\|f_0\|_{L^2}+\Big(e^{\tilde{C}_1\bar{M}T_0}\|f_0\|_{L^2}\Big)^3\right]\bigg\}.\label{def.D}
\end{split}
\end{equation}
Define
\begin{align}\nonumber
H(t):=1+\int_0^t\|h(s)\|_{L^\infty}\,\dd s.
\end{align}
It then follows from \eqref{5.91} that
\begin{align}\nonumber
H'(t)\leq \tilde{C}_4\tilde{C}_2^4  M_0^5  e^{-\f{\nu_0}{8}t}H(t)+D,
\end{align}
%which yields that 
that is
\begin{align}\nonumber
\left(H(t) \exp\Big\{-\f{8}{\nu_0}\tilde{C}_4\tilde{C}_2^4  M_0^5 (1-e^{-\f{\nu_0}{8}t})\Big\}\right)'\leq D,%\quad \mbox{for}\quad t\in[0,T_0].
\end{align}
for all $0\leq t\leq T_0$.
Integrating the above inequality over $[0,t]$, we get that 
\begin{align}\label{5.96}
H(t)&\leq (1+Dt)\exp\Big\{\f{8}{\nu_0}\tilde{C}_4\tilde{C}_2^4  M_0^5 \left(1-e^{-\f{\nu_0}{8}t}\right)\Big\}\leq  (1+Dt)\exp\Big\{\f{8}{\nu_0}\tilde{C}_4\tilde{C}_2^4  M_0^5 \Big\},
%\quad \mbox{for}\quad t\in[0,T_0].
\end{align}
for all $0\leq t\leq T_0$.
Substituting \eqref{5.96} back to \eqref{5.91}, one obtains that for all $0\leq t\leq T_0$, 
\begin{align}
\|h(t)\|_{L^\infty}&\leq \tilde{C}_4\tilde{C}_2^4  M_0^5 
\exp\Big\{\f{8}{\nu_0}\tilde{C}_4\tilde{C}_2^4  M_0^5 \Big\}\cdot(1+Dt) e^{-\f{\nu_0}{8}t}+D\nonumber\\
&\leq \f1{4\tilde{C}_4}\bar{M} [1+ Dt]e^{-\f{\nu_0}{8}t}+D\nonumber\\
&\leq \f1{4\tilde{C}_4}\bar{M}\left[1+\f{16}{\nu_0}D\right]e^{-\f{\nu_0}{16}t}+D.\label{5.97}
%\quad\mbox{for}\quad t\in[0,T_0].
\end{align}
Now, recalling \eqref{def.D}, we firstly choose $\l>0$ and $\v>0$ sufficiently small, then $N>0$ large enough, and finally let $\|f_0\|_{L^2}\leq \epsilon_2$ with $\epsilon_2=\epsilon_2(\d, M_0)>0$ further sufficiently small,  so  that one has 
%\begin{align}\nonumber
$D\leq \min\left\{\f{\nu_0}{64}, \f{\delta}{8}\right\}$.
%\end{align}
This together with \eqref{5.97} immediately yield that 
\begin{align}
\|h(t)\|_{L^\infty}\leq \f5{16\tilde{C}_4}\bar{M}\exp\left\{-\f{\nu_0}{16}t\right\}+\f{\d}{8}< \f1{2\tilde{C}_4}\bar{M},
\label{5.99}
\end{align}
for all $0\leq t\leq T_0$.  Hence we have closed the {\it a priori} assumption \eqref{5.0} over $t\in[0,T_0]$  provided that 
$
\|f_0\|_{L^2}\leq \epsilon_0:=\min\{\epsilon_1,\epsilon_2\}.
$
Note that $\epsilon_0>0$ depends only on $\d$ and $M_0$.   

Combining \eqref{5.99} and the local existence Theorem \ref{LE}, we can extend the Boltzmann solution to time interval $t\in[0,T_0]$.   Indeed, it follows from Theorem \ref{LE} that the Boltzmann solution $f(t)$ exists on $t\in[0,\hat{t}_0]$ with 
\begin{equation}\label{5.101}
\sup_{0\leq t\leq \hat{t}_0}\|h(t)\|_{L^\infty}\leq 2\tilde{C}_4\|h_0\|_{L^\infty}\leq \f{1}{2\tilde{C}_4}\bar{M}.
\end{equation}
We define $t_*:=(\hat{C}_\r[1+(2\tilde{C}_4)^{-1}\bar{M}])^{-1}>0$ where $\hat{C}_\r$ is the constant in Theorem \ref{LE}. With $t=\hat{t}_0$ as the starting point, using \eqref{5.101} and  Theorem \ref{LE}, we extend the Boltzmann solution $f(t)$ to exist in time interval $t\in[0,\hat{t}_0+t_*]$ with 
\begin{equation}\nonumber
\sup_{\hat{t}_0\leq t\leq \hat{t}_0+t_*}\|h(t)\|_{L^\infty}\leq 2\tilde{C}_4\|h(\hat{t}_0)\|_{L^\infty}\leq \bar{M},
\end{equation}
which together with \eqref{5.101}, yields that $\sup_{0\leq t\leq \hat{t}_0+t_*}\|h(t)\|_{L^\infty}\leq \bar{M}$. Since $h(t)$ satisfies the assumption \eqref{5.0} over $t\in[0,\hat{t}_0+t_*]$, then we can apply the {\it a priori} estimate \eqref{5.99} to get that the Boltzmann solution $f(t)$ indeed is bounded by
\begin{equation}\nonumber
\sup_{0\leq t\leq \hat{t}_0+t_*}\|h(t)\|_{L^\infty}\leq \f{1}{2\tilde{C}_4}\bar{M}.
\end{equation}
Repeating the same procedure for finite times, we can extend the Boltzmann solution $f(t)$ to exist in  the time interval $t\in[0,T_0]$, and \eqref{5.99} is satisfied.

Next, for the case  $t\geq T_0$, we note from the first inequality of \eqref{5.99} and the definition \eqref{5.90} for $T_0$ that 
\begin{align}\nonumber
\|h(T_0)\|_{L^\infty}&\leq \f5{16\tilde{C}_4}\bar{M}\exp\left\{-\f{\nu_0}{16}T_0\right\}+\f{\d}{8}\leq \f{5\d}{16\tilde{C}_4}+\f{\d}{8}< \f{1}{2}\d.
\end{align}
With $t=T_0$ as the initial time and applying Proposition \ref{prop1.1}, we can extend the Boltzmann solution $f(t)$ from $[0,T_0]$ to $[T_0,\infty)$, and thus obtain the unique solution $f(t)$ globally in time on $[0,\infty)$ such that $F(t,x,v)=\mu+\sqrt{\mu}f(t,x,v)\geq 0$ and $\sup_{t\geq 0}\|wf(t)\|_{L^\infty}\leq \bar{M}$. This proves the global existence and uniqueness of solutions in weighted $L^\infty$ space.  

For the large time behavior of the obtained solution, we note that as an immediate consequence of Proposition \ref{prop1.1}, it holds that 
\begin{align}\label{5.100}
\|h(t)\|_{L^\infty}\leq C_0\|h(T_0)\|_{L^\infty}e^{-\vartheta(t-T_0)}\leq C_0\d e^{-\vartheta(t-T_0)},
%\quad\mbox{for}\quad t\geq T_0.
\end{align}
for all $t\geq T_0$. By taking 
%$$
$\tilde{C}_0:=4\tilde{C}_4^3\tilde{C}_2^4$ and  %\quad\mbox{and}\quad 
$\vartheta_1:=\min\left\{\vartheta,\f{\nu_0}{16}\right\}$,
%$$
it follows from \eqref{5.99}, \eqref{5.100} and a direct computation that for any $t\geq 0$,
\begin{align*}
\|h(t)\|_{L^\infty}&\leq \max\{\f12,C_0\} \bar{M} e^{-\vartheta_1 t}\\
&\leq \tilde{C}_4\bar{M} e^{-\vartheta_1 t}\\
&\leq 4\tilde{C}_4^3\tilde{C}_2^4M_0^5\exp\left\{ \f8{\nu_0}\tilde{C}_4\tilde{C}_2^4M_0^5\right\} e^{-\vartheta_1 t}\nonumber\\
&= \tilde{C}_0 M_0^5\exp\left\{ \f2{\nu_0}\tilde{C}_0M_0^5\right\} e^{-\vartheta_1 t},\nonumber
\end{align*}
which then proves \eqref{thm.ltb}.

Finally,  if $\Omega$ is strictly convex, and  
$F_0(x,v)$ is continuous except on $\gamma_0$ 
and satisfies \eqref{bc.i}, then $F(t,x,v)$ is continous in $[0,\infty)\times\{\overline{\Omega}\times\mathbb{R}^3\backslash \gamma_0\}$, according to Theorem \ref{LE} for the local-in-time existence result.
Therefore  the proof of Theorem \ref{thm1.2} is completed. \qed %$\hfill\Box$

\section{Appendix}\label{sec7}

For completeness, we would list in this appendix some propositions obtained in \cite{Guo2}. First of all, it was  firstly observed in \cite{Guo2}  that the set in the space $\Pi _{l=1}^{k-1}\mathcal{V}_{l}$ not reaching $t=0$ after $k$ bounces is small when $k$ is large.
	
\begin{proposition}%[\cite{Guo2}]
%\label{lem3.2}
For any $\varepsilon >0,$ there exists $k_{0}(\varepsilon,T_{0})>0$ such that for $k\geq k_{0}$, and  for all $(t,x,v)$ with $0\leq t\leq T_{0}$,  $x\in \overline{\Omega}$ and $v\in\mathbb{R}^{3},$
\begin{equation}\label{4.7}
\int_{\Pi _{l=1}^{k-1}\mathcal{V}_{l}}\mathbf{1}_{\{t_{k}(t,x,v,v_{1},v_{2}...,v_{k-1})>0\}}\,\Pi _{l=1}^{k-1}\dd\sigma _{l}\leq\varepsilon.
\end{equation}
Furthermore, for $T_0$ sufficiently large, there exist constants $C_1,C_2>0$, independent of $T_0$, such that for $k=C_1T_0^{\f54}$,
\begin{align}\label{4.8}
\int_{\Pi _{l=1}^{k-1}\mathcal{V}_{l}}\mathbf{1}_{\{t_{k}(t,x,v,v_{1},v_{2}...,v_{k-1})>0\}}\,\Pi _{l=1}^{k-1}\dd\sigma _{l}\leq\left\{\f12\right\}^{C_2T_0^{\f54}}.
\end{align}
\end{proposition}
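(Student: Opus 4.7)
The plan is to exploit two facts about the diffuse measure $\dd\sigma_l=c_\mu\mu(v_l)\{v_l\cdot n(x_l)\}\,\dd v_l$ on $\mathcal V_l$: it is overwhelmingly concentrated on non-grazing velocities of moderate size, and such velocities produce a uniformly positive inter-bounce time $t_l-t_{l+1}=t_{\mathbf b}(x_l,v_l)$. Together these force $t_k<0$ after sufficiently many bounces outside a small-measure set, which is exactly what \eqref{4.7} and \eqref{4.8} assert.

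First I would fix a parameter $\delta\in(0,1)$ and introduce the good velocity set
\begin{equation*}
\mathcal V^\delta_l:=\{v\in\mathcal V_l:\ \delta\leq|v|\leq 1/\delta,\ v\cdot n(x_l)\geq\delta\}.
\end{equation*}
The Gaussian factor in $\dd\sigma_l$ controls $|v|\geq 1/\delta$, while the $v\cdot n(x_l)$ weight together with the polynomial volume element controls both the grazing sliver $\{v\cdot n(x_l)<\delta\}$ and the small-$|v|$ part, yielding a bound $\int_{\mathcal V_l\setminus\mathcal V^\delta_l}\dd\sigma_l\leq C_\sharp\delta$ uniformly in $x_l\in\partial\Omega$. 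Next, I would verify using the $C^2$ regularity of $\partial\Omega$ that for $v\in\mathcal V^\delta_l$ the backward trajectory $x_l-\tau v$ enters $\Omega$ with normal speed at least $\delta$ and total speed at most $1/\delta$, so it must penetrate to depth of order $\delta^\alpha$ before the boundary can curl back to catch it, giving $t_{\mathbf b}(x_l,v)\geq c_0\delta^\alpha$ for fixed geometric constants $c_0>0$, $\alpha\geq 1$. Telescoping $t-t_k=\sum_{l=0}^{k-1}t_{\mathbf b}(x_l,v_l)$ then forces $t_k<0$ whenever at least $m:=\lceil T_0/(c_0\delta^\alpha)\rceil+1$ of the $v_l$'s belong to $\mathcal V^\delta_l$, so the set $\{t_k>0\}$ lies inside the event of fewer than $m$ good indices out of $k-1$. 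A straightforward binomial count bounds the measure of that event by
\begin{equation*}
\int_{\Pi_l\mathcal V_l}\mathbf 1_{\{t_k>0\}}\,\Pi_l\,\dd\sigma_l\leq\sum_{j=0}^{m-1}\binom{k-1}{j}(C_\sharp\delta)^{k-1-j}\leq 2\binom{k-1}{m-1}(C_\sharp\delta)^{k-m},
\end{equation*}
after choosing $\delta$ so that $C_\sharp\delta\leq 1/2$ and $m-1\leq(k-2)/2$.

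From this master estimate, the qualitative claim \eqref{4.7} follows by first fixing $\delta=\delta(\varepsilon)$ and then taking $k_0(\varepsilon,T_0)$ large enough to drive the right-hand side below $\varepsilon$. For the quantitative bound \eqref{4.8}, I would freeze $\delta$ at a small absolute constant so $m\asymp T_0$, and then take $k-m\asymp T_0^{5/4}$; since $\binom{k-1}{m-1}\leq(ek/m)^m$, the logarithm of the binomial prefactor is $O(T_0\log T_0)$, which is absorbed by the exponential decay $(k-m)|\log(C_\sharp\delta)|\asymp T_0^{5/4}$ once $T_0$ is large enough, producing the claimed $(1/2)^{C_2T_0^{5/4}}$ with $C_2$ independent of $T_0$; the exponent $5/4$ is simply a convenient power strictly exceeding $1$ so that $T_0^{5/4}$ dominates $T_0\log T_0$. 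The main technical hurdle I anticipate is the quantitative geometric lower bound $t_{\mathbf b}(x_l,v)\geq c_0\delta^\alpha$ in Step~2: since this proposition is applied to a general smooth bounded $\Omega$ with no convexity assumption, the constants $c_0$ and $\alpha$ must be extracted from the $C^2$ regularity of the defining function $\xi$ together with the non-grazing hypothesis $v\cdot n(x_l)\geq\delta$, and one must rule out by hand the scenario where the trajectory re-approaches $\partial\Omega$ tangentially and exits prematurely.
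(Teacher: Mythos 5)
Your proposal is correct and follows essentially the same strategy as Guo's original proof in \cite{Guo2}, which the paper cites rather than reproving in the appendix: split each $\mathcal V_l$ into a non-grazing, bounded-speed core of $\sigma$-measure $\geq 1-C_\sharp\delta$ on which the bounce time has a uniform lower bound, and then run the pigeonhole/binomial count. The geometric lower bound you flag as the main hurdle is in fact a short Taylor estimate $\xi(x_l-\tau v_l)\le -\tau\,|\nabla\xi(x_l)|\,(v_l\cdot n(x_l))+\tfrac12\|\nabla^2\xi\|_{L^\infty}\tau^2|v_l|^2$, valid with uniform constants on a fixed neighbourhood of $\overline\Omega$ without any convexity assumption, which immediately gives $t_{\mathbf{b}}(x_l,v_l)\gtrsim\delta^3$ (so $\alpha=3$) and already absorbs the tangential-re-entry scenario you were worried about.
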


%\subsection{Mild form with $k$ bounces}

%To the end, 
Recall that we have denoted $h(t,x,v):=w(v)f(t,x,v)$. Then the  diffuse reflection boundary condition \eqref{diffuse} can be rewriten as
\begin{equation}\label{4.1-2}
	h(t,x,v)|_{\g_-}=\f{1}{\widetilde{w}(v)}\int_{\mathcal{V}(x)}h(t,x,v')\widetilde{w}(v')\,\dd\sigma,
\end{equation}
where $\mathcal{V}(x)$, $\dd\sigma$ are given in \eqref{def.snu} and \eqref{def.bm}, respectively. %and 
%\begin{align*}%\label{4.1-1}
	%\mathcal{V}(x)=\{v'\in\mathbb{R}^3:~v'\cdot n(x)>0\},~~
%	\widetilde{w}(v):=\f1{w(v)\sqrt{\mu(v)}}.
%\end{align*}
%and the probability measure $\dd\s=\dd\s(x)$ is given by
%\begin{align}\label{4.2}
%	\dd\s(x)=c_\mu \mu(v')\cdot\{n(x)\cdot v'\}\dd v'.
%\end{align}
The blow proposition can be regarded as a special case of Lemma \ref{lem3.01} when $\varphi\equiv 0$.

\begin{proposition}%[\cite{Guo2}]
\label{lem3.1}
Assume that $h,\frac{q}{\nu }\in L^{\infty }$ satisfy $\{\partial_{t}+v\cdot \nabla _{x}+\nu(v) \}h=q(t,x,v),$ with the diffuse reflection boundary condition \eqref{4.1-2}. Recall the diffusive back-time cycles in \eqref{diffusecycle}. Then for any $0\leq s\leq t,$ for almost every $x,v$, if $t_{1}(t,x,v)\leq s,$ 
\begin{equation*}%\label{4.9}
h(t,x,v)=e^{-\nu(v) (t-s)}h(s,x-v(t-s),v)
+\int_{s}^{t}e^{-\nu(v) (t-\tau)}q(\tau,x-v(t-\tau),v)\,\dd\tau.  
\end{equation*}
If $t_{1}(t,x,v)>s,$ then for $k\geq 2,$ 
\begin{align*}%\label{4.10}
h(t,x,v)=&\int_{t_{1}}^{t}e^{-\nu(v) (t-\tau)}q(\tau ,x-v(t-\tau ),v)\,\dd\tau\nonumber\\
&+\frac{e^{-\nu (v)(t-t_{1})}}{\widetilde{w}(v)}\int_{\Pi _{j=1}^{k-1}\mathcal{V}_{j}}\sum_{l=1}^{k-1}\mathbf{1}_{\{t_{l+1}\leq s<t_l\}}h(s,x_{l}-v_{l}(t_{l}-s),v_{l})\,\dd\Sigma_{l}(s) \nonumber\\
&+\frac{e^{-\nu (v)(t-t_{1})}}{\widetilde{w}(v)}\int_{\Pi _{j=1}^{k-1}\mathcal{V}_{j}}\sum_{l=1}^{k-1}\int_{s}^{t_{l}}\mathbf{1}_{\{t_{l+1}\leq s<t_l\}}q(\tau ,x_{l}-v_{l}(t_{l}-\tau),v_{l})\,\dd\Sigma _{l}(\tau)\dd\tau  \nonumber\\
&+\frac{e^{-\nu (v)(t-t_{1})}}{\widetilde{w}(v)}\int_{\Pi _{j=1}^{k-1}\mathcal{V}_{j}}\sum_{l=1}^{k-1}\int_{t_{l+1}}^{t_{l}}\mathbf{1}_{\{t_{l+1}>s\}}q(\tau,x_{l}-v_{l}(t_{l}-\tau),v_{l})\,\dd\Sigma _{l}(\tau)\dd\tau  \nonumber \\
&+\frac{e^{-\nu (v)(t-t_{1})}}{\widetilde{w}(v)}\int_{\Pi _{j=1}^{k-1}\mathcal{V}_{j}}\mathbf{1}_{\{t_{k}>s\}}h(t_{k},x_{k},v_{k-1})\,\dd\Sigma _{k-1}(t_k),
\end{align*}
where $\dd\Sigma_{l}(s)$ is given by
\begin{equation*}%\label{4.11}
\dd\Sigma _{l}(s)=\{\Pi _{j=l+1}^{k-1}\dd\sigma _{j}\}\cdot \widetilde{w}(v_l) e^{-\nu (v_{l})(t_l-s)}\dd\s_l\cdot\Pi_{j=1}^{l-1}\{e^{-\nu(v_{j})(t_{j}-t_{j+1})}\dd\s_j\},
\end{equation*}
and $\dd\Sigma _{k-1}(t_k)$ is the value of $\dd\Sigma_{k-1}(s)$ at $s=t_k$.
\end{proposition}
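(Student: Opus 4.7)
\medskip
\noindent\textbf{Proof proposal for Lemma \ref{lem3.01}.}
The plan is to integrate the transport equation along the straight-line backward characteristic as long as it remains in $\Omega$, and then iterate using the diffuse reflection boundary condition at each bounce. First I would write \eqref{3.1} along the bicharacteristic $\tau\mapsto(x-v(t-\tau),v)$ as the ODE
\begin{equation*}
\partial_\tau\bigl[h(\tau,x-v(t-\tau),v)\bigr]+R(\varphi)(\tau,x-v(t-\tau),v)\,h(\tau,x-v(t-\tau),v)=q(\tau,x-v(t-\tau),v).
\end{equation*}
Direct differentiation of the definition \eqref{def.iv} shows $\partial_\tau I^\varphi(t,\tau)=R(\varphi)(\tau,\ldots)\,I^\varphi(t,\tau)$, whence
\begin{equation*}
\partial_\tau\!\bigl[I^\varphi(t,\tau)\,h(\tau,x-v(t-\tau),v)\bigr]=I^\varphi(t,\tau)\,q(\tau,x-v(t-\tau),v).
\end{equation*}
When $t_1(t,x,v)\leq s$ the characteristic stays in $\Omega$ throughout $[s,t]$, so I integrate the above identity from $s$ to $t$ and use $I^\varphi(t,t)=1$ to obtain the first claimed formula immediately.

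When $t_1(t,x,v)>s$, I would integrate the same identity only down to $\tau=t_1$, producing
\begin{equation*}
h(t,x,v)=\int_{t_1}^t I^\varphi(t,\tau)\,q(\tau,x-v(t-\tau),v)\,\dd\tau+I^\varphi(t,t_1)\,h(t_1,x_1,v),
\end{equation*}
where $x_1=x-v(t-t_1)\in\partial\Omega$. Since $(x_1,v)\in\gamma_-$, the diffuse reflection boundary condition \eqref{lp.bc} lets me rewrite $h(t_1,x_1,v)$ as a $\dd\sigma_1$-average of $h(t_1,x_1,v_1)$ for $v_1\in\mathcal{V}_1$, with the prefactor $1/\widetilde w(v)$ and an extra factor $\widetilde w(v_1)$ inside the integral. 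For each $v_1$ I repeat exactly the same dichotomy on $(t_1,x_1,v_1)$: either $t_2\leq s<t_1$, in which case straight-line integration back to time $s$ contributes the $\mathbf 1_{\{t_{l+1}\leq s<t_l\}}$ terms with $l=1$, or $t_2>s$, in which case the characteristic hits the boundary at $(t_2,x_2)$ and I apply \eqref{lp.bc} again. Iterating $k-1$ times produces exactly \eqref{3.5}: the three sums over $l=1,\dots,k-1$ collect respectively the initial-data contributions on each completed segment, the $q$-source on the segment containing $s$, and the $q$-source on segments fully contained in $(s,t_1)$; the final term is the deepest residue at level $k$.

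Finally, to see that the measures match the stated $\dd\Sigma_l^\varphi(\cdot)$, I would track the accumulation of integrating factors across bounces. On the segment $[t_{j+1},t_j]$ the relevant factor is $I^\varphi(t_j,t_{j+1})$ for $j<l$, it is $I^\varphi(t_l,\tau)$ on the segment where the evaluation point $\tau$ (or $s$) sits, the boundary measures $\dd\sigma_j$ arise from each application of \eqref{lp.bc}, the single weight $\widetilde w(v_l)$ is the one left by the boundary reflection at the $l$-th bounce (the reciprocal weight $1/\widetilde w(v_{l-1})$ from the previous step having been absorbed into the factor in front), and $\prod_{j=l+1}^{k-1}\dd\sigma_j$ encodes the free integrations over the deeper dummy velocities introduced by subsequent applications of the boundary operator before truncation. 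Combining these gives the expression for $\dd\Sigma_l^\varphi(s)$ in the statement, and $\dd\Sigma_{k-1}^\varphi(t_k)$ is just the specialization at $s=t_k$. The main bookkeeping obstacle is keeping all back-time cycle indices consistent and verifying that the nontrivial weight factor $\widetilde w(v_l)$ appears in precisely one $\dd\sigma_l$; beyond that the argument is an induction on the iteration depth and is essentially identical to \cite[Lemma 24]{Guo2}, which is why the author omits the details.
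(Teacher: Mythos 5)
Your proposal is correct and follows exactly the standard argument the paper relies on (Duhamel along the straight-line characteristic with the integrating factor $I^\varphi(t,\tau)$, followed by iterated application of the diffuse reflection condition at each bounce, with the weights $\widetilde w(v_{j-1})^{-1}\widetilde w(v_j)$ telescoping to $\widetilde w(v)^{-1}\widetilde w(v_l)$); the paper itself omits the details and defers to \cite[Lemma 24]{Guo2}, and the stated proposition is just the specialization $\varphi\equiv 0$, $I^0(t,\tau)=e^{-\nu(v)(t-\tau)}$. No gaps.
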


%\subsection{Exponential time-decay of linear homogeneous equation}

The following result is also needed in the proof in the previous sections.

%will be used in the later proof.

\begin{proposition}%[\cite{Guo2}]
\label{lem4.2}
Let $\r>0$ be sufficiently large and $\beta>3/2$ in the velocity weight function  \eqref{WF}. 
Assume $h_{0}\in L^{\infty }$.
%assume $\b>\f32$ and $\r$ sufficiently large in the weight function \eqref{WF}. 
Then, there exits a unique solution $h(t)=G(t)h_{0}\in L^{\infty }$ to the linear homogeneous equation 
\begin{equation}
\label{lhe.nu}
h_t+v\cdot\nabla h+\nu(v) h=0,
\end{equation}
with the diffuse reflection boundary condition \eqref{4.1-2}. 
%\begin{align}\label{4.16}
%\begin{cases}
%\sup_{0\leq t\leq \r}\{e^{\nu _{0}t}\|\{G(t)h_{0}\}\mathbf{1}
%_{t_{1}>0}\|_{L^\infty }\}\leq C\r^{4\b}\left\|\frac{h_{0}}{1+|v|}\right\|_{L^\infty},\\[2mm]
%\|\{G(t)h_{0}\}\mathbf{1}_{t_{1}\leq 0}\|_{L^\infty 
%}\leq \|e^{-t\nu (v)}h_{0}\|_{L^\infty }.
%\end{cases}
%\end{align}
%In particular,  there exists positive constant $C_\r>0$ such that 
Moreover, there exists a constant $C_\r>0$, depending only on $\r$, such that
\begin{equation}\label{4.17}
\|G(t)h_{0}\|_{L^\infty }\leq C_\r e^{-\f{\nu_0}2t}\max\left\{\left\|\frac{h_0}{1+|v|}\right\|_{L^\infty},\left\|e^{-\nu(v)+\nu_0}h_0\right\|_{L^\infty}\right\},%\quad\mbox{for}\quad t\geq0,
\end{equation}
for all $t\geq 0$, where $\nu_0:=\inf \nu(v)>0$ is a positive constant.
%where $C_\r$ depends only on $\r$.
\end{proposition}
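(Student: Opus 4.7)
My plan is to deduce Proposition \ref{lem4.2} by specializing Lemmas \ref{lem3.02} and \ref{lem3.3} to the homogeneous case $\varphi \equiv 0$ and then sharpening the analysis via the back-time cycle formula of Proposition \ref{lem3.1}. Since $\varphi \equiv 0$ trivially satisfies \eqref{3.3}, and $R(0) = \nu(v) \geq \frac{1}{2}\nu(v)$ verifies the hypothesis of Lemma \ref{lem3.3}, the existence and uniqueness of the solution operator $G(t) := G^0(t, 0) \in L^\infty$ to \eqref{lhe.nu} with boundary condition \eqref{4.1-2}, together with the preliminary decay $\|G(t) h_0\|_{L^\infty} \leq C_\rho e^{-\nu_0 t/4} \|h_0\|_{L^\infty}$, follow immediately from the general machinery already developed. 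It remains to sharpen the decay rate from $\nu_0/4$ to $\nu_0/2$ and to replace $\|h_0\|_{L^\infty}$ on the right by the weaker norm in \eqref{4.17}.

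I revisit the back-time cycle formula of Proposition \ref{lem3.1} with $q \equiv 0$. In the interior regime $t_1(t, x, v) \leq 0$, the formula reduces to $h(t, x, v) = e^{-\nu(v) t} h_0(x - vt, v)$, which I bound by
\begin{equation*}
e^{-\nu(v) t} |h_0(x - vt, v)| \leq e^{-\nu_0} e^{-\nu(v)(t - 1)} \|e^{-\nu(v) + \nu_0} h_0\|_{L^\infty} \leq C e^{-\nu_0 t/2} \|e^{-\nu(v) + \nu_0} h_0\|_{L^\infty},
\end{equation*}
valid for $t \geq 2$; for $t \leq 2$ the trivial bound $e^{-\nu(v) t} \leq 1$ together with $e^{-\nu_0 t/2} \geq e^{-\nu_0}$ absorbs everything into $C_\rho$. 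The weight $e^{-\nu(v)+\nu_0}$ on $h_0$ is matched precisely to the transport damping $e^{-\nu(v) t}$, which is the reason this particular norm appears.

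For the boundary regime $t_1 > 0$, Proposition \ref{lem3.1} expresses $h(t, x, v)$ as a sum of $k - 1$ boundary integrals evaluating $h_0$ along the broken trajectory, plus a non-returning tail. Each boundary integral contains the product $e^{-\nu(v)(t - t_1)} \prod_j e^{-\nu(v_j)(t_j - t_{j+1})} \leq e^{-\nu_0 (t - t_l)}$ for the $l$-th term reaching back to $s = 0$ in $[t_{l+1}, t_l)$; since $t_{l+1} \leq 0$, this factor is $\leq e^{-\nu_0 t}$, providing decay at rate $\nu_0$ (twice the rate used in Lemma \ref{lem3.3}). Each evaluation $h_0(x_l - v_l t_l, v_l)$ is controlled by $(1 + |v_l|) \|h_0/(1+|v|)\|_{L^\infty}$, and the extra factor $(1 + |v_l|)$ is harmlessly absorbed upon integration because the diffuse boundary measure $\dd \sigma_l \propto \mu(v_l)\{v_l \cdot n(x_l)\} \dd v_l$ has Gaussian decay and thus arbitrarily many finite polynomial moments. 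Choosing $k = C_1 \rho^{5/4}$ so that \eqref{4.8} bounds the non-returning set of measure at most $(1/2)^{C_2 \rho^{5/4}}$, and absorbing that factor against $\rho^{2\beta}$ for $\rho$ large (exactly as in the proof of Lemma \ref{lem3.02}), the boundary contribution is bounded by $C_\rho e^{-\nu_0 t/2} \|h_0/(1+|v|)\|_{L^\infty}$ uniformly on $[0, \rho]$. Combining both regimes yields \eqref{4.17} on $[0, \rho]$ with the $\max$-norm on the right.

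The main obstacle is extending the sharp rate $e^{-\nu_0 t/2}$ from $[0, \rho]$ to all $t \geq \rho$ without incurring the halving of the exponent that occurs in Lemma \ref{lem3.3}. Since the equation is autonomous, the semigroup identity $G(t + \rho) = G(t) G(\rho)$ holds, and the key point is that $G(\rho) h_0$ automatically lies in both refined norms with bounds comparable to $\|G(\rho) h_0\|_{L^\infty}$ (because both $(1+|v|)^{-1}$ and $e^{-\nu(v)+\nu_0}$ are bounded by $1$), so iterating the $[0, \rho]$ estimate is self-consistent with respect to the right-hand side norms. Under condition \eqref{5.00} and for $\rho$ chosen sufficiently large, the accumulated factor $[C_3 \rho^{2\beta}]^n$ over $n \approx t/\rho$ iterations is fully absorbed by $e^{-\nu_0 n \rho/2}$ from each block, producing $\|G(t) h_0\|_{L^\infty} \leq C_\rho e^{-\nu_0 t/2} \max\{\|h_0/(1+|v|)\|_{L^\infty}, \|e^{-\nu(v)+\nu_0} h_0\|_{L^\infty}\}$ for every $t \geq 0$ and completing the proof.
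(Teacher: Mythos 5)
The paper itself never proves this proposition; the Appendix opens by saying it is quoted from \cite{Guo2}, so there is no internal argument to compare against. Your overall route — specialize Lemmas~\ref{lem3.02} and \ref{lem3.3} to $\varphi\equiv 0$ for existence and a preliminary decay, then sharpen the rate and the norm by revisiting the back-time cycle expansion of Proposition~\ref{lem3.1} — is the natural one, and your handling of the boundary regime $t_1>0$ (full chain of damping factors contributing $e^{-\nu_0 t}$ down to $s=0$, Gaussian integration of the $(1+|v_l|)$ factors against $\dd\sigma_l$, the non-returning tail suppressed by \eqref{4.8}) is essentially sound. However, there is a genuine gap in the interior regime $t_1(t,x,v)\leq 0$ on a short initial time layer. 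There the mild form gives $|h(t,x,v)|=e^{-\nu(v)t}|h_0(x-vt,v)|$, and your bound $\leq e^{-\nu_0 t}\|e^{-\nu(v)+\nu_0}h_0\|_{L^\infty}$ is indeed correct for $t\geq 1$; but you then dismiss $t\leq 2$ with the assertion that "the trivial bound $e^{-\nu(v)t}\leq 1$ ... absorbs everything into $C_\rho$." This step is circular: it requires
$\|h_0\|_{L^\infty}\leq C_\rho\max\bigl\{\|h_0/(1+|v|)\|_{L^\infty},\ \|e^{-\nu(v)+\nu_0}h_0\|_{L^\infty}\bigr\}$,
which is \emph{false} when $\kappa>0$. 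Take $h_0=n\,\Fi_{\{n^{1/2}\leq|v|\leq 2n^{1/2}\}}$: then $\|h_0\|_{L^\infty}=n$, $\|h_0/(1+|v|)\|_{L^\infty}\sim n^{1/2}$, and $\|e^{-\nu(v)+\nu_0}h_0\|_{L^\infty}\leq n\,e^{-c\,n^{\kappa/2}+\nu_0}\to 0$, so no fixed $C_\rho$ works. You also have not invoked the geometric restriction of the interior regime, $|v|\leq \operatorname{diam}(\Omega)/t$, but even this does not save the claim: $\sup_{|v|\leq d/t}e^{-\nu(v)t}(1+|v|)$ blows up like $t^{-1}$ as $t\downarrow 0$ for any $0\leq\kappa\leq 1$. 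The small-time interior layer genuinely needs a further idea that your write-up does not supply.

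A secondary imprecision is in the semigroup extension from $[0,\rho]$ to all $t\geq 0$. Starting from a $[0,\rho]$ bound with rate $\nu_0/2$ and prefactor $C_3\rho^{2\beta}$ and iterating $G((n+1)\rho)=G(\rho)G(n\rho)$ yields, under \eqref{5.00}, only the rate $\nu_0/2-\rho^{-1}\log(C_3\rho^{2\beta})$, which that condition allows to degrade all the way to $0$ rather than remain at $\nu_0/2$ as you assert. To realize the stated rate $\nu_0/2$ one must first prove the stronger rate $\nu_0$ on $[0,\rho]$ — available here precisely because $R\equiv\nu$ exactly, so $I^0(t,s)=e^{-\nu(v)(t-s)}\leq e^{-\nu_0(t-s)}$ without the factor $1/2$ present under the hypothesis \eqref{3.22} of Lemma~\ref{lem3.3} — and only then pay the iteration penalty, matching the mechanism by which \eqref{3.23} passes to \eqref{3.24}.
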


\medskip

\noindent{\bf Acknowledgments.}  Renjun Duan is partially supported by the General Research Fund (Project No. 409913) from RGC of Hong Kong and {a Direct Grant (No. 3132699) from CUHK.} Yong Wang is partially supported by National Natural Sciences Foundation of China No. 11771429 and 11688101. {The authors would thank the anonymous referees for the valuable and helpful comments on the paper.} 

%\newpage


\begin{thebibliography}{99}

\bibitem{AC}
L. Arkeryd and C. Cercignani, A global existence theorem for the initial-boundary value problem for the Boltzmann equation when the boundaries are not isothermal, {\it Arch. Rational Mech. Anal.} {\bf 125} (1993), no. 3, 271--287. 

\bibitem{AEMN-1}
L. Arkeryd,  R. Esposito,  R. Marra and A. Nouri, 
Stability of the laminar solution of the Boltzmann equation for the Benard problem, 
{\it Bull. Inst. Math. Acad. Sin. (N.S.)} {\bf 3} (2008), no. 1, 51--97. 


\bibitem{AEMN-2}
L. Arkeryd,  R. Esposito,  R. Marra and A. Nouri, 
Stability for Rayleigh-Benard convective solutions of the Boltzmann equation, {\it Arch. Ration. Mech. Anal.} {\bf 198} (2010), no. 1, 125--187.

\bibitem{AM}
L. Arkeryd and N. Maslova, On diffuse reflection at the boundary for the Boltzmann equation and related equations, {\it J. Statist. Phys.} {\bf 77} (1994), no. 5-6, 1051--1077.


\bibitem{AN}
L. Arkeryd and A. Nouri, Boltzmann asymptotics with diffuse reflection boundary condition, {\it Monatsh. Math.} {\bf 123} (1997), 285--298.



\bibitem{AMUXY-s}
R. Alexandre, Y. Morimoto, S. Ukai, C.-J. Xu and T. Yang, The Boltzmann equation without angular cutoff in
the whole space: I, Global existence for soft potential, {\it J.
Funct. Anal.} {\bf 263} (2012), no. 3, 915--1010.


\bibitem{BGGL}
C. Bardos, I.~M. Gamba, F. Golse and C. Levermore, Global solutions of the Boltzmann equation over near global Maxwellians with small mass,  {\it Comm. Math. Phys.} {\bf 346} (2016), no. 2, 435--467.





\bibitem{Br1}
M. Briant, Instantaneous exponential lower bound for solutions to the Boltzmann equation with Maxwellian diffusion boundary conditions, {\it Kinet. Relat. Models} {\bf 8} (2015), no. 2, 281--308.
  

\bibitem{Br2}
M. Briant, Instantaneous filling of the vacuum for the full Boltzmann equation in convex domains, {\it Arch. Ration. Mech. Anal.} {\bf 218} (2015), no. 2, 985--1041.
 
 

\bibitem{BG}
M. Briant and Y. Guo, Asymptotic stability of the Boltzmann equation with Maxwell boundary conditions, {\it J. Differential Equations} {\bf 261} (2016), no. 12, 7000--7079.





\bibitem{Cer}
C. Cercignani, On the initial-boundary value problem for the Boltzmann equation, {\it Arch. Rational Mech. Anal.} {\bf 116} (1992), 307--315.

\bibitem{Cer-B}
C. Cercignani, {\it The Boltzmann Equation and Its Application}. Springer, Berlin, 1988.

\bibitem{CIP}
\newblock C. Cercignani, R. Illner, M. Pulvirenti, 
\emph{The Mathematical Theory of Dilute Gases},  Springer-Verlag, New York, 1994.


\bibitem{CCLS}
C.-C. Chen, I.-K. Chen, T.-P. Liu and Y. Sone,
Thermal transpiration for the linearized Boltzmann equation, 
{\it Comm. Pure Appl. Math.} {\bf 60} (2007), no. 2, 147--163. 


\bibitem{D-Lion} 
R.J. DiPerna, P.-L. Lions, On the Cauchy problem for Boltzmann equation: Global existence and weak stability, 
{\it Ann. of Math.} \textbf{130} (1989), 321--366.

\bibitem{De}
L. Desvillettes, Convergence to equilibrium in large time for Boltzmann and B.G.K. equations, {\it Arch. Rational Mech. Anal.} {\bf 110} (1990), no. 1, 73--91.

\bibitem{DV} 
L. Desvillettes and  C. Villani, 
On the trend to global equilibrium for spatially inhomogeneous kinetic systems: The Boltzmann equation, {\it Invent. Math.} \textbf{159} (2005), 243--316.



\bibitem{DHWY}
R.-J. Duan, F.M. Huang, Y. Wang, and T. Yang, Global well-posedness of the Boltzmann equation with large amplitude initial data, {\it Arch. Ration. Mech. Anal.} {\bf 225} (2017), no. 1, 375--424. 

\bibitem{DHWZ}
{R.-J. Duan, F.M. Huang, Y. Wang, and Zhu Zhang, Effects of soft interaction and non-isothermal  boundary upon long-time dynamics of rarefied gas, 	arXiv:1807.05700.}


\bibitem{DLX}
R.-J. Duan, S.-Q. Liu and J. Xu, Global well-posedness in spatially critical Besov space for the Boltzmann equation, {\it Arch. Ration. Mech. Anal.} {\bf 220} (2016), no. 2, 711--745. 



\bibitem{EGKM}
R. Esposito, Y. Guo, C. Kim and R. Marra, Non-isothermal boundary in the Boltzmann theory and Fourier law, {\it Commun. Math. Phys.} {\bf 323} (2013), 177--239. 

\bibitem{EGKM-hy}
R. Esposito, Y. Guo, C. Kim and R. Marra, Stationary solutions to the Boltzmann
equation in the hydrodynamic limit, {{\it Ann. PDE} {\bf 4} (2018), no. 1, Art. 1, 119 pp. }



\bibitem{Glassey} 
\newblock R.T. Glassey,
\emph{The Cauchy Problem in Kinetic Theory,}
\newblock Society for Industrial and Applied Mathematics (SIAM), Philadelphia,
1996.

\bibitem{Gra}
H. Grad,
{Asymptotic theory of the Boltzmann equation}, in Rarefied Gas Dynamics, edited by J. A. Laurmann,  (Academic Press, New York, 1963), Vol. 1, pp. 26--59.

\bibitem{Gra-Pr}
H. Grad, Principles of the kinetic theory of gases. In: {\it Handbuch der Physik}, vol. XII, pp. 205--294. Springer, Berlin, 1958.

\bibitem{GS}
P. Gressman,  R. M. Strain,  
Global classical solutions of the Boltzmann equation without angular cut-off, 
{\it J. Amer. Math. Soc.} \textbf{24} (2011), no. 3, 771--847. 

\bibitem{GMM} 
M. P. Gualdani, S. Mischler and C. Mouhot, {Factorization of non-symmetric operators and exponential H-theorem, {\it M\'em. Soc. Math. Fr. (N.S.)} No. {\bf 153} (2017), 137 pp.} 

\bibitem{Gui}
J.P. Guiraud, An $H$-theorem for a gas of rigid spheres in a bounded domain. In: Pichon, G. (ed.) Theories cinetique classique et relativistes, CNRS, Paris, pp. 29--58, 1975.

\bibitem{Guo-03}
Y. Guo, %Yan 
{Classical solutions to the Boltzmann equation for molecules with an angular cutoff},
 {\it Arch. Ration. Mech. Anal.} {\bf 169} (2003), no. 4, 305--353.


\bibitem{Guo2}
\newblock Y. Guo,
{Decay and continuity of the Boltzmann equation in Bounded domains},
{\it Arch. Rational. Mech. Anal.} \textbf{197} (2010), 713--809.


\bibitem{Guo-IM} Y. Guo, The Vlasov-Maxwell-Boltzmann system near Maxwellians, \textit{Invent. Math.} \textbf{153} (2003), no. 3, 593--630.

\bibitem{GKTT-BV}
Y. Guo, C. Kim, D. Tonon and A. Trescases, BV-regularity of the Boltzmann equation in non-convex domains, {\it Arch. Rat. Mech. Anal.} {\bf 220} (2016), 1045--1093.


\bibitem{GKTT-IM}
Y. Guo, C. Kim, D. Tonon and A. Trescases, Regularity of the Boltzmann equation in convex domains, {\it Invent. Math.} {\bf 207} (2017), no. 1, 115--290.




\bibitem{Ha}
K. Hamdache, Initial-boundary value problems for the Boltzmann equation: global existence of weak solutions, {\it Arch. Rational Mech. Anal.} {\bf 119} (1992), no. 4, 309--353.




\bibitem{IS} 
R. Illner and M. Shinbrot, 
{Global existence for a rare gas in an infinite vacuum}, {\it Comm. Math. Phys.} \textbf{95} (1984), 117--126.


\bibitem{KS}
S. Kaniel and M. Shinbrot, 
The Boltzmann equation. I. Uniqueness and local existence, 
{\it Comm. Math. Phys.} {\bf 58} (1978), no. 1, 65--84. 

\bibitem{Kim-1}
C. Kim, Formulation and propagation of discontinuity for Boltzmann equation in non-convex domains, {\it Commun. Math. Phys.} {\bf 308} (2011), 641--701.


\bibitem{KD}
C. Kim and D. Lee, The Boltzmann equation with specular boundary condition in convex domains,  {{\it Comm. Pure Appl. Math.} {\bf 71} (2018), no. 3, 411--504.} 


\bibitem{KLT}
H.-W. Kuo,  T.-P. Liu, and L.-C. Tsai, Equilibrating effects of boundary and collision in rarefied gases,
{\it Comm. Math. Phys.} {\bf 328} (2014), no. 2, 421--480. 



\bibitem{LYa}
S.-Q. Liu and X.-F. Yang, The initial boundary value problem for the Boltzmann equation with soft potential, {\it Arch. Rational. Mech. Anal.}  {\bf 223} (2017), no. 1, 463--541. 

\bibitem{LYY}
\newblock T. Liu, T. Yang, and  S. H. Yu,
 {Energy method for the Boltzmann equation,}
{\it Physica D} \textbf{188} (2004), 178--192.


\bibitem{LYu}
T.-P. Liu and S.-H. Yu, Initial-boundary value problem for one-dimensional wave solutions of the Boltzmann equation, {\it Comm. Pure Appl. Math.} {\bf 60} (2007), 295--356.


\bibitem{Ma}
N.B. Maslova, {\it Nonlinear Evolution Equations, Kinetic Approach}. World Scientific, Singapore, 1993.

\bibitem{MS}
N. Masmoudi and L. Saint-Raymond, From the Boltzmann equation to the Stokes-Fourier system in a bounded domain, {\it Comm. Pure Appl. Math.} {\bf 56} (2003), no. 9, 1263--1293.

\bibitem{Mi}
S. Mischler, Kinetic equations with Maxwell boundary conditions, {\it Ann. Sci. Ec. Norm. Super.} {\bf 43} (2010), no. 5, 719--760.

\bibitem{Mi-VPB}
S. Mischler, On the initial boundary value problem for the Vlasov-Poisson-Boltzmann system, {\it Comm. Math. Phys.} {\bf 210} (2000), no. 2, 447--466. 


\bibitem{MSa}
Y. Morimoto and S. Sakamoto, Global solution in the critical Besov space for the Boltzmann equation without angular cutoff, {\it J. Differential Equations} {\bf 261} (2016), 4073--4134.

\bibitem{NI}
T. Nishida and K. Imai, Global solutions to the initial value
problem for the nonlinear Boltzmann equation, {\it Publ. Res. Inst.
Math. Sci.} {\bf  12} (1976/77), 229--239.

\bibitem{Sh}
Y. Shizuta, On the classical solutions of the Boltzmann equation,
{\it Commun. Pure Appl. Math.} {\bf 36} (1983), 705--754.


\bibitem{SG}
R.M. Strain and  Y. Guo,
Exponential decay for soft potentials near Maxwellian,
{\it Arch. Rational. Mech. Anal.} \textbf{187} (2008), 287--339.


\bibitem{SS}
V. Sohinger and  R.M. Strain, {The Boltzmann equation, Besov spaces, and optimal time decay rates in $\R^n_x$, {\it Adv. Math.} {\bf 261} (2014), 274--332.} 

\bibitem{Yo}
Y. Sone, {\it Molecular Gas Dynamics: Theory, Techniques, and Applications}. Birkhauser, Boston, 2007.


\bibitem{Uk}
S. Ukai,
{On the existence of global solutions of mixed problem for non-linear Boltzmann equation}, {\it Proc. Jpn. Acad.} {\bf 50} (1974), 179--184.



\bibitem{UA}
{S. Ukai and K. Asano, On the Cauchy problem of the Boltzmann equation with a soft potential, {\it Publ. Res. Inst. Math. Sci.} {\bf 18} (1982), no. 2, 477--519.}


\bibitem{UY}
\newblock S. Ukai and  T. Yang,
{The Boltzmann equation in the space $L^2\cap L^\infty_\beta$ : Global and time-periodic solutions,} 
{\it Anal. Appl.} \textbf{4} (2006),  263--310.

\bibitem{Vidav}
I. Vidav,
Spectra of perturbed semigroups with applications to transport theory,
{\it J. Math. Anal. Appl.} \textbf{30} (1970), 264--279.

\bibitem{Vi} 
\newblock C. Villani,  
{A review of mathematical topics in collisional kinetic theory. Handbook of mathematical
fluid dynamics, }
\newblock Vol. I, 71--305. North-Holland, Amsterdam, 2002.



\bibitem{Yu1} 
S.-H. Yu,  Stochastic formulation for the initial-boundary value problems of the Boltzmann equation, {\it Arch. Ration. Mech. Anal.} {\bf 192} (2009), no. 2, 217--274. 

\bibitem{Yu2}
S.-H. Yu, Nonlinear wave propagations over a Boltzmann shock profile, {\it J. Amer. Math. Soc.} {\bf 23} (2010), no. 4, 1041--1118.


\end{thebibliography}
\end{document}